\documentclass{iopjournal}

\usepackage{amsmath,amsfonts}
\usepackage{algorithmic}
\usepackage{algorithm}
\usepackage[caption=false,font=normalsize,labelfont=sf,textfont=sf]{subfig}
\usepackage{textcomp}
\usepackage{stfloats}
\usepackage{url}
\usepackage{verbatim}
\usepackage{graphicx}
\usepackage{cite}

\usepackage{amssymb}
\usepackage{mathdots}
\usepackage{mathrsfs}
\usepackage{enumitem}
\usepackage{bibunits}
\defaultbibliographystyle{IEEEtran}

\usepackage{xr}

\def\st{\text{subject to }}

\def\m #1{\boldsymbol{#1}}

\def\bee{\begin{equation}}
\def\ene{\end{equation}}

\def\beq{\begin{eqnarray}}
\def\enq{\end{eqnarray}}

\newtheorem{lem}{Lemma}

\newtheorem{cor}{Corollary}
\newtheorem{thm}{Theorem}
\newtheorem{prop}{Proposition}

\newenvironment{proof}{{\noindent\it \quad Proof:}\;}{\hfill $\square$\par}

\def\diag #1{\text{diag}#1}

\def\st {\text{ subject to }}

\begin{document}

\articletype{Paper} 

\begin{bibunit}[IEEEtran]

\title{Jointly Sparse Blind Deconvolution via Riemannian Optimization}

\author{Wenlong Wang$^1$\orcid{0009-0001-5855-8723}, Baiyang Guo$^2$\orcid{0009-0003-2399-0879}, Zai Yang$^{1,*}$\orcid{0000-0002-9502-5176}, Shixiang Chen$^3$\orcid{0000-0002-3261-0714} and Junpeng Shi$^2$\orcid{0000-0002-9910-0663}}

\affil{$^1$School of Mathematics and Statistics, Xi’an Jiaotong University, Xi’an, China}

\affil{$^2$College of Electronic Engineering, National University of Defense Technology, Hefei, China}

\affil{$^3$School of Mathematical Sciences,University of Science and Technology of China, Hefei, China}

\affil{$^*$Author to whom any correspondence should be addressed.}

\email{wang1813857265@stu.xjtu.edu.cn, guobaiyang@nudt.edu.cn, yangzai@xjtu.edu.cn, shxchen@ustc.edu.cn and shijunpeng20@nudt.edu.cn}

\keywords{jointly sparse blind deconvolution, Riemannian manifold optimization, sample complexity}

\begin{abstract}
Blind deconvolution has been widely applied in system identification and signal processing. While joint sparsity commonly arises in practical scenarios, effectively exploiting this structure to enhance recovery performance remains a challenging and largely open problem. In this paper, we propose a joint-sparsity-promoting optimization problem and develop a Riemannian optimization algorithm for its accurate and efficient solution. We further establish theoretical guarantees that characterize the non-asymptotic relationship between the estimation error and the sample complexity, showing that exploiting joint sparsity can significantly reduce the sample complexity required for successful recovery. Numerical experiments are provided that validate the theoretical results and demonstrate the effectiveness of the proposed approach.
\end{abstract}

\section{Introduction}
Blind deconvolution is a fundamental inverse problem in information science and signal processing~\cite{levin2009understanding,6680763}, with broad applications in system identification and signal recovery, including seismic imaging~\cite{kaaresen1998multichannel}, astronomical imaging~\cite{prato2013convergent}, neuroscience~\cite{benfenati2023neural,ekanadham2011blind}, fluorescence microscopy~\cite{zhang2025computational,1628876}, radar systems~\cite{vargas2023dual}, image restoration~\cite{lv2018convex}, and wireless communications~\cite{8242678}. Recovering both an unknown channel and unknown input signals from their convolved output without direct knowledge of either constituent is a cornerstone capability in modern information systems. However, blind deconvolution is fundamentally ill-posed: the convolutive structure introduces inherent identifiability ambiguities that prevent unique recovery without additional constraints. Multichannel observation scenarios arise in many practical applications, where each channel shares a common filter and receives a sparse input signal. Incorporating both multichannel measurement and signal sparsity constraints significantly mitigates the ill-posedness of the problem, enabling identifiable and well-posed recovery~\cite{8762219,li2018global,qu2020exact,shi2021manifold,8365828,akhavan2019multichannel,cao2024generalized}. This paper focuses on a particularly compelling scenario where the input signals across all channels share the same sparse support, referred to as jointly sparse blind deconvolution.

The jointly sparse blind deconvolution model arises naturally in radar sensing and fluorescence microscopy applications. In passive sensing systems, the unknown filter corresponds to the transmitted waveform~\cite{267014,757212}, whereas in active sensing systems it may represent sensor imperfections, such as unknown gains and phases~\cite{7779126,10535469,eldar2020sensor}. The joint sparsity assumption reflects the fact that multichannel measurements are acquired in the same propagation environment and therefore share a small number of sparse propagation paths~\cite{rossi2013spatial}. This model underlies a variety of important applications, including the resolution of overlapping echoes with unknown characteristics~\cite{267014,757212}, blind gain and phase calibration~\cite{7779126,10535469,eldar2020sensor}, integrated radar--communication systems~\cite{vargas2023dual,jacome2024multi}, and synthetic aperture radar autofocus~\cite{4799379}. Jointly sparse blind deconvolution also arises in certain fluorescence microscopy applications, such as fixed-cell microtubule imaging~\cite{min2013fluorescent}. In these settings, microtubules constitute sparse cellular structures and can be regarded as approximately stationary over short observation intervals~\cite{min2013fluorescent,vavrdova2019multicolour}. Meanwhile, the imaging system may suffer from non-ideal effects, including optical aberrations, defocus, and refractive-index mismatch~\cite{soulez2012blind,soulez2016superresolution}. Consequently, accurate image reconstruction requires the simultaneous estimation of both the underlying specimen structure and the unknown imaging system response.

For the jointly sparse blind deconvolution model, the observation in each channel is represented as the circulant convolution of an unknown filter and an input signal:
\begin{equation}
\label{circulant-convolution}
\begin{aligned}
\m{y}_l=\m{g}\circledast\m{x}_l\in\mathbb{R}^N,\;l=1,\cdots,L,
\end{aligned}
\end{equation}
where $N$ and $L$ denote the filter length and the number of channels (also referred to as the sample size), respectively, $\m{x}_l\in\mathbb{R}^N$ represents the input signal of the $l$-th channel, and $\m{g}\in\mathbb{R}^N$ denotes the filter. The symbol $\circledast$ represents the circulant convolution operator, which can be expressed equivalently as a circulant matrix multiplication:
\begin{equation}
\begin{aligned}
\m{g}\circledast\m{x}=\mathcal{C}\left(\m{g}\right)\m{x},
\end{aligned}
\end{equation}
where $\m{x}$ denotes an input signal and
\begin{equation}
\begin{aligned}
\mathcal{C}\left(\m{g}\right)=\begin{bmatrix}g_1&g_N&\cdots&g_2\\g_2&g_1&\cdots&g_3\\\vdots&\vdots&\ddots&\vdots\\g_N&g_{N-1}&\cdots&g_1\end{bmatrix}\in\mathbb{R}^{N\times N}.
\end{aligned}
\end{equation}
The circulant convolution formulation is widely adopted in practical scenarios where the filter satisfies periodic boundary conditions~\cite{cho2009fast} or serves as an approximation of linear convolution when the filter has compact support or decays rapidly~\cite{strohmer2002four,li2018global}. Throughout this paper, we assume that the input signals $\left\{\m{x}_{l}\right\}_{l=1}^L$ share a common support set $\Omega\subseteq\left\{1,2,\cdots,N\right\}$ with cardinality $\left|\Omega\right|=K$. Our objective is to recover both the filter $\m{g}$ and the inputs $\left\{\m{x}_{l}\right\}_{l=1}^L$ from the observations $\left\{\m{y}_{l}\right\}_{l=1}^L$ under the joint sparsity assumption.

The jointly sparse blind deconvolution problem reduces to the general multichannel sparse blind deconvolution problem when the input signals in different channels are sparse but do not necessarily share a common support. Various methods have been proposed to address multichannel sparse blind deconvolution with theoretical recovery guarantees~\cite{8762219,li2018global,qu2020exact,shi2021manifold}. Existing theoretical analyses primarily focus on two aspects: the sample complexity required for successful recovery and the development of optimization algorithms with provable convergence guarantees. This paper aims to further improve both the theoretical guarantees and the algorithmic performance by explicitly leveraging the joint sparsity structure.

\subsection{Contributions}
In this paper, we develop an algorithmic framework and theoretical analysis for jointly sparse blind deconvolution. The main contributions are summarized as follows.
\begin{enumerate}
\item
We formulate jointly sparse blind deconvolution as a Riemannian optimization problem on the unit sphere. To exploit the common support shared by multiple input signals, we introduce a smooth joint-sparsity-promoting objective function that effectively aggregates information across channels while remaining amenable to both geometric analysis and algorithmic development.
\item
We establish a geometric characterization of the proposed optimization landscape. Under a probabilistic model in which the nonzero entries of the input signals are independently drawn from a standard Gaussian distribution, we show that every approximate second-order stationary point lies within a neighborhood of the target solution, provided the number of channels is sufficiently large. This result yields a rigorous recovery guarantee for jointly sparse blind deconvolution via nonconvex optimization.
\item
Building on the above landscape analysis, we develop a Riemannian gradient descent algorithm with negative curvature search (RGD-NCS). We prove that the proposed algorithm computes an approximate second-order stationary point within a finite number of iterations. Combined with the landscape characterization, this result guarantees that the underlying filter can be recovered accurately with high probability. Furthermore, we extend both the optimization framework and the proposed algorithm to the complex-valued setting.
\end{enumerate}
Extensive numerical experiments are conducted to validate the proposed framework. The results demonstrate the effectiveness of the proposed approach and corroborate the theoretical prediction that exploiting joint sparsity reduces the sample complexity required for successful recovery. Finally, the application in array signal processing is presented to illustrate the usefulness of the proposed method.

\subsection{Relations to Prior Art}

The advantages of multichannel observations have been extensively studied in areas such as compressed sensing~\cite{4014378,eldar2009average} and spectral analysis~\cite{7484756,8533395}. In these settings, the measurements are known linear transformations of unknown input signals, and the objective is to recover the input signals by exploiting their joint sparsity. Related research has shown that, when the nonzero entries of the sparse input signals are generated from an appropriate random distribution, exact recovery can be achieved via convex optimization with high probability, provided that the sample size is sufficiently large. In contrast, the jointly sparse blind deconvolution problem considered in this paper is substantially more challenging because the linear transformation itself is unknown, leading to a highly ill-conditioned bilinear inverse problem. Moreover, the only structural assumption imposed on the transformation is that it admits a circulant matrix representation.

Early studies on sparse blind deconvolution are mainly focused on incorporating additional prior information about either the filter or the input signals~\cite{6680763,7508934,10985864,9830780,8365828,huang2018blind}. Some works assume that the filter admits a sparse representation under a known dictionary, thereby significantly reducing the number of unknown parameters and improving identifiability~\cite{6680763,7508934,10985864,9830780,huang2018blind}. Under these assumptions, convex optimization methods can successfully recover both the filter and input signals when the observation length is sufficiently large. Other studies impose stronger structural assumptions on the input signals. For example, the work in~\cite{8365828} assumes that the support of the input signals is restricted to the first few entries and that the matrix formed by the nonzero entries admits a low-dimensional representation in a known subspace. In contrast to these approaches, which rely on additional structural priors, we consider a fully blind setting in which both the input signals and their support are unknown, the nonzero entries are random, and no specific structural assumptions are imposed on the common support set.

The main challenges in sparse blind deconvolution stem from the bilinear nonconvexity and parameter ambiguities induced by the convolution structure. Similar challenges also arise in dictionary learning, which aims to recover sparse representations of observed data~\cite{sun2016complete,7755786,7165675,7378964,sun2025global,ruetz2026convergence,liang2025simple}. Since both the dictionary and the sparse coefficients are unknown, dictionary learning is likewise a bilinear inverse problem, differing from sparse blind deconvolution primarily in that the dictionary in the latter is constrained to have a circulant structure. Extensive research on dictionary learning has inspired reparameterization-based optimization methods for bilinear inverse problems. The key idea is to exploit the assumption that the filter is invertible, i.e., the circulant matrix $\mathcal{C}\left(\m{g}\right)$ is nonsingular. Under this assumption, the unknown input signals $\left\{\m{x}_{l}\right\}_{l=1}^L$ can be recovered from the observations $\left\{\m{y}_{l}\right\}_{l=1}^L$ via $\m{x}_l=\mathcal{C}\left(\m{g}\right)^{-1}\m{y}_l$ once the filter $\m{g}$ is known. This paper adopts the same assumption throughout. Consequently, the joint recovery problem can be reduced to the estimation of the filter alone. The work in~\cite{cosse2017note} reformulates sparse blind deconvolution as the problem of finding the sparsest vector in an affine subspace, where a linear structure is exploited to resolve the parameter ambiguities, thereby enabling the use of convex optimization techniques with theoretical recovery guarantees. Although the success of this approach relies on the restrictive Bernoulli model, in which the nonzero entries take values in $\pm1$, it provides an important insight: appropriately designed constraints can effectively alleviate parameter ambiguities. Subsequent work extended this framework to the Bernoulli--Gaussian model, where the Bernoulli distribution determines the support locations and the Gaussian distribution models the nonzero values~\cite{wang2016blind}. However, the resulting guarantees require the filter to be sufficiently close to a spiky signal. The work in~\cite{8762219} further relaxes these assumptions by imposing a unit-sphere constraint to mitigate the parameter ambiguities, employing a negative $\ell_4$-based objective to promote sparsity, and introducing preconditioning techniques to improve the optimization landscape. More recent studies have adopted $\ell_1$-based objective functions to further enhance sparsity promotion, including approaches based on the Huber function~\cite{qu2020exact} and the log-cosh function~\cite{shi2021manifold}. Our work builds upon these developments by employing a joint-sparsity-promoting objective function while retaining the unit-sphere constraint and preconditioning techniques. The key distinction is that the proposed objective is based on the $\ell_{2,1}$ norm (see details in the main context), which explicitly exploits the joint sparsity shared across multiple channels.

Existing theoretical analyses of sparse blind deconvolution primarily focus on characterizing the sample complexity required for successful recovery. The work in~\cite{li2018global} establishes recovery guarantees using a negative $\ell_4$-based objective function together with preconditioning techniques. Subsequent studies demonstrate that these sample complexity requirements can be improved through $\ell_1$-based objective functions~\cite{qu2020exact,shi2021manifold}. These results generally show that, when the sample size is sufficiently large relative to the filter length, the optimization landscape contains a substantial region in which the gradient points toward the target solution, and iterates initialized within this region remain there and converge to a point sufficiently close to the target solution. In contrast, the joint-sparsity-promoting objective function considered in this paper gives rise to a significantly more intricate optimization landscape containing numerous saddle points. Consequently, poor solutions cannot be excluded solely on the basis of first-order stationarity conditions. Similar challenges arise in a variety of nonconvex optimization problems, including low-rank matrix recovery~\cite{ge2017no}, phase retrieval~\cite{cai2023provable}, and related settings~\cite{carmon2018accelerated,chen2023local}. In these problems, a central theoretical objective is to establish the absence of spurious local minima, namely, to show that every local minimum is globally optimal. Our theoretical analysis follows a similar line of reasoning. Specifically, we prove that sufficiently accurate approximate second-order stationary points lie sufficiently close to the target solution. This result implies that undesirable first-order stationary points can be ruled out through second-order optimality conditions. Furthermore, our analysis reveals that the sample complexity required for successful recovery depends on both the filter length and the sparsity level, while exhibiting a substantially weaker dependence on the filter length than existing results~\cite{qu2020exact,shi2021manifold}. This finding further highlights the advantages of exploiting joint sparsity.

\subsection{Notations and Organization}
The notations used throughout this paper are summarized as follows. Boldface letters denote vectors and matrices. The sets of real and complex numbers are denoted by $\mathbb{R}$ and $\mathbb{C}$, respectively. For a vector $\m{x}$, its transpose, conjugate transpose, complex conjugate, $\ell_1$ norm, $\ell_2$ norm, $\ell_4$ norm, and $\ell_\infty$ norm are denoted as $\m{x}^\top$, $\m{x}^H$, $\overline{\m{x}}$, $\left\|\m{x}\right\|_1$, $\left\|\m{x}\right\|_2$, $\left\|\m{x}\right\|_4$, and $\left\|\m{x}\right\|_{\infty}$, respectively. For a matrix $\m{X}$, the notations $\m{X}^\top$, $\m{X}^H$, $\overline{\m{X}}$, $\m{X}^{-1}$, and $\sigma_{\text{min}}\left(\m{X}\right)$ denote its transpose, conjugate transpose, complex conjugate, inverse, and smallest singular value, respectively. For a symmetric matrix $\m{X}$, $\lambda_{\text{min}}\left(\m{X}\right)$ denotes its smallest eigenvalue. The $j$-th entry of the vector $\m{x}$ is denoted by $x_j$, and the $\left(i_1,i_2\right)$-th entry of the matrix $\m{X}$ is denoted by $X_{i_1,i_2}$. The $i$-th row and column of a matrix $\m{X}$ are denoted as $\m{X}_{i,:}$ and $\m{X}_{:,i}$, respectively. The $\ell_{2,1}$ norm of a matrix $\m{X}\in\mathbb{C}^{N\times L}$ is defined as $\left\|\m{X}\right\|_{2,1}=\sum_{n=1}^N\left\|\m{X}_{n,:}^\top\right\|_2$. For a vector $\m{x}$ and a support set $\Omega$, $\mathcal{P}_{\Omega}\left(\m{x}\right)$ denotes the projection operator that retains the subvector of $\m{x}$ indexed by $\Omega$. For symmetric matrices $\m{X}$ and $\m{Y}$, the notation $\m{X}\succeq\m{Y}$ indicates that $\m{X}-\m{Y}$ is positive semidefinite (PSD). The notation $\left|\cdot\right|$ denotes either the absolute value of a scalar or the cardinality of a set. The identity matrix is denoted as $\m{I}$. The set $\mathbb{S}^{N-1}=\left\{\m{h}\in\mathbb{R}^N:\;\left\|\m{h}\right\|_2=1\right\}$ denotes the unit sphere in $\mathbb{R}^N$. The vector $\m{e}_m$ denotes the $m$-th canonical basis vector, whose $m$-th entry is $1$ and all other entries are $0$. $\m{J}_n$ denotes the $n\times n$ reversal matrix with ones on the anti-diagonal and zeros elsewhere. The notation $\mathbb{E}\left[\cdot\right]$ denotes expectation, and $\mathbb{P}\left[\cdot\right]$ denotes probability. The diagonal matrix with vector $\m{x}$ on its diagonal is represented as $\diag\left(\m{x}\right)$. For positive integers $a$ and $b$, the notation $a\bmod b$ denotes the remainder when $a$ is divided by $b$. For a manifold $\mathcal{M}$ and a point $\m{x}\in\mathcal{M}$, the tangent space of $\mathcal{M}$ at $\m{x}$ is denoted by $\mathrm{T}_{\m{x}}\mathcal{M}$. The matrix ${\m{F}^{\left[N\right]}}\in\mathbb{C}^{N\times N}$ denotes the $N\times N$ unitary discrete Fourier transform (DFT) matrix whose $\left(i_1,i_2\right)$-th entry is given by $\frac{1}{\sqrt{N}}\exp\left\{j2\pi\frac{\left(i_1-1\right)\left(i_2-1\right)}{N}\right\}$. For a vector $\m{x}\in\mathbb{C}^N$, its DFT spectrum is denoted by $\widehat{\m{x}}$ and defined as $\widehat{\m{x}}=\sqrt{N}{\m{F}^{\left[N\right]}}^H\m{x}$.

The remainder of this paper is organized as follows. Section~\ref{Sec:Preliminaries} introduces the fundamental concepts of sparse blind deconvolution. Section~\ref{Sec:mean-result} presents the main results, including the optimization framework for jointly sparse blind deconvolution and the characterization of the optimization landscape under statistical assumptions on the input signals. Section~\ref{Sec:Algorithm} describes the proposed algorithm and establishes its convergence and reconstruction error guarantees. Section~\ref{Sec:extension} extends the proposed framework and algorithm to the complex-valued setting. Section~\ref{Sec:simulation} presents numerical experiments, and Section~\ref{Sec:conclusion} concludes the paper.

\section{Preliminaries}
\label{Sec:Preliminaries}

This section introduces several fundamental concepts in sparse blind deconvolution, including the properties of circulant convolution and circulant matrices, parameter identifiability, and optimization techniques relevant to algorithm design. These concepts provide the theoretical foundation for the developments presented in the subsequent sections.

\subsection{Properties of Circulant Convolution}
This subsection summarizes several fundamental properties of circulant convolution.
\begin{lem}
\label{lemma-circulant convolution}
For vectors $\m{u}$, $\m{v}$, and $\m{w}$ of length $N$, the following results hold true:
\begin{enumerate}[label=\arabic*)]
\item DFT diagonalization~\cite[Theorem 3.2.2]{davis1979circulant}:
\begin{equation}
\label{DFT diagonalization}
\begin{aligned}
\mathcal{C}\left(\m{u}\right)=\m{F}^{\left[N\right]}\diag\left(\widehat{\m{u}}\right){\m{F}^{\left[N\right]}}^H.
\end{aligned}
\end{equation}
\item Commutativity~\cite[Theorem 7]{gray2006toeplitz}:
\begin{equation}
\label{commutativity-2}
\begin{aligned}
\mathcal{C}\left(\m{u}\right)\mathcal{C}\left(\m{v}\right)=&\mathcal{C}\left(\m{v}\right)\mathcal{C}\left(\m{u}\right).
\end{aligned}
\end{equation}
\item Associativity~\cite[Appendix A.3]{bamieh2018discovering}:
\begin{equation}
\label{equ-associativity}
\begin{aligned}
\m{u}\circledast\m{v}\circledast\m{w}=\m{u}\circledast\left(\m{v}\circledast\m{w}\right).
\end{aligned}
\end{equation}
\item Inverse property~\cite[(2)]{shen2011determinants}: if a circulant matrix is invertible, then its inverse is also circulant.
\end{enumerate}
\end{lem}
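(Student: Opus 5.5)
The plan is to prove item 1) directly and derive items 2)--4) from it. For 1), I will compute the action of $\mathcal{C}\left(\m{u}\right)$ on the Fourier columns. Let $\m{f}_k$ be the $k$-th column of $\m{F}^{\left[N\right]}$, so $\left(\m{f}_k\right)_{i}=\frac{1}{\sqrt{N}}\omega^{(i-1)(k-1)}$ with $\omega=e^{j2\pi/N}$. From the displayed matrix, $\mathcal{C}\left(\m{u}\right)_{i,m}=u_{((i-m)\bmod N)+1}$. Hence
\begin{equation*}
\left(\mathcal{C}\left(\m{u}\right)\m{f}_k\right)_i=\frac{1}{\sqrt{N}}\sum_{m=1}^N u_{((i-m)\bmod N)+1}\,\omega^{(m-1)(k-1)}.
\end{equation*}
Substituting $p=((i-m)\bmod N)+1$ and using $\omega^N=1$ to handle the wraparound gives $\left(\m{f}_k\right)_i\sum_{p=1}^N u_p\,\omega^{-(p-1)(k-1)}$. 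By the definition $\widehat{\m{u}}=\sqrt{N}{\m{F}^{\left[N\right]}}^H\m{u}$, the sum is exactly $\widehat{u}_k$, since the conjugate transpose produces the negative exponent. Therefore $\mathcal{C}\left(\m{u}\right)\m{F}^{\left[N\right]}=\m{F}^{\left[N\right]}\diag\left(\widehat{\m{u}}\right)$, and unitarity of $\m{F}^{\left[N\right]}$ gives \eqref{DFT diagonalization}.

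The main obstacle is the index bookkeeping: the modular reindexing and matching the sign convention of the exponent with the paper's DFT definition. Everything else follows algebraically from 1).

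For 2), I multiply the two diagonalizations. The inner factors ${\m{F}^{\left[N\right]}}^H\m{F}^{\left[N\right]}=\m{I}$ cancel, and diagonal matrices commute, so $\mathcal{C}\left(\m{u}\right)\mathcal{C}\left(\m{v}\right)=\m{F}^{\left[N\right]}\diag\left(\widehat{\m{u}}\odot\widehat{\m{v}}\right){\m{F}^{\left[N\right]}}^H=\mathcal{C}\left(\m{v}\right)\mathcal{C}\left(\m{u}\right)$.

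For 3), the left side is read left to right as $(\m{u}\circledast\m{v})\circledast\m{w}$, so I need $\mathcal{C}\left(\m{u}\circledast\m{v}\right)=\mathcal{C}\left(\m{u}\right)\mathcal{C}\left(\m{v}\right)$. Two facts give this:
\begin{itemize}
\item[(a)] $\mathcal{C}$ is linear and injective, since $\m{u}$ is the first column of $\mathcal{C}\left(\m{u}\right)$.
\item[(b)] $\widehat{\m{u}\circledast\m{v}}=\sqrt{N}{\m{F}^{\left[N\right]}}^H\mathcal{C}\left(\m{u}\right)\m{v}=\diag\left(\widehat{\m{u}}\right)\widehat{\m{v}}$, using 1).
\end{itemize}
By 1) and (b), $\mathcal{C}\left(\m{u}\circledast\m{v}\right)=\m{F}^{\left[N\right]}\diag\left(\widehat{\m{u}}\odot\widehat{\m{v}}\right){\m{F}^{\left[N\right]}}^H$, which is the product from part 2). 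Then $(\m{u}\circledast\m{v})\circledast\m{w}=\mathcal{C}\left(\m{u}\right)\mathcal{C}\left(\m{v}\right)\m{w}=\m{u}\circledast\left(\m{v}\circledast\m{w}\right)$.

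For 4), invertibility of $\mathcal{C}\left(\m{u}\right)$ means every $\widehat{u}_k\neq 0$. I define $\m{z}$ by $\widehat{\m{z}}=(1/\widehat{u}_k)_k$, that is, $\m{z}=\frac{1}{\sqrt{N}}\m{F}^{\left[N\right]}\widehat{\m{z}}$. This $\m{z}$ is real when $\m{u}$ is real, because $\widehat{\m{u}}$ has conjugate symmetry and so does its entrywise reciprocal. Then 1) shows $\mathcal{C}\left(\m{z}\right)\mathcal{C}\left(\m{u}\right)=\m{I}$, so $\mathcal{C}\left(\m{u}\right)^{-1}=\mathcal{C}\left(\m{z}\right)$ is circulant.
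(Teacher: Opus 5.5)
Your proof is correct, and it takes a different route from the paper, which gives no argument for this lemma and simply cites a separate source for each item (Davis for 1), Gray for 2), Bamieh for 3), Shen for 4)). You instead verify 1) directly on the Fourier columns and then derive 2)--4) from one mechanism: under $\m{F}^{[N]}$, circulant matrices act as entrywise multiplication of spectra.

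Your route has two advantages beyond being self-contained:
\begin{itemize}
\item It confirms that the diagonalization holds with the paper's own conventions, namely $\m{F}^{[N]}$ with the $+j$ exponent and $\widehat{\m{u}}=\sqrt{N}{\m{F}^{[N]}}^H\m{u}$. This is not guaranteed by a citation whose source may use the opposite sign or a different normalization.
\item It establishes that the inverse has a real generating vector. The paper uses this silently when it asserts $\m{h}\in\mathbb{R}^N$ in \eqref{filter-inverse}, and the bare statement that ``the inverse is circulant'' does not say so.
\end{itemize}
The citation route's only advantage is brevity.

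Two minor remarks. First, fact (a) in your item 3) is never used, because you apply 1) to $\m{u}\circledast\m{v}$ directly. Second, the realness in 4) follows more simply than via conjugate symmetry: $\m{z}$ is the first column of $\mathcal{C}(\m{z})=\mathcal{C}(\m{u})^{-1}$, and that is the inverse of a real matrix.
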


We also introduce a useful matrix representation of circulant convolution:
\begin{equation}
\label{circulant-convolution-matrix}
\begin{aligned}
\m{u}\circledast\m{v}=\mathcal{C}\left(\m{u}\right)\m{v}=\begin{bmatrix}\m{u}^\top\m{\Gamma}_1\m{v}&\cdots&\m{u}^\top\m{\Gamma}_N\m{v}\end{bmatrix}^\top,
\end{aligned}
\end{equation}
where $\left\{\m{\Gamma}_n=\begin{bmatrix}\m{J}_n\\&\m{J}_{N-n}\end{bmatrix}\right\}_{n=1}^N$ is a collection of symmetric matrices. We next provide the properties of the special circulant matrix $\mathcal{C}\left(\m{e}_m\right)$ for $m=1,\cdots,N$.
\begin{lem}
\label{lem-transpose}
The matrix $\mathcal{C}\left(\m{e}_m\right)$ is orthogonal and satisfies
\begin{equation}
\label{transpose}
\begin{aligned}
\mathcal{C}\left(\m{e}_m\right)^\top=\mathcal{C}\left(\m{e}_{\left(\left(N+1-m\right)\bmod N\right)+1}\right).
\end{aligned}
\end{equation}
\end{lem}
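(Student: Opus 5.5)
We will work directly with the entries of the circulant matrix. From the definition of $\mathcal{C}(\m{g})$, the $(i,k)$ entry is $g_{((i-k)\bmod N)+1}$: the first column is $\m{g}$, and each subsequent column is the previous one cyclically shifted down by one. Taking $\m{g}=\m{e}_m$, the $(i,k)$ entry of $\mathcal{C}(\m{e}_m)$ equals $1$ exactly when $(i-k)\bmod N = m-1$, that is, when $i \equiv k+m-1 \pmod N$, and equals $0$ otherwise. So each column $k$ has a single unit entry, in row $((k+m-2)\bmod N)+1$. The map $k\mapsto k+m-1 \pmod N$ is a bijection of $\{1,\dots,N\}$, so each row also has exactly one unit entry. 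Hence $\mathcal{C}(\m{e}_m)$ is a permutation matrix, and therefore orthogonal: $\mathcal{C}(\m{e}_m)^\top\mathcal{C}(\m{e}_m)=\m{I}$.

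An alternative check of orthogonality uses Lemma~\ref{lemma-circulant convolution}: $\widehat{\m{e}_m}=\sqrt{N}{\m{F}^{[N]}}^H\m{e}_m$ has entries of unit modulus, so $\operatorname{diag}(\widehat{\m{e}_m})$ is unitary, and the DFT diagonalization gives a real unitary matrix. We will use the permutation argument as the main route, since it also yields the transpose formula.

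For the transpose formula, the $(i,k)$ entry of $\mathcal{C}(\m{e}_m)^\top$ is the $(k,i)$ entry of $\mathcal{C}(\m{e}_m)$. It equals $1$ iff $k-i\equiv m-1$, i.e.\ iff $i-k\equiv 1-m \equiv N+1-m \pmod N$. By the characterization above applied to index $m'$, $\mathcal{C}(\m{e}_{m'})$ has its ones exactly where $i-k\equiv m'-1 \pmod N$. So we need $m'$ with $m'-1 \equiv N+1-m \pmod N$ and $m'\in\{1,\dots,N\}$. The choice $m' = ((N+1-m)\bmod N)+1$ satisfies this: $m'-1=(N+1-m)\bmod N$ lies in $\{0,\dots,N-1\}$, so $m'\in\{1,\dots,N\}$. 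Since a circulant matrix is determined by its first column, this gives $\mathcal{C}(\m{e}_m)^\top=\mathcal{C}(\m{e}_{m'})$.

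The only real obstacle is the index bookkeeping with the $\bmod$ conventions, including the edge case $m=1$. There $N+1-m=N$, so $(N)\bmod N=0$, giving $m'=1$, consistent with $\mathcal{C}(\m{e}_1)=\m{I}$. We will state the entry formula for $\mathcal{C}(\m{g})$ explicitly first, verifying it against the displayed matrix (e.g.\ entry $(1,2)$ is $g_N$, and indeed $((1-2)\bmod N)+1=N$), so that every later step is a congruence check.
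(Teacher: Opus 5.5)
Your proof is correct, but it takes a different route from the paper's.

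\textbf{Your route.} You compute the entries of $\mathcal{C}(\m{e}_m)$ directly and identify it as the permutation matrix of the cyclic shift $k\mapsto k+m-1 \pmod N$. Both orthogonality and the transpose formula then follow from congruences on $i-k$. Your entry formula $\mathcal{C}(\m{g})_{i,k}=g_{((i-k)\bmod N)+1}$ agrees with the displayed matrix under the nonnegative-remainder convention for negative arguments. The paper uses the same convention implicitly elsewhere, but it only defines $\bmod$ for positive integers, so state the convention.

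\textbf{The paper's route.} It argues structurally. Orthogonality comes from norm preservation, $\|\mathcal{C}(\m{e}_m)\m{u}\|_2=\|\m{u}\circledast\m{e}_m\|_2=\|\m{u}\|_2$. The transpose is then identified as the inverse through the convolution identity $\m{e}_m\circledast\m{e}_{((N+1-m)\bmod N)+1}=\m{e}_1$, which gives $\mathcal{C}(\m{e}_{((N+1-m)\bmod N)+1})\mathcal{C}(\m{e}_m)=\mathcal{C}(\m{e}_1)=\m{I}$. This step implicitly uses the homomorphism property $\mathcal{C}(\m{u})\mathcal{C}(\m{v})=\mathcal{C}(\m{u}\circledast\m{v})$, which follows from associativity. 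The index check behind the identity is left as ``straightforward.''

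\textbf{Comparison.} The paper's approach yields the interpretation of $\m{e}_{((N+1-m)\bmod N)+1}$ as the inverse filter of $\m{e}_m$, which it uses immediately in the identifiability discussion. Your argument is more elementary and self-contained: it needs no convolution algebra, and it makes explicit the index bookkeeping the paper skips, including the $m=1$ edge case.

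One small point: your closing remark that a circulant matrix is determined by its first column is unnecessary, since you have already matched all entries.
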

\begin{proof}
Since
\begin{equation}
\begin{aligned}
\left\|\m{u}\right\|_2=\left\|\m{u}\circledast\m{e}_m\right\|_2=\left\|\mathcal{C}\left(\m{e}_m\right)\m{u}\right\|_2
\end{aligned}
\end{equation}
the matrix $\mathcal{C}\left(\m{e}_m\right)$ is orthogonal. Moreover, it is straightforward to verify that
\begin{equation}
\begin{aligned}
\m{e}_1\circledast\m{e}_1=\m{e}_1,
\end{aligned}
\end{equation}
and
\begin{equation}
\begin{aligned}
\m{e}_m\circledast\m{e}_{\left(\left(N+1-m\right)\bmod N\right)+1}=\m{e}_1.
\end{aligned}
\end{equation}
Therefore,
\begin{equation}
\begin{aligned}
\mathcal{C}\left(\m{e}_{\left(\left(N+1-m\right)\bmod N\right)+1}\right)
\mathcal{C}\left(\m{e}_m\right)=\mathcal{C}\left(\m{e}_1\right)=\m{I},
\end{aligned}
\end{equation}
which implies \eqref{transpose} and completes the proof.
\end{proof}

Lemma~\ref{lem-transpose} shows that the filter $\m{e}_{\left(\left(N+1-m\right)\bmod N\right)+1}$ serves as an inverse representation of the filter $\m{e}_m$.

Based on these properties, the remainder of this section presents several fundamental results and algorithmic frameworks for sparse blind deconvolution.

\subsection{Parameter Identifiability for Blind Deconvolution}
The blind deconvolution problem in \eqref{circulant-convolution} suffers from inherent parameter ambiguities induced by the circulant convolution model, including global scaling ambiguity and circulant shift ambiguity between the filter $\m{g}$ and the input signals $\left\{\m{x}_{l}\right\}_{l=1}^L$. Consequently, recovery of $\m{g}$ and $\left\{\m{x}_{l}\right\}_{l=1}^L$ can only be up to a scaling factor and a circulant shift. Specifically, the filter is considered to be accurately recovered if there exists an estimate $\widetilde{\m{g}}$ such that
\begin{equation}
\begin{aligned}
\widetilde{\m{g}}=s\m{g}\circledast\m{e}_m,
\end{aligned}
\end{equation}
or equivalently,
\begin{equation}
\label{equ-filter}
\begin{aligned}
\mathcal{C}\left(\widetilde{\m{g}}\right)=s\mathcal{C}\left(\m{g}\right)\mathcal{C}\left(\m{e}_m\right),
\end{aligned}
\end{equation}
where $s\neq0$ is an unknown scaling factor and $\m{e}_m$ corresponds to a circulant shift. Under this condition, it follows from \eqref{circulant-convolution}, \eqref{equ-filter}, and Lemma~\ref{lem-transpose} that the corresponding signal estimates satisfy, for $l=1,\cdots,L$,
\begin{equation}
\label{signal-est}
\begin{aligned}
\widetilde{\m{x}_l}=&\mathcal{C}\left(\widetilde{\m{g}}\right)^{-1}\m{y}_l\\
=&\left[s\mathcal{C}\left(\m{g}\right)\mathcal{C}\left(\m{e}_m\right)\right]^{-1}\mathcal{C}\left(\m{g}\right)\m{x}_l\\
=&s^{-1}\mathcal{C}\left(\m{e}_m\right)^\top\m{x}_l\\
=&s^{-1}\m{e}_{\left(\left(N+1-m\right)\bmod N\right)+1}\circledast\m{x}_l.
\end{aligned}
\end{equation}

\subsection{Multichannel Sparse Blind Deconvolution: Reparameterization and Preconditioning}
The main challenges in multichannel sparse blind deconvolution arise from the nonconvexity induced by the bilinear observation model and the unfavorable optimization landscape caused by an unknown ill-conditioned filter $\m{g}$. Without exploiting joint sparsity, existing approaches typically address these challenges through reparameterization to eliminate the bilinearity and preconditioning to improve the optimization landscape~\cite{8762219,li2018global,qu2020exact,shi2021manifold}. Specifically, these approaches reparameterize the input signals in terms of the filter and the observations, thereby reducing the joint recovery of the filter and input signals to the estimation of the filter alone. In particular, by the inverse property established in Lemma~\ref{lemma-circulant convolution}, there exists a vector $\m{h}\in\mathbb{R}^N$ such that
\begin{equation}
\begin{aligned}
\mathcal{C}\left(\m{h}\right)=\mathcal{C}\left(\widetilde{\m{g}}\right)^{-1}=s^{-1}\mathcal{C}\left(\m{e}_m\right)^\top\mathcal{C}\left(\m{g}\right)^{-1}=s^{-1}\mathcal{C}\left(\m{g}\right)^{-1}\mathcal{C}\left(\m{e}_m\right)^\top,
\end{aligned}
\end{equation}
where the last equality follows from the commutativity of circulant matrices in~\eqref{commutativity-2}. Furthermore, Lemma~\ref{lem-transpose} implies that
\begin{equation}
\label{filter-inverse}
\begin{aligned}
\m{h}=s^{-1}\mathcal{C}\left(\m{g}\right)^{-1}\m{e}_{\left(\left(N+1-m\right)\bmod N\right)+1},\;m=1,\cdots,N,
\end{aligned}
\end{equation}
serves as a valid inverse representation of the filter $\m{g}$. Consequently, once $\m{h}$ is estimated, the input signals can be recovered as
\begin{equation}
\begin{aligned}
\widetilde{\m{x}_l}=\mathcal{C}\left(\m{h}\right)\m{y}_l=\m{h}\circledast\m{y}_l,\;l=1,\cdots,L.
\end{aligned}
\end{equation}
To eliminate the scaling ambiguity and exclude the trivial solution $\m{h}=\mathbf{0}$, $\m{h}$ is constrained to lie on the unit sphere, leading to the optimization problem
\begin{equation}
\label{opt-MSBD}
\begin{aligned}
\min_{\m{h}\in\mathbb{S}^{N-1}}\;f\left(\begin{bmatrix}\mathcal{C}\left(\m{h}\right)\m{y}_1&\mathcal{C}\left(\m{h}\right)\m{y}_2&\cdots&\mathcal{C}\left(\m{h}\right)\m{y}_L\end{bmatrix}\right)=f\left(\begin{bmatrix}\mathcal{C}\left(\m{y}_1\right)\m{h}&\mathcal{C}\left(\m{y}_2\right)\m{h}&\cdots&\mathcal{C}\left(\m{y}_L\right)\m{h}\end{bmatrix}\right),
\end{aligned}
\end{equation}
where $f$ denotes a sparsity-promoting objective function, typically constructed as a sum of component-wise functions. Existing formulations can be broadly categorized into negative $\ell_4$-based objective $f_{-\ell_4}\left(\m{X}\right)=-\sum_{n=1}^N\sum_{l=1}^L\left|\m{X}_{n,l}\right|^4$ and $\ell_1$-based objective $f_{\ell_1}\left(\m{X}\right)=\sum_{n=1}^N\sum_{l=1}^L\left|\m{X}_{n,l}\right|$. Compared with negative $\ell_4$-based objectives, $\ell_1$-based objective functions typically exhibit a more favorable optimization landscape, as the landscape around the target solution is sharper, thereby enabling probability concentration under lower sample complexity requirements~\cite{qu2020exact,shi2021manifold}. However, when the filter $\m{g}$ is ill-conditioned, i.e., $\mathcal{C}\left(\m{g}\right)$ has a large condition number, the optimization landscape deteriorates, and the optimal solution no longer corresponds to the target. To address this issue, preconditioning is employed to improve the conditioning of the optimization problem, yielding
\begin{equation}
\label{opt-MSBD-manifold}
\begin{aligned}
\min_{\m{h}\in\mathbb{S}^{N-1}}\;f\left(\begin{bmatrix}\mathcal{C}\left(\m{y}_1\right)\m{R}\m{h}&\mathcal{C}\left(\m{y}_2\right)\m{R}\m{h}&\cdots&\mathcal{C}\left(\m{y}_L\right)\m{R}\m{h}\end{bmatrix}\right),
\end{aligned}
\end{equation}
where $\m{R}$ is a preconditioning matrix constructed from the observations $\left\{\m{y}_{l}\right\}_{l=1}^L$. Existing component-wise sparsity-promoting objective functions typically employ the preconditioning matrix~\cite{li2018global,qu2020exact,shi2021manifold}:
\begin{equation}
\label{preconditioning-matrix}
\begin{aligned}
\m{R}=\left[\frac{1}{L}\sum_{l=1}^L\mathcal{C}\left(\m{y}_l\right)^\top\mathcal{C}\left(\m{y}_l\right)\right]^{-\frac{1}{2}}.
\end{aligned}
\end{equation}
Substituting different sparsity-promoting objective functions into \eqref{opt-MSBD-manifold} yields different optimization problems. 
In practice, because the $\ell_1$ norm is nonsmooth, it is typically replaced by smooth approximations, such as the Huber function~\cite{qu2020exact} and the log-cosh function~\cite{shi2021manifold}. 

Despite their effectiveness, these formulations treat each signal independently and therefore cannot exploit the joint sparsity shared across multiple channels. Developing an optimization framework that effectively incorporates this structural prior is the primary focus of this paper. The next section presents the proposed formulation and establishes the corresponding theoretical guarantees.

\section{Jointly Sparse Blind Deconvolution}
\label{Sec:mean-result}

In this section, we propose an optimization framework that exploits the joint sparsity of the input signals to achieve simultaneous recovery of both the filter and the signals. We further investigate the geometric properties of the resulting optimization problem to facilitate algorithm design. Our analysis is based on the following assumptions: 1) the nonzero components $\mathcal{P}_{\Omega}\left(\m{x}_l\right)$, for $l=1,\cdots,L$, are i.i.d. Gaussian random vectors with zero mean and covariance matrix $\m{I}$; and 2) $\mathcal{C}\left(\m{g}\right)$ is invertible with condition number $\kappa$. We begin with a detailed formulation of the optimization problem in the form of \eqref{opt-MSBD-manifold}.

\subsection{Optimization Problem}
To clarify the roles of the sparsity-promoting objective function and the preconditioning technique, we reformulate \eqref{opt-MSBD-manifold} by introducing an auxiliary optimization variable $\m{Z}$, whose columns correspond to the estimated input signals across different channels:
\begin{equation}
\label{opt-MSBD-Z}
\begin{aligned}
\min_{\m{h}\in\mathbb{S}^{N-1},\m{Z}\in\mathbb{R}^{N\times L}}\;f\left(\m{Z}\right),\;\st\m{Z}_{:,l}=\left(\m{R}\m{h}\right)\circledast\m{y}_l,\;l=1,\cdots,L.
\end{aligned}
\end{equation}
A key component of this formulation is the design of a joint-sparsity-promoting objective function. A natural choice for promoting joint sparsity is the widely used mixed $\ell_{2,1}$ norm. However, its nonsmoothness poses significant challenges for both algorithm design and theoretical analysis. To facilitate the subsequent development of efficient algorithms and rigorous performance guarantees, we instead adopt a smooth approximation of the $\ell_{2,1}$ norm, leading to the optimization problem
\begin{equation}
\label{ini-opt-0}
\begin{aligned}
\min_{\m{h}\in\mathbb{S}^{N-1},\m{Z}\in\mathbb{R}^{N\times L}}\;&\sum_{n=1}^N\left(\frac{1}{L}\left\|\m{Z}_{n,:}\right\|_2^2+\varepsilon\right)^{\frac{1}{2}},\;\st\m{Z}_{:,l}=\left(\m{R}\m{h}\right)\circledast\m{y}_l,\;l=1,\cdots,L,
\end{aligned}
\end{equation}
where $\varepsilon>0$ is a smoothing parameter. From \eqref{circulant-convolution-matrix}, the $n$-th entry of $\left(\m{R}\m{h}\right)\circledast\m{y}_l$ can be expressed as $\m{y}_l^\top\m{\Gamma}_n\m{R}\m{h}$ for every $n=1,\cdots,N$ and $l=1,\cdots,L$. Consequently, \eqref{ini-opt-0} is equivalent to
\begin{equation}
\label{ini-opt}
\begin{aligned}
\min_{\m{h}\in\mathbb{S}^{N-1},\m{Z}\in\mathbb{R}^{N\times L}}\;&\sum_{n=1}^N\left(\frac{1}{L}\left\|\m{Z}_{n,:}\right\|_2^2+\varepsilon\right)^{\frac{1}{2}},\;\st{Z}_{n,l}=\m{y}_l^\top\m{\Gamma}_n\m{R}\m{h},\;n=1,\dots,N,\;l=1,\cdots,L.
\end{aligned}
\end{equation}
Eliminating the auxiliary variable $\m{Z}$ yields the following optimization problem on the sphere:
\begin{equation}
\label{opt-ini-general-0}
\begin{aligned}
\min_{\m{h}\in\mathbb{S}^{N-1}}\;\sum_{n=1}^N\left(\frac{1}{L}\sum_{l=1}^L\left|\m{y}_l^\top\m{\Gamma}_n\m{R}\m{h}\right|^2+\varepsilon\right)^{\frac{1}{2}}.
\end{aligned}
\end{equation}
We still employ the same preconditioning matrix $\m{R}$ as in~\eqref{preconditioning-matrix} and will justify this choice in Section~\ref{subsec: general}. Once an optimal solution $\widetilde{\m{h}}$ is obtained, an estimate of the inverse representation of the filter $\m{g}$ is given by
\begin{equation}
\begin{aligned}
\widetilde{\m{g}}_{\text{inv}}=\m{R}\widetilde{\m{h}}.
\end{aligned}
\end{equation}
Accordingly, an estimate of the filter $\m{g}$ is given by
\begin{equation}
\begin{aligned}
\widetilde{\m{g}}=\mathcal{C}\left(\widetilde{\m{g}}_{\text{inv}}\right)^{-1}\m{e}_1=\mathcal{C}\left(\m{R}\widetilde{\m{h}}\right)^{-1}\m{e}_1.
\end{aligned}
\end{equation}

The remainder of this section is devoted to analyzing the geometric properties of the optimization problem in \eqref{opt-ini-general-0} under the statistical assumptions imposed on the input signals. The primary challenge stems from the fact that the shared support structure plays a dual role. On the one hand, it provides additional information that can potentially improve recovery performance and reduce the sample complexity required for successful recovery. On the other hand, it introduces strong dependencies across channels, fundamentally altering the geometry of the resulting optimization problem. In particular, the joint-sparsity-promoting objective function is no longer separable across channels, thereby preventing the direct application of existing theoretical frameworks developed for sparse blind deconvolution. To address these challenges, we develop a new geometric analysis tailored to the proposed optimization problem. Our objective is to understand how the additional structural information introduced by joint sparsity affects the optimization landscape and, consequently, the recovery guarantees and sample complexity of blind deconvolution. The analysis proceeds in two stages. First, we consider a special case in which the filter $\m{g}$ is a unit impulse and show that the optimization problem \eqref{opt-ini-general-0} is simplified in this case and admits recovery of both the filter and the input signals from its approximate second-order stationary points. Second, we study the general setting in which the filter $\m{g}$ is an arbitrary invertible filter. We show that, with the preconditioning matrix defined in \eqref{preconditioning-matrix}, problem~\eqref{opt-ini-general-0} approaches the idealized special-case formulation as the sample size increases.

\subsection{Geometry in the Special Case}
\label{subsec:special}
We now consider the special case in which the unknown filter $\m{g}$ is the unit impulse $\m{e}_1$, namely,
\begin{equation}
\begin{aligned}
\mathcal{C}\left(\m{g}\right)=\m{I}.
\end{aligned}
\end{equation}
In this setting, the observations $\left\{\m{y}_l\right\}_{l=1}^L$ coincide with the input signals $\left\{\m{x}_l\right\}_{l=1}^L$, and every canonical basis vector $\m{e}_m$, $m=1,\cdots,N$, serves as a valid inverse representation of the filter $\m{g}$. In this case, preconditioning is not required, and the optimization problem in \eqref{ini-opt} reduces to
\begin{equation}
\label{opt-special-0}
\begin{aligned}
\min_{\m{h}\in\mathbb{S}^{N-1},\m{Z}\in\mathbb{R}^{N\times L}}\;&\sum_{n=1}^N\left(\frac{1}{L}\left\|\m{Z}_{n,:}\right\|_2^2+\varepsilon\right)^{\frac{1}{2}},\;\st{Z}_{n,l}=\m{h}^\top\m{\Gamma}_n\m{x}_l,\;n=1,\dots,N,l=1,\cdots,L,
\end{aligned}
\end{equation}
or equivalently,
\begin{equation}
\label{opt-special}
\begin{aligned}
\min_{\m{h}\in\mathbb{S}^{N-1}}\;\psi_\varepsilon^0\left(\m{h}\right)=&\sum_{n=1}^N\left(\frac{1}{L}\sum_{l=1}^L\left|\m{x}_l^\top\m{\Gamma}_n\m{h}\right|^2+\varepsilon\right)^{\frac{1}{2}}.
\end{aligned}
\end{equation}
The unit sphere becomes a Riemannian manifold when equipped with the Euclidean-induced Riemannian metric
\begin{equation}
\begin{aligned}
g_{\m{h}}\left(\m{x},\m{y}\right)=\m{x}^\top\m{y},
\end{aligned}
\end{equation}
where $\m{x}$ and $\m{y}$ are tangent vectors in the tangent space at $\m{h}\in\mathbb{S}^{N-1}$. Therefore, \eqref{opt-special} can be viewed as an optimization problem on a Riemannian manifold. Define
\begin{equation}
\begin{aligned}
\m{E}_{\Omega}=\frac{1}{L}\sum_{l=1}^L\m{x}_l\m{x}_l^\top-\m{\Sigma}_{\Omega},
\end{aligned}
\end{equation}
where $\m{\Sigma}_\Omega=\mathbb{E}\left[\m{x}_l\m{x}_l^\top\right]\succeq\m{0}$ is a diagonal matrix whose diagonal contains $0/1$ entries indicating the support set $\Omega$. Then, the objective function in \eqref{opt-special} can be expressed as
\begin{equation}
\label{obj-0-re}
\begin{aligned}
\psi_\varepsilon^0\left(\m{h}\right)=&\sum_{n=1}^N\left[\m{h}^\top\m{\Gamma}_n^\top\left(\frac{1}{L}\sum_{l=1}^L\m{x}_l\m{x}_l^\top\right)\m{\Gamma}_n\m{h}+\varepsilon\right]^{\frac{1}{2}}=\sum_{n=1}^N\left[\m{h}^\top\m{\Gamma}_n^\top\left(\m{\Sigma}_{\Omega}+\m{E}_{\Omega}\right)\m{\Gamma}_n\m{h}+\varepsilon\right]^{\frac{1}{2}}.
\end{aligned}
\end{equation}
The asymptotic optimization problem corresponding to $\m{E}_{\Omega}=\m{0}$ is given by
\begin{equation}
\label{opt-special-asy}
\begin{aligned}
\min_{\m{h}\in\mathbb{S}^{N-1}}\;\sum_{n=1}^N\left(\m{h}^\top\m{\Gamma}_n^\top\m{\Sigma}_{\Omega}\m{\Gamma}_n\m{h}+\varepsilon\right)^{\frac{1}{2}}.
\end{aligned}
\end{equation}
The following lemma shows that the desired solution can be exactly recovered in the asymptotic setting. To state the result, we first introduce the notion of a non-uniform support set. Specifically, the common support set $\Omega$ is said to be non-uniform if it does not exhibit any nontrivial cyclic symmetry; that is, there exists no nonzero shift $s\in\left\{1,2,\cdots,N-1\right\}$ such that
\begin{equation}
\begin{aligned}
\left(\left(\Omega+s-1\right)\bmod N\right)+1=\Omega.
\end{aligned}
\end{equation}
The following property establishes that the non-uniformity of the support set is both necessary and sufficient for exact recovery in the asymptotic setting.
\begin{prop}
\label{lem-support}
The global minimizers of problem~\eqref{opt-special-asy} are signed canonical basis vectors if and only if $\Omega$ is non-uniform.
\end{prop}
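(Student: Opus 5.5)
The plan is to reduce problem~\eqref{opt-special-asy} to minimizing a concave function over the probability simplex, and then to read off when the minimizers can sit off the vertices. First I would make the quadratic forms explicit. Each $\m{\Gamma}_n$ is a symmetric permutation matrix, so $\m{\Gamma}_n^\top\m{\Sigma}_\Omega\m{\Gamma}_n$ is again a diagonal $0/1$ matrix. It indicates the set $S_n=\m{\Gamma}_n(\Omega)$, which by \eqref{circulant-convolution-matrix} is a reflected cyclic shift of $\Omega$, namely $S_n=\{j:\ ((n-j)\bmod N)+1\in\Omega\}$ up to the indexing convention. Substituting $w_j=h_j^2$, the objective becomes
\begin{equation*}
F(\m{w})=\sum_{n=1}^N\Big(\sum_{j\in S_n}w_j+\varepsilon\Big)^{\frac{1}{2}},\qquad \m{w}\in\Delta=\Big\{\m{w}\ge\m{0}:\ \textstyle\sum_j w_j=1\Big\}.
\end{equation*}
A point $\m{h}$ is a signed canonical basis vector exactly when $\m{w}$ is a vertex of $\Delta$. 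Since every index $j$ lies in exactly $K$ of the sets $S_n$, every vertex gives the same value $K\sqrt{1+\varepsilon}+(N-K)\sqrt{\varepsilon}$. Because $F$ is a sum of square roots of affine functions, it is concave, so its minimum over $\Delta$ is attained at vertices. It therefore remains to decide whether any non-vertex point also attains this value.

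The key observation concerns the segment between two vertices $\m{e}_i$ and $\m{e}_j$, or more generally the line through a point $\m{w}$ in the direction $\m{e}_i-\m{e}_j$. Along such a line each summand is $\sqrt{a_n+t(\mathbb{1}[i\in S_n]-\mathbb{1}[j\in S_n])}$. Because $\varepsilon>0$, this summand is strictly concave in $t$ unless the two indicators coincide. Hence $F$ is strictly concave along that direction unless $i\in S_n\Leftrightarrow j\in S_n$ for all $n$. Unwinding the definition of $S_n$, this condition says that $\Omega$ is invariant under the cyclic shift by $s=\pm(j-i)\bmod N\neq0$, i.e.\ that $\Omega$ is not non-uniform.

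For sufficiency, assume $\Omega$ is non-uniform and let $\m{w}\in\Delta$ be a non-vertex. Then $\m{w}$ has two positive coordinates $w_i,w_j$. Moving along $\pm(\m{e}_i-\m{e}_j)$ keeps us in $\Delta$ for small $|t|$. Since $\Omega$ is non-uniform, some $n$ separates $i$ and $j$, so $F$ is strictly concave along this segment. A strictly concave function cannot attain its minimum at an interior point of a segment, so $F(\m{w})$ strictly exceeds the value at one of the nearby points. Hence $\m{w}$ is not a global minimizer, and all global minimizers are vertices, i.e.\ signed canonical basis vectors.

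For necessity, assume $\Omega$ is invariant under a shift $s\ne0$ and pick $j=i+s$. By the equivalence above, $F$ is constant on the whole segment $[\m{e}_i,\m{e}_j]$. In particular $\m{h}=(\m{e}_i+\m{e}_j)/\sqrt2$ attains the minimal value and is a global minimizer that is not a signed basis vector.

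The main obstacle is the bookkeeping in identifying $S_n$ via the $\m{\Gamma}_n$ and checking that the condition ``$i\in S_n\Leftrightarrow j\in S_n$ for all $n$'' is exactly the cyclic-shift invariance $((\Omega+s-1)\bmod N)+1=\Omega$ with the paper's indexing. I would verify this carefully using $\m{\Gamma}_n\m{v}$ entries from \eqref{circulant-convolution-matrix}, noting that the reflection does not affect whether some shift preserves $\Omega$.
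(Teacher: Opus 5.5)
Your proof is correct and is essentially the paper's argument. The paper's Jensen step $\bigl(\sum_m h_m^2 a_{m,n}\bigr)^{1/2}\geq\sum_m h_m^2 a_{m,n}^{1/2}$, with $a_{m,n}=\m{e}_m^\top\m{\Gamma}_n^\top\m{\Sigma}_{\Omega}\m{\Gamma}_n\m{e}_m+\varepsilon$, is exactly your concavity-on-the-simplex bound. Where the paper invokes the equality case of Jensen (the $a_{m,n}$ must agree on the support of $\m{h}$), you use strict concavity along $\m{e}_i-\m{e}_j$, which is the same fact localized to two coordinates; your necessity construction coincides with the paper's $c_1\m{e}_m+c_2\m{e}_{m_s}$. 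Your explicit check that ``$i\in S_n\Leftrightarrow j\in S_n$ for all $n$'' is equivalent to invariance of $\Omega$ under the cyclic shift by $i-j$ also fills in a step the paper only asserts.
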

\begin{proof}
($\Leftarrow$) Since $\left\|\m{h}\right\|_2=1$ and $\m{\Gamma}_n^\top\m{\Sigma}_{\Omega}\m{\Gamma}_n$ is diagonal for each $n=1,\cdots,N$, it follows that
\begin{equation}
\label{inequ-obj}
\begin{aligned}
\sum_{n=1}^N\left(\m{h}^\top\m{\Gamma}_n^\top\m{\Sigma}_{\Omega}\m{\Gamma}_n\m{h}+\varepsilon\right)^{\frac{1}{2}}=&\sum_{n=1}^N\left[\sum_{m=1}^Nh_m^2\left(\m{e}_m^\top\m{\Gamma}_n^\top\m{\Sigma}_{\Omega}\m{\Gamma}_n\m{e}_m+\varepsilon\right)\right]^{\frac{1}{2}}\\
\geq&\sum_{n=1}^N\sum_{m=1}^Nh_m^2\left(\m{e}_m^\top\m{\Gamma}_n^\top\m{\Sigma}_{\Omega}\m{\Gamma}_n\m{e}_m+\varepsilon\right)^{\frac{1}{2}}\\
=&\sum_{m=1}^Nh_m^2\sum_{n=1}^N\left(\m{e}_m^\top\m{\Gamma}_n^\top\m{\Sigma}_{\Omega}\m{\Gamma}_n\m{e}_m+\varepsilon\right)^{\frac{1}{2}}\\
=&\sum_{m=1}^Nh_m^2\left[K\sqrt{1+\varepsilon}+\left(N-K\right)\sqrt{\varepsilon}\right]\\
=&K\sqrt{1+\varepsilon}+\left(N-K\right)\sqrt{\varepsilon}\\
=&\sum_{n=1}^N\left(\m{e}_m^\top\m{\Gamma}_n^\top\m{\Sigma}_{\Omega}\m{\Gamma}_n\m{e}_m+\varepsilon\right)^{\frac{1}{2}},
\end{aligned}
\end{equation}
where the inequality follows from the strict concavity of the square-root function. Moreover, equality in \eqref{inequ-obj} holds if and only if, for every $n=1,\cdots,N$, the quantity $\m{e}_m^\top\m{\Gamma}_n^\top\m{\Sigma}_{\Omega}\m{\Gamma}_n\m{e}_m$ takes the same value for all indices $m$ satisfying $h_m\neq0$. Suppose that $\m{h}\neq\pm\m{e}_m$ for every $m=1,\ldots,N$. Then there exist distinct indices $m$ and $\widetilde{m}$ such that $h_m\neq0$ and $h_{\widetilde{m}}\neq0$. If $\Omega$ is non-uniform, there exists at least one $n\in\{1,\cdots,N\}$ such that
\begin{equation}
\begin{aligned}
\m{e}_{m}^\top\m{\Gamma}_n^\top\m{\Sigma}_{\Omega}\m{\Gamma}_n\m{e}_{m}\neq\m{e}_{\widetilde{m}}^\top\m{\Gamma}_n^\top\m{\Sigma}_{\Omega}\m{\Gamma}_n\m{e}_{\widetilde{m}}.
\end{aligned}
\end{equation}
This contradicts the equality condition above. Therefore, equality in \eqref{inequ-obj} holds only if $\m{h}=\pm\m{e}_m$ for some $m=1,\ldots,N$. Consequently, every global minimizer of problem~\eqref{opt-special-asy} is a signed canonical basis vector.

$(\Rightarrow)$ Suppose that $\Omega$ is uniform. Then there exists a nonzero shift $s\in\left\{1,2,\cdots,N-1\right\}$ such that $\left((\Omega-1+s)\bmod N\right)+1=\Omega$. For any $m\in\left\{1,2,\cdots,N\right\}$, define its cyclically shifted index as $m_s=\left((m-1+s)\bmod N\right)+1$. We immediately have that $m_s\neq m$. Moreover, the cyclic invariance of $\Omega$ implies that $\m{e}_m^\top\m{\Gamma}_n^\top
\m{\Sigma}_{\Omega}\m{\Gamma}_n\m{e}_m=\m{e}_{m_s}^\top\m{\Gamma}_n^\top
\m{\Sigma}_{\Omega}\m{\Gamma}_n\m{e}_{m_s}$ for all $n=1,\cdots,N$. Consequently, the equality condition in \eqref{inequ-obj} is satisfied by every vector of the form $\m{h}=c_1\m{e}_m+c_2\m{e}_{m_s}$ with $c_1^2+c_2^2=1$. Hence, every such vector attains the global minimum of \eqref{opt-special-asy}, contradicting the assumption that all global minimizers are signed canonical basis vectors. Therefore, $\Omega$ must be non-uniform.
\end{proof}

Proposition~\ref{lem-support} shows that, under the non-uniform support assumption, problem \eqref{opt-special-asy} admits no additional global minimizers that are indistinguishable from the target solution. This assumption is mild and is satisfied in most practical scenarios. In contrast, a uniform support pattern imposes stringent algebraic constraints on both the filter length $N$ and the support set $\Omega$, making it highly unlikely to occur in practice. Moreover, it is shown in~\cite{wainwright2019high} that $\m{E}_{\Omega}$ converges to $\m{0}$ as the sample size $L$ increases. The following lemma provides a non-asymptotic tail bound for the sample covariance matrix of a Gaussian random vector.
\begin{lem}\cite[Example 6.2]{wainwright2019high}
\label{lem-tail}
For any $\delta>0$, the following bound holds with probability at least $1-2\exp\left\{-\frac{L\delta^2}{2}\right\}$:
\begin{equation}
\begin{aligned}
\left\|\m{E}_\Omega\right\|_2\leq2\left(\sqrt{\frac{K}{L}}+\delta\right)+\left(\sqrt{\frac{K}{L}}+\delta\right)^2.
\end{aligned}
\end{equation}
\end{lem}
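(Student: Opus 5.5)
The statement is a standard covariance-concentration bound, so I would prove it by reducing to the extreme singular values of a Gaussian matrix. Write the nonzero parts $\m{w}_l=\mathcal{P}_{\Omega}\left(\m{x}_l\right)\in\mathbb{R}^K$. Since $\m{x}_l$ vanishes off $\Omega$ and $\m{\Sigma}_\Omega$ is the $0/1$ indicator of $\Omega$, the matrix $\m{E}_\Omega$ is zero outside the $\Omega\times\Omega$ block. On that block it equals $\frac{1}{L}\m{W}^\top\m{W}-\m{I}_K$, where $\m{W}\in\mathbb{R}^{L\times K}$ has rows $\m{w}_l^\top$ and i.i.d. $\mathcal{N}(0,1)$ entries. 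Hence
\[
\left\|\m{E}_\Omega\right\|_2=\left\|\tfrac{1}{L}\m{W}^\top\m{W}-\m{I}_K\right\|_2=\max\left\{\sigma_{\max}^2\left(\m{W}/\sqrt{L}\right)-1,\;1-\sigma_{\min}^2\left(\m{W}/\sqrt{L}\right)\right\}.
\]

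The next step is to control the singular values. Gordon's inequality (via Slepian/Sudakov–Fernique comparison) gives $\sqrt{L}-\sqrt{K}\le\mathbb{E}\,\sigma_{\min}(\m{W})\le\mathbb{E}\,\sigma_{\max}(\m{W})\le\sqrt{L}+\sqrt{K}$. Both $\sigma_{\max}$ and $\sigma_{\min}$ are $1$-Lipschitz functions of $\m{W}$ in Frobenius norm, so Gaussian Lipschitz concentration applies: for $t=\sqrt{L}\delta$, each one-sided deviation exceeds $t$ with probability at most $\exp\{-t^2/2\}=\exp\{-L\delta^2/2\}$. A union bound then gives, with probability at least $1-2\exp\{-L\delta^2/2\}$,
\[
1-\sqrt{K/L}-\delta\le\sigma_{\min}\left(\m{W}/\sqrt{L}\right)\le\sigma_{\max}\left(\m{W}/\sqrt{L}\right)\le1+\sqrt{K/L}+\delta.
\]

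It remains to convert this into the stated bound. Set $u=\sqrt{K/L}+\delta$. Then $\sigma_{\max}^2-1\le(1+u)^2-1=2u+u^2$. On the other side, $1-\sigma_{\min}^2\le1-(1-u)^2=2u-u^2\le2u+u^2$ when $u\le1$; when $u>1$, the trivial bound $1-\sigma_{\min}^2\le1\le2u+u^2$ holds. Taking the maximum yields $\left\|\m{E}_\Omega\right\|_2\le2u+u^2$, which is the claim.

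The only nontrivial ingredient is the Gordon/Slepian expectation bound for $\sigma_{\min}$ and $\sigma_{\max}$. I would cite it as in \cite{wainwright2019high}, Theorem 6.1, rather than reprove it. The remaining steps are the Lipschitz concentration and elementary algebra.
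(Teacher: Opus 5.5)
Your proof is correct and follows the same route as the source the paper relies on: the paper gives no proof of its own and simply cites Example~6.2 of Wainwright, which bounds the extreme singular values of the $L\times K$ Gaussian matrix (Gordon's expectation bounds plus Gaussian Lipschitz concentration, as in his Theorem~6.1) and converts them into the $2u+u^2$ bound, exactly as you do. Two details you make explicit are left implicit in the paper: the reduction of $\m{E}_\Omega$ to its $\Omega\times\Omega$ block, and the separate treatment of $u>1$, which handles the regime $L<K$ where the lower singular-value bound is not needed.
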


Consequently, when $L$ is sufficiently large, problem~\eqref{opt-special} can be viewed as a perturbation of its asymptotic counterpart~\eqref{opt-special-asy}. We next establish rigorously that solving \eqref{opt-special} recovers the target solution by characterizing the geometric landscape of the objective function over the unit sphere under the non-uniform support assumption. Specifically, we investigate the first- and second-order geometric properties of the objective function in \eqref{opt-special} and show that, when the sample size $L$ is sufficiently large, every approximate second-order stationary point lies in a neighborhood of the target solution. 

According to~\cite[Proposition 3.53]{boumal2023introduction}, the Riemannian gradient of $\psi_\varepsilon^0$, denoted by $\text{grad}\;\psi_\varepsilon^0\left(\m{h}\right)$, is obtained by projecting the Euclidean gradient $\nabla\;\psi_\varepsilon^0\left(\m{h}\right)$ onto the tangent space:
\begin{equation}
\label{grad}
\begin{aligned}
\text{grad}\;\psi_\varepsilon^0\left(\m{h}\right)=&\left(\m{I}-\m{h}\m{h}^\top\right)\nabla\;\psi_\varepsilon^0\left(\m{h}\right)\\
=&\sum_{n=1}^N\left[\m{h}^\top\m{\Gamma}_n^\top\left(\m{\Sigma}_{\Omega}+\m{E}_{\Omega}\right)\m{\Gamma}_n\m{h}+\varepsilon\right]^{-\frac{1}{2}}\left(\m{I}-\m{h}\m{h}^\top\right)\m{\Gamma}_n^\top\left(\m{\Sigma}_{\Omega}+\m{E}_{\Omega}\right)\m{\Gamma}_n\m{h}.
\end{aligned}
\end{equation}
Consider an approximate first-order stationary point $\m{h}\in\mathbb{S}^{N-1}$ satisfying
\begin{equation}
\label{prop-first-order stationary}
\begin{aligned}
\sqrt{g_{\m{h}}\left(\text{grad}\;\psi_\varepsilon^0\left(\m{h}\right),\text{grad}\;\psi_\varepsilon^0\left(\m{h}\right)\right)}=\left\|\text{grad}\;\psi_\varepsilon^0\left(\m{h}\right)\right\|_2\leq\xi.
\end{aligned}
\end{equation}
To determine whether $\m{h}$ is also an approximate second-order stationary point, we further investigate the second-order geometry of $\psi_\varepsilon^0$ at $\m{h}$. Specifically, we examine whether the Riemannian Hessian of $\psi_\varepsilon^0$ on the unit sphere, denoted by $\text{Hess}\;\psi_\varepsilon^0\left(\m{h}\right)$, is approximately PSD; that is, whether $\lambda_{\text{min}}\left(\text{Hess}\;\psi_\varepsilon^0\left(\m{h}\right)\right)\geq-\xi$ holds for a sufficiently small positive constant $\xi$. Observe that if there exists a tangent vector $\m{d}\in\text{T}_{\m{h}}\mathbb{S}^{N-1}$ such that $\frac{\m{d}^\top\text{Hess}\;\psi_\varepsilon^0\left(\m{h}\right)\m{d}}{\left\|\m{d}\right\|_2^2}$ is sufficiently negative, then $\text{Hess}\;\psi_\varepsilon^0\left(\m{h}\right)$ must possess a sufficiently negative eigenvalue, implying that $\m{h}$ cannot be an approximate second-order stationary point. Motivated by this observation, we construct a tangent vector $\m{d}$ for the case $\left\|\m{h}\right\|_{\infty}\neq 1$ and show that $\frac{\m{d}^\top\text{Hess}\;\psi_\varepsilon^0\left(\m{h}\right)\m{d}}{\left\|\m{d}\right\|_2^2}$ is sufficiently negative whenever $\m{h}$ is sufficiently far from every canonical basis vector. Without loss of generality (WLOG), let $o$ denote the index of the largest-magnitude entry of $\m{h}$. We further assume that $h_o>0$. This assumption is trivial, since replacing $\m{h}$ with $-\m{h}$ yields an equivalent solution whenever $h_o<0$. Consequently,
\begin{equation}
\label{prop-max}
\begin{aligned}
h_o=\left\|\m{h}\right\|_\infty\geq\frac{1}{\sqrt{N}}.
\end{aligned}
\end{equation}
Since $\m{e}_o$ is the canonical basis vector closest to $\m{h}$, we consider the direction
\begin{equation}
\label{direction}
\begin{aligned}
\m{d}=\m{e}_o-\left(\m{h}^\top\m{e}_o\right)\m{h}=\m{e}_o-h_o\m{h}\in\text{T}_{\m{h}}\mathbb{S}^{N-1},
\end{aligned}
\end{equation}
which is collinear with the Riemannian logarithmic direction from $\m{h}$ to $\m{e}_o$. Its length is given by
\begin{equation}
\begin{aligned}
\left\|\m{d}\right\|_2=\sqrt{\m{d}^\top\m{d}}=\sqrt{1-h_o^2}\leq1.
\end{aligned}
\end{equation}
We next characterize the second-order behavior of $\psi_\varepsilon^0$ along the direction $\m{d}$ by deriving a bound for $\frac{\m{d}^\top\text{Hess}\;\psi_\varepsilon^0\left(\m{h}\right)\m{d}}{\left\|\m{d}\right\|_2^2}$ under the condition $h_o\neq 1$.

The following lemma establishes the concentration behavior of the quantity $\m{h}^\top\m{\Gamma}_n^\top\m{\Sigma}_{\Omega}\m{\Gamma}_n\m{h}$, which appears in the objective function of the asymptotic optimization problem \eqref{opt-special-asy}. This quantity serves as a key indicator of the proximity of $\m{h}$ to a canonical basis vector.
\begin{lem}
\label{lem-diff}
Let $\m{h}$ satisfy \eqref{prop-max}. If $\Omega$ is non-uniform, then
\begin{equation}
\begin{aligned}
\min_{n\in\chi_o}\m{h}^\top\m{\Gamma}_n^\top\m{\Sigma}_{\Omega}\m{\Gamma}_n\m{h}\leq\min\left\{Kh_o^2,1-\frac{1-h_o^2}{K}\right\},
\end{aligned}
\end{equation}
where
\begin{equation}
\begin{aligned}
\chi_o=\left\{n:\;\m{e}_{o}^\top\m{\Gamma}_n^\top\m{\Sigma}_{\Omega}\m{\Gamma}_n\m{e}_{o}=1\right\}=\left\{\big((i+o-2)\bmod N\big)+1:\; i\in\Omega\right\}
\end{aligned}
\end{equation}
denotes the set of indices $n$ for which the permutation operator $\m{\Gamma}_n$ maps the index $o$ into the support set $\Omega$.
\end{lem}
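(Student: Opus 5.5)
The plan is to write the quadratic form $\m{h}^\top\m{\Gamma}_n^\top\m{\Sigma}_{\Omega}\m{\Gamma}_n\m{h}$ as a partial sum of the squared entries of $\m{h}$ over an index set of size $K$. Each of the two bounds then follows from an elementary argument: the first from the maximality of $h_o$, the second from averaging over $n\in\chi_o$.

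\emph{Step 1 (combinatorial description).} Each $\m{\Gamma}_n$ is a symmetric permutation matrix. Reading off its block structure, $\m{\Gamma}_n\m{e}_m=\m{e}_{\pi_n(m)}$ with $\pi_n(m)=((n-m)\bmod N)+1$, a reflection followed by a cyclic shift. Since $\m{\Sigma}_\Omega$ is the $0/1$ diagonal indicator of $\Omega$,
\begin{equation*}
q_n:=\m{h}^\top\m{\Gamma}_n^\top\m{\Sigma}_{\Omega}\m{\Gamma}_n\m{h}=\sum_{m\in S_n}h_m^2,\qquad S_n=\pi_n^{-1}(\Omega),\quad |S_n|=K .
\end{equation*}
By definition, $n\in\chi_o$ if and only if $o\in S_n$. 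I would check the explicit index formula for $\chi_o$ in the statement against this description, but only two facts are needed below: $|\chi_o|=K$, and $o\in S_n$ for $n\in\chi_o$.

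\emph{Step 2 (first bound).} By \eqref{prop-max}, $h_m^2\le h_o^2$ for every $m$. Hence every $q_n$, and in particular $\min_{n\in\chi_o}q_n$, is at most $Kh_o^2$.

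\emph{Step 3 (second bound via averaging).} I bound the minimum by the mean over $\chi_o$:
\begin{equation*}
\sum_{n\in\chi_o}q_n=\sum_{m=1}^N h_m^2\,c_m,\qquad c_m=\left|\left\{n\in\chi_o:\ m\in S_n\right\}\right| .
\end{equation*}
For $m=o$ we have $c_o=|\chi_o|=K$. For $m\neq o$, $c_m$ is the number of $n$ with both $\pi_n(o)\in\Omega$ and $\pi_n(m)\in\Omega$. Since $\pi_n(m)-\pi_n(o)\equiv o-m \pmod N$ and $n$ ranges over all shifts, this count equals $\left|\{i\in\Omega:\ ((i+s-1)\bmod N)+1\in\Omega\}\right|$ with $s\equiv o-m\not\equiv 0$. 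This is at most $K$, and equality would force the shift by $s$ to map $\Omega$ into, hence onto, itself, which non-uniformity forbids. So $c_m\le K-1$ for $m\neq o$. Using $\sum_{m\neq o}h_m^2=1-h_o^2$,
\begin{equation*}
\min_{n\in\chi_o}q_n\le\frac1K\sum_{n\in\chi_o}q_n\le\frac{Kh_o^2+(K-1)(1-h_o^2)}{K}=1-\frac{1-h_o^2}{K}.
\end{equation*}
Combining Steps 2 and 3 gives the stated minimum of the two bounds.

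The main obstacle is the bookkeeping in Step 3. One must correctly identify $\pi_n$, including the reflection coming from $\m{J}_n$, and verify that the pair count reduces to the shift-overlap $|\Omega\cap(\Omega-s)|$ for a nonzero shift $s$. This is exactly where the non-uniformity definition enters, and an indexing slip there would invalidate the bound $c_m\le K-1$.
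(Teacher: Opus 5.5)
Your proof is correct. It reaches the second bound by a different route than the paper.

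The paper argues by contradiction. It assumes $q_n>1-\frac{1-h_o^2}{K}$ for all $n\in\chi_o$. A pigeonhole step over the $K-1$ indices of $S_n\setminus\{o\}$ then produces one coordinate $\tilde o\neq o$ with $h_{\tilde o}^2>\frac{1-h_o^2}{K}$. Non-uniformity supplies a single $\tilde n\in\chi_o$ with $\tilde n\notin\chi_{\tilde o}$, and the bound $q_{\tilde n}\le 1-h_{\tilde o}^2$ gives the contradiction.

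You instead bound the minimum by the mean over $\chi_o$. You control all coordinates at once through the overlap counts $c_m=|\chi_o\cap\chi_m|\le K-1$.

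The key ingredient is the same in both arguments. $\chi_m$ is the cyclic shift of $\chi_o$ by $m-o$, so non-uniformity means $\chi_o\not\subseteq\chi_m$ whenever $m\neq o$.

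What your version buys:
\begin{itemize}
\item It is direct rather than by contradiction.
\item It makes the index bookkeeping explicit. Your $\pi_n(m)=((n-m)\bmod N)+1$ is correct and reproduces the stated formula $\chi_o=\{((i+o-2)\bmod N)+1:\ i\in\Omega\}$.
\item It proves the stronger fact that the average of $q_n$ over $\chi_o$, not just the minimum, satisfies the bound.
\item It could be sharpened by using the actual overlaps $|\Omega\cap(\Omega-s)|$ instead of the worst case $K-1$.
\end{itemize}

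The paper's argument is shorter. It needs non-uniformity only for the one shift determined by the heavy coordinate $\tilde o$.
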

\begin{proof}
The proof exploits the non-uniformity of the support set, which ensures that the contributions of different components of $\m{h}$ can be distinguished through the objective function in problem~\eqref{opt-special-asy}. The detailed proof is deferred to Section~\ref{pf-lem-diff} of Supplementary Material.
\end{proof}


Lemma~\ref{lem-diff} implies that $\displaystyle\min_{n\in\chi_o}\m{h}^\top\m{\Gamma}_n^\top\m{\Sigma}_{\Omega}\m{\Gamma}_n\m{h}$ tends to increase as $h_o$ becomes larger. This quantity measures the extent to which $\m{h}$ is concentrated around its dominant coordinate. In particular, larger values indicate that $\m{h}$ is more strongly aligned with a canonical basis vector. Building upon Lemma~\ref{lem-diff}, we next derive an upper bound for $\frac{\m{d}^\top\text{Hess}\;\psi_\varepsilon^0\left(\m{h}\right)\m{d}}{\left\|\m{d}\right\|_2^2}$.
\begin{lem}
\label{lem-2st}
Let $\m{h}$ be an approximate first-order stationary point satisfying \eqref{prop-first-order stationary} and \eqref{prop-max}. Suppose that $\Omega$ is non-uniform. Then there exist positive constants $c_1$, $c_2$, and $c_3$ such that, whenever $\left\|\m{E}_\Omega\right\|_2\leq c_1K^{-2}\max^{-1}\left\{K^{2},N\right\}$, $\varepsilon\leq c_2K^{-6}N^{-2}$, and $\xi\leq c_3K^{-3}N^{-\frac{1}{2}}$, the following inequality holds for $h_o\in\left[\sqrt{\frac{1}{N}},1\right)$:
\begin{equation}
\begin{aligned}
\frac{\m{d}^\top\text{Hess}\;\psi_\varepsilon^0\left(\m{h}\right)\m{d}}{\left\|\m{d}\right\|_2^2}\leq&\left(1+c_1+c_2\right)^{-\frac{3}{2}}\frac{h_o^2}{\left\|\m{d}\right\|_2^2}\left(\min\left\{1-\frac{1-h_o^2}{K},Kh_o^2\right\}\right)^{-\frac{3}{2}}\min\left\{-\frac{1-h_o^2}{K},Kh_o^2-1\right\}\\
&+\frac{h_o^2}{\left\|\m{d}\right\|_2^2}\left(\min\left\{1-\frac{1-h_o^2}{K},Kh_o^2\right\}\right)^{-\frac{3}{2}}\left(\left\|\m{E}_\Omega\right\|_2+\varepsilon\right)\\
&+4\left\|\m{E}_\Omega\right\|_2\sum_{n\in\chi_{\m{h}}}\left\|\mathcal{P}_{\Omega}\left(\m{\Gamma}_n\m{h}\right)\right\|_2+\frac{20}{\left\|\m{d}\right\|_2^2}h_o^{-2}K\left\|\m{E}_\Omega\right\|_2+\frac{c_1+c_2+2c_2^{\frac{1}{2}}+c_3}{\left\|\m{d}\right\|_2^2}K^{-3},
\end{aligned}
\end{equation}
where
\begin{equation}
\begin{aligned}
\chi_{\m{h}}=&\left\{n:\;\m{h}^\top\m{\Gamma}_n^\top\m{\Sigma}_{\Omega}\m{\Gamma}_n\m{h}>0\right\}.
\end{aligned}
\end{equation}
\end{lem}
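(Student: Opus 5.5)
We compute the Riemannian Hessian explicitly and evaluate it along $\m{d}$. Write $\m{A}_n=\m{\Gamma}_n^\top(\m{\Sigma}_\Omega+\m{E}_\Omega)\m{\Gamma}_n$ and $q_n=\m{h}^\top\m{A}_n\m{h}+\varepsilon$. By the standard formula for the sphere, $\text{Hess}\,\psi_\varepsilon^0(\m{h})=(\m{I}-\m{h}\m{h}^\top)\big[\nabla^2\psi_\varepsilon^0(\m{h})-(\m{h}^\top\nabla\psi_\varepsilon^0(\m{h}))\m{I}\big](\m{I}-\m{h}\m{h}^\top)$, with $\nabla^2\psi_\varepsilon^0=\sum_n\big[q_n^{-1/2}\m{A}_n-q_n^{-3/2}\m{A}_n\m{h}\m{h}^\top\m{A}_n\big]$. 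Hence
\begin{equation*}
\m{d}^\top\text{Hess}\,\psi_\varepsilon^0(\m{h})\m{d}=\sum_{n=1}^N\Big[q_n^{-\frac12}\big(\m{d}^\top\m{A}_n\m{d}-\|\m{d}\|_2^2\,\m{h}^\top\m{A}_n\m{h}\big)-q_n^{-\frac32}\big(\m{d}^\top\m{A}_n\m{h}\big)^2\Big].
\end{equation*}
Substituting $\m{d}=\m{e}_o-h_o\m{h}$, we have $\m{d}^\top\m{A}_n\m{h}=\m{e}_o^\top\m{A}_n\m{h}-h_o\m{h}^\top\m{A}_n\m{h}$, and similarly for $\m{d}^\top\m{A}_n\m{d}$. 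We then split every $\m{A}_n$ into its population part $\m{D}_n=\m{\Gamma}_n^\top\m{\Sigma}_\Omega\m{\Gamma}_n$, which is a diagonal $0/1$ matrix, and its perturbation $\m{\Gamma}_n^\top\m{E}_\Omega\m{\Gamma}_n$. All perturbation contributions are collected into the $\|\m{E}_\Omega\|_2$ and $\varepsilon$ error terms of the statement. The remaining population part is the main term.

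Next we use approximate stationarity to cancel the dangerous positive terms. The inner product $\m{e}_o^\top\text{grad}\,\psi_\varepsilon^0(\m{h})=\sum_n q_n^{-1/2}\big(\m{e}_o^\top\m{A}_n\m{h}-h_o\m{h}^\top\m{A}_n\m{h}\big)$ has absolute value at most $\xi$. This lets us replace the first-order-looking pieces $\sum_n q_n^{-1/2}(\cdots)$ in the quadratic form by $O(\xi)$; the $\xi$ bound then yields the $c_3K^{-3}$ term. For the population diagonal part, $\m{e}_o^\top\m{D}_n\m{h}=h_o\mathbb{1}[n\in\chi_o]$ and $\m{e}_o^\top\m{D}_n\m{e}_o=\mathbb{1}[n\in\chi_o]$. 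After this simplification, the curvature reduces to
\begin{equation*}
-\,h_o^2\sum_{n\in\chi_o}q_n^{-\frac32}\big(1-\m{h}^\top\m{D}_n\m{h}\big)^2
\end{equation*}
plus terms on $n\notin\chi_o$, whose sign is favorable or whose size is controlled by $\|\m{d}\|_2^2$ and $\varepsilon^{1/2}$; these produce the $2c_2^{1/2}$ contribution. We lower-bound this negative sum by its single term $n^\star=\arg\min_{n\in\chi_o}\m{h}^\top\m{D}_n\m{h}$. Lemma~\ref{lem-diff} gives $\m{h}^\top\m{D}_{n^\star}\m{h}\le\min\{Kh_o^2,1-(1-h_o^2)/K\}=:m$. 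Then $q_{n^\star}\le(1+c_1+c_2)m$ after absorbing $\|\m{E}_\Omega\|_2+\varepsilon$ relative to $m$, using $m\gtrsim K^{-1}\cdot$ and the assumed smallness. Also $1-\m{h}^\top\m{D}_{n^\star}\m{h}\ge 1-m=-\min\{-(1-h_o^2)/K,\,Kh_o^2-1\}$. Together these produce exactly the first displayed term, and the $E$-correction to $q_{n^\star}^{-3/2}$ and to the square gives the second term.

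The main obstacle is the cross-term bookkeeping for the perturbation. The terms $q_n^{-1/2}\m{d}^\top\m{\Gamma}_n^\top\m{E}_\Omega\m{\Gamma}_n\m{d}$ summed over all $n$ cannot be bounded crudely by $N\|\m{E}_\Omega\|_2$. We will restrict to $n\in\chi_{\m{h}}$, using that for $n\notin\chi_{\m{h}}$ the vector $\m{\Gamma}_n\m{h}$ vanishes on $\Omega$. This yields the factor $\sum_{n\in\chi_{\m{h}}}\|\mathcal{P}_\Omega(\m{\Gamma}_n\m{h})\|_2$. For the $\m{e}_o$ components we use $q_n^{-1/2}\le h_o^{-1}$ on $\chi_o$ (since $q_n\ge h_o^2$ there, modulo $E$) and $|\chi_o|=K$, giving the $20h_o^{-2}K\|\m{E}_\Omega\|_2$ term. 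The remaining constant-order residues are converted into $K^{-3}$ via the hypotheses $\|\m{E}_\Omega\|_2\le c_1K^{-2}\max^{-1}\{K^2,N\}$ and $\varepsilon\le c_2K^{-6}N^{-2}$, together with $h_o\ge N^{-1/2}$.
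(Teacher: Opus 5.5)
Your Hessian expansion, the split into a population part and $\m{E}_\Omega$-perturbations, and the $\chi_{\m{h}}$ bookkeeping all match the paper. The population part does take the form you describe. Write $s_n=\m{h}^\top\m{\Gamma}_n^\top\m{\Sigma}_\Omega\m{\Gamma}_n\m{h}$, and let $\iota_n=1$ if $n\in\chi_o$ and $\iota_n=0$ otherwise. Then $\m{d}^\top\text{Hess}\,\psi_\varepsilon^0(\m{h})\m{d}$ equals $(1-2h_o^2)\sum_n q_n^{-1/2}(\iota_n-s_n)-h_o^2\sum_n q_n^{-3/2}(\iota_n-s_n)^2$, and the first sum is $h_o^{-1}\m{e}_o^\top\text{grad}\,\psi_\varepsilon^0(\m{h})$ up to $\m{E}_\Omega$-terms.

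\textbf{The gap is the final step.} You keep only the summand $n^\star$ and use $1-s_{n^\star}\geq 1-m$. After dividing by $\|\m{d}\|_2^2$, this gives $-(1+c_1+c_2)^{-3/2}\frac{h_o^2}{\|\m{d}\|_2^2}m^{-3/2}(1-m)^2$. The lemma's first term is $-(1+c_1+c_2)^{-3/2}\frac{h_o^2}{\|\m{d}\|_2^2}m^{-3/2}(1-m)$. Your factor is squared, so you do not get ``exactly the first displayed term''. This loss matters. For $h_o^2>\frac{1}{K+1}$ one has $1-m=\frac{1-h_o^2}{K}$. At $h_o^2=1-\frac{1}{4K^2}$, your main term is about $-\frac{1}{4K^4}$ and the lemma's is about $-\frac{1}{K}$. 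The remainder $\frac{c}{\|\m{d}\|_2^2}K^{-3}$ there equals $4cK^{-1}$, which swamps your term. So Propositions~\ref{thm-case2-1} and~\ref{thm-case2-2} could not be derived from what you would prove.

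\textbf{How to fix it.} The summands you discard (the rest of $\chi_o$ and all $n\notin\chi_o$) carry exactly what is missing. Up to the stationarity residual and small $\varepsilon$-terms, $\sum_n q_n^{-3/2}(\iota_n-s_n)^2$ coincides with $\sum_{n\in\chi_o}q_n^{-3/2}(1-q_n)$. The paper applies the stationarity relation with a different coefficient. Lemma~\ref{lem-1st} only trades $\sum_{n=1}^N q_n^{-1/2}s_n$ for $\sum_{n\in\chi_o}q_n^{-1/2}$. Since $\|\m{d}\|_2^{-2}-1=h_o^2\|\m{d}\|_2^{-2}$, the population part then collapses to $\frac{h_o^2}{\|\m{d}\|_2^2}\sum_{n\in\chi_o}q_n^{-3/2}(q_n-1)$. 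The map $q\mapsto q^{-3/2}(q-1)$ is increasing on $(0,3)$, and $q_n\leq 1+\|\m{E}_\Omega\|_2+\varepsilon$. Hence:
\begin{itemize}
\item each summand other than $n^\star$ is at most $\|\m{E}_\Omega\|_2+\varepsilon$, and these are absorbed into the $K^{-3}$ remainder;
\item the $n^\star$ summand is at most $(m+\|\m{E}_\Omega\|_2+\varepsilon)^{-3/2}(m-1+\|\m{E}_\Omega\|_2+\varepsilon)$.
\end{itemize}
The second bound is linear in $m-1$ and yields the first two displayed terms. Replace your single-term lower bound with this monotonicity argument.
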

\begin{proof}
The main objective of the proof is to establish negative curvature of the Riemannian Hessian along the direction $\m{d}$. To this end, we decompose the Hessian into a dominant geometric component and a perturbation component. We then show that the dominant component is strictly negative, while the perturbation component can be controlled by $\|\m{E}_\Omega\|_2$, $\varepsilon$, and $\xi$. The detailed proof is provided in Section~\ref{pf-lem-2st} of Supplementary Material.
\end{proof}

Note that when the smoothing parameter $\varepsilon$, the first-order stationarity residual $\xi$, and $\left\|\m{E}_\Omega\right\|_2$ are sufficiently small, the positive constants $c_1$, $c_2$, and $c_3$ can be made arbitrarily small. By setting $c_1$, $c_2$, and $c_3$ to zero, Lemma~\ref{lem-2st} implies that the quantity $\frac{\m{d}^\top\text{Hess}\;\psi_\varepsilon^0\left(\m{h}\right)\m{d}}{\left\|\m{d}\right\|_2^2}$ is asymptotically upper bounded by $-\frac{h_o^2}{\left\|\m{d}\right\|_2^2}\left(1-\frac{1-h_o^2}{K}\right)^{-\frac{3}{2}}\frac{1-h_o^2}{K}$. This upper bound is strictly negative whenever $h_o<1$. Consequently, any $\m{h}$ that remains sufficiently far from all canonical basis vectors cannot be a second-order stationary point. Motivated by this observation, we further refine the upper bound on $\frac{\m{d}^\top\text{Hess}\;\psi_\varepsilon^0\left(\m{h}\right)\m{d}}{\left\|\m{d}\right\|_2^2}$ based on Lemma~\ref{lem-2st}. In particular, we consider three regimes corresponding to different ranges of $h_o$.


\textbf{Case 1 (Small $h_o$):}
We consider the regime $Kh_o^2\leq1-\frac{1-h_o^2}{K}$, which is equivalent to $h_o\leq\sqrt{\frac{1}{K+1}}$. For this regime, we establish the following proposition.
\begin{prop}
\label{thm-case1}
Let $\m{h}$ be an approximate first-order stationary point satisfying \eqref{prop-first-order stationary} and \eqref{prop-max}. Suppose that $\Omega$ is non-uniform and $h_o\in\left[\sqrt{\frac{1}{N}},\sqrt{\frac{1}{K+1}}\right]$. Then there exist positive constants $c_1$, $c_2$, $c_3$, $c_4$ such that, whenever $\left\|\m{E}_\Omega\right\|_2\leq c_1K^{-\frac{5}{2}}\max^{-1}\left\{N,KN^{\frac{1}{2}},K^2\right\}$, $\varepsilon\leq c_2K^{-6}N^{-2}$, and $\xi\leq c_3K^{-3}N^{-\frac{1}{2}}$, it holds that
\begin{equation}
\begin{aligned}
\frac{\m{d}^\top\text{Hess}\;\psi_\varepsilon^0\left(\m{h}\right)\m{d}}{\left\|\m{d}\right\|_2^2}\leq-c_4K^{-2}.
\end{aligned}
\end{equation}
\end{prop}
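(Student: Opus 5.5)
The plan is to specialize Lemma~\ref{lem-2st} to the regime $h_o\le (K+1)^{-1/2}$ and bound each of its five terms. In this regime $Kh_o^2\le 1-\frac{1-h_o^2}{K}$, so both minima in the leading term are attained by their second argument. The main term therefore equals
\begin{equation*}
(1+c_1+c_2)^{-\frac{3}{2}}\frac{h_o^2}{\|\m{d}\|_2^2}(Kh_o^2)^{-\frac{3}{2}}(Kh_o^2-1).
\end{equation*}
Since $Kh_o^2\le \frac{K}{K+1}$, we have $Kh_o^2-1\le -\frac{1}{K+1}$. Together with $h_o^2(Kh_o^2)^{-3/2}=K^{-3/2}h_o^{-1}\ge K^{-3/2}(K+1)^{1/2}$, the main term is at most $-c\,K^{-1}\|\m{d}\|_2^{-2}$ for an absolute constant $c$, once $c_1,c_2\le 1$. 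In addition, $\|\m{d}\|_2^2=1-h_o^2\ge \frac{K}{K+1}\ge \frac12$, so all factors $\|\m{d}\|_2^{-2}$ are bounded by $2$. The negative part is then at least of order $K^{-1}$, which is stronger than the claimed $K^{-2}$. The remaining work is to show that the perturbation terms total at most a small multiple of $K^{-2}$.

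The second term is $\frac{h_o^2}{\|\m{d}\|^2}(Kh_o^2)^{-3/2}(\|\m{E}_\Omega\|_2+\varepsilon)\le 2K^{-3/2}h_o^{-1}(\|\m{E}_\Omega\|_2+\varepsilon)$. Using $h_o\ge N^{-1/2}$, it is at most $2K^{-3/2}N^{1/2}(\|\m{E}_\Omega\|_2+\varepsilon)$. The hypothesis $\|\m{E}_\Omega\|_2\le c_1K^{-5/2}/(KN^{1/2})$ then gives order $c_1K^{-5}$, and $\varepsilon\le c_2K^{-6}N^{-2}$ is much smaller still. The fourth term, $\frac{20}{\|\m{d}\|^2}h_o^{-2}K\|\m{E}_\Omega\|_2\le 40NK\|\m{E}_\Omega\|_2$, is controlled by the $N$ entry of the max, giving $O(c_1K^{-3/2})$.

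This is where the stated threshold needs care: $K^{-3/2}$ does not beat $K^{-2}$. The first thing I would try is to use the main term's actual size, order $K^{-1}$ rather than $K^{-2}$, so that an $O(c_1K^{-3/2})$ perturbation is dominated for small $c_1$. The fifth term is $O((c_1+c_2+2c_2^{1/2}+c_3)K^{-3})$ and is absorbed by choosing the constants small.

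The main obstacle is the third term, $4\|\m{E}_\Omega\|_2\sum_{n\in\chi_{\m{h}}}\|\mathcal{P}_\Omega(\m{\Gamma}_n\m{h})\|_2$. By Cauchy--Schwarz, the sum is at most $\sqrt{|\chi_{\m{h}}|}\,\bigl(\sum_n\|\mathcal{P}_\Omega(\m{\Gamma}_n\m{h})\|_2^2\bigr)^{1/2}$. Since each coordinate of $\m{h}$ lands in $\Omega$ for exactly $K$ values of $n$, we have $\sum_n \m{h}^\top\m{\Gamma}_n^\top\m{\Sigma}_\Omega\m{\Gamma}_n\m{h}=K$. With $|\chi_{\m{h}}|\le N$, the sum is therefore at most $\sqrt{NK}$. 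The third term is then at most $4c_1\sqrt{NK}\,K^{-5/2}/\max\{N,KN^{1/2},K^2\}\le 4c_1K^{-2}$, using $\sqrt{NK}\le \max\{N,K^2\}$. Collecting all terms, the total perturbation is at most $C c_1K^{-2}$ plus smaller terms, while the negative part is at most $-cK^{-1}$, which also implies $\le -cK^{-2}$. Choosing $c_1,c_2,c_3$ small enough then yields the bound $\le -c_4K^{-2}$.
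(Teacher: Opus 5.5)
Your proposal has a genuine gap, and it lies in the size you assign to the leading term. From $h_o^2(Kh_o^2)^{-3/2}=K^{-3/2}h_o^{-1}$ and $Kh_o^2-1\le-\frac{1}{K+1}$, the main term of Lemma~\ref{lem-2st} is at most $-(1+c_1+c_2)^{-3/2}\|\m{d}\|_2^{-2}K^{-3/2}h_o^{-1}(K+1)^{-1}$. Inserting $h_o^{-1}\ge(K+1)^{1/2}$ gives $-c\,K^{-3/2}(K+1)^{-1/2}\|\m{d}\|_2^{-2}$, which is of order $K^{-2}$, not $K^{-1}$; you dropped the factor $(K+1)^{-1}$. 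At the upper end $h_o\approx(K+1)^{-1/2}$ of the regime, the negative part is therefore only of order $K^{-2}$, which is why the statement has $K^{-2}$.

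Your rescue for the fourth term consequently fails. The uniform bound $20\|\m{d}\|_2^{-2}h_o^{-2}K\|\m{E}_\Omega\|_2\le 40NK\|\m{E}_\Omega\|_2\le 40c_1K^{-3/2}$ exceeds $cK^{-2}$ for large $K$, whatever fixed $c_1$ you choose. The root cause is that you compare the perturbation at its worst point $h_o=N^{-1/2}$ (where you replace $h_o^{-2}$ by $N$) with the main term at its weakest point $h_o=(K+1)^{-1/2}$. No single entry of the max repairs a uniform bound: the $KN^{1/2}$ entry gives $c_1K^{-5/2}N^{1/2}$, and the $K^2$ entry gives $c_1NK^{-7/2}$, neither of which is $O(K^{-2})$.

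The missing idea is to keep the factor $h_o^{-1}$ in the main term and compare pointwise in $h_o$. Since $1-Kh_o^2\ge\frac{1}{K+1}\ge\frac{1}{2K}$, the main term is at most $-\frac12(1+c_1+c_2)^{-3/2}\|\m{d}\|_2^{-2}h_o^{-1}K^{-5/2}$.

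For the fourth term, use the $KN^{1/2}$ entry of the max, i.e.\ $\|\m{E}_\Omega\|_2\le c_1K^{-7/2}N^{-1/2}$, together with $h_o^{-1}\le N^{1/2}$. This gives $20\|\m{d}\|_2^{-2}h_o^{-2}K\|\m{E}_\Omega\|_2\le 20c_1\|\m{d}\|_2^{-2}h_o^{-1}\,(h_o^{-1}N^{-1/2})\,K^{-5/2}\le 20c_1\|\m{d}\|_2^{-2}h_o^{-1}K^{-5/2}$. This carries exactly the same $h_o^{-1}K^{-5/2}$ factor as the main term.

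Your bounds on the second, third and fifth terms are fine; in particular the Cauchy--Schwarz argument for the third term is correct. All three are $O\bigl((c_1+c_2+c_2^{1/2}+c_3)K^{-2}\bigr)$, and $K^{-2}\le h_o^{-1}K^{-5/2}$ because $h_o^{-1}\ge\sqrt{K+1}$, so they are absorbed as well. Choosing the constants small then gives $-c'\|\m{d}\|_2^{-2}h_o^{-1}K^{-5/2}\le -c'K^{-2}$, using $\|\m{d}\|_2\le 1$ and $h_o^{-1}\ge\sqrt{K}$. This is the route the paper's proof takes.
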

\begin{proof}
The key idea is to specialize the bound in Lemma~\ref{lem-2st} to the regime $h_o\in\left[\sqrt{\frac{1}{N}},\sqrt{\frac{1}{K+1}}\right]$ by imposing more explicit conditions on $\left\|\m{E}_\Omega\right\|_2$. This yields a concrete negative-curvature bound for the Riemannian Hessian along the direction $\m{d}$. The detailed proof is provided in Section~\ref{pf-thm-case1} of Supplementary Material.
\end{proof}

Proposition~\ref{thm-case1} implies that, when $\varepsilon$, $\xi$, and $\left\|\m{E}_\Omega\right\|_2$ are sufficiently small, no second-order stationary points exist far from any canonical basis vector.

\textbf{Case 2 (Moderate $h_o$):}
We consider the regime $\sqrt{1-\frac{1}{4K}}\geq h_o>\sqrt{\frac{1}{K+1}}$. For this regime, we establish the following result.
\begin{prop}
\label{thm-case2-1}
Let $\m{h}$ be an approximate first-order stationary point satisfying \eqref{prop-first-order stationary} and \eqref{prop-max}. Suppose that $\Omega$ is non-uniform and $h_o\in\left(\sqrt{\frac{1}{K+1}},\sqrt{1-\frac{1}{4K}}\right]$. Then there exist positive constants $c_1$, $c_2$, $c_3$, $c_4$ such that, whenever $\left\|\m{E}_\Omega\right\|_2\leq c_1K^{-\frac{5}{2}}\max^{-1}\left\{N,K^2\right\}$, $\varepsilon\leq c_2K^{-6}N^{-2}$, and $\xi\leq c_3K^{-3}N^{-\frac{1}{2}}$, it holds that
\begin{equation}
\begin{aligned}
\frac{\m{d}^\top\text{Hess}\;\psi_\varepsilon^0\left(\m{h}\right)\m{d}}{\left\|\m{d}\right\|_2^2}\leq-c_4K^{-2}.
\end{aligned}
\end{equation}
\end{prop}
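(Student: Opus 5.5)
The plan is to specialize the bound of Lemma~\ref{lem-2st} to the regime $h_o\in\left(\sqrt{\frac{1}{K+1}},\sqrt{1-\frac{1}{4K}}\right]$. The assumptions here imply those of Lemma~\ref{lem-2st}, because $K^{-5/2}\max^{-1}\{N,K^2\}\le K^{-2}\max^{-1}\{K^2,N\}$. In this regime $Kh_o^2>1-\frac{1-h_o^2}{K}$, so both minima in Lemma~\ref{lem-2st} pick the second branch. Indeed, $-\frac{1-h_o^2}{K}\le Kh_o^2-1$ holds exactly when $h_o^2\ge\frac{1}{K+1}$. Set $m:=1-\frac{1-h_o^2}{K}$. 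Then $m\in[1-\frac{1}{K+1},1]$, so $m$ is bounded between absolute constants and $m^{-3/2}\le 2^{3/2}$.

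\textbf{Leading term.} The first term of the bound becomes
\[
-(1+c_1+c_2)^{-3/2}\,\frac{h_o^2}{\|\m{d}\|_2^2}\,m^{-3/2}\,\frac{1-h_o^2}{K}.
\]
Since $\|\m{d}\|_2^2=1-h_o^2$, this equals $-(1+c_1+c_2)^{-3/2}m^{-3/2}h_o^2/K$. Using $h_o^2>\frac{1}{K+1}$ and $m\le1$, it is at most $-(1+c_1+c_2)^{-3/2}\frac{1}{K(K+1)}\le-\frac{1}{2}(1+c_1+c_2)^{-3/2}K^{-2}$. This is the source of the target rate $-c_4K^{-2}$.

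\textbf{Error terms.} Each remaining term must be made at most a small multiple of $K^{-2}$. Throughout I use the upper limit $h_o^2\le1-\frac{1}{4K}$, which gives $\|\m{d}\|_2^{-2}\le4K$.
\begin{enumerate}[label=(\roman*)]
\item The second term is at most $\frac{h_o^2}{1-h_o^2}2^{3/2}(\|\m{E}_\Omega\|_2+\varepsilon)\le 2^{3/2}\cdot4K(\|\m{E}_\Omega\|_2+\varepsilon)$. This is $O(c_1K^{-5/2}K^{-1})+O(c_2K^{-5}N^{-2})$, well below $K^{-2}$.
\item The term $\frac{20}{\|\m{d}\|_2^2}h_o^{-2}K\|\m{E}_\Omega\|_2$ is at most $20\cdot4K\cdot(K+1)\cdot K\|\m{E}_\Omega\|_2=O(K^3\|\m{E}_\Omega\|_2)$. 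Using $\|\m{E}_\Omega\|_2\le c_1K^{-5/2}K^{-2}$, it is $O(c_1K^{-3/2})$. This is not $\le K^{-2}$, so this is where the bound is tight. I would first recheck whether the factor $h_o^{-2}$ is really benign here, since $h_o^{-2}\le K+1$ and not $O(1)$. If the estimate must be sharpened, the route is to use the full $\max^{-1}\{N,K^2\}$: this gives $\|\m{E}_\Omega\|_2\le c_1K^{-9/2}$, hence $K^3\|\m{E}_\Omega\|_2\le c_1K^{-3/2}$, which is still too large by $K^{1/2}$. The hope is that the lemma's $h_o^{-2}$ combines with $\|\m{d}\|^{-2}$ more favourably. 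Concretely, I would bound $h_o^{-2}\|\m{d}\|_2^{-2}\le 4K(K+1)$ only when both are extreme, which cannot happen simultaneously. When $h_o^2$ is near $\frac{1}{K+1}$ we have $\|\m{d}\|_2^{-2}=O(1)$, and when $\|\m{d}\|_2^{-2}\sim K$ we have $h_o^{-2}=O(1)$. So $h_o^{-2}\|\m{d}\|_2^{-2}=\frac{1}{h_o^2(1-h_o^2)}=O(K)$ uniformly on the interval. The term is then $O(K^2\|\m{E}_\Omega\|_2)=O(c_1K^{-5/2})$, which suffices.
\item The sum term: since $\|\m{h}\|_2=1$, Cauchy--Schwarz gives $\sum_{n\in\chi_{\m{h}}}\|\mathcal{P}_\Omega(\m{\Gamma}_n\m{h})\|_2\le\sqrt{N\sum_n\|\mathcal{P}_\Omega\m{\Gamma}_n\m{h}\|_2^2}=\sqrt{NK}$. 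The term is then at most $4\sqrt{NK}\,c_1K^{-5/2}N^{-1}\le4c_1K^{-2}$, which uses the $N$ in the max.
\item The last term is at most $4K(c_1+c_2+2c_2^{1/2}+c_3)K^{-3}=O((c_1+c_2+c_2^{1/2}+c_3)K^{-2})$.
\end{enumerate}

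Choosing $c_1,c_2,c_3$ small enough that all error contributions total at most $\frac14(1+c_1+c_2)^{-3/2}K^{-2}$ gives the claim with $c_4=\frac14(1+c_1+c_2)^{-3/2}$. The main obstacle is the bookkeeping in term (ii) and in the last term, where the factor $\|\m{d}\|_2^{-2}\lesssim K$ costs one power of $K$. The upper endpoint $h_o^2\le1-\frac{1}{4K}$ is precisely what keeps that loss at a single factor of $K$. This explains why the regime beyond it (Case 3) requires a separate, local argument.
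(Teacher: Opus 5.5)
Your proposal is correct and follows the paper's proof. Both specialize Lemma~\ref{lem-2st} to the branch $1-\frac{1-h_o^2}{K}$, extract a leading term of order $-K^{-2}$, and absorb the remaining terms by taking $c_1,c_2,c_3$ small.

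The only difference is bookkeeping. The paper keeps the common factor $\|\m{d}\|_2^{-2}$ in every term and lower-bounds $h_o^2(1-h_o^2)$ in the leading term. It then needs only $h_o^{-2}\le 2K$ and the trivial bound $\sum_{n}\|\mathcal{P}_{\Omega}(\m{\Gamma}_n\m{h})\|_2\le N$. You instead cancel $\|\m{d}\|_2^2=1-h_o^2$ in the leading term, so you pay $\|\m{d}\|_2^{-2}\le 4K$ in the error terms; that is exactly why step (ii) needed the product bound $h_o^{-2}\|\m{d}\|_2^{-2}=O(K)$.
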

\begin{proof}
The proof follows the same argument as that of Proposition~\ref{thm-case1}. The detailed proof is provided in Section~\ref{pf-thm-case2-1} of Supplementary Material.
\end{proof}

Proposition~\ref{thm-case2-1} implies that, when $\varepsilon$, $\xi$, and $\left\|\m{E}_\Omega\right\|_2$ are sufficiently small, no second-order stationary points exist if the point remains moderately far from every canonical basis vector.

\textbf{Case 3 (Large $h_o$):}
We consider the regime $\sqrt{1-\frac{1}{4K^2}}\geq h_o>\sqrt{1-\frac{1}{4K}}$. For this regime, we establish the following result.
\begin{prop}
\label{thm-case2-2}
Let $\m{h}$ be an approximate first-order stationary point satisfying \eqref{prop-first-order stationary} and \eqref{prop-max}. Suppose that $\Omega$ is non-uniform and $h_o\in\left(\sqrt{1-\frac{1}{4K}},\sqrt{1-\frac{1}{4K^2}}\right]$. Then there exist positive constants $c_1$, $c_2$, $c_3$, $c_4$ such that, whenever $\left\|\m{E}_\Omega\right\|_2\leq c_1K^{-\frac{5}{2}}\max^{-1}\left\{N,K^2\right\}$, $\varepsilon\leq c_2K^{-6}N^{-2}$, and $\xi\leq c_3K^{-3}N^{-\frac{1}{2}}$, it holds that
\begin{equation}
\begin{aligned}
\frac{\m{d}^\top\text{Hess}\;\psi_\varepsilon^0\left(\m{h}\right)\m{d}}{\left\|\m{d}\right\|_2^2}\leq-c_4K^{-3}.
\end{aligned}
\end{equation}
\end{prop}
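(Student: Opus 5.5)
We specialize the bound of Lemma~\ref{lem-2st} to the regime $h_o\in\left(\sqrt{1-\frac{1}{4K}},\sqrt{1-\frac{1}{4K^2}}\right]$, as in Propositions~\ref{thm-case1} and~\ref{thm-case2-1}. Write $\|\m{d}\|_2^2=1-h_o^2=:t$, so that $t\in\left[\frac{1}{4K^2},\frac{1}{4K}\right)$.

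\textbf{Step 1: the leading term.} Since $Kh_o^2-1\geq K(1-\frac{1}{4K})-1>-\frac{t}{K}$ for $K\ge 2$ (and the case $K=1$ is handled directly), the two minima in Lemma~\ref{lem-2st} are $1-\frac{t}{K}$ and $-\frac{t}{K}$, respectively. The first term therefore equals
\[
-(1+c_1+c_2)^{-\frac{3}{2}}\,\frac{h_o^2}{t}\left(1-\frac{t}{K}\right)^{-\frac{3}{2}}\frac{t}{K}.
\]
Because $h_o^2\geq\frac{3}{4}$ and $1-\frac{t}{K}\in[\frac12,1]$, this is at most $-c\,K^{-1}$ for an absolute constant $c>0$ once $c_1,c_2\le 1$. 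The curvature is thus of order $K^{-1}$; the claimed rate $K^{-3}$ is lost only to the error terms, which now carry the factor $t^{-1}\le 4K^2$.

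\textbf{Step 2: the perturbation terms.} The second term is at most $\frac{h_o^2}{t}\,2^{3/2}\left(\|\m{E}_\Omega\|_2+\varepsilon\right)\le 12K^2\left(\|\m{E}_\Omega\|_2+\varepsilon\right)$. The fourth term is $\frac{20K}{t h_o^2}\|\m{E}_\Omega\|_2\le 107K^3\|\m{E}_\Omega\|_2$. The last term is at most $4K^2(c_1+c_2+2c_2^{1/2}+c_3)K^{-3}=4(c_1+c_2+2c_2^{1/2}+c_3)K^{-1}$, which is dominated by $cK^{-1}$ once the constants are small.

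The remaining piece, $4\|\m{E}_\Omega\|_2\sum_{n\in\chi_{\m{h}}}\|\mathcal{P}_\Omega(\m{\Gamma}_n\m{h})\|_2$, is the only one needing structural care. By Cauchy--Schwarz over the at most $N$ indices,
\[
\sum_{n}\|\mathcal{P}_\Omega(\m{\Gamma}_n\m{h})\|_2\le\sqrt{N\sum_n\|\mathcal{P}_\Omega(\m{\Gamma}_n\m{h})\|_2^2}=\sqrt{NK}.
\]
This uses $\sum_n \m{h}^\top\m{\Gamma}_n^\top\m{\Sigma}_\Omega\m{\Gamma}_n\m{h}=K\|\m{h}\|_2^2=K$, since each coordinate is mapped into $\Omega$ by exactly $K$ of the permutations. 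Because $\m{h}$ is concentrated near $\m{e}_o$, a sharper bound is available: the $K$ indices in $\chi_o$ contribute at most $K$ in total, and the rest contribute at most $\sqrt{NK t}$. Either way the term is $O(\|\m{E}_\Omega\|_2(K+\sqrt{NK}))$.

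\textbf{Step 3: collecting constants.} Under $\|\m{E}_\Omega\|_2\le c_1K^{-5/2}\max^{-1}\{N,K^2\}$, every error term is at most a multiple of $c_1K^{-1}$, since
\begin{itemize}[label={}]
\item $K^3\|\m{E}_\Omega\|_2\le c_1K^{1/2}/K^2\le c_1K^{-1}$ (using $\max\ge K^2$),
\item $\sqrt{NK}\,\|\m{E}_\Omega\|_2\le c_1K^{-2}$ (using $\max\ge N$ and $\sqrt{N}\le N$).
\end{itemize}
Likewise $\varepsilon\le c_2K^{-6}N^{-2}$ makes $K^2\varepsilon$ negligible. Choosing $c_1,c_2,c_3$ small enough relative to $c$ leaves a net bound of $-c_4K^{-1}\le -c_4K^{-3}$.

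\textbf{Main obstacle.} The step I expect to be delicate is verifying that the hypotheses of Lemma~\ref{lem-2st} (its bound on $\|\m{E}_\Omega\|_2$) are implied by the present ones. This holds because $K^{-5/2}\max^{-1}\{N,K^2\}\le K^{-2}\max^{-1}\{K^2,N\}$. The other delicate point is the sign analysis of the two minima at the lower endpoint $h_o^2=1-\frac{1}{4K}$, which must be checked carefully for small $K$.
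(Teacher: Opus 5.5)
Your proposal is correct and follows the paper's proof: you specialize Lemma~\ref{lem-2st} to this regime, bound the structural sum $\sum_{n\in\chi_{\m{h}}}\|\mathcal{P}_{\Omega}(\m{\Gamma}_n\m{h})\|_2$, and absorb the error terms by shrinking the constants. The differences are minor: the paper bounds that sum by $K+\frac{N-K}{2\sqrt{K}}$ (splitting $\chi_o$ from its complement) where you use Cauchy--Schwarz, and it lower-bounds $1-h_o^2\geq\frac{1}{4K^2}$ in the leading term where you cancel it against $\|\m{d}\|_2^2$, so the paper lands directly at $-c_4K^{-3}$ while you obtain the sharper $-cK^{-1}$.
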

\begin{proof}
The proof follows the same argument as that of Proposition~\ref{thm-case1}. The detailed proof is provided in Section~\ref{pf-thm-case2-2} of Supplementary Material.
\end{proof}

Proposition~\ref{thm-case2-2} implies that, when $\varepsilon$, $\xi$, and $\left\|\m{E}_\Omega\right\|_2$ are sufficiently small, no approximate second-order stationary points exist unless the point is sufficiently close to a canonical basis vector. Combining the analyses of the above three regimes yields the following result.
\begin{cor}
\label{cor-case1,2-1,2-2}
Let $\m{h}$ be an approximate first-order stationary point satisfying \eqref{prop-first-order stationary} and \eqref{prop-max}. Suppose that $\Omega$ is non-uniform and $h_o\in\left[\sqrt{\frac{1}{N}},\sqrt{1-\frac{1}{4K^2}}\right]$. Then there exist positive constants $c_1$, $c_2$, $c_3$, $c_4$ such that, whenever $\left\|\m{E}_\Omega\right\|_2\leq c_1K^{-\frac{5}{2}}\max^{-1}\left\{N,KN^{\frac{1}{2}},K^2\right\}$, $\varepsilon\leq c_2K^{-6}N^{-2}$, and $\xi\leq c_3K^{-3}N^{-\frac{1}{2}}$, it holds that
\begin{equation}
\begin{aligned}
\frac{\m{d}^\top\text{Hess}\;\psi_\varepsilon^0\left(\m{h}\right)\m{d}}{\left\|\m{d}\right\|_2^2}\leq-c_4K^{-3}.
\end{aligned}
\end{equation}
\end{cor}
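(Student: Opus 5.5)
The corollary follows by taking the union of the three regimes handled in Propositions~\ref{thm-case1}, \ref{thm-case2-1}, and \ref{thm-case2-2}. The intervals $\left[\sqrt{\frac{1}{N}},\sqrt{\frac{1}{K+1}}\right]$, $\left(\sqrt{\frac{1}{K+1}},\sqrt{1-\frac{1}{4K}}\right]$, and $\left(\sqrt{1-\frac{1}{4K}},\sqrt{1-\frac{1}{4K^2}}\right]$ are consecutive and cover $\left[\sqrt{\frac{1}{N}},\sqrt{1-\frac{1}{4K^2}}\right]$. (If some interval is empty, for instance when $K+1>N$ makes the first one empty, that case is simply vacuous.) So for any admissible $h_o$, exactly one proposition applies. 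The plan is to choose a single set of constants that works for all three simultaneously.

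First, I will fix the constants. Let $c_1^{(i)},c_2^{(i)},c_3^{(i)},c_4^{(i)}$, $i=1,2,3$, be the constants provided by the three propositions. I then set $c_j=\min_i c_j^{(i)}$ for $j=1,2,3$. The smoothing and stationarity conditions $\varepsilon\leq c_2K^{-6}N^{-2}$ and $\xi\leq c_3K^{-3}N^{-\frac{1}{2}}$ have the same form in all three propositions, so taking minima makes them imply each proposition's hypothesis.

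Next, I check the perturbation condition. The corollary assumes $\left\|\m{E}_\Omega\right\|_2\leq c_1K^{-\frac{5}{2}}\max^{-1}\left\{N,KN^{\frac{1}{2}},K^2\right\}$. Since $\max\left\{N,KN^{\frac{1}{2}},K^2\right\}\geq\max\left\{N,K^2\right\}$, this bound is at most $c_1K^{-\frac{5}{2}}\max^{-1}\{N,K^2\}$. It therefore implies the (weaker) requirements of Propositions~\ref{thm-case2-1} and \ref{thm-case2-2}, and it coincides with the requirement of Proposition~\ref{thm-case1}.

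Finally, I unify the curvature bounds. Cases 1 and 2 give a bound of $-c_4^{(i)}K^{-2}$, and Case 3 gives $-c_4^{(3)}K^{-3}$. Because $K\geq1$, we have $-c_4^{(i)}K^{-2}\leq-c_4^{(i)}K^{-3}$. Taking $c_4=\min_i c_4^{(i)}$ then yields $\frac{\m{d}^\top\text{Hess}\;\psi_\varepsilon^0\left(\m{h}\right)\m{d}}{\left\|\m{d}\right\|_2^2}\leq-c_4K^{-3}$ in every regime.

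There is no real obstacle here. The only point needing care is that shrinking the constants $c_1,c_2,c_3$ does not invalidate any proposition. This holds because each proposition's conclusion is guaranteed "whenever" its hypotheses hold, so smaller thresholds only strengthen those hypotheses. The positivity of all constants is preserved since we take minima over finitely many positive numbers.
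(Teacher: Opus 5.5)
Your proposal is correct and follows the same route as the paper, whose proof consists solely of invoking Propositions~\ref{thm-case1}, \ref{thm-case2-1}, and \ref{thm-case2-2} on the three consecutive $h_o$-regimes. You make explicit the bookkeeping the paper leaves implicit: taking minima of the constants, noting that the corollary's condition on $\left\|\m{E}_\Omega\right\|_2$ is the most stringent of the three, and using $K^{-2}\geq K^{-3}$ to unify the curvature bounds.
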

\begin{proof}
The result follows directly from Propositions~\ref{thm-case1}, \ref{thm-case2-1}, and \ref{thm-case2-2}.
\end{proof}

Corollary~\ref{cor-case1,2-1,2-2} indicates that no approximate second-order stationary points exist unless the point is sufficiently close to a canonical basis vector. We next analyze the distance between an approximate first-order stationary point and its nearest canonical basis vector when the point is sufficiently close to a canonical basis vector, specifically when $h_o>\sqrt{1-\frac{1}{4K^2}}$. The following proposition characterizes this result.
\begin{prop}
\label{thm-case3}
Let $\m{h}$ be an approximate first-order stationary point satisfying \eqref{prop-first-order stationary} and \eqref{prop-max}. Suppose that $\Omega$ is non-uniform and $h_o\in\left(\sqrt{1-\frac{1}{4K^2}},1\right]$. There exist positive constants $c_1$, $c_2$, $c_3$, $c_4$ such that, whenever $\left\|\m{E}_\Omega\right\|_2\leq c_1K^{-2}$ and $\varepsilon\leq c_2K^{-2}$, it holds that
\begin{equation}
\begin{aligned}
\left\|\m{h}-\m{e}_o\right\|_2\leq&c_3K^{-1}\xi+c_4K^{-1}N\left\|\m{E}_\Omega\right\|_2.
\end{aligned}
\end{equation}
\end{prop}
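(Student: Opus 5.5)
The plan is to show that near $\m{e}_o$ the Riemannian gradient of $\psi_\varepsilon^0$ is bounded below by a constant multiple of $K\left\|\m{h}-\m{e}_o\right\|_2$, minus an error of order $N\left\|\m{E}_\Omega\right\|_2$. The bound then follows from $\left\|\text{grad}\;\psi_\varepsilon^0\left(\m{h}\right)\right\|_2\leq\xi$.

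\textbf{Setup.} Write $\m{h}=h_o\m{e}_o+\m{w}$ with $\m{w}\perp\m{e}_o$ and $\left\|\m{w}\right\|_2^2=1-h_o^2\leq\frac{1}{4K^2}$. Then $\left\|\m{h}-\m{e}_o\right\|_2\leq\sqrt{2}\left\|\m{w}\right\|_2$, so it suffices to bound $\left\|\m{w}\right\|_2$.

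\textbf{Projected gradient.} Rather than work with the full gradient, I would use the tangent test vector $\m{v}=\m{w}-\left\|\m{w}\right\|_2^2\m{h}$, the projection of $\m{w}$ onto $\text{T}_{\m{h}}\mathbb{S}^{N-1}$. It satisfies $\left\|\m{v}\right\|_2\leq\left\|\m{w}\right\|_2$, which gives
\[
\m{v}^\top\text{grad}\;\psi_\varepsilon^0\left(\m{h}\right)\leq\xi\left\|\m{w}\right\|_2.
\]
Using \eqref{grad}, write $\m{A}_n=\m{\Gamma}_n^\top\m{\Sigma}_\Omega\m{\Gamma}_n$, which is diagonal with $0/1$ entries, and $\m{B}_n=\m{\Gamma}_n^\top\m{E}_\Omega\m{\Gamma}_n$. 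Also let $q_n=\m{h}^\top\left(\m{A}_n+\m{B}_n\right)\m{h}+\varepsilon$. Then
\[
\m{v}^\top\text{grad}\;\psi_\varepsilon^0\left(\m{h}\right)=\sum_n q_n^{-1/2}\,\m{v}^\top\left(\m{A}_n+\m{B}_n\right)\m{h}.
\]

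\textbf{Main term.} Split the sum into $n\in\chi_o$ and $n\notin\chi_o$.
\begin{itemize}
\item For $n\in\chi_o$, the $o$-th diagonal entry of $\m{A}_n$ is $1$. So $\m{h}^\top\m{A}_n\m{h}\in\left[h_o^2,1\right]$, and $q_n\approx1$ up to $\left\|\m{E}_\Omega\right\|_2+\varepsilon$. The contribution $\m{v}^\top\m{A}_n\m{h}=\m{w}^\top\m{A}_n\m{w}-\left\|\m{w}\right\|_2^2\,\m{h}^\top\m{A}_n\m{h}$ is bounded in magnitude by $\left\|\m{w}\right\|_2^2$. Summed over the $K$ indices, these give at most $K\left\|\m{w}\right\|_2^2$.
\item For $n\notin\chi_o$, the $o$-th diagonal entry of $\m{A}_n$ is $0$. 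Hence $\m{h}^\top\m{A}_n\m{h}=\m{w}^\top\m{A}_n\m{w}$, and $\m{v}^\top\m{A}_n\m{h}=\left(1-\left\|\m{w}\right\|_2^2\right)\m{w}^\top\m{A}_n\m{w}$. Using $q_n\leq\m{w}^\top\m{A}_n\m{w}+\left\|\m{E}_\Omega\right\|_2+\varepsilon$, each such term is roughly $\left(\m{w}^\top\m{A}_n\m{w}\right)^{1/2}\geq\m{w}^\top\m{A}_n\m{w}/\left\|\m{w}\right\|_2$, since $\m{w}^\top\m{A}_n\m{w}\leq\left\|\m{w}\right\|_2^2$.
\item Every coordinate $m\neq o$ lies in the support of $\m{A}_n$ for exactly $K$ values of $n$, and the differences of $\Omega$ ensure that at least one of these has $n\notin\chi_o$. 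Summing over $n\notin\chi_o$ therefore gives at least $c\sum_{m\neq o}w_m^2/\left\|\m{w}\right\|_2=c\left\|\m{w}\right\|_2$.
\end{itemize}
The $\chi_o$ part is only $O\left(K\left\|\m{w}\right\|_2^2\right)=O\left(\left\|\m{w}\right\|_2/2\right)$ because $\left\|\m{w}\right\|_2\leq\frac{1}{2K}$, which is exactly why the threshold $1-\frac{1}{4K^2}$ is used.

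\textbf{Obstacle.} The hard step is this lower bound with the correct $K$-dependence. When $\m{w}^\top\m{A}_n\m{w}$ is comparable to $\varepsilon$ or $\left\|\m{E}_\Omega\right\|_2$, the square-root gain disappears. I would handle this by bounding such terms crudely below by $\m{w}^\top\m{A}_n\m{w}\left(\left\|\m{w}\right\|_2^2+\left\|\m{E}_\Omega\right\|_2+\varepsilon\right)^{-1/2}$. The assumptions $\left\|\m{E}_\Omega\right\|_2\leq c_1K^{-2}$ and $\varepsilon\leq c_2K^{-2}$ keep this denominator of order $K^{-1}$, which still yields a term $\gtrsim K\left\|\m{w}\right\|_2^2\cdot\left(\ldots\right)$. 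I would track this carefully so the final inequality reads $cK\left\|\m{w}\right\|_2\lesssim\xi+N\left\|\m{E}_\Omega\right\|_2$.

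\textbf{Perturbation terms.} The terms involving $\m{B}_n$ are bounded using $\left|\m{v}^\top\m{B}_n\m{h}\right|\leq\left\|\m{E}_\Omega\right\|_2\left\|\m{w}\right\|_2$ and $q_n\geq\varepsilon$. The cleaner route is to group them: since each $\m{\Gamma}_n$ is a permutation, the sum over $n$ is bounded by $N\left\|\m{E}_\Omega\right\|_2\left\|\m{w}\right\|_2$ times the relevant weights. The perturbation of the weights $q_n^{-1/2}$ themselves is absorbed in the same way.

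\textbf{Conclusion.} Dividing by $\left\|\m{w}\right\|_2$ gives $cK\left\|\m{w}\right\|_2\leq\xi+C'N\left\|\m{E}_\Omega\right\|_2$. This is the claimed bound with constants $c_3,c_4$.
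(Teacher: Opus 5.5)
Your test direction $\m{v}=\m{w}-\|\m{w}\|_2^2\m{h}$ equals $-h_o\m{d}$. Your main-term argument is therefore the paper's: the same split into $\chi_o$ and its complement, the same non-uniformity fact $\sum_{n\notin\chi_o}\m{w}^\top\m{A}_n\m{w}\geq\|\m{w}\|_2^2$, and the same comparison of weights. The ``square-root gain'' regime is unnecessary. The crude bound $q_n\leq\|\m{w}\|_2^2+\|\m{E}_\Omega\|_2+\varepsilon\leq\frac{1+4c_1+4c_2}{4K^2}$ holds for every $n\notin\chi_o$. It gives an off-support contribution of at least $2K(1+4c_1+4c_2)^{-1/2}h_o^2\|\m{w}\|_2^2$. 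The contribution on $\chi_o$ is at most $K(\frac{3}{4}-c_1)^{-1/2}h_o^2\|\m{w}\|_2^2$. State this comparison ($2$ versus $2/\sqrt{3}$) explicitly, since that is where the threshold $1-\frac{1}{4K^2}$ enters.

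The genuine gap is the perturbation step. With $|\m{v}^\top\m{B}_n\m{h}|\leq\|\m{E}_\Omega\|_2\|\m{w}\|_2$ and $q_n\geq\varepsilon$ you only get $\sum_n q_n^{-1/2}|\m{v}^\top\m{B}_n\m{h}|\leq\varepsilon^{-1/2}N\|\m{E}_\Omega\|_2\|\m{w}\|_2$. The weights really can be of order $\varepsilon^{-1/2}$: whenever $\mathcal{P}_\Omega(\m{\Gamma}_n\m{h})=\m{0}$ one has $q_n=\varepsilon$ exactly. This happens for most $n\notin\chi_o$ when $\m{w}$ is concentrated on a few coordinates. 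So ``times the relevant weights'' gives no $\varepsilon$-free constant. Your conclusion would read $\|\m{w}\|_2\lesssim K^{-1}(\xi+\varepsilon^{-1/2}N\|\m{E}_\Omega\|_2)$, which is much weaker than the statement and useless later, where $\varepsilon\lesssim K^{-6}N^{-2}$.

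The missing idea is that $\m{E}_\Omega$, like $\m{\Sigma}_\Omega$, vanishes outside $\Omega\times\Omega$, because every $\m{x}_l$ is supported on $\Omega$. Hence $|\m{v}^\top\m{B}_n\m{h}|\leq\|\m{v}\|_2\|\m{E}_\Omega\|_2\|\mathcal{P}_\Omega(\m{\Gamma}_n\m{h})\|_2$, while $q_n\geq(1-\|\m{E}_\Omega\|_2)\|\mathcal{P}_\Omega(\m{\Gamma}_n\m{h})\|_2^2$. Each term is then at most $\|\m{v}\|_2\|\m{E}_\Omega\|_2(1-\|\m{E}_\Omega\|_2)^{-1/2}$, uniformly in $n$ and $\varepsilon$. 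In particular the terms with $q_n=\varepsilon$ have zero numerator. Summing gives $N\|\m{w}\|_2\|\m{E}_\Omega\|_2(1-\|\m{E}_\Omega\|_2)^{-1/2}$. This self-normalization by $\|\mathcal{P}_\Omega(\m{\Gamma}_n\m{h})\|_2$ is exactly how the paper obtains the $\varepsilon$-free term $N\|\m{E}_\Omega\|_2$.
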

\begin{proof}
The key idea is to establish a local error bound using only the approximate first-order stationarity condition when $\m{h}$ is already sufficiently close to a canonical basis vector. The detailed proof is provided in Section~\ref{pf-thm-case3} of Supplementary Material.
\end{proof}

By combining Corollary~\ref{cor-case1,2-1,2-2} and Proposition~\ref{thm-case3}, we obtain the following result.
\begin{thm}
\label{thm-2st}
Suppose that $\Omega$ is non-uniform and $\m{h}$ is an approximate first-order stationary point satisfying \eqref{prop-first-order stationary} and \eqref{prop-max}. There exist positive constants $c_1$, $c_2$, $c_3$, $c_4$, and $c_5$ such that, whenever $\left\|\m{E}_\Omega\right\|_2\leq c_1K^{-\frac{5}{2}}\max^{-1}\left\{N,KN^{\frac{1}{2}},K^2\right\}$, $\varepsilon\leq c_2K^{-6}N^{-2}$, $\xi\leq c_3K^{-3}N^{-\frac{1}{2}}$, and $\text{Hess}\;\psi_\varepsilon^0\left(\m{h}\right)\succeq -c_4K^{-3}\m{I}$, it holds that
\begin{equation}
\label{err-bound}
\begin{aligned}
\left\|\m{h}-\m{e}_o\right\|_2\leq&c_5K^{-1}\left(\xi+N\left\|\m{E}_\Omega\right\|_2\right).
\end{aligned}
\end{equation}
\end{thm}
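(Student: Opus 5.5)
The plan is to split on the value of $h_o=\left\|\m{h}\right\|_\infty$, which by \eqref{prop-max} lies in $\left[\sqrt{1/N},1\right]$. The split point is $\sqrt{1-\frac{1}{4K^2}}$, and the two regimes are handled by Corollary~\ref{cor-case1,2-1,2-2} and Proposition~\ref{thm-case3}. First, I will fix the constants. Let $c_1,c_2,c_3$ be the minimum of the corresponding constants from Corollary~\ref{cor-case1,2-1,2-2} and Proposition~\ref{thm-case3}, so that all hypotheses hold at once. This is possible because the conditions of Theorem~\ref{thm-2st} imply those of Proposition~\ref{thm-case3}:
\[
c_1K^{-\frac{5}{2}}\max^{-1}\left\{N,KN^{\frac{1}{2}},K^2\right\}\leq c_1K^{-\frac{9}{2}}\leq c_1K^{-2},\qquad c_2K^{-6}N^{-2}\leq c_2K^{-2}.
\]
Next, choose the Hessian threshold $c_4$ in the theorem strictly smaller than the constant $c_4'$ from Corollary~\ref{cor-case1,2-1,2-2}, say $c_4=c_4'/2$.

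In the first regime, suppose for contradiction that $h_o\in\left[\sqrt{1/N},\sqrt{1-\frac{1}{4K^2}}\right]$. Then $h_o<1$, so the direction $\m{d}=\m{e}_o-h_o\m{h}$ in \eqref{direction} is a nonzero tangent vector with $\left\|\m{d}\right\|_2=\sqrt{1-h_o^2}>0$. Corollary~\ref{cor-case1,2-1,2-2} then gives
\[
\frac{\m{d}^\top\text{Hess}\;\psi_\varepsilon^0\left(\m{h}\right)\m{d}}{\left\|\m{d}\right\|_2^2}\leq-c_4'K^{-3}<-c_4K^{-3}.
\]
On the other hand, the assumption $\text{Hess}\;\psi_\varepsilon^0\left(\m{h}\right)\succeq-c_4K^{-3}\m{I}$, read on the tangent space, bounds the same Rayleigh quotient below by $-c_4K^{-3}$. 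This is a contradiction, so this regime cannot occur.

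Therefore $h_o\in\left(\sqrt{1-\frac{1}{4K^2}},1\right]$, and Proposition~\ref{thm-case3} applies directly:
\[
\left\|\m{h}-\m{e}_o\right\|_2\leq c_3'K^{-1}\xi+c_4''K^{-1}N\left\|\m{E}_\Omega\right\|_2,
\]
where $c_3',c_4''$ are that proposition's constants. Setting $c_5=\max\{c_3',c_4''\}$ yields \eqref{err-bound}.

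The only delicate point is the bookkeeping of constants. The Hessian threshold must be strictly below the curvature bound of Corollary~\ref{cor-case1,2-1,2-2}, which the halving handles. Also, "$\succeq$" must be understood as a statement about the Riemannian Hessian acting on $\text{T}_{\m{h}}\mathbb{S}^{N-1}$, which is where $\m{d}$ lives. Beyond that the argument is a direct combination of the two earlier results.
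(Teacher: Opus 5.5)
Your proposal is correct and follows the paper's proof. Like the paper, you use Corollary~\ref{cor-case1,2-1,2-2} together with the Hessian lower bound, evaluated along the tangent direction $\m{d}$, to rule out $h_o\le\sqrt{1-\frac{1}{4K^2}}$, and then invoke Proposition~\ref{thm-case3} after checking that the theorem's conditions on $\|\m{E}_\Omega\|_2$ and $\varepsilon$ imply its weaker ones. Your choice of a Hessian threshold strictly below the corollary's curvature constant is a small but real tightening: the paper reuses the same $c_4$, which, read literally, gives only a non-strict inequality rather than a contradiction.
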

\begin{proof}
Since $\Omega$ is non-uniform, $\m{h}$ is an approximate first-order stationary point satisfying \eqref{prop-first-order stationary} and \eqref{prop-max}, Corollary~\ref{cor-case1,2-1,2-2} implies that $h_o\in\left(\sqrt{1-\frac{1}{4K^2}},1\right]$ provided $\text{Hess}\;\psi_\varepsilon^0\left(\m{h}\right)\succeq -c_4K^{-3}\m{I}$. Applying Proposition~\ref{thm-case3} then yields the desired result.
\end{proof}

Theorem~\ref{thm-2st} establishes a recovery error bound in terms of $\left\|\m{E}_\Omega\right\|_2$. By combining this result with the concentration bound in Lemma~\ref{lem-tail}, we further characterize the dependence of the recovery error on the sample size $L$ in the following theorem.
\begin{thm}
\label{thm-err}
Suppose that $\Omega$ is non-uniform and $\m{h}$ is an approximate first-order stationary point satisfying \eqref{prop-first-order stationary} and \eqref{prop-max}. There exist positive constants $c_1$, $c_2$, $c_3$, $c_4$, and $c_5$ such that, whenever $\varepsilon\leq c_2K^{-6}N^{-2}$, $\xi\leq c_3K^{-3}N^{-\frac{1}{2}}$, and $\text{Hess}\;\psi_\varepsilon^0\left(\m{h}\right)\succeq -c_4K^{-3}\m{I}$, for
\begin{equation}
\label{impulse-cond-L}
\begin{aligned}
L\geq c_1\left(\sqrt{K}+\sqrt{\log L}\right)^2K^{5}\max\left\{N^2,K^2N,K^4\right\},
\end{aligned}
\end{equation}
it holds with probability at least $1-2L^{-8}$ that
\begin{equation}
\label{err-bound-L}
\begin{aligned}
\left\|\m{h}-\m{e}_o\right\|_2\leq&c_5K^{-1}\left[\xi+N\left(\sqrt{\frac{K}{L}}+\sqrt{\frac{\log L}{L}}\right)\right].
\end{aligned}
\end{equation}
\end{thm}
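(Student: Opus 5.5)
The plan is to derive Theorem~\ref{thm-err} directly from Theorem~\ref{thm-2st} by replacing the deterministic condition on $\left\|\m{E}_\Omega\right\|_2$ with a high-probability event supplied by Lemma~\ref{lem-tail}. We instantiate Lemma~\ref{lem-tail} with $\delta=4\sqrt{\frac{\log L}{L}}$. The failure probability is then $2\exp\left\{-\frac{L\delta^2}{2}\right\}=2\exp\left\{-8\log L\right\}=2L^{-8}$, which matches the stated probability. On the complementary event we have
\begin{equation*}
\left\|\m{E}_\Omega\right\|_2\leq 2t+t^2,\qquad t=\sqrt{\frac{K}{L}}+4\sqrt{\frac{\log L}{L}}.
\end{equation*}

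Next we show that the sample-size condition \eqref{impulse-cond-L} forces $t\leq 1$. In that case $t^2\leq t$, so $\left\|\m{E}_\Omega\right\|_2\leq 3t\leq 12\left(\sqrt{\frac{K}{L}}+\sqrt{\frac{\log L}{L}}\right)$. Since $t\leq \frac{4\left(\sqrt{K}+\sqrt{\log L}\right)}{\sqrt{L}}$, we get
\begin{equation*}
t\leq c_1' K^{-\frac{5}{2}}\max^{-1}\left\{N,KN^{\frac{1}{2}},K^2\right\}
\end{equation*}
as soon as
\begin{equation*}
L\geq 16\,c_1'^{-2}\left(\sqrt{K}+\sqrt{\log L}\right)^2K^{5}\max\left\{N^2,K^2N,K^4\right\}.
\end{equation*}
Here we used $\max^2\left\{N,KN^{1/2},K^2\right\}=\max\left\{N^2,K^2N,K^4\right\}$. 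This is exactly the form of \eqref{impulse-cond-L} after renaming constants. The right-hand side of the $t$-bound is at most $1$, which gives $t\leq 1$. Hence, choosing $c_1$ in \eqref{impulse-cond-L} large relative to the constant of Theorem~\ref{thm-2st}, we obtain $3t\leq c_1^{\mathrm{Thm\,\ref{thm-2st}}}K^{-\frac{5}{2}}\max^{-1}\left\{N,KN^{\frac{1}{2}},K^2\right\}$, so the hypothesis on $\left\|\m{E}_\Omega\right\|_2$ in Theorem~\ref{thm-2st} holds on this event.

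The remaining hypotheses of Theorem~\ref{thm-2st}, namely those on $\varepsilon$, $\xi$, non-uniformity and the Hessian, are assumed verbatim in Theorem~\ref{thm-err}. Applying \eqref{err-bound} and substituting the bound on $\left\|\m{E}_\Omega\right\|_2$ gives
\begin{equation*}
\left\|\m{h}-\m{e}_o\right\|_2\leq c_5K^{-1}\left[\xi+12N\left(\sqrt{\frac{K}{L}}+\sqrt{\frac{\log L}{L}}\right)\right].
\end{equation*}
Absorbing the factor $12$ into $c_5$ yields \eqref{err-bound-L}.

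The only delicate point is bookkeeping. The condition \eqref{impulse-cond-L} is implicit in $L$ through $\log L$, but that is harmless: we never solve for $L$ and only use the inequality as stated. The other care point is the order of constant choices. The constants $c_2,c_3,c_4$ are inherited unchanged from Theorem~\ref{thm-2st}, while $c_1$ is chosen afterwards, depending on Theorem~\ref{thm-2st}'s $c_1$ and the numerical factors $12$ and $16$. No step requires new analysis.
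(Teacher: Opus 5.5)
Your proposal is correct and follows the paper's proof essentially step for step. You both take $\delta=4\sqrt{\log L/L}$ in Lemma~\ref{lem-tail} (the paper writes it as $\sqrt{16\log L/L}$), which gives failure probability $2L^{-8}$, then turn the sample-size condition into the hypothesis $\|\m{E}_\Omega\|_2\le c_1K^{-5/2}\max^{-1}\{N,KN^{1/2},K^2\}$ of Theorem~\ref{thm-2st}, and finally substitute into \eqref{err-bound}. Your explicit check that $t\le 1$, so that $t^2\le t$ and $\|\m{E}_\Omega\|_2\le 3t$, simply spells out the step the paper asserts as $\|\m{E}_\Omega\|_2\le 4(\sqrt{K/L}+\sqrt{16\log L/L})$.
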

\begin{proof}
Setting $\delta=\sqrt{\frac{16\log L}{L}}$, we obtain from Lemma~\ref{lem-tail} that, with probability at least $1-2\exp\left\{-\frac{16\log L}{2}\right\}=1-2L^{-8}$,
\begin{equation}
\label{inequ-E}
\begin{aligned}
\left\|\m{E}_\Omega\right\|_2\leq2\left(\sqrt{\frac{K}{L}}+\sqrt{\frac{16\log L}{L}}\right)+\left(\sqrt{\frac{K}{L}}+\sqrt{\frac{16\log L}{L}}\right)^2.
\end{aligned}
\end{equation}
Moreover, there exists a positive constant $c$ such that, if
\begin{equation}
\begin{aligned}
L\geq 16c^{-2}\left(\sqrt{K}+4\sqrt{\log L}\right)^2\left(K^{\frac{5}{2}}\max\left\{N,KN^{\frac{1}{2}},K^2\right\}\right)^2,
\end{aligned}
\end{equation}
then
\begin{equation}
\begin{aligned}
\left\|\m{E}_\Omega\right\|_2\leq4\left(\sqrt{\frac{K}{L}}+\sqrt{\frac{16\log L}{L}}\right)\leq cK^{-\frac{5}{2}}\min\left\{N^{-1},K^{-1}N^{-\frac{1}{2}},K^{-2}\right\}.
\end{aligned}
\end{equation}
Applying Theorem~\ref{thm-2st} completes the proof.
\end{proof}

Theorem~\ref{thm-err} explicitly characterizes the recovery error attained by an approximate second-order stationary point. Note that the sample complexity requirement in \eqref{impulse-cond-L} is approximately of order $O\left(K^6N^2\right)$ when logarithmic factors are ignored and $N\geq K^2$. A related result was established in~\cite{shi2021manifold}. In contrast to the present work, \cite{shi2021manifold} does not assume joint sparsity of the input signals. Instead, each entry of $\left\{\m{x}_{l}\right\}_{l=1}^L$ is modeled as an independent Bernoulli--Gaussian random variable. Under this model, the required sample complexity is approximately of order $O\left(\theta^{-2}N^4\right)$, where $\theta$ denotes the Bernoulli parameter governing the sparsity ratio; see \cite[Theorem~2]{shi2021manifold}. Since $\theta$ can be interpreted as $\theta\approx\frac{K}{N}$, this condition becomes $O\left(K^{-2}N^6\right)$. Consequently, in the practically relevant regime where $K^2\leq N$, the sample complexity requirement established in this paper is less restrictive than that of~\cite{shi2021manifold}. Such a regime commonly arises in applications of interest. For example, in radar systems, the filter length $N$ is determined by the number of temporal measurement samples, whereas the sparsity level $K$ corresponds to the number of targets and is typically small~\cite{richards2010principles}. These results demonstrate that exploiting joint sparsity can substantially reduce the sample complexity required for successful recovery.

In the next subsection, we extend the preceding analysis to the case of general invertible filters. This setting is considerably more challenging because the target solutions no longer possess the simple structure of canonical basis vectors.

\subsection{Geometry in the General Case}
\label{subsec: general}

We now consider the general case, where the filter $\m{g}$ is arbitrary but satisfies the invertibility condition that $\mathcal{C}\left(\m{g}\right)$ is nonsingular. An effective strategy is to introduce a suitable preconditioning matrix $\m{R}$ so that solving problem~\eqref{opt-ini-general-0} becomes nearly equivalent to solving the corresponding problem in the special case \eqref{opt-special}~\cite{8762219,li2018global,qu2020exact,shi2021manifold}. Following the existing literature on sparse blind deconvolution, the preconditioning matrix is designed to compensate for the effect of the unknown filter on the input signals. Specifically, the constraints in the multichannel sparse blind deconvolution problem~\eqref{opt-MSBD-Z} can be rewritten as
\begin{equation}
\begin{aligned}
\left(\m{R}\m{h}\right)\circledast\m{y}_l=\left(\m{R}\m{h}\right)\circledast\left(\m{g}\circledast\m{x}_l\right)=\left(\m{R}\m{h}\right)\circledast\m{g}\circledast\m{x}_l=\left(\mathcal{C}\left(\m{g}\right)\m{R}\m{h}\right)\circledast\m{x}_l,\;l=1,\cdots,L,
\end{aligned}
\end{equation}
where the second equality follows from the associativity of circulant convolution established in~\eqref{equ-associativity}. Therefore, choosing the preconditioning matrix as
\begin{equation}
\begin{aligned}
\widetilde{\m{R}}=\left[\mathcal{C}\left(\m{g}\right)^\top\mathcal{C}\left(\m{g}\right)\right]^{-\frac{1}{2}}
\end{aligned}
\end{equation}
and defining the orthogonal matrix
\begin{equation}
\begin{aligned}
\m{U}=\mathcal{C}\left(\m{g}\right)\left[\mathcal{C}\left(\m{g}\right)^\top\mathcal{C}\left(\m{g}\right)\right]^{-\frac{1}{2}}
\end{aligned}
\end{equation}
yield
\begin{equation}
\label{equ-compensate}
\begin{aligned}
\mathcal{C}\left(\m{g}\right)\widetilde{\m{R}}\m{U}^\top=\m{I}.
\end{aligned}
\end{equation}
Consequently, for $l=1,\cdots,L$,
\begin{equation}
\begin{aligned}
\left(\widetilde{\m{R}}\m{h}\right)\circledast\m{y}_l=&\left(\mathcal{C}\left(\m{g}\right)\left[\mathcal{C}\left(\m{g}\right)^\top\mathcal{C}\left(\m{g}\right)\right]^{-\frac{1}{2}}\m{U}^\top\m{q}\right)\circledast\m{x}_l=\m{q}\circledast\m{x}_l.
\end{aligned}
\end{equation}
This transformation converts the estimation of $\m{h}$ into that of $\m{q}$, where $\m{q}$ interacts directly with the input signals, with the effect of the unknown filter completely removed. Moreover, the preconditioning matrix $\m{R}$ defined in~\eqref{preconditioning-matrix} provides a non-asymptotic approximation to $\frac{1}{\sqrt{K}}\widetilde{\m{R}}$, as established later in Lemma~\ref{lem-tail-Delta}, thereby making it a natural candidate for preconditioning from the system perspective. The remainder of this subsection further justifies this choice from the optimization perspective and establishes the geometric properties of the resulting optimization problem.

Using the preconditioning matrix $\m{R}$ defined in~\eqref{preconditioning-matrix}, we reformulate problem~\eqref{opt-ini-general-0} as
\begin{equation}
\label{opt-ini-general}
\begin{aligned}
\min_{\m{h}\in\mathbb{S}^{N-1}}\;\sum_{n=1}^N\left(\frac{1}{L}\sum_{l=1}^L\left|\m{y}_l^\top\m{\Gamma}_n\left[\frac{1}{L}\sum_{l=1}^L\mathcal{C}\left(\m{y}_l\right)^\top\mathcal{C}\left(\m{y}_l\right)\right]^{-\frac{1}{2}}\m{h}\right|^2+\varepsilon\right)^{\frac{1}{2}}.
\end{aligned}
\end{equation}
For analytical convenience, we further consider the optimization problem
\begin{equation}
\label{opt-general-reparameterize}
\begin{aligned}
\min_{\m{h}\in\mathbb{S}^{N-1}}\;\psi_\varepsilon\left(\m{h}\right)=&\sum_{n=1}^N\left(\frac{1}{L}\sum_{l=1}^L\left|\m{y}_l^\top\m{\Gamma}_n\m{R}\m{U}^\top\m{h}\right|^2+\varepsilon\right)^{\frac{1}{2}}=\sum_{n=1}^N\left(\frac{1}{L}\sum_{l=1}^L\left|\m{x}_l^\top\mathcal{C}\left(\m{g}\right)^\top\m{\Gamma}_n\m{R}\m{U}^\top\m{h}\right|^2+\varepsilon\right)^{\frac{1}{2}}.
\end{aligned}
\end{equation}
Problems~\eqref{opt-ini-general} and~\eqref{opt-general-reparameterize} differ only by the orthogonal transformation $\m{U}$ applied to the optimization variable. Since orthogonal transformations preserve the geometry of optimization problems on the unit sphere, the two formulations possess identical geometric properties. Therefore, we focus on the geometric analysis of problem~\eqref{opt-general-reparameterize}, with particular emphasis on quantifying the discrepancy between problems~\eqref{opt-general-reparameterize} and~\eqref{opt-special} arising from the finite-sample approximation error of the preconditioning matrix. To proceed, we need the following result.
\begin{lem}
\label{lem-commutativity}
For every $n=1,\cdots,N$ and any vector $\m{u}$ of length $N$, it holds that
\begin{equation}
\label{symmetry}
\mathcal{C}\left(\m{u}\right)^\top\m{\Gamma}_n=\m{\Gamma}_n\mathcal{C}\left(\m{u}\right).
\end{equation}
\end{lem}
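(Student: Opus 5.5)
The plan is to verify \eqref{symmetry} entrywise, using explicit index formulas for $\m{\Gamma}_n$ and $\mathcal{C}\left(\m{u}\right)$ with all indices read modulo $N$. No probabilistic or spectral facts are needed; the identity should follow from the Hankel-type structure of $\m{\Gamma}_n$ against the Toeplitz-type structure of a circulant matrix.

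\textbf{Step 1 (index formulas).} First we describe $\m{\Gamma}_n$. From its block form $\m{\Gamma}_n=\mathrm{blkdiag}\left(\m{J}_n,\m{J}_{N-n}\right)$:
\begin{itemize}
\item the block $\m{J}_n$ has ones at positions $(i,j)$ with $i,j\le n$ and $i+j=n+1$;
\item the block $\m{J}_{N-n}$ has ones at positions $(i,j)$ with $i,j>n$ and $i+j=N+n+1$.
\end{itemize}
Hence $\left(\m{\Gamma}_n\right)_{i,j}=1$ exactly when $i+j\equiv n+1 \pmod N$, and it is zero otherwise. In particular, each row and each column of $\m{\Gamma}_n$ contains exactly one $1$, so $\m{\Gamma}_n$ is a symmetric permutation matrix. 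Next, from the displayed form of $\mathcal{C}\left(\m{u}\right)$ we read off $\mathcal{C}\left(\m{u}\right)_{i,j}=u_{((i-j)\bmod N)+1}$.

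\textbf{Step 2 (left-hand side).} Since $\m{\Gamma}_n$ is a permutation matrix, each matrix product picks out a single term. For fixed $(i,j)$,
\begin{equation*}
\left(\mathcal{C}\left(\m{u}\right)^\top\m{\Gamma}_n\right)_{i,j}=\sum_{k}\mathcal{C}\left(\m{u}\right)_{k,i}\left(\m{\Gamma}_n\right)_{k,j}.
\end{equation*}
The only nonzero term has $k\equiv n+1-j$, so the entry equals $u_{((n+1-j-i)\bmod N)+1}$.

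\textbf{Step 3 (right-hand side).} Similarly,
\begin{equation*}
\left(\m{\Gamma}_n\mathcal{C}\left(\m{u}\right)\right)_{i,j}=\sum_k\left(\m{\Gamma}_n\right)_{i,k}\mathcal{C}\left(\m{u}\right)_{k,j}.
\end{equation*}
The only nonzero term has $k\equiv n+1-i$, giving $u_{((n+1-i-j)\bmod N)+1}$.

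\textbf{Step 4 (conclusion).} The two expressions from Steps 2 and 3 coincide because they depend on $i$ and $j$ only through $i+j$. This is the Hankel symmetry noted above, and it yields \eqref{symmetry}.

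\textbf{Alternative route.} A more conceptual check is available via \eqref{circulant-convolution-matrix} and commutativity. Row $n$ of $\mathcal{C}\left(\m{u}\right)$ is $\m{u}^\top\m{\Gamma}_n$, and $\m{u}\circledast\m{v}=\m{v}\circledast\m{u}$ gives $\m{u}^\top\m{\Gamma}_n\m{v}=\m{v}^\top\m{\Gamma}_n\m{u}$. However, this route yields only a row-wise statement, so the entrywise computation above is cleaner.

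\textbf{Main obstacle.} The only delicate point is bookkeeping with the modular indices, in particular checking that the wrap-around block $\m{J}_{N-n}$ is consistent with the rule $i+j\equiv n+1$. This includes the boundary case $n=N$, where $\m{J}_0$ is empty and $\m{\Gamma}_N=\m{J}_N$.
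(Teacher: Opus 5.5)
Your proposal is correct and follows essentially the same route as the paper: both arguments verify the identity entrywise and show that the $(i,j)$ entry of each side equals $u_{((n+1-i-j)\bmod N)+1}$, which is symmetric in $i$ and $j$. The difference is only cosmetic. The paper extracts these entries through $\m{\Gamma}_n\m{e}_m=\m{e}_{((n-m)\bmod N)+1}$ and the row representation \eqref{circulant-convolution-matrix}, whereas you write down explicit index formulas for $\m{\Gamma}_n$ and $\mathcal{C}(\m{u})$.
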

\begin{proof}
For any $n_1,n_2\in\left\{1,\cdots,N\right\}$, we have
\begin{equation}
\begin{aligned}
\m{e}_{n_1}^\top\mathcal{C}\left(\m{u}\right)^\top\m{\Gamma}_n\m{e}_{n_2}=&\left(\m{u}\circledast\m{e}_{n_1}\right)^\top\m{e}_{\left(\left(n-n_2\right)\bmod N\right)+1}\\
=&\begin{bmatrix}\m{u}^\top\m{\Gamma}_1\m{e}_{n_1}&\cdots&\m{u}^\top\m{\Gamma}_N\m{e}_{n_1}\end{bmatrix}\m{e}_{\left(\left(n-n_2\right)\bmod N\right)+1}\\
=&\m{u}^\top\m{\Gamma}_{\left(\left(n-n_2\right)\bmod N\right)+1}\m{e}_{n_1}\\
=&\m{u}^\top\m{e}_{\left(\left(\left(\left(n-n_2\right)\bmod N\right)+1-n_1\right)\bmod N\right)+1},
\end{aligned}
\end{equation}
where the last equality follows from
\begin{equation}
\begin{aligned}
\m{\Gamma}_n\m{e}_m=\m{e}_{\left(\left(n-m\right)\bmod N\right)+1},\;n=1,\cdots,N.
\end{aligned}
\end{equation}
On the other hand,
\begin{equation}
\begin{aligned}
\m{e}_{n_1}^\top\m{\Gamma}_n\mathcal{C}\left(\m{u}\right)\m{e}_{n_2}=&\m{e}_{\left(\left(n-n_1\right)\bmod N\right)+1}^\top\left(\m{u}\circledast\m{e}_{n_2}\right)\\
=&\left(\m{u}\circledast\m{e}_{n_2}\right)^\top\m{e}_{\left(\left(n-n_1\right)\bmod N\right)+1}\\
=&\m{u}^\top\m{e}_{\left(\left(\left(\left(n-n_1\right)\bmod N\right)+1-n_2\right)\bmod N\right)+1}.
\end{aligned}
\end{equation}
Since
\begin{equation}
\begin{aligned}
{\left(\left(\left(n-n_2\right)\bmod N\right)+1-n_1\right)\bmod N}={\left(\left(\left(n-n_1\right)\bmod N\right)+1-n_2\right)\bmod N},
\end{aligned}
\end{equation}
the two expressions above are identical. Hence, for all $n_1,n_2$,
\begin{equation}
\begin{aligned}
\m{e}_{n_1}^\top\mathcal{C}\left(\m{u}\right)^\top\m{\Gamma}_n\m{e}_{n_2}=\m{e}_{n_1}^\top\m{\Gamma}_n\mathcal{C}\left(\m{u}\right)\m{e}_{n_2},
\end{aligned}
\end{equation}
which completes the proof.
\end{proof}

Lemma~\ref{lem-commutativity} reveals a reversal-symmetry property of circulant matrices, namely,
\begin{equation}
\mathcal{C}\left(\m{u}\right)^\top=\m{\Gamma}_n\mathcal{C}\left(\m{u}\right)\m{\Gamma}_n^\top,
\end{equation}
which allows the coupled terms introduced by the joint-sparsity-promoting objective to be manipulated in a manner analogous to that used for entry-wise sparsity-promoting objectives. Specifically, using Lemma~\ref{lem-commutativity}, we obtain
\begin{equation}
\begin{aligned}
\mathcal{C}\left(\m{g}\right)^\top\m{\Gamma}_n\m{R}\m{U}^\top=\m{\Gamma}_n\mathcal{C}\left(\m{g}\right)\m{R}\m{U}^\top=\m{\Gamma}_n\left(\frac{1}{\sqrt{K}}\m{I}+\mathcal{C}\left(\m{g}\right)\m{R}\m{U}^\top-\frac{1}{\sqrt{K}}\m{I}\right).
\end{aligned}
\end{equation}
Substituting this identity into~\eqref{opt-general-reparameterize} yields
\begin{equation}
\label{objective-general}
\begin{aligned}
\psi_\varepsilon\left(\m{h}\right)=&\sum_{n=1}^N\left(\frac{1}{L}\sum_{l=1}^L\left|\m{x}_l^\top\m{\Gamma}_n\left(\frac{1}{\sqrt{K}}\m{I}+\mathcal{C}\left(\m{g}\right)\m{R}\m{U}^\top-\frac{1}{\sqrt{K}}\m{I}\right)\m{h}\right|^2+\varepsilon\right)^{\frac{1}{2}}\\
=&\sum_{n=1}^N\left(\frac{1}{L}\sum_{l=1}^L\left|\m{x}_l^\top\m{\Gamma}_n\left(\frac{1}{\sqrt{K}}\m{I}+\m{\Delta}\right)\m{h}\right|^2+\varepsilon\right)^{\frac{1}{2}}\\
=&\sum_{n=1}^N\left[\m{h}^\top\left(\frac{1}{\sqrt{K}}\m{I}+\m{\Delta}\right)^\top\m{\Gamma}_n^\top\left(\m{\Sigma}_{\Omega}+\m{E}_{\Omega}\right)\m{\Gamma}_n\left(\frac{1}{\sqrt{K}}\m{I}+\m{\Delta}\right)\m{h}+\varepsilon\right]^{\frac{1}{2}},
\end{aligned}
\end{equation}
where 
\begin{equation}
\begin{aligned}
\m{\Delta}=&\mathcal{C}\left(\m{g}\right)\m{R}\m{U}^\top-\frac{1}{\sqrt{K}}\m{I}.
\end{aligned}
\end{equation}
It is noteworthy that if $\m{\Delta}=\m{0}$, then problem~\eqref{opt-general-reparameterize} reduces to
\begin{equation}
\label{opt-delta=0}
\begin{aligned}
\min_{\m{h}\in\mathbb{S}^{N-1}}\;\sum_{n=1}^N\left[\frac{1}{K}\m{h}^\top\m{\Gamma}_n^\top\left(\m{\Sigma}_{\Omega}+\m{E}_{\Omega}\right)\m{\Gamma}_n\m{h}+\varepsilon\right]^{\frac{1}{2}}=\frac{1}{\sqrt{K}}\sum_{n=1}^N\left[\m{h}^\top\m{\Gamma}_n^\top\left(\m{\Sigma}_{\Omega}+\m{E}_{\Omega}\right)\m{\Gamma}_n\m{h}+K\varepsilon\right]^{\frac{1}{2}}.
\end{aligned}
\end{equation}
It follows immediately that solving problem~\eqref{opt-delta=0} is equivalent to solving problem~\eqref{opt-special}, up to a rescaling of the smoothing parameter $\varepsilon$ by a factor of $\frac{1}{K}$. Therefore, problem~\eqref{opt-general-reparameterize} reduces to the special-case formulation when $\m{\Delta}=\m{0}$, thereby justifying the choice of the preconditioning matrix. We next establish the following lemma for the subsequent analysis of the optimization problem, which provides a bound on $\left\|\m{\Delta}\right\|_2$ as a function of the sample size $L$. This result will serve as a key ingredient in quantifying the deviation of problem~\eqref{opt-general-reparameterize} from its ideal counterpart.
\begin{lem}
\label{lem-tail-Delta}
There exist positive constants $c_1$ and $c_2$ such that for any $t\in\left(0,\frac{\kappa^{-2}}{2}\right)$, the following bounds hold with probability at least $1-2N\exp\left\{-\frac{Lt^2}{c_1\log^22N+\left(c_2\log2N\right)t}\right\}$:
\begin{equation}
\begin{aligned}
\left\|\m{\Delta}\right\|_2\leq \frac{2\kappa^4t}{\sqrt{K}}.
\end{aligned}
\end{equation}
and
\begin{equation}
\begin{aligned}
\left\|\m{R}-\frac{1}{\sqrt{K}}\left[\mathcal{C}\left(\m{g}\right)^\top\mathcal{C}\left(\m{g}\right)\right]^{-\frac{1}{2}}\right\|_2\leq \frac{2\kappa^4t}{\sqrt{K}\sigma_{\text{min}}\left(\mathcal{C}\left(\m{g}\right)\right)},
\end{aligned}
\end{equation}
\end{lem}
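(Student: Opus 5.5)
The plan is to reduce both bounds to a concentration statement for the sample covariance of the observations, and then push that statement through the matrix inverse square root using the circulant structure.

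\textbf{Step 1: the random matrix and its mean.} Define
\begin{equation}
\m{M}=\frac{1}{L}\sum_{l=1}^L\mathcal{C}\left(\m{y}_l\right)^\top\mathcal{C}\left(\m{y}_l\right).
\end{equation}
By commutativity of circulant matrices,
\begin{equation}
\mathcal{C}\left(\m{y}_l\right)=\mathcal{C}\left(\m{g}\right)\mathcal{C}\left(\m{x}_l\right),
\end{equation}
so
\begin{equation}
\m{M}=\mathcal{C}\left(\m{g}\right)^\top\m{S}\,\mathcal{C}\left(\m{g}\right),\qquad \m{S}=\frac{1}{L}\sum_{l=1}^L\mathcal{C}\left(\m{x}_l\right)^\top\mathcal{C}\left(\m{x}_l\right).
\end{equation}
Next compute $\mathbb{E}[\mathcal{C}(\m{x}_l)^\top\mathcal{C}(\m{x}_l)]$. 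Its $(i,j)$ entry is the circular autocorrelation of $\m{x}_l$ at lag $i-j$. By independence, the zero-mean entries give expectation $0$ off the diagonal and $|\Omega|=K$ on the diagonal. Hence $\mathbb{E}\m{S}=K\m{I}$ and $\mathbb{E}\m{M}=K\,\mathcal{C}(\m{g})^\top\mathcal{C}(\m{g})$.

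\textbf{Step 2: concentration of $\m{S}$.} Write
\begin{equation}
\m{S}-K\m{I}=\frac{1}{L}\sum_{l=1}^L\left(\m{A}_l-K\m{I}\right),\qquad \m{A}_l=\mathcal{C}\left(\m{x}_l\right)^\top\mathcal{C}\left(\m{x}_l\right).
\end{equation}
I would diagonalize by the DFT, using part 1) of Lemma~\ref{lemma-circulant convolution}: $\m{A}_l-K\m{I}$ has eigenvalues $|\widehat{x}_{l,k}|^2-K$. So
\begin{equation}
\left\|\m{S}-K\m{I}\right\|_2=\max_{k}\left|\frac{1}{L}\sum_{l=1}^L\left(|\widehat{x}_{l,k}|^2-K\right)\right|.
\end{equation}
Each $\widehat{x}_{l,k}$ is a Gaussian linear form with variance $K$, so $|\widehat{x}_{l,k}|^2/K$ is a sub-exponential variable with $O(1)$ parameter. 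Bernstein's inequality plus a union bound over the $N$ frequencies gives $\|\m{S}/K-\m{I}\|_2\le t$ with probability at least $1-2N\exp\{-Lt^2/(c_1+c_2t)\}$. The stated $\log 2N$ factors suggest the authors instead used a matrix Bernstein inequality with truncation, where $\|\m{A}_l\|$ is bounded by about $K\log N$ with high probability. I would follow that route if the scalar union-bound argument does not cleanly produce the stated form. Because the stated form is weaker than what the scalar route gives, either route suffices.

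\textbf{Step 3: the perturbation.} All the matrices involved are circulant and hence commute. Therefore
\begin{equation}
\m{R}=\m{M}^{-1/2}=K^{-1/2}\left(\m{C}^\top\m{C}\right)^{-1/2}\left(\m{S}/K\right)^{-1/2},\qquad \m{C}=\mathcal{C}\left(\m{g}\right),
\end{equation}
since $\m{C}^\top\m{S}\m{C}=\m{S}\,\m{C}^\top\m{C}$. This gives
\begin{equation}
\m{\Delta}=\m{C}\m{R}\m{U}^\top-K^{-1/2}\m{I}=K^{-1/2}\left[\left(\m{S}/K\right)^{-1/2}-\m{I}\right],
\end{equation}
using $\m{C}(\m{C}^\top\m{C})^{-1/2}\m{U}^\top=\m{I}$ from \eqref{equ-compensate}. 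The eigenvalues of $\m{S}/K$ lie in $[1-t,1+t]$, so $\|(\m{S}/K)^{-1/2}-\m{I}\|_2\le t/(1-t)\le 2t$. The second bound then follows from $\|(\m{C}^\top\m{C})^{-1/2}\|_2=1/\sigma_{\min}(\m{C})$.

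This argument yields the claimed bounds even without the $\kappa^4$ factor. The $\kappa^4$ factor and the restriction $t<\kappa^{-2}/2$ presumably come from the authors concentrating $\m{M}$ directly and paying condition-number factors when converting to $\m{S}$. Since $\kappa\ge1$, my bound implies theirs.

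\textbf{Main obstacle.} The main obstacle is the concentration step, in particular getting the $\log 2N$ dependence right if matrix Bernstein is used. The commuting structure makes the square-root perturbation step easy.
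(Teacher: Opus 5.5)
Your proof is correct, and it takes a genuinely different and sharper route than the paper. The paper never uses the fact that $\mathcal{C}(\m{g})$ commutes with $\m{Q}=\frac{1}{L}\sum_{l}\mathcal{C}(\m{x}_l)^\top\mathcal{C}(\m{x}_l)$. Instead it bounds $\|\m{\Delta}\|_2\le\|\mathcal{C}(\m{g})\|_2\,\|\m{R}-K^{-1/2}[\mathcal{C}(\m{g})^\top\mathcal{C}(\m{g})]^{-1/2}\|_2$ and applies a generic square-root perturbation bound (Higham, Theorem~6.2) to the inverses. It then controls $(\m{I}+\m{E})^{-1}-\m{I}$, where $\m{E}=\mathcal{C}(\m{g})^\top(\m{Q}/K-\m{I})\mathcal{C}(\m{g})[\mathcal{C}(\m{g})^\top\mathcal{C}(\m{g})]^{-1}$, using $\|\m{E}\|_2\le\kappa^2t\le\frac{1}{2}$. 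This is exactly where the factor $\kappa^4$ and the restriction $t<\kappa^{-2}/2$ come from. For concentration, the paper applies the moment-controlled matrix Bernstein inequality to $\m{Z}_l=\mathcal{C}(\m{x}_l)^\top\mathcal{C}(\m{x}_l)/K$. It bounds $\mathbb{E}\|\m{Z}_l\|_2^m$ through sub-Gaussian tails of the DFT coefficients and a union bound over frequencies inside the moment integral, which produces the $\log 2N$ factors.

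Your simultaneous DFT diagonalization instead gives the exact identity $\m{\Delta}=K^{-1/2}[(\m{Q}/K)^{-1/2}-\m{I}]$, in which the filter cancels entirely. It also reduces concentration to $N$ scalar averages of sub-exponential variables. For real $\m{x}_l$ the coefficient $\widehat{x}_{l,k}$ need not be circular, but $|\widehat{x}_{l,k}|^2$ still has the form $\lambda_1g_1^2+\lambda_2g_2^2$ with $\lambda_1+\lambda_2=K$, so your sub-exponential claim holds. The result is $\|\m{\Delta}\|_2\le 2t/\sqrt{K}$ for every $t\le\frac{1}{2}$, with probability at least $1-2N\exp\{-Lt^2/(c_1+c_2t)\}$. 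This implies the stated lemma, and it would remove the $\kappa^8$ factor that this lemma feeds into the sample-size condition of Theorem~\ref{thm-err-bound-general}.

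The only advantage of the paper's route is generality. Its perturbation argument does not need the preconditioner and the filter to be simultaneously diagonalizable, so it would survive in models without exact circulant structure, at the cost of condition-number factors. In the circulant setting of this paper, your argument is strictly better.
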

\begin{proof}
The proof is inspired by that of \cite[Lemma~2]{shi2021manifold}. The key distinction is that our analysis explicitly exploits the joint sparsity structure of the input signals, which leads to a sharper bound on $\left\|\m{\Delta}\right\|_2$. The main idea is to first establish a relationship between $\left\|\m{\Delta}\right\|_2$ and $\left\|\frac{1}{KL}\sum_{l=1}^L\mathcal{C}\left(\m{x}_l\right)^\top\mathcal{C}\left(\m{x}_l\right)-\m{I}\right\|_2$ and then characterize the latter quantity using the DFT diagonalization property of circulant matrices \eqref{DFT diagonalization} established in Lemma~\ref{lemma-circulant convolution}. The detailed proof is provided in Section~\ref{pf-lem-tail-Delta} of Supplementary Material.
\end{proof}

Based on Lemma~\ref{lem-tail-Delta}, we investigate in the remainder of this subsection the geometric landscape of the objective function in \eqref{opt-general-reparameterize} over the unit sphere in the finite-sample regime. Similar to the unit-impulse-filter case, we show that all approximate second-order stationary points of problem~\eqref{opt-general-reparameterize} lie sufficiently close to the target solution when the sample size is sufficiently large. The Riemannian gradient of $\psi_\varepsilon$ is given by
\begin{equation}
\label{grad-general}
\begin{aligned}
\text{grad}\;\psi_\varepsilon\left(\m{h}\right)=&\sum_{n=1}^N\left[\m{h}^\top\left(\frac{1}{\sqrt{K}}\m{I}+\m{\Delta}\right)^\top\m{\Gamma}_n^\top\left(\m{\Sigma}_{\Omega}+\m{E}_{\Omega}\right)\m{\Gamma}_n\left(\frac{1}{\sqrt{K}}\m{I}+\m{\Delta}\right)\m{h}+\varepsilon\right]^{-\frac{1}{2}}\\
&\cdot\left(\m{I}-\m{h}\m{h}^\top\right)\left(\frac{1}{\sqrt{K}}\m{I}+\m{\Delta}\right)^\top\m{\Gamma}_n^\top\left(\m{\Sigma}_{\Omega}+\m{E}_{\Omega}\right)\m{\Gamma}_n\left(\frac{1}{\sqrt{K}}\m{I}+\m{\Delta}\right)\m{h}.
\end{aligned}
\end{equation}
Consider a vector $\m{h}\in\mathbb{S}^{N-1}$ satisfying \eqref{prop-max}, and suppose that $\m{h}$ is an approximate first-order stationary point, i.e.,
\begin{equation}
\label{prop-first-order stationary-general}
\begin{aligned}
\sqrt{g_{\m{h}}\left(\text{grad}\;\psi_\varepsilon\left(\m{h}\right),\text{grad}\;\psi_\varepsilon\left(\m{h}\right)\right)}=\left\|\text{grad}\;\psi_\varepsilon\left(\m{h}\right)\right\|_2\leq\xi.
\end{aligned}
\end{equation}
Let $\m{e}_o$ denote the canonical basis vector closest to $\m{h}$, as in Section~\ref{subsec:special}. Furthermore, let $\m{d}$, defined in \eqref{direction}, represent the direction of the Riemannian logarithmic direction from $\m{h}$ toward $\m{e}_o$. We characterize the second-order behavior of $\psi_\varepsilon$ along the direction $\m{d}$ through the quantity $\frac{\m{d}^\top\text{Hess}\;\psi_\varepsilon\left(\m{h}\right)\m{d}}{\left\|\m{d}\right\|_2^2}$. The following lemma establishes an upper bound on $\frac{\m{d}^\top\text{Hess}\;\psi_\varepsilon\left(\m{h}\right)\m{d}}{\left\|\m{d}\right\|_2^2}$ in terms of $\left\|\m{E}\right\|_\Omega$, $\left\|\m{\Delta}\right\|_2$, $\varepsilon$, and $\xi$.
\begin{lem}
\label{lem-2st-general}
Let $\m{h}$ be an approximate first-order stationary point satisfying \eqref{prop-max} and \eqref{prop-first-order stationary-general}. Suppose that $\Omega$ is non-uniform. Then there exist positive constants $c_1$, $c_2$, $c_3$, and $c_4$ such that, whenever $\left\|\m{E}_\Omega\right\|_2\leq c_1\min\left\{K^{-\frac{7}{2}}N^{-1},\varepsilon^{\frac{1}{2}}\right\}$, $\left\|\m{\Delta}\right\|_2\leq c_2\min\left\{K^{-4}N^{-1},K^{-3}N^{-\frac{3}{2}},\varepsilon^{\frac{1}{2}}K^{\frac{1}{2}}N^{-\frac{1}{2}}\right\}$, $\varepsilon\leq c_3K^{-7}N^{-2}$, and $\xi\leq c_4K^{-\frac{7}{2}}N^{-\frac{1}{2}}$, it holds that
\begin{equation}
\begin{aligned}
\frac{\m{d}^\top\text{Hess}\;\psi_\varepsilon\left(\m{h}\right)\m{d}}{\left\|\m{d}\right\|_2^2}
\leq&K^{-\frac{1}{2}}\left(1+2c_1+3c_2+c_3\right)^{-\frac{3}{2}}\frac{h_o^{2}}{\left\|\m{d}\right\|_2^2}\left(\min\left\{Kh_o^2,1-\frac{1-h_o^2}{K}\right\}\right)^{-\frac{3}{2}}\\
&\cdot\min\left\{Kh_o^2-1,-\frac{1-h_o^2}{K}\right\}\\
&+K^{-\frac{1}{2}}\frac{h_o^{2}}{\left\|\m{d}\right\|_2^2}\left(\min\left\{Kh_o^2,1-\frac{1-h_o^2}{K}\right\}\right)^{-\frac{3}{2}}\left(3K^{\frac{1}{2}}\left\|\m{\Delta}\right\|_2+2\left\|\m{E}_\Omega\right\|_2+\varepsilon K\right)\\
&+K^{-\frac{1}{2}}\frac{1}{\left\|\m{d}\right\|_2^2}\left(6c_1+3c_2+2c_3+c_4+16c_1c_2+4c_1c_3+c_2^2+2c_1c_2^2\right)K^{-3}\\
&+K^{-\frac{1}{2}}\frac{1}{\left\|\m{d}\right\|_2^2}\left(8c_1+27c_2+20c_1c_2\right)h_o^{-1}K^{-\frac{5}{2}}N^{-\frac{1}{2}},
\end{aligned}
\end{equation}
\end{lem}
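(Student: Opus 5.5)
The plan is to reduce Lemma~\ref{lem-2st-general} to the special-case Lemma~\ref{lem-2st} by treating $\m{\Delta}$ as a perturbation. Write $\m{A}=\frac{1}{\sqrt{K}}\m{I}+\m{\Delta}$ and $\m{M}_n=\m{A}^\top\m{\Gamma}_n^\top(\m{\Sigma}_\Omega+\m{E}_\Omega)\m{\Gamma}_n\m{A}$. The objective \eqref{objective-general} then reads $\psi_\varepsilon(\m{h})=\sum_n(\m{h}^\top\m{M}_n\m{h}+\varepsilon)^{1/2}$.

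\textbf{Step 1: Hessian along $\m{d}$.} On the sphere, the Riemannian Hessian quadratic form along the tangent vector $\m{d}$ is $\m{d}^\top\nabla^2\psi_\varepsilon\m{d}-(\m{h}^\top\nabla\psi_\varepsilon)\|\m{d}\|_2^2$. Hence
\begin{equation*}
\m{d}^\top\text{Hess}\,\psi_\varepsilon\m{d}=\sum_n\left[\frac{\m{d}^\top\m{M}_n\m{d}-\m{h}^\top\m{M}_n\m{h}\,\|\m{d}\|_2^2}{(\m{h}^\top\m{M}_n\m{h}+\varepsilon)^{1/2}}-\frac{(\m{d}^\top\m{M}_n\m{h})^2}{(\m{h}^\top\m{M}_n\m{h}+\varepsilon)^{3/2}}\right].
\end{equation*}
I would split $\m{M}_n=\frac1K\m{\Gamma}_n^\top\m{\Sigma}_\Omega\m{\Gamma}_n+\m{P}_n$. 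The perturbation $\m{P}_n$ collects the $\m{E}_\Omega$ terms and the terms linear and quadratic in $\m{\Delta}$. Its size satisfies $\|\m{P}_n\|\lesssim \frac1K\|\m{E}_\Omega\|_2+\frac{2}{\sqrt K}\|\m{\Delta}\|_2+\|\m{\Delta}\|_2^2$. More usefully, $\m{P}_n$ acts on $\m{h}$ only through $\mathcal{P}_\Omega(\m{\Gamma}_n\m{h})$ and $\mathcal{P}_\Omega(\m{\Gamma}_n\m{\Delta}\m{h})$, which allows summation over $n$ with only a $\sqrt{N}$ loss rather than $N$.

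Factoring out $K^{-1/2}$ turns the ideal part into exactly the special-case Hessian with smoothing $K\varepsilon$, as in \eqref{opt-delta=0}. This explains the $K^{-1/2}$ prefactor and the $\varepsilon K$ term in the claim.

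\textbf{Step 2: main negative term.} Follow the argument of Lemma~\ref{lem-2st} for the ideal part. This yields the dominant term $h_o^2\|\m{d}\|^{-2}(\min\{Kh_o^2,1-\frac{1-h_o^2}{K}\})^{-3/2}\min\{Kh_o^2-1,-\frac{1-h_o^2}{K}\}$.

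The key ingredient is Lemma~\ref{lem-diff}. It locates an index $n\in\chi_o$ at which $\m{h}^\top\m{\Gamma}_n^\top\m{\Sigma}_\Omega\m{\Gamma}_n\m{h}$ is small while $\m{d}^\top\m{M}_n\m{h}\approx h_o\cdot(\text{coefficient})$. That index supplies the strongly negative $-(\m{d}^\top\m{M}_n\m{h})^2/(\cdot)^{3/2}$ contribution.

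The denominators are then lower-bounded by $(1+2c_1+3c_2+c_3)^{-3/2}$ times their ideal values. Under the stated smallness conditions, $\|\m{E}_\Omega\|_2\le c_1\varepsilon^{1/2}$ and $\|\m{\Delta}\|_2\lesssim\varepsilon^{1/2}$ make the perturbed quantities $\m{h}^\top\m{M}_n\m{h}+\varepsilon$ comparable to $\frac1K\m{h}^\top\m{\Gamma}_n^\top\m{\Sigma}_\Omega\m{\Gamma}_n\m{h}+\varepsilon$ multiplicatively. This is exactly why the $\varepsilon^{1/2}$ terms appear in the hypotheses.

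\textbf{Step 3: using first-order stationarity.} I would use \eqref{prop-first-order stationary-general} as in Lemma~\ref{lem-2st}. Pairing $\text{grad}\,\psi_\varepsilon(\m{h})$ with suitable vectors (e.g.\ $\m{e}_o$ and $\m{d}$) controls the term $\m{h}^\top\nabla\psi_\varepsilon$ and the cross sums $\sum_n\m{d}^\top\m{M}_n\m{h}/(\cdot)^{1/2}$. This produces the $c_4K^{-3}$ contributions.

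\textbf{Step 4: remaining error terms.} Bound every remaining term by Cauchy--Schwarz over $n$, using $\sum_n\|\mathcal{P}_\Omega(\m{\Gamma}_n\m{v})\|_2^2=K\|\m{v}\|_2^2$. Terms carrying one factor of $(\cdot)^{-1/2}$ are controlled through the bound on $\varepsilon$, giving $K^{-3}$-order remainders. The cross terms between $\m{\Delta}$ and small denominators produce the $h_o^{-1}K^{-5/2}N^{-1/2}$ term, using $h_o\ge N^{-1/2}$ from \eqref{prop-max}.

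\textbf{Main obstacle.} The hard part will be the $(\m{d}^\top\m{M}_n\m{h})^2/(\m{h}^\top\m{M}_n\m{h}+\varepsilon)^{3/2}$ terms for indices $n$ where $\m{h}^\top\m{\Gamma}_n^\top\m{\Sigma}_\Omega\m{\Gamma}_n\m{h}$ is tiny. There the $\m{\Delta}$-induced leakage is not small relative to the denominator.

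I would first try a case split on $n$: $\chi_{\m{h}}$ versus its complement. On the complement, $\m{\Gamma}_n\m{h}$ vanishes on $\Omega$, so $\m{M}_n\m{h}$ is purely $O(\|\m{\Delta}\|_2+\|\m{E}_\Omega\|_2)$. The denominator is then at least $\varepsilon$, so these terms are bounded by $\|\m{\Delta}\|_2^2N/\varepsilon^{1/2}$-type quantities. This is precisely what the condition $\|\m{\Delta}\|_2\le c_2\varepsilon^{1/2}K^{1/2}N^{-1/2}$ is designed to absorb.
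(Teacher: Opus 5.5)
Your scaffolding matches the paper's proof. You expand the Hessian along $\m{d}$, split off the $\m{E}_\Omega$- and $\m{\Delta}$-parts, use Lemma~\ref{lem-diff} and the $o$-th gradient coordinate, and pay $\varepsilon^{-1/2}$ at indices where only $q_n:=\m{h}^\top\m{M}_n\m{h}+\varepsilon\geq\varepsilon$ is available. However, Step~2 misidentifies where the leading term comes from, and as you describe it, it does not give the stated bound. At the index supplied by Lemma~\ref{lem-diff}, the ideal part gives $\m{d}^\top\m{M}_n\m{h}\approx\frac{h_o}{K}(1-m_n)$, where $m_n=\m{h}^\top\m{\Gamma}_n^\top\m{\Sigma}_\Omega\m{\Gamma}_n\m{h}\leq\mu:=\min\{Kh_o^2,1-\frac{1-h_o^2}{K}\}$. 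So $-(\m{d}^\top\m{M}_n\m{h})^2/q_n^{3/2}$ at that index contributes only about $-K^{-1/2}\frac{h_o^2}{\|\m{d}\|_2^2}\mu^{-3/2}(1-\mu)^2$. That is one power of $1-\mu$ short of the claimed $K^{-1/2}\frac{h_o^2}{\|\m{d}\|_2^2}\mu^{-3/2}(\mu-1)$. The loss is not cosmetic. For $h_o^2\in(1-\frac{1}{4K},1-\frac{1}{4K^2}]$ one has $1-\mu=\|\m{d}\|_2^2/K$, so your term is $O(K^{-5/2}\|\m{d}\|_2^2)=O(K^{-7/2})$. Meanwhile the lemma's error terms, of size $K^{-7/2}\|\m{d}\|_2^{-2}$, reach order $K^{-3/2}$ in that range, so your negative term is swamped.

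The missing idea is that first-order stationarity builds the main term; it does not merely produce the $c_4$ remainder. After substituting $\m{d}=\m{e}_o-h_o\m{h}$, the $h_o$-cross terms cancel up to terms carrying a factor $\varepsilon$. What remains is $\frac{1}{\|\m{d}\|_2^2}\sum_n\frac{\m{e}_o^\top\m{M}_n\m{e}_o}{q_n^{1/2}}-\sum_n\frac{\m{h}^\top\m{M}_n\m{h}}{q_n^{1/2}}-\frac{1}{\|\m{d}\|_2^2}\sum_n\frac{(\m{e}_o^\top\m{M}_n\m{h})^2}{q_n^{3/2}}$ plus residuals. Lemma~\ref{lem-1st-general}, which is the $o$-th gradient coordinate, trades the middle sum for $h_o^{-1}\sum_n\m{e}_o^\top\m{M}_n\m{h}/q_n^{1/2}$. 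Modulo $\m{\Delta}$- and $\m{E}_\Omega$-corrections, both this and the first sum equal $\frac{1}{K}\sum_{n\in\chi_o}q_n^{-1/2}$. The identity $\frac{1}{\|\m{d}\|_2^2}-1=\frac{h_o^2}{\|\m{d}\|_2^2}$ then turns everything into $\frac{h_o^2}{K\|\m{d}\|_2^2}\sum_{n\in\chi_o}\left(q_n^{-1/2}-K^{-1}q_n^{-3/2}\right)$. Since $q_n\lesssim 1/K$, each summand is at most perturbation size. Lemma~\ref{lem-diff}, together with monotonicity of $x\mapsto x^{-1/2}-K^{-1}x^{-3/2}$ on $(0,3/K)$, bounds the smallest-$q_n$ summand by $K^{1/2}\mu^{-3/2}(\mu-1)$, which gives exactly the stated leading term.

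Relatedly, your ``main obstacle'' is not one. The square terms carry a minus sign, so for an upper bound you can simply drop them at the bad indices. The pieces that genuinely cost $\varepsilon^{-1/2}$ are those that can be positive:
\begin{itemize}
\item the $\m{\Delta}$-leakage in $\m{e}_o^\top\m{M}_n\m{e}_o/q_n^{1/2}$ for $n\notin\chi_o$;
\item the term $-2\varepsilon h_o\m{h}^\top\m{M}_n\m{e}_o/q_n^{3/2}$;
\item the $\m{\Sigma}_\Omega$--$\m{E}_\Omega$ cross term inside the expanded square.
\end{itemize}
These are what the $\varepsilon^{1/2}$ hypotheses absorb.
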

\begin{proof}
This proof generalizes that of Lemma~\ref{lem-2st} to the general filter setting. The main challenge arises from the additional perturbation introduced by $\m{\Delta}$, which significantly alters the structure of the objective function. The key step is to carefully analyze the spectral properties of $\left(\frac{1}{\sqrt{K}}\m{I}+\m{\Delta}\right)^\top\m{\Gamma}_n^\top\m{\Sigma}_{\Omega}\m{\Gamma}_n\left(\frac{1}{\sqrt{K}}\m{I}+\m{\Delta}\right)$ under the influence of $\m{\Delta}$. The detailed proof is provided in Section~\ref{pf-lem-2st-general} of Supplementary Material.
\end{proof}

Based on Lemma~\ref{lem-2st-general}, the following proposition establishes that an approximate first-order stationary point that is sufficiently far from every canonical basis vector cannot be a second-order stationary point.
\begin{prop}
\label{prop-2st-general}
Let $\m{h}$ be an approximate first-order stationary point satisfying \eqref{prop-max} and \eqref{prop-first-order stationary-general}. Suppose that $\Omega$ is non-uniform and $h_o\in\left(\sqrt{\frac{1}{N}},\sqrt{1-\frac{1}{4K^2}}\right]$. Then there exist positive constants $c_1$, $c_2$, $c_3$, $c_4$, and $c_5$ such that, whenever $\left\|\m{E}_\Omega\right\|_2\leq c_1\min\left\{K^{-\frac{7}{2}}N^{-1},\varepsilon^{\frac{1}{2}}\right\}$, $\left\|\m{\Delta}\right\|_2\leq c_2\min\left\{K^{-4}N^{-1},K^{-3}N^{-\frac{3}{2}},\varepsilon^{\frac{1}{2}}K^{-\frac{1}{2}},\varepsilon^{\frac{1}{2}}K^{\frac{1}{2}}N^{-\frac{1}{2}}\right\}$, $\varepsilon\leq c_3K^{-7}N^{-2}$, and $\xi\leq c_4K^{-\frac{7}{2}}N^{-\frac{1}{2}}$, it holds that
\begin{equation}
\begin{aligned}
\frac{\m{d}^\top\text{Hess}\;\psi_\varepsilon\left(\m{h}\right)\m{d}}{\left\|\m{d}\right\|_2^2}\leq-c_5K^{-\frac{7}{2}}.
\end{aligned}
\end{equation}
\end{prop}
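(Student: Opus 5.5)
The plan is to specialize the bound of Lemma~\ref{lem-2st-general} to the range $h_o\in\left(\sqrt{\frac{1}{N}},\sqrt{1-\frac{1}{4K^2}}\right]$, mirroring how Propositions~\ref{thm-case1}, \ref{thm-case2-1}, and \ref{thm-case2-2} were obtained from Lemma~\ref{lem-2st}. The hypotheses on $\left\|\m{E}_\Omega\right\|_2$, $\left\|\m{\Delta}\right\|_2$, $\varepsilon$, and $\xi$ contain those of Lemma~\ref{lem-2st-general}. The extra condition $\left\|\m{\Delta}\right\|_2\leq c_2\varepsilon^{\frac{1}{2}}K^{-\frac{1}{2}}$ will be used to absorb the $\m{\Delta}$-terms against the $\varepsilon$-scale. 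So the bound of Lemma~\ref{lem-2st-general} is available, and the task is to show that its first (negative) term dominates the remaining three.

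I would split into the same three regimes as before.

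\textbf{Case (i): $h_o\leq\sqrt{\frac{1}{K+1}}$.} Here the minimum equals $Kh_o^2$. The main term is $K^{-\frac{1}{2}}(1+\cdots)^{-\frac{3}{2}}\frac{h_o^2}{\|\m{d}\|_2^2}(Kh_o^2)^{-\frac{3}{2}}(Kh_o^2-1)$. Since $Kh_o^2-1\leq -\frac{1}{K+1}$, $\|\m{d}\|_2^2\leq1$, and $h_o^2(Kh_o^2)^{-\frac{3}{2}}=K^{-\frac{3}{2}}h_o^{-1}\geq K^{-\frac{3}{2}}$, this term is at most $-cK^{-3}h_o^{-1}$.

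\textbf{Case (ii): $\sqrt{\frac{1}{K+1}}<h_o\leq\sqrt{1-\frac{1}{4K}}$.} The minimum is $1-\frac{1-h_o^2}{K}\in[\frac12,1]$, and $\frac{1-h_o^2}{K}\geq\frac{1}{4K^2}$. Together with $h_o^2\geq\frac{1}{K+1}$, the main term is at most $-cK^{-\frac{1}{2}}K^{-1}K^{-2}\cdot\frac{1}{\|\m{d}\|_2^2}$.

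\textbf{Case (iii): $\sqrt{1-\frac{1}{4K}}<h_o\leq\sqrt{1-\frac{1}{4K^2}}$.} We have $h_o^2\geq\frac{3}{4}$, and $\frac{1-h_o^2}{\|\m{d}\|_2^2}=1$ exactly because $\|\m{d}\|_2^2=1-h_o^2$. The main term is therefore $\leq -cK^{-\frac{1}{2}}K^{-1}$, which is better than needed.

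In all cases the main term is at most $-cK^{-\frac{7}{2}}/\|\m{d}\|_2^2$, or better. I would also keep track of the factor $h_o^{-1}$ in Case (i), since it is needed to beat the last error term.

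Next I would bound the three error terms one at a time.

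The second term carries the same prefactor $K^{-\frac{1}{2}}\frac{h_o^2}{\|\m{d}\|_2^2}(\min)^{-\frac{3}{2}}$ as the main term, multiplied by $3K^{\frac{1}{2}}\|\m{\Delta}\|_2+2\|\m{E}_\Omega\|_2+\varepsilon K$. By the hypotheses this factor is $O\big((c_1+c_2+c_3)K^{-\frac{7}{2}}N^{-1}\big)$. The relevant comparison is with $|\min\{Kh_o^2-1,-\frac{1-h_o^2}{K}\}|\geq\frac{1}{4K^2}$ in Cases (ii)--(iii), and $\geq\frac{1}{K+1}$ in Case (i). So for small constants this term is at most a fraction of the main term.

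The third term is $O\big((c_1+\dots+c_4)K^{-\frac{7}{2}}/\|\m{d}\|_2^2\big)$. It is absorbed directly by taking the constants small relative to the main term.

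The fourth term is $O\big((c_1+c_2)h_o^{-1}K^{-3}N^{-\frac{1}{2}}/\|\m{d}\|_2^2\big)$. In Cases (ii)--(iii), $h_o^{-1}\leq\sqrt{K+1}$ makes it $O\big((c_1+c_2)K^{-\frac{5}{2}}N^{-\frac{1}{2}}\big)$, which must be compared with the main term's $K^{-\frac{7}{2}}$. This is fine provided $N\geq K^2$-type slack, or by using the sharper Case (ii)/(iii) constants; I would verify this carefully. In Case (i) it is compared against $cK^{-3}h_o^{-1}$, and the common factor $h_o^{-1}$ cancels.

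Finally, multiply through by $\|\m{d}\|_2^2\leq1$ to conclude $\leq -c_5K^{-\frac{7}{2}}$.

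The main obstacle I expect is the fourth error term in Case (ii) near $h_o\approx K^{-\frac{1}{2}}$. There the main term is only of order $K^{-\frac{7}{2}}$, while the error is of order $K^{-\frac{5}{2}}N^{-\frac{1}{2}}$. If the naive estimate fails, I would first try re-deriving the main term in that sub-range using $\frac{1-h_o^2}{K}\geq\frac{1}{2K}$, which holds for $h_o^2\leq\frac12$. That improves the main term to order $K^{-\frac{5}{2}}$ and makes the comparison work with small $c_1,c_2$. For $h_o^2>\frac12$, $h_o^{-1}$ is bounded, and the error term is $O\big((c_1+c_2)K^{-3}N^{-\frac12}\big)$, which is absorbed by $K^{-\frac{7}{2}}$ as long as $N\gtrsim K$. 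I would handle the residual regime through the constant choice in the hypotheses, if needed.
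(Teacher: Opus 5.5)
Your proposal is correct and takes the same route as the paper, whose proof reruns the three-regime argument of Propositions~\ref{thm-case1}, \ref{thm-case2-1}, and \ref{thm-case2-2} on the bound of Lemma~\ref{lem-2st-general}; your version is more explicit than the paper's one-line reduction. The Case (ii) worry is not a real obstacle: on that range $h_o^2\left(1-h_o^2\right)\geq\frac{3}{16K}$, which already makes the main term at most $-cK^{-\frac{5}{2}}/\left\|\m{d}\right\|_2^2$, and in any case $N\geq K$ always holds because $\Omega\subseteq\left\{1,\cdots,N\right\}$ (also, in Case (iii) the correct lower bound is $\frac{1-h_o^2}{K}\geq\frac{1}{4K^3}$ rather than $\frac{1}{4K^2}$, which is harmless since the perturbation factor is $O\left(K^{-\frac{7}{2}}N^{-1}\right)$).
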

\begin{proof}
The proof follows from those of Propositions~\ref{thm-case1}, \ref{thm-case2-1}, and \ref{thm-case2-2}, together with Corollary~\ref{cor-case1,2-1,2-2}, by observing that
 $\left\|\m{E}_\Omega\right\|_2\leq c_1K^{-\frac{5}{2}}\max^{-1}\left\{N,KN^{\frac{1}{2}},K^2\right\}$, $\left\|\m{\Delta}\right\|_2\leq K^{-\frac{1}{2}}\left\|\m{E}_\Omega\right\|_2$, $\varepsilon\leq c_2K^{-6}N^{-2}$, and $\xi\leq c_3K^{-3}N^{-\frac{1}{2}}$.
\end{proof}

Proposition~\ref{prop-2st-general} indicates that no approximate second-order stationary points exist unless $h_o>\sqrt{1-\frac{1}{4K^2}}$. We next investigate the distance between an approximate first-order stationary point and a canonical basis vector in the regime $h_o>\sqrt{1-\frac{1}{4K^2}}$. The following proposition characterizes the result.
\begin{prop}
\label{thm-case3-general}
Let $\m{h}$ be an approximate first-order stationary point satisfying \eqref{prop-max} and \eqref{prop-first-order stationary-general}. Suppose that $\Omega$ is non-uniform and $h_o\in\left(\sqrt{1-\frac{1}{4K^2}},1\right]$. There exist positive constants $c_1$, $c_2$, $c_3$, and $c_4$ such that, whenever $\left\|\m{E}_\Omega\right\|_2\leq c_1K^{-2}$, $\left\|\m{\Delta}\right\|_2\leq c_2K^{-\frac{5}{2}}$, and $\varepsilon\leq c_3K^{-3}$, it holds that
\begin{equation}
\begin{aligned}
\left\|\m{h}-\m{e}_o\right\|_2\leq&c_4K^{-\frac{3}{2}}\left[\xi+2N\left(\left\|\m{E}_\Omega\right\|_2+\left\|\m{\Delta}\right\|_2\right)+\varepsilon^{-\frac{1}{2}}K^{-\frac{1}{2}}N\left\|\m{\Delta}\right\|_2\right].
\end{aligned}
\end{equation}
\end{prop}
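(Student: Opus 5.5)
The plan is to derive a local error bound from first-order stationarity alone, following the argument behind Proposition~\ref{thm-case3} but carrying the extra perturbation $\m{\Delta}$. Write $\m{A}=\frac{1}{\sqrt{K}}\m{I}+\m{\Delta}$ and $\m{M}_n=\m{\Gamma}_n^\top(\m{\Sigma}_\Omega+\m{E}_\Omega)\m{\Gamma}_n$. Split $\text{grad}\;\psi_\varepsilon(\m{h})$ in \eqref{grad-general} into the idealized gradient (with $\m{\Delta}=\m{0}$, $\m{E}_\Omega=\m{0}$) plus a remainder. The idealized gradient is $\frac{1}{\sqrt{K}}$ times the gradient of \eqref{opt-special-asy} with smoothing $K\varepsilon$. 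Here each $\m{D}_n=\m{\Gamma}_n^\top\m{\Sigma}_\Omega\m{\Gamma}_n$ is diagonal with $0/1$ entries, so the $m$-th coordinate of the idealized Euclidean gradient is $w_m h_m$ with $w_m=\frac{1}{\sqrt{K}}\sum_n (D_n)_{mm}\,(\m{h}^\top\m{D}_n\m{h}/K+\varepsilon)^{-1/2}/K$.

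\textbf{Step 1 (the idealized gradient has a gap).} Since $h_o>\sqrt{1-\frac{1}{4K^2}}$, the mass off coordinate $o$ is at most $\frac{1}{4K^2}$. Hence for $n\in\chi_o$ we have $\m{h}^\top\m{D}_n\m{h}\approx1$, and for $n\notin\chi_o$ we have $\m{h}^\top\m{D}_n\m{h}\le\frac{1}{4K^2}$. For $m\neq o$, non-uniformity implies that the index sets $\{n:(D_n)_{mm}=1\}$ and $\chi_o$ differ, so at least one $n\in\chi_m\setminus\chi_o$ contributes a term of size $(\le\frac{1}{4K^3}+\varepsilon)^{-1/2}\gtrsim K^{3/2}$. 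This holds provided $\varepsilon\le c_3K^{-3}$, and it is exactly where that hypothesis enters. By contrast, $w_o\approx K^{-1/2}\cdot K\cdot K^{-1/2}=O(1)$. The tangential component $(w_m-\lambda)h_m$ with $\lambda=\m{h}^\top\nabla\approx w_o$ therefore has magnitude $\gtrsim K^{3/2}\cdot K^{-1/2}|h_m|$, after accounting for normalization. Summing over $m\ne o$ gives $\|\text{ideal grad}\|_2\ge cK^{3/2}\|\m{h}_{\setminus o}\|_2$. Since $\|\m{h}-\m{e}_o\|_2\le\sqrt2\|\m{h}_{\setminus o}\|_2$, the factor $K^{-3/2}$ in the statement follows. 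I will need to recheck the exact $K$ powers against the target $c_4K^{-3/2}$.

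\textbf{Step 2 (bounding the remainder).} The remainder has two parts.

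The $\m{E}_\Omega$ part changes both numerators and the square-root weights. Numerators change by at most $\frac1K\|\m{E}_\Omega\|_2$ per term. Denominators are bounded below by $\sqrt{\varepsilon}$, but the change in $(\cdot)^{-1/2}$ is controlled relative to the numerator it multiplies, because $\m{D}_n\m{h}$ is small exactly when the weight is large. Summing over $N$ terms gives $O(N\|\m{E}_\Omega\|_2)$, and $\|\m{E}_\Omega\|_2\le c_1K^{-2}$ keeps the perturbed weights comparable to the ideal ones.

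The $\m{\Delta}$ part is handled the same way for the $\m{A}^\top\m{M}_n\m{A}$ numerators, yielding $O(N\|\m{\Delta}\|_2)$. It also perturbs the scalar $\m{h}^\top\m{A}^\top\m{M}_n\m{A}\m{h}$ inside the inverse square root by $O(K^{-1/2}\|\m{\Delta}\|_2)$. Because this is not proportional to $\m{D}_n\m{h}$, the only available control is the crude bound $|a^{-1/2}-b^{-1/2}|\le \varepsilon^{-1}|a-b|$ times a numerator of order $\sqrt{a}$. That produces the term $\varepsilon^{-1/2}K^{-1/2}N\|\m{\Delta}\|_2$.

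\textbf{Step 3 (conclusion).} Combine $cK^{3/2}\|\m{h}-\m{e}_o\|_2\le\|\text{ideal grad}\|_2\le\xi+\|\text{remainder}\|_2$ and rearrange.

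I expect the main obstacle to be Step 2's control of the inverse-square-root weights for indices $n\notin\chi_o$. There the denominators are tiny, so a naive perturbation bound blows up. I would first try the Lipschitz bound for $t\mapsto t(t^2+\varepsilon)^{-1/2}$, which is $1$-Lipschitz in $t=\|\mathcal{P}_\Omega(\m{\Gamma}_n\m{A}\m{h})\|$-type quantities, before falling back on the $\varepsilon^{-1/2}$ estimate only for the $\m{\Delta}$ cross-terms.
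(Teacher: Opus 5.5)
Step 1 is where the proposal fails, and it cannot be repaired: the inequality with $K^{-3/2}$ is false. With the correct normalization, the idealized Euclidean gradient has $m$-th coordinate $w_mh_m$, where $w_m=\frac1K\sum_{n\in\chi_m}\phi_n$, $\phi_n=(\frac{q_n}{K}+\varepsilon)^{-1/2}$ and $q_n=\m{h}^\top\m{\Gamma}_n^\top\m{\Sigma}_\Omega\m{\Gamma}_n\m{h}$. Your formula carries a spurious extra factor $K^{-1/2}$.

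With this normalization $w_o\approx K^{1/2}$, not $O(1)$. An index $n\in\chi_m\setminus\chi_o$ with $q_n\approx\frac{1}{4K^2}$ adds only $\frac1K\phi_n\approx\frac1K\cdot 2K^{3/2}=2K^{1/2}$ to $w_m$. Non-uniformity guarantees just one such index, so the best uniform gap is $w_m-w_o\ge cK^{1/2}$. That gives only $\|\text{grad}\;\psi_\varepsilon(\m{h})\|_2\gtrsim K^{1/2}\|\m{h}_{\setminus o}\|_2$; even your own product $K^{3/2}\cdot K^{-1/2}$ does not reach $K^{3/2}$.

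This rate is sharp. Take:
\begin{itemize}
\item $\Omega=\{1,\dots,K\}$ with $K<N$ and $m=o+1$, so $|\chi_m\setminus\chi_o|=1$;
\item $\m{E}_\Omega=\m{0}$, $\m{\Delta}=\m{0}$ and $0<\varepsilon\ll K^{-3}$;
\item $\m{h}=h_o\m{e}_o+h_m\m{e}_m$ with $h_m^2=\frac{1}{5K^2}$.
\end{itemize}
Then $\|\text{grad}\;\psi_\varepsilon(\m{h})\|_2=(w_m-w_o)h_oh_m\approx K^{-1/2}$, while $\|\m{h}-\m{e}_o\|_2\ge h_m=\frac{1}{\sqrt5K}$. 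Hence $\|\m{h}-\m{e}_o\|_2\approx\frac{1}{\sqrt5}K^{-1/2}\xi$, which violates $c_4K^{-3/2}\xi$ once $K>\sqrt5c_4$. Small $\xi$ does not rescue the statement: with $\varepsilon=c_3K^{-3}$ and $h_m\to0$, the ratio tends to $\approx\sqrt{c_3}K^{-1/2}$ while $\xi\to0$.

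The paper's proof makes the same slip in its last step. From $\frac1Kh_o(1-h_o^2)^{1/2}\{\cdots\}\le\xi+\cdots$ and $\{\cdots\}\ge cK^{3/2}$, one obtains $(1-h_o^2)^{1/2}\le\frac{\sqrt2}{c}K^{-1/2}[\cdots]$; the prefactor $\frac1K$ was dropped. What either route can prove is the bound with $K^{-1/2}$. This also agrees with Proposition~\ref{thm-case3}, since $\psi_\varepsilon=K^{-1/2}\psi^0_{K\varepsilon}$ when $\m{\Delta}=\m{0}$.

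For that corrected statement, two further points.

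First, your per-coordinate bound on $(w_m-\lambda)h_m$ presumes $\lambda\approx w_o$, which you do not establish and which fails in general. The quantity $\lambda-w_o=\sum_{j\neq o}(w_j-w_o)h_j^2\ge0$ can exceed $w_m-w_o$ for some $m$. This happens when the off-$o$ mass is spread over many coordinates with tiny $|h_j|$, and hence huge $w_j$. The paper avoids this by testing the gradient against $\m{d}=\m{e}_o-h_o\m{h}$. In the ideal case $\m{d}^\top\text{grad}\;\psi_\varepsilon(\m{h})=-h_o(\lambda-w_o)$, so only the one-sided gap is needed, and $cK^{1/2}h_o(1-h_o^2)\le\xi\|\m{d}\|_2=\xi(1-h_o^2)^{1/2}$ follows at once. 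The paper's bounds $\sum_{n\in\chi_o}(1-q_n)\le K(1-h_o^2)$ and \eqref{out-concentrate} are the summed form of this, and every remainder term automatically carries the factor $\|\m{d}\|_2$.

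Second, the obstacle you anticipate in Step 2 is sidestepped in the paper rather than solved. The paper never compares true weights with idealized ones. Instead it bounds the true denominators uniformly:
\begin{itemize}
\item on $\chi_o$, from below by $[\frac{h_o^2}{K}-\frac{2\|\m{\Delta}\|_2}{\sqrt K}-\cdots]^{1/2}$;
\item off $\chi_o$, from above by $[\frac{1-h_o^2}{K}+\frac{2\|\m{\Delta}\|_2}{\sqrt K}+\cdots]^{1/2}$.
\end{itemize}
This is exactly where $\|\m{E}_\Omega\|_2\le c_1K^{-2}$, $\|\m{\Delta}\|_2\le c_2K^{-5/2}$ and $\varepsilon\le c_3K^{-3}$ are used: they keep every perturbation $O(K^{-3})$. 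Only the numerators are split. The $\varepsilon^{-1/2}K^{-1/2}N\|\m{\Delta}\|_2$ term then comes from the cross term $\frac{1}{\sqrt K}\m{d}^\top\m{\Gamma}_n^\top\m{\Sigma}_\Omega\m{\Gamma}_n\m{\Delta}\m{h}$ over a denominator bounded below by $\varepsilon^{1/2}$. Note also that $t\mapsto t(t^2+\varepsilon)^{-1/2}$ is $\varepsilon^{-1/2}$-Lipschitz, not $1$-Lipschitz.
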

\begin{proof}
The proof follows the same line of argument as that of Proposition~\ref{thm-case3}, with additional analysis to account for the perturbation induced by $\m{\Delta}$. The detailed proof is provided in Section~\ref{pf-thm-case3-general} of Supplementary Material.
\end{proof}

Combining Propositions~\ref{prop-2st-general} and~\ref{thm-case3-general} yields the following result.
\begin{thm}
\label{thm-err-general}
Suppose that $\Omega$ is non-uniform and $\m{h}$ is an approximate first-order stationary point satisfying \eqref{prop-max} and \eqref{prop-first-order stationary-general}. There exist positive constants $c_1$, $c_2$, $c_3$, $c_4$, $c_5$, and $c_6$ such that, whenever $\left\|\m{E}_\Omega\right\|_2\leq c_1\min\left\{K^{-\frac{7}{2}}N^{-1},\varepsilon^{\frac{1}{2}}\right\}$, $\left\|\m{\Delta}\right\|_2\leq c_2\min\left\{K^{-4}N^{-1},K^{-3}N^{-\frac{3}{2}},\varepsilon^{\frac{1}{2}}K^{-\frac{1}{2}},\varepsilon^{\frac{1}{2}}K^{\frac{1}{2}}N^{-\frac{1}{2}}\right\}$, $\varepsilon\leq c_3K^{-7}N^{-2}$, $\xi\leq c_4K^{-\frac{7}{2}}N^{-\frac{1}{2}}$, and $\text{Hess}\;\psi_\varepsilon\left(\m{h}\right)\succeq -c_5K^{-\frac{7}{2}}\m{I}$, it holds that
\begin{equation}
\begin{aligned}
\left\|\m{h}-\m{e}_o\right\|_2\leq&c_6K^{-\frac{3}{2}}\left[\xi+2N\left(\left\|\m{E}_\Omega\right\|_2+\left\|\m{\Delta}\right\|_2\right)+\varepsilon^{-\frac{1}{2}}K^{-\frac{1}{2}}N\left\|\m{\Delta}\right\|_2\right].
\end{aligned}
\end{equation}
\end{thm}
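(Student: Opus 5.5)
The argument is a case split on the size of $h_o$, mirroring the proof of Theorem~\ref{thm-2st}. Proposition~\ref{prop-2st-general} covers the far-from-target regime and Proposition~\ref{thm-case3-general} covers the near-target regime. The only real work is to make the constants and hypotheses of the two propositions compatible and to use the Hessian condition to rule out the first regime.

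First, we fix the constants. Let $c_1,\dots,c_5$ be the constants furnished by Proposition~\ref{prop-2st-general}. Then, if necessary, we shrink $c_1,c_2,c_3$ so that the hypotheses of the theorem also imply those of Proposition~\ref{thm-case3-general}, namely $\|\m{E}_\Omega\|_2\leq c_1'K^{-2}$, $\|\m{\Delta}\|_2\leq c_2'K^{-5/2}$, and $\varepsilon\leq c_3'K^{-3}$. These implications are immediate, since $\min\{K^{-7/2}N^{-1},\varepsilon^{1/2}\}\leq K^{-7/2}\leq K^{-2}$, $\min\{K^{-4}N^{-1},\dots\}\leq K^{-4}\leq K^{-5/2}$, and $K^{-7}N^{-2}\leq K^{-3}$. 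Shrinking constants in the hypotheses of Proposition~\ref{prop-2st-general} is harmless: its conclusion remains valid with the same $c_5$, because the hypotheses only become stronger.

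Second, we exclude the far regime. By \eqref{prop-max}, $h_o\geq N^{-1/2}$. Suppose that $h_o\in\left(\sqrt{1/N},\sqrt{1-\frac{1}{4K^2}}\right]$. Proposition~\ref{prop-2st-general} then gives a tangent direction $\m{d}\neq\m{0}$, since $h_o<1$, with $\m{d}^\top\text{Hess}\,\psi_\varepsilon(\m{h})\m{d}\leq -c_5K^{-7/2}\|\m{d}\|_2^2$.

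To contradict the assumed $\text{Hess}\,\psi_\varepsilon(\m{h})\succeq -c_5'K^{-7/2}\m{I}$, we take the theorem's Hessian constant strictly smaller than $c_5$, for instance $c_5'=c_5/2$. The endpoint $h_o=N^{-1/2}$ is excluded from the open interval of Proposition~\ref{prop-2st-general}. We handle it either by noting that the Case~1 argument of Proposition~\ref{thm-case1} applies on the closed interval, or by the identical computation from Lemma~\ref{lem-2st-general}, whose bound is valid for all $h_o\geq N^{-1/2}$. Hence $h_o>\sqrt{1-\frac{1}{4K^2}}$.

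Third, in the remaining regime $h_o\in\left(\sqrt{1-\frac{1}{4K^2}},1\right]$, Proposition~\ref{thm-case3-general} applies directly under the verified hypotheses. It yields the stated bound with $c_6=c_4'$, the constant of that proposition.

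The main (mild) obstacle is this bookkeeping of constants. We must choose the Hessian threshold strictly below the curvature constant of Proposition~\ref{prop-2st-general} so that the contradiction is strict, and we must confirm that shrinking the hypothesis constants keeps both propositions valid simultaneously. No new analytic estimates are needed.
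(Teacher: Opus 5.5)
Your proposal is correct and follows the paper's proof. The paper uses the Hessian condition together with Proposition~\ref{prop-2st-general} to force $h_o>\sqrt{1-\frac{1}{4K^2}}$, and then applies Proposition~\ref{thm-case3-general}. Your extra bookkeeping fills small details that the paper's two-line proof glosses over: taking the Hessian threshold strictly below the curvature constant so the contradiction is strict, checking that the theorem's hypotheses imply those of Proposition~\ref{thm-case3-general}, and covering the endpoint $h_o=N^{-1/2}$ that the open interval in Proposition~\ref{prop-2st-general} omits.
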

\begin{proof}
Since $\Omega$ is non-uniform and $\m{h}$ is an approximate first-order stationary point satisfying \eqref{prop-max} and \eqref{prop-first-order stationary-general}, Proposition~\ref{prop-2st-general} implies that $h_o\in\left(\sqrt{1-\frac{1}{4K^2}},1\right]$ provided $\text{Hess}\;\psi_\varepsilon\left(\m{h}\right)\succeq -c_5K^{-\frac{7}{2}}\m{I}$. Applying Proposition~\ref{thm-case3-general} then yields the desired result.
\end{proof}

Theorem~\ref{thm-err-general} establishes an explicit relationship between the recovery error and the perturbation terms $\left\|\m{E}_\Omega\right\|_2$ and $\left\|\m{\Delta}\right\|_2$. By combining Lemmas~\ref{lem-tail} and~\ref{lem-tail-Delta} with Theorem~\ref{thm-err-general}, we further characterize the dependence of the recovery error on the sample size $L$, as stated in the following theorem.
\begin{thm}
\label{thm-err-bound-general}
Suppose that $\Omega$ is non-uniform and $\m{h}$ is an approximate first-order stationary point satisfying \eqref{prop-max} and \eqref{prop-first-order stationary-general}. There exist positive constants $c_1$, $c_2$, $c_3$, $c_4$, and $c_5$ such that, whenever $\varepsilon\leq c_2K^{-7}N^{-2}$, $\xi\leq c_3K^{-\frac{7}{2}}N^{-\frac{1}{2}}$, $\text{Hess}\;\psi_\varepsilon\left(\m{h}\right)\succeq -c_4K^{-\frac{7}{2}}\m{I}$, and
\begin{equation}
\label{general-cond-L}
\begin{aligned}
L\geq c_1\kappa^8\log^2N\left(\sqrt{K}+\sqrt{\log L}\right)^2\max\left\{K^{7}N^2,K^{5}N^{3},\varepsilon^{-1},\varepsilon^{-1}K^{-2}N\right\},
\end{aligned}
\end{equation}
it holds with probability at least $1-4L^{-8}$ that
\begin{equation}
\label{err-bound-general}
\begin{aligned}
\left\|\m{h}-\m{e}_o\right\|_2\leq&c_5K^{-\frac{3}{2}}\left(\xi+\frac{\kappa^4\varepsilon^{-\frac{1}{2}}N\log N}{K}\sqrt{\frac{\log L}{L}}\right).
\end{aligned}
\end{equation}
\end{thm}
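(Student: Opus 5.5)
This follows the template of Theorem~\ref{thm-err}: we plug the concentration bounds of Lemma~\ref{lem-tail} and Lemma~\ref{lem-tail-Delta} into Theorem~\ref{thm-err-general}, with parameters tuned so that each failure probability is at most $2L^{-8}$. We then check that \eqref{general-cond-L} makes every smallness hypothesis on $\left\|\m{E}_\Omega\right\|_2$ and $\left\|\m{\Delta}\right\|_2$ in Theorem~\ref{thm-err-general} hold. Finally we substitute the resulting bounds into its error estimate. A union bound gives the overall probability $1-4L^{-8}$.

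\emph{Bounding $\m{E}_\Omega$.} Set $\delta=\sqrt{16\log L/L}$ in Lemma~\ref{lem-tail}, as in the proof of Theorem~\ref{thm-err}. With probability at least $1-2L^{-8}$, once $L\gtrsim K+\log L$, this gives $\left\|\m{E}_\Omega\right\|_2\leq 4\left(\sqrt{K/L}+4\sqrt{\log L/L}\right)\lesssim(\sqrt{K}+\sqrt{\log L})/\sqrt{L}$. We need this to be below $c_1\min\{K^{-7/2}N^{-1},\varepsilon^{1/2}\}$. That requires $L\gtrsim(\sqrt K+\sqrt{\log L})^2\max\{K^7N^2,\varepsilon^{-1}\}$, and both terms appear in \eqref{general-cond-L}.

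\emph{Bounding $\m{\Delta}$.} Choose $t$ in Lemma~\ref{lem-tail-Delta} so that $N\exp\{-Lt^2/(c\log^2 2N+ct\log 2N)\}\leq L^{-8}$. For $t\leq 1$ the quadratic term dominates the denominator, so it suffices to take $t\asymp \log N\sqrt{\log L/L}$; the extra $\log N$ inside the square root absorbs the factor $N$ once $L\geq N$, which is implied by \eqref{general-cond-L}. We also need $t<\kappa^{-2}/2$, which holds because $L\gtrsim\kappa^8\log^2N\log L$. This gives $\left\|\m{\Delta}\right\|_2\lesssim\kappa^4K^{-1/2}\log N\sqrt{\log L/L}$. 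Requiring it to lie below $c_2\min\{K^{-4}N^{-1},K^{-3}N^{-3/2},\varepsilon^{1/2}K^{-1/2},\varepsilon^{1/2}K^{1/2}N^{-1/2}\}$ translates, after squaring, into $L\gtrsim\kappa^8\log^2N\log L\cdot\max\{K^7N^2,K^5N^3,\varepsilon^{-1},\varepsilon^{-1}K^{-2}N\}$. This matches \eqref{general-cond-L}, with $(\sqrt K+\sqrt{\log L})^2\geq\log L$ covering the $\log L$ factor.

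\emph{Conclusion.} Theorem~\ref{thm-err-general} now applies, and we bound its right-hand side. The term $\varepsilon^{-1/2}K^{-1/2}N\left\|\m{\Delta}\right\|_2\lesssim\kappa^4\varepsilon^{-1/2}K^{-1}N\log N\sqrt{\log L/L}$ is the stated one. It dominates $2N\left\|\m{\Delta}\right\|_2$ because $\varepsilon\leq1$. It also dominates $2N\left\|\m{E}_\Omega\right\|_2$: the latter is $\lesssim N(\sqrt K+\sqrt{\log L})/\sqrt L$, and since $\varepsilon\leq c K^{-7}N^{-2}$ we have $\varepsilon^{-1/2}K^{-1}\gtrsim K^{5/2}N\geq\sqrt K$, while $\kappa\geq1$ and $\log N\geq$ const. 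Absorbing constants yields \eqref{err-bound-general}.

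The main obstacle is this last comparison: checking that the $\m{E}_\Omega$ contribution, which carries a $\sqrt{K}$ without a $\log L$, is dominated by the $\varepsilon^{-1/2}$ term. A secondary point is choosing $t$ in Lemma~\ref{lem-tail-Delta} so that both the $N$ prefactor and the linear-in-$t$ denominator term are controlled. If the $\sqrt K$ mismatch were a problem, I would fall back to using $\varepsilon^{-1/2}\geq K^{7/2}N$ explicitly.
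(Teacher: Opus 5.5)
Your proposal is correct and follows the paper's proof essentially step for step. The paper also takes $\delta=\sqrt{16\log L/L}$ in Lemma~\ref{lem-tail} and chooses $t\asymp\log(2N)\sqrt{\log(2NL^8)/L}$ in Lemma~\ref{lem-tail-Delta}, which has the same order as your $t$ (the paper puts the $N$ inside $\log(2NL^8)$ rather than invoking $L\geq N$). It then checks the smallness hypotheses of Theorem~\ref{thm-err-general} from the sample-size condition, takes a union bound, and absorbs the $2N(\|\m{E}_\Omega\|_2+\|\m{\Delta}\|_2)$ terms into the $\varepsilon^{-1/2}$ term, exactly as you do.

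One small correction: dominating $2N\|\m{\Delta}\|_2$ by $\varepsilon^{-1/2}K^{-1/2}N\|\m{\Delta}\|_2$ needs $\varepsilon\lesssim K^{-1}$, not merely $\varepsilon\leq 1$. This does follow from the hypothesis $\varepsilon\leq c_2K^{-7}N^{-2}$, so the argument stands.
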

\begin{proof}
As in the derivation of \eqref{inequ-E}, setting $\delta=\sqrt{\frac{16\log L}{L}}$ and applying Lemma~\ref{lem-tail} imply that \eqref{inequ-E} holds with probability at least $1-2\exp\left\{-\frac{16\log L}{2}\right\}=1-2L^{-8}$. Moreover, there exists positive constant $c_1$ such that, if
\begin{equation}
\begin{aligned}
L\geq 16c_1^{-2}\left(\sqrt{K}+4\sqrt{\log L}\right)^2\left(\max\left\{K^{\frac{7}{2}}N,\varepsilon^{-\frac{1}{2}}\right\}\right)^2,
\end{aligned}
\end{equation}
then
\begin{equation}
\begin{aligned}
\left\|\m{E}_\Omega\right\|_2\leq4\left(\sqrt{\frac{K}{L}}+\sqrt{\frac{16\log L}{L}}\right)\leq c_1\min\left\{K^{-\frac{7}{2}}N^{-1},\varepsilon^{\frac{1}{2}}\right\}.
\end{aligned}
\end{equation}

On the other hand, Lemma~\ref{lem-tail-Delta} implies that there exist positive constants $c_2$ and $c_3$ such that, for any $t\in\left(0,\frac{1}{2}\kappa^{-2}\right)$, 
\begin{equation}
\begin{aligned}
\left\|\m{\Delta}\right\|_2\leq \frac{2\kappa^4t}{\sqrt{K}}
\end{aligned}
\end{equation}
holds with probability at least $1-2N\exp\left\{-\frac{Lt^2}{c_2\log^22N+\left(c_3\log2N\right)t}\right\}$. 
Choosing $t=\sqrt{\frac{2c_2\log^22N\log\left(2NL^8\right)}{L}}$, we observe that the condition $t\leq\min\left\{\frac{c_2}{c_3}\log2N,\frac{1}{2}\kappa^{-2}\right\}$ is satisfied whenever
\begin{equation}
\begin{aligned}
L\geq&\max\left\{\frac{c_3^2}{c_2^2\log^22N}2c_2\log^22N\log\left(2NL^8\right),8c_2\kappa^4\log^22N\log\left(2NL^8\right)\right\}\\
=&\max\left\{\frac{2c_3^2}{c_2}\log\left(2NL^8\right),8c_2\kappa^4\log^22N\log\left(2NL^8\right)\right\}.
\end{aligned}
\end{equation}
Under this condition, with probability at least $1-2N\exp\left\{-\frac{2c_2\log^22N\log\left(2NL^8\right)}{2c_2\log^22N}\right\}=1-2L^{-8}$, it holds that
\begin{equation}
\label{Delta-order}
\begin{aligned}
\left\|\m{\Delta}\right\|_2\leq \frac{2\kappa^4}{\sqrt{K}}\sqrt{\frac{2c_2\log^22N\log\left(2NL^8\right)}{L}}.
\end{aligned}
\end{equation}
Furthermore, there exists a positive constant $c_4$ such that, if
\begin{equation}
\begin{aligned}
L\geq 8c_2c_4^{-2}\kappa^8\log^22N\log\left(2NL^8\right)\left(\max\left\{K^{\frac{7}{2}}N,K^{\frac{5}{2}}N^{\frac{3}{2}},\varepsilon^{-\frac{1}{2}},\varepsilon^{-\frac{1}{2}}K^{-1}N^{\frac{1}{2}}\right\}\right)^2,
\end{aligned}
\end{equation}
then
\begin{equation}
\begin{aligned}
\left\|\m{\Delta}\right\|_2\leq c_4\min\left\{K^{-4}N^{-1},K^{-3}N^{-\frac{3}{2}},\varepsilon^{\frac{1}{2}}K^{-\frac{1}{2}},\varepsilon^{\frac{1}{2}}K^{\frac{1}{2}}N^{-\frac{1}{2}}\right\}.
\end{aligned}
\end{equation}
Applying Theorem~\ref{thm-err-general}, we conclude that there exist positive constants $c_5$, $c_6$, $c_7$, and $c_8$ such that, if $\varepsilon\leq c_5K^{-7}N^{-2}$, $\xi\leq c_6K^{-\frac{7}{2}}N^{-\frac{1}{2}}$, $\text{Hess}\;\psi_\varepsilon\left(\m{h}\right)\succeq -c_7K^{-\frac{7}{2}}\m{I}$, and
\begin{equation}
\label{L-3terms}
\begin{aligned}
L\geq \Bigg\{&16c_1^{-2}\left(\sqrt{K}+4\sqrt{\log L}\right)^2\left(\max\left\{K^{\frac{7}{2}}N,\varepsilon^{-\frac{1}{2}}\right\}\right)^2,\frac{2c_3^2}{c_2}\log\left(2NL^8\right),\\
&8c_2c_4^{-2}\kappa^8\log^22N\log\left(2NL^8\right)\left(\max\left\{K^{\frac{7}{2}}N,K^{\frac{5}{2}}N^{\frac{3}{2}},\varepsilon^{-\frac{1}{2}},\varepsilon^{-\frac{1}{2}}K^{-1}N^{\frac{1}{2}}\right\}\right)^2\Bigg\},
\end{aligned}
\end{equation}
then
\begin{equation}
\label{error-4terms}
\begin{aligned}
\left\|\m{h}-\m{e}_o\right\|_2\leq&c_8K^{-\frac{3}{2}}\Bigg[\xi+8N\left(\sqrt{\frac{K}{L}}+\sqrt{\frac{16\log L}{L}}\right)\Bigg]\\
&+c_8K^{-\frac{3}{2}}\left(2N+\varepsilon^{-\frac{1}{2}}K^{-\frac{1}{2}}N\right)\frac{2\kappa^4\log2N}{\sqrt{K}}\sqrt{\frac{2c_2\log\left(2NL^8\right)}{L}}\\
\leq&c_8K^{-\frac{3}{2}}\Bigg[\xi+\frac{20\kappa^4\varepsilon^{-\frac{1}{2}}N\log2N}{K}\sqrt{\frac{2c_2\log\left(2NL^8\right)}{L}}\Bigg].
\end{aligned}
\end{equation}
Note that the right-hand side of \eqref{L-3terms} is of order
$$
\begin{aligned}
O\left(\kappa^8\log^2N\left(\sqrt{K}+\sqrt{\log L}\right)^2\max\left\{K^{7}N^2,K^{5}N^{3},\varepsilon^{-1},\varepsilon^{-1}K^{-2}N\right\}\right),
\end{aligned}
$$
whereas the right-hand side of \eqref{error-4terms} is of order
$$
\begin{aligned}
O\left(K^{-\frac{3}{2}}\left(\xi+\frac{\kappa^4\varepsilon^{-\frac{1}{2}}N\log N}{K}\sqrt{\frac{\log L}{L}}\right)\right),
\end{aligned}
$$
which completes the proof.
\end{proof}

Similar to Theorem~\ref{thm-err} for the unit impulse filter case, Theorem~\ref{thm-err-bound-general} explicitly characterizes the recovery error achieved by approximate second-order stationary points in the general filter setting. Ignoring logarithmic factors, the sample complexity requirement in \eqref{general-cond-L} is approximately of order $O\left(\kappa^8K^6N^3\right)$ when $N\geq K^2$ and $\varepsilon$ is chosen on the order of $O\left(K^{-7}N^{-2}\right)$. A related result established in~\cite[Theorem~3]{shi2021manifold} requires a sample complexity of approximately $O\left(\kappa^8\theta^{-4}\mu^{-2}N^3\right)$. In that work, $\theta\approx K/N$ denotes the sparsity ratio, as mentioned previously, and $\mu$ is an algorithmic hyperparameter whose order does not exceed that of $\theta$. Consequently, when $\mu$ is chosen on the order of $O\left(KN^{-1}\right)$, the corresponding sample complexity requirement becomes approximately $O\left(\kappa^8K^{-6}N^9\right)$. Therefore, when $K^2\leq N$, the sample complexity condition derived in this paper is substantially less restrictive than that of \cite{shi2021manifold}. This improvement again highlights the benefits of exploiting joint sparsity in blind deconvolution, as in Section~\ref{subsec:special}.

At this stage, we have established the optimization landscape of problem \eqref{opt-general-reparameterize}. Since \eqref{opt-general-reparameterize} and \eqref{opt-ini-general} share identical first- and second-order optimality properties, this geometric characterization can be directly transferred to problem \eqref{opt-ini-general}. In the next section, we develop an algorithm for solving \eqref{opt-ini-general} based on the established optimization landscape.

\section{Algorithm}
\label{Sec:Algorithm}

In this section, we develop an algorithm for jointly sparse blind deconvolution by solving the optimization problem \eqref{opt-ini-general}. According to the optimization landscape established in Theorem~\ref{thm-err-bound-general}, successful recovery only requires finding an approximate second-order stationary point of the optimization problem on the unit sphere defined in \eqref{opt-ini-general}. Motivated by this observation, we develop an efficient algorithm with provable convergence guarantees that exploits the specific structure of the objective function.

\subsection{Riemannian Gradient Descent Algorithm with Negative Curvature Search}
Let
\begin{equation}
\begin{aligned}
\rho\left(\m{h}\right)=\sum_{n=1}^N\left(\frac{1}{L}\sum_{l=1}^L\left|\m{y}_l^\top\m{\Gamma}_n\m{R}\m{h}\right|^2+\varepsilon\right)^{\frac{1}{2}}
\end{aligned}
\end{equation}
denote the objective function in \eqref{opt-ini-general}. A straightforward calculation shows that the Riemannian gradient of $\rho$ is given by
\begin{equation}
\begin{aligned}
\text{grad}\;\rho\left(\m{h}\right)=&\sum_{n=1}^N\left(\frac{1}{L}\sum_{l=1}^L\left|\m{y}_l^\top\m{\Gamma}_n\m{R}\m{h}\right|^2+\varepsilon\right)^{-\frac{1}{2}}\left(\m{I}-\m{h}\m{h}^\top\right)\m{R}^\top\m{\Gamma}_n^\top\left(\frac{1}{L}\sum_{l=1}^L\m{y}_l\m{y}_l^\top\right)\m{\Gamma}_n\m{R}\m{h},
\end{aligned}
\end{equation}
and the Riemannian Hessian of $\rho$ is
\begin{equation}
\begin{aligned}
\text{Hess}\;\rho\left(\m{h}\right)=&\left(\m{I}-\m{h}\m{h}^\top\right)\Bigg\{\sum_{n=1}^N\left(\frac{1}{L}\sum_{l=1}^L\left|\m{y}_l^\top\m{\Gamma}_n\m{R}\m{h}\right|^2+\varepsilon\right)^{-\frac{1}{2}}\m{R}^\top\m{\Gamma}_n^\top\left(\frac{1}{L}\sum_{l=1}^L\m{y}_l\m{y}_l^\top\right)\m{\Gamma}_n\m{R}\\
&-\sum_{n=1}^N\left(\frac{1}{L}\sum_{l=1}^L\left|\m{y}_l^\top\m{\Gamma}_n\m{R}\m{h}\right|^2+\varepsilon\right)^{-\frac{3}{2}}\\
&\cdot\m{R}^\top\m{\Gamma}_n^\top\left(\frac{1}{L}\sum_{l=1}^L\m{y}_l\m{y}_l^\top\right)\m{\Gamma}_n\m{R}\m{h}\m{h}^\top\m{R}^\top\m{\Gamma}_n^\top\left(\frac{1}{L}\sum_{l=1}^L\m{y}_l\m{y}_l^\top\right)\m{\Gamma}_n\m{R}\\
&-\sum_{n=1}^N\left(\frac{1}{L}\sum_{l=1}^L\left|\m{y}_l^\top\m{\Gamma}_n\m{R}\m{h}\right|^2+\varepsilon\right)^{-\frac{1}{2}}\left[\m{h}^\top\m{R}^\top\m{\Gamma}_n^\top\left(\frac{1}{L}\sum_{l=1}^L\m{y}_l\m{y}_l^\top\right)\m{\Gamma}_n\m{R}\m{h}\right]\m{I}\Bigg\}\left(\m{I}-\m{h}\m{h}^\top\right).
\end{aligned}
\end{equation}
The point $\m{h}$ is an $\left(\epsilon_g,\epsilon_H\right)$-approximate second-order stationary point, if 
\begin{equation}
\begin{aligned}
\left\|\text{grad}\;\rho\left(\m{h}\right)\right\|_2\leq\epsilon_g,
\end{aligned}
\end{equation}
and
\begin{equation}
\begin{aligned}
\lambda_{\min}\left(\text{Hess}\;\rho\left(\m{h}\right)\right)\geq-\epsilon_H,
\end{aligned}
\end{equation}
where $\lambda_{\min}\left(\text{Hess}\;\rho\left(\m{h}\right)\right)$ denotes the smallest eigenvalue of the Riemannian Hessian, viewed as a self-adjoint linear operator on the tangent space $\text{T}_{\m{h}}\mathcal{M}$, and $\varepsilon_g>0$ and $\varepsilon_H>0$ represent the first- and second-order stationarity residuals, respectively. A variety of algorithms have been developed for computing second-order stationary points on Riemannian manifolds, including first-order methods~\cite{criscitiello2019efficiently,sun2019escaping} and second-order methods~\cite{absil2007trust}. Since the Riemannian Hessian can be computed efficiently in the present setting, we employ RGD-NCS to compute an approximate second-order stationary point. This algorithm may be viewed as a specialization of perturbed Riemannian gradient descent~\cite{criscitiello2019efficiently,sun2019escaping}, in which a direction of strictly negative curvature replaces the random perturbation. Similar ideas have been extensively studied in nonconvex optimization over Euclidean spaces~\cite{liu2018adaptive,curtis2019exploiting}; here, we extend them to the Riemannian setting. Specifically, the algorithm performs a gradient descent step whenever the length of the Riemannian gradient exceeds a prescribed threshold, thereby reducing the first-order stationarity residual. Once the gradient norm falls below this threshold, the algorithm computes the eigendirection corresponding to the smallest eigenvalue of the Riemannian Hessian. If a sufficiently negative curvature direction exists, the iterate is updated along this direction to reduce the second-order stationarity residual. Otherwise, the current iterate is declared an approximate second-order stationary point. The complete procedure is summarized in Algorithm~1. Although our numerical experiments in Section~\ref{Sec:simulation} indicate that essentially the same recovery performance is achieved by Riemannian gradient descent alone, we retain the negative-curvature search step in the proposed algorithm to facilitate the subsequent theoretical analysis. In the next subsection, we establish convergence guarantees to approximate second-order stationary points together with the corresponding reconstruction error bounds.
\begin{algorithm}[htbp]
\label{alg-rgd-nc}
    \caption{Riemannian Gradient Descent Algorithm with Negative Curvature Search (RGD-NCS)} 
    {\bf Input:}
    Observations $\left\{\m{y}_{l}\right\}_{l=1}^{L}$, initial point $\m{h}_{0}$, first-order tolerance $\epsilon_g>0$, second-order tolerance $\epsilon_H>0$, gradient descent step size $\alpha_{g}>0$, negative-curvature step size $\alpha_{H}>0$.\\
    {\bf Output:} 
    Estimate of the inverse filter representation $\widetilde{\m{g}}_{\text{inv}}$.\\
    \hspace*{0.02in} 1: \textbf{for} $t=1,\cdots$ \textbf{do}\\
    \hspace*{0.02in} 2: \hspace*{0.06in} Compute the Riemannian gradient $\text{grad}\;\rho\left(\m{h}_{t-1}\right)$;\\
    \hspace*{0.02in} 3: \hspace*{0.06in} \textbf{if} $\left\|\text{grad}\;\rho\left(\m{h}_{t-1}\right)\right\|_2>\epsilon_g$ \textbf{then}\\
    \hspace*{0.02in} 4: \hspace*{0.12in} Update $\m{h}_{t}=\dfrac{\m{h}_{t-1}-\alpha_g\text{grad}\;\rho\left(\m{h}_{t-1}\right)}{\left\|\m{h}_{t-1}-\alpha_g\text{grad}\;\rho\left(\m{h}_{t-1}\right)\right\|_2}$;\\
    \hspace*{0.02in} 5: \hspace*{0.06in} \textbf{else}\\
    \hspace*{0.02in} 6: \hspace*{0.12in} Compute the smallest eigenvalue $\lambda_{\min}\left(\text{Hess}\;\rho\left(\m{h}_{t-1}\right)\right)$ of the Riemannian Hessian and its associated unit eigenvector $\m{v}_t\in\text{T}_{\m{h}_{t-1}}\mathbb{S}^{N-1}$;\\
    \hspace*{0.02in} 7: \hspace*{0.12in} \textbf{if} $\lambda_{\min}\left(\text{Hess}\;\rho\left(\m{h}_{t-1}\right)\right)<-\epsilon_H$ \textbf{then}\\
    \hspace*{0.02in} 8: \hspace*{0.18in} Choose $\eta_t\in\left\{1,-1\right\}$ such that $\eta_tg_{\m{h}_{t-1}}\left(\text{grad}\;\rho\left(\m{h}_{t-1}\right),\m{v}_t\right)\geq0$;\\
    \hspace*{0.02in} 9: \hspace*{0.18in} Update $\m{h}_{t}=\dfrac{\m{h}_{t-1}-\alpha_H\eta_t\m{v}_t}{\left\|\m{h}_{t-1}-\alpha_H\eta_t\m{v}_t\right\|_2}$;\\
    \hspace*{0.02in} 10: \hspace*{0.12in} \textbf{else}\\
    \hspace*{0.02in} 11: \hspace*{0.18in} Set $\widetilde{\m{h}}=\m{h}_{t-1}$ and \textbf{break};\\
    \hspace*{0.02in} 12: \hspace*{0.12in} \textbf{end if}\\
    \hspace*{0.02in} 13: \hspace*{0.06in} \textbf{end if}\\
    \hspace*{0.02in} 14: \textbf{end for}\\
    \hspace*{0.02in} 15: \textbf{return} $\widetilde{\m{g}}_{\text{inv}}=\m{R}\widetilde{\m{h}}$.
\end{algorithm}

\subsection{Convergence and Error Analysis}
We now establish the convergence of the proposed algorithm. Specifically, we show that the algorithm is guaranteed to generate an iterate whose first- and second-order stationarity residuals are below any prescribed tolerances. The result is summarized in the following proposition.
\begin{prop}
\label{prop-alg}
For any prescribed tolerances $\epsilon_g>0$ and $\epsilon_H>0$, there exist a gradient descent step size $\alpha_g>0$, a negative-curvature step size $\alpha_H>0$, and a finite integer $T$ of order $O\left(\max\left\{\epsilon_g^{-2},\epsilon_H^{-3}\right\}\right)$ such that RGD-NCS returns an $\left(\epsilon_g,\epsilon_H\right)$-approximate second-order stationary point of \eqref{opt-ini-general} within at most $T$ iterations.
\end{prop}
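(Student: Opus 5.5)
The plan is a standard sufficient-decrease argument: show that every non-terminating iteration lowers $\rho$ by at least a fixed amount, and use the fact that $\rho$ is bounded below on the sphere to cap the number of such iterations.

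\emph{Step 1: lower bound and smoothness.} The objective is bounded below, since $\rho\left(\m{h}\right)\geq N\sqrt{\varepsilon}>0$. At any initial point it is finite, with $\rho\left(\m{h}_0\right)\leq\rho_{\max}:=\max_{\m{h}\in\mathbb{S}^{N-1}}\rho\left(\m{h}\right)$, which exists by compactness. Because $\varepsilon>0$, each summand $\left(\m{h}^\top\m{A}_n\m{h}+\varepsilon\right)^{1/2}$ is $C^\infty$ on a neighborhood of the sphere, where $\m{A}_n=\m{R}^\top\m{\Gamma}_n^\top\left(\frac{1}{L}\sum_l\m{y}_l\m{y}_l^\top\right)\m{\Gamma}_n\m{R}\succeq\m{0}$. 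Compactness then gives finite constants bounding the Riemannian gradient ($G$), the Hessian ($L_g$), and the Lipschitz constant of the Hessian ($L_H$). These constants depend on $\varepsilon^{-1/2}$, $\varepsilon^{-3/2}$, $\varepsilon^{-5/2}$ and $\|\m{A}_n\|_2$.

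I will use the metric-projection retraction $R_{\m{h}}\left(\m{v}\right)=\left(\m{h}+\m{v}\right)/\|\m{h}+\m{v}\|_2$, which is exactly the update used in the algorithm. For the pullback $\rho\circ R_{\m{h}}$ restricted to the tangent space, I will invoke the standard results in~\cite{boumal2023introduction} (Lemmas on second-order retractions on compact submanifolds). They give, for all tangent $\m{v}$ with $\|\m{v}\|_2\leq1$:
\begin{equation*}
\rho\left(R_{\m{h}}\left(\m{v}\right)\right)\leq\rho\left(\m{h}\right)+\left\langle\text{grad}\;\rho\left(\m{h}\right),\m{v}\right\rangle+\frac{\widetilde{L}_g}{2}\|\m{v}\|_2^2,
\end{equation*}
\begin{equation*}
\rho\left(R_{\m{h}}\left(\m{v}\right)\right)\leq\rho\left(\m{h}\right)+\left\langle\text{grad}\;\rho\left(\m{h}\right),\m{v}\right\rangle+\frac{1}{2}\left\langle\m{v},\text{Hess}\;\rho\left(\m{h}\right)\m{v}\right\rangle+\frac{\widetilde{L}_H}{6}\|\m{v}\|_2^3.
\end{equation*}
The second bound relies on the projection retraction being second order on the sphere, so that the Riemannian Hessian coincides with the Hessian of the pullback at $\m{v}=\m{0}$.

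\emph{Step 2: per-step decrease.} For a gradient step with $\alpha_g=1/\widetilde{L}_g$ and $\|\text{grad}\;\rho\|_2>\epsilon_g$, the first bound gives a decrease of at least $\epsilon_g^2/(2\widetilde{L}_g)$. For a negative-curvature step, take $\m{v}=-\alpha_H\eta_t\m{v}_t$ with $\alpha_H=\epsilon_H/\widetilde{L}_H$ (capped at $1$). The sign choice of $\eta_t$ makes the linear term nonpositive, and $\lambda_{\min}<-\epsilon_H$ makes the quadratic term at most $-\frac{1}{2}\alpha_H^2\epsilon_H$. The decrease is therefore at least $\frac{\alpha_H^2\epsilon_H}{2}-\frac{\widetilde{L}_H\alpha_H^3}{6}=\frac{\epsilon_H^3}{3\widetilde{L}_H^2}$.

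\emph{Step 3: counting.} Every iteration that does not terminate is one of these two step types, so it decreases $\rho$ by at least $\min\left\{\epsilon_g^2/(2\widetilde{L}_g),\epsilon_H^3/(3\widetilde{L}_H^2)\right\}$. Since $\rho$ cannot fall below $N\sqrt{\varepsilon}$, the number of such iterations is at most
\begin{equation*}
T=\left(\rho_{\max}-N\sqrt{\varepsilon}\right)\max\left\{2\widetilde{L}_g\epsilon_g^{-2},3\widetilde{L}_H^2\epsilon_H^{-3}\right\}=O\left(\max\left\{\epsilon_g^{-2},\epsilon_H^{-3}\right\}\right).
\end{equation*}
Once no further decrease step is taken, the algorithm has reached line~11, and both tests certify that the returned point is an $\left(\epsilon_g,\epsilon_H\right)$-approximate second-order stationary point.

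The main obstacle is Step 1: obtaining the second-order retraction model with explicit $\widetilde{L}_H$. Two points need checking. First, the Lipschitz continuity of the Euclidean Hessian on a neighborhood (say the annulus $1/2\leq\|\m{h}\|_2\leq2$) must be bounded using $\varepsilon>0$. Second, one must verify that $R_{\m{h}}$ is a second-order retraction, so that no gradient-dependent correction term appears in the cubic model. If that correction cannot be avoided, it is proportional to $\|\text{grad}\;\rho\|_2\|\m{v}\|_2^2$. In the negative-curvature branch this is at most $\epsilon_g\alpha_H^2$, and it is absorbed by additionally requiring $\epsilon_g\lesssim\epsilon_H$, which can be imposed by shrinking $\epsilon_g$ without changing the stated order.
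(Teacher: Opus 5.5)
Your proposal is correct and follows essentially the same route as the paper. Both arguments get an $\epsilon_g^2/(2L_g)$ decrease per gradient step with $\alpha_g=L_g^{-1}$, an $O(\epsilon_H^3)$ decrease per negative-curvature step from a cubic model along the second-order projection retraction with $\alpha_H\propto\epsilon_H$, and then count iterations against the lower bound of $\rho$. The differences are cosmetic: the paper Taylor-expands $\rho\circ\gamma$ with a Lagrange remainder bounded by compactness rather than citing the packaged pullback bound, takes $\alpha_H=3\epsilon_H/(2L_\gamma)$, and uses $\rho\geq 0$ instead of $\rho\geq N\sqrt{\varepsilon}$.
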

\begin{proof}
The proof relies on the smoothness of the objective function and the tractability of the Riemannian Hessian. These properties enable the algorithm to reduce the first-order stationarity residual by taking steps along the Riemannian gradient direction. Once the first-order residual becomes sufficiently small, the algorithm exploits directions of negative curvature identified from the Riemannian Hessian to decrease the second-order stationarity residual. The detailed proof is provided in Section~\ref{pf-prop-alg} of Supplementary Material.
\end{proof}

Proposition~\ref{prop-alg} establishes that an approximate second-order stationary point with arbitrarily small stationarity residuals can be obtained in a finite number of iterations. Combining this result with the geometric analysis developed in the previous subsections, we obtain the following theorem, which characterizes the filter estimation accuracy achieved by the proposed algorithm.
\begin{thm}
\label{thm-err-alg}
Suppose that $\Omega$ is non-uniform. There exist positive constants $c_1$, $c_2$, $c_3$, $c_4$, and $c_5$ such that, whenever $\varepsilon\leq c_2K^{-7}N^{-2}$ and
\begin{equation}
\label{general-cond-L}
\begin{aligned}
L\geq c_1\kappa^8\log^2N\left(\sqrt{K}+\sqrt{\log L}\right)^2\max\left\{K^{7}N^2,K^{5}N^{3},\varepsilon^{-1},\varepsilon^{-1}K^{-2}N\right\},
\end{aligned}
\end{equation}
the following statement holds with probability at least $1-4L^{-8}$: if RGD-NCS is executed with first-order tolerance $\epsilon_g\leq c_3K^{-\frac{7}{2}}N^{-\frac{1}{2}}$ and second-order tolerance $\epsilon_H\leq c_4K^{-\frac{7}{2}}$, then the resulting estimate of the inverse filter representation $\widetilde{\m{g}}_{\text{inv}}$ satisfies
\begin{equation}
\begin{aligned}
\min_{s=\pm1,n=1,\cdots,N}\left\|K^{-\frac{1}{2}}\mathcal{C}\left(\m{g}\right)^{-1}\m{e}_n-s\widetilde{\m{g}}_{\text{inv}}\right\|_2\leq \frac{c_5}{\sigma_{\min}\left(\mathcal{C}\left(\m{g}\right)\right)}\left[\kappa^4\varepsilon^{-\frac{1}{2}}K^{-\frac{5}{2}}N\log N\sqrt{\frac{\log L}{L}}+K^{-\frac{3}{2}}\epsilon_g\right].
\end{aligned}
\end{equation}
\end{thm}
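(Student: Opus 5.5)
The proof combines Proposition~\ref{prop-alg}, Theorem~\ref{thm-err-bound-general}, and the second bound of Lemma~\ref{lem-tail-Delta}, after transferring stationarity from $\rho$ to $\psi_\varepsilon$.

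\textbf{Step 1: termination.} By Proposition~\ref{prop-alg}, RGD-NCS terminates after finitely many iterations. It returns $\widetilde{\m{h}}$ with $\|\text{grad}\;\rho(\widetilde{\m{h}})\|_2\le\epsilon_g$ and $\lambda_{\min}(\text{Hess}\;\rho(\widetilde{\m{h}}))\ge-\epsilon_H$.

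\textbf{Step 2: transfer from $\rho$ to $\psi_\varepsilon$.} Since $\psi_\varepsilon(\m{h})=\rho(\m{U}^\top\m{h})$ with $\m{U}$ orthogonal, set $\m{h}^\star=\m{U}\widetilde{\m{h}}\in\mathbb{S}^{N-1}$. The Riemannian gradient and Hessian are equivariant under the isometry: $\text{grad}\,\psi_\varepsilon(\m{h}^\star)=\m{U}\,\text{grad}\,\rho(\widetilde{\m{h}})$, and $\text{Hess}\,\psi_\varepsilon(\m{h}^\star)=\m{U}\,\text{Hess}\,\rho(\widetilde{\m{h}})\,\m{U}^\top$. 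Hence $\m{h}^\star$ satisfies \eqref{prop-first-order stationary-general} with $\xi=\epsilon_g$ and $\text{Hess}\;\psi_\varepsilon(\m{h}^\star)\succeq-\epsilon_H\m{I}$. Replacing $\m{h}^\star$ by $-\m{h}^\star$ if needed (a sign absorbed by $s$), \eqref{prop-max} holds with some index $o$.

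\textbf{Step 3: landscape bound.} Choose $c_3,c_4$ below the constants of Theorem~\ref{thm-err-bound-general}. On the event of probability at least $1-4L^{-8}$ given there, under the stated conditions on $L$ and $\varepsilon$, we get
\[
\|\m{h}^\star-\m{e}_o\|_2\le c K^{-\frac{3}{2}}\Big(\epsilon_g+\kappa^4\varepsilon^{-\frac{1}{2}}K^{-1}N\log N\sqrt{\tfrac{\log L}{L}}\Big).
\]
The bound on $\|\m{\Delta}\|_2$ from \eqref{Delta-order} holds on this same event, which Step 4 needs.

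\textbf{Step 4: map back to the filter.} Note $\widetilde{\m{g}}_{\text{inv}}=\m{R}\widetilde{\m{h}}=\m{R}\m{U}^\top\m{h}^\star$. Using $\m{R}\m{U}^\top=\mathcal{C}(\m{g})^{-1}(K^{-1/2}\m{I}+\m{\Delta})$, which follows from the definition of $\m{\Delta}$, decompose
\[
K^{-\frac{1}{2}}\mathcal{C}(\m{g})^{-1}\m{e}_o-\widetilde{\m{g}}_{\text{inv}}=K^{-\frac{1}{2}}\mathcal{C}(\m{g})^{-1}(\m{e}_o-\m{h}^\star)-\mathcal{C}(\m{g})^{-1}\m{\Delta}\m{h}^\star .
\]
The first term is at most $K^{-1/2}\sigma_{\min}^{-1}\|\m{h}^\star-\m{e}_o\|_2$. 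The second is at most $\sigma_{\min}^{-1}\|\m{\Delta}\|_2$, where \eqref{Delta-order} gives $\|\m{\Delta}\|_2\lesssim\kappa^4K^{-1/2}\log N\sqrt{\log L/L}$.

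Combining, the first term yields $K^{-2}\epsilon_g+\kappa^4\varepsilon^{-1/2}K^{-3}N\log N\sqrt{\log L/L}$, both within the claimed bound. The second term must be dominated by $\kappa^4\varepsilon^{-1/2}K^{-5/2}N\log N\sqrt{\log L/L}$, which holds because $\varepsilon\le c_2K^{-7}N^{-2}$ gives $\varepsilon^{-1/2}K^{-2}N\ge1$. The minimum over $s,n$ is then bounded by the choice $s=\pm1$, $n=o$.

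\textbf{Main obstacle.} The delicate part is the exponent bookkeeping in Step 4. The target has $K^{-5/2}$ and $K^{-3/2}$, while the direct estimate gives $K^{-3}$ and $K^{-2}$, which are smaller, so the claim holds with room to spare. I would double-check that the $\|\m{\Delta}\|_2$ term is genuinely absorbed using the $\varepsilon$ condition, and that all events intersect within probability $1-4L^{-8}$.
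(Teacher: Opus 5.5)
Your proposal is correct and follows the paper's route. You combine Proposition~\ref{prop-alg} with Theorem~\ref{thm-err-bound-general} applied to $\m{U}\widetilde{\m{h}}$, use a triangle inequality with the bound \eqref{Delta-order} on $\|\m{\Delta}\|_2$ (valid on the same $1-4L^{-8}$ event), and absorb that term via $\varepsilon\le c_2K^{-7}N^{-2}$. The only difference is the split: the paper writes the error as $-\mathcal{C}(\m{g})^{-1}\m{\Delta}\m{e}_o+\mathcal{C}(\m{g})^{-1}(K^{-1/2}\m{I}+\m{\Delta})(\m{e}_o-\m{U}\widetilde{\m{h}})$ and crudely bounds $\|K^{-1/2}\m{I}+\m{\Delta}\|_2\le 2$, which is why its stated exponents are $K^{-3/2}$ and $K^{-5/2}$ rather than your sharper $K^{-2}$ and $K^{-3}$.
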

\begin{proof}
It follows from Proposition~\ref{prop-alg} that RGD-NCS produces an $\left(\epsilon_g,\epsilon_H\right)$-approximate second-order stationary point $\widetilde{\m{h}}$. Combining this result with Theorem~\ref{thm-err-bound-general}, we conclude that there exists a positive constant $c_6$ such that
\begin{equation}
\begin{aligned}
\left\|\m{U}\widetilde{\m{h}}-s\m{e}_o\right\|_2=\left\|\widetilde{\m{h}}-s\m{U}^\top\m{e}_o\right\|_2\leq&c_6K^{-\frac{3}{2}}\left(\epsilon_g+\frac{\kappa^4\varepsilon^{-\frac{1}{2}}N\log N}{K}\sqrt{\frac{\log L}{L}}\right),
\end{aligned}
\end{equation}
where $\m{e}_o$ denotes the canonical basis vector closest to $\m{U}\widetilde{\m{h}}$, and $s$ denotes the sign of the $o$-th entry of $\m{U}\widetilde{\m{h}}$. Therefore,
\begin{equation}
\begin{aligned}
\left\|sK^{-\frac{1}{2}}\mathcal{C}\left(\m{g}\right)^{-1}\m{e}_o-\widetilde{\m{g}}_{\text{inv}}\right\|_2=&\left\|sK^{-\frac{1}{2}}\mathcal{C}\left(\m{g}\right)^{-1}\m{e}_o-\m{R}\widetilde{\m{h}}\right\|_2\\
\leq&\left\|\mathcal{C}\left(\m{g}\right)^{-1}\right\|_2\left\|sK^{-\frac{1}{2}}\m{e}_o-\mathcal{C}\left(\m{g}\right)\m{R}\m{U}^\top\m{U}\widetilde{\m{h}}\right\|_2\\
\leq&\left\|\mathcal{C}\left(\m{g}\right)^{-1}\right\|_2\left\|sK^{-\frac{1}{2}}\m{e}_o-s\mathcal{C}\left(\m{g}\right)\m{R}\m{U}^\top\m{e}_o\right\|_2\\
&+\left\|\mathcal{C}\left(\m{g}\right)^{-1}\right\|_2\left\|s\mathcal{C}\left(\m{g}\right)\m{R}\m{U}^\top\m{e}_o-\mathcal{C}\left(\m{g}\right)\m{R}\m{U}^\top\m{U}\widetilde{\m{h}}\right\|_2\\
\leq&\left\|\mathcal{C}\left(\m{g}\right)^{-1}\right\|_2\left\|K^{-\frac{1}{2}}\m{I}-\mathcal{C}\left(\m{g}\right)\m{R}\m{U}^\top\right\|_2\\
&+\left\|\mathcal{C}\left(\m{g}\right)^{-1}\right\|_2\left\|\mathcal{C}\left(\m{g}\right)\m{R}\m{U}^\top\right\|_2\left\|s\m{e}_o-\m{U}\widetilde{\m{h}}\right\|_2.
\end{aligned}
\end{equation}
Furthermore, Lemma~\ref{lem-tail-Delta} implies that
\begin{equation}
\begin{aligned}
\min_{s=\pm1,n=1,\cdots,N}\left\|K^{-\frac{1}{2}}\mathcal{C}\left(\m{g}\right)^{-1}\m{e}_n-s\widetilde{\m{g}}_{\text{inv}}\right\|_2\leq&\frac{2}{\sigma_{\min}\left(\mathcal{C}\left(\m{g}\right)\right)}\left\|s\m{e}_o-\m{U}\widetilde{\m{h}}\right\|_2+\frac{1}{\sigma_{\min}\left(\mathcal{C}\left(\m{g}\right)\right)}\left\|\m{\Delta}\right\|_2.
\end{aligned}
\end{equation}
Substituting the bound in \eqref{Delta-order} into the above inequality yields that there exists a positive constant $c_7$ such that
\begin{equation}
\begin{aligned}
\min_{s=\pm1,n=1,\cdots,N}\left\|K^{-\frac{1}{2}}\mathcal{C}\left(\m{g}\right)^{-1}\m{e}_n-s\widetilde{\m{g}}_{\text{inv}}\right\|_2\leq&\frac{2}{\sigma_{\min}\left(\mathcal{C}\left(\m{g}\right)\right)}c_6K^{-\frac{3}{2}}\left(\epsilon_g+\frac{\kappa^4\varepsilon^{-\frac{1}{2}}N\log N}{K}\sqrt{\frac{\log L}{L}}\right)\\
&+\frac{1}{\sigma_{\min}\left(\mathcal{C}\left(\m{g}\right)\right)}\frac{2\kappa^4}{\sqrt{K}}\sqrt{\frac{2c_7\log^22N\log\left(2NL^8\right)}{L}}.
\end{aligned}
\end{equation}
Under the condition $\varepsilon\leq c_2K^{-7}N^{-2}$, the second term is of lower order than the first and can therefore be absorbed into the leading term. The desired result then follows by redefining the constant $c_5$, which completes the proof.
\end{proof}

Theorem~\ref{thm-err-alg} shows that, provided the sample size is sufficiently large, the estimation error can be made arbitrarily small through an appropriate choice of the smoothing parameter $\varepsilon$ and the algorithmic tolerances $\epsilon_g$ and $\epsilon_H$.

\section{Extension to the Complex-valued Setting}
\label{Sec:extension}


Jointly sparse blind deconvolution in the complex domain arises in a wide range of applications in array and radar signal processing~\cite{267014,757212,7779126,10535469,eldar2020sensor,vargas2023dual,jacome2024multi,4799379}. In this setting, the signal model is the complex-valued counterpart of \eqref{circulant-convolution}, where the input signals $\left\{\m{x}_{l}\right\}_{l=1}^L$, the filter $\m{g}$, and the observations $\left\{\m{y}_{l}\right\}_{l=1}^L$ are all complex-valued. In this section, we show that the optimization framework and algorithm developed for the real-valued problem can be naturally extended to the complex-valued setting.



We first formulate the optimization problem for complex-valued jointly sparse blind deconvolution, following the optimization framework developed in Section~\ref{Sec:mean-result}. Similar to the framework~\eqref{ini-opt-0} in the real-valued setting, we propose the following formulation:
\begin{equation}
\label{ini-opt-complex}
\begin{aligned}
\min_{\m{h}\in\mathbb{S}_{\text{C}}^{N-1},\m{Z}\in\mathbb{C}^{N\times L}}\;&\sum_{n=1}^N\left(\frac{1}{L}\left\|\m{Z}_{n,:}\right\|_2^2+\varepsilon\right)^{\frac{1}{2}},\;\st\m{Z}_{:,l}=\left(\m{R}_{\text{C}}\m{h}\right)\circledast\m{y}_l,\;l=1,\cdots,L,
\end{aligned}
\end{equation}
where $\mathbb{S}_{\text{C}}^{N-1}=\left\{\m{h}\in\mathbb{C}^N:\;\left\|\m{h}\right\|_2=1\right\}$ is the complex-valued counterpart of $\mathbb{S}^{N-1}$, and the preconditioning matrix in the complex-valued setting is naturally defined as
\begin{equation}
\begin{aligned}
\m{R}_{\text{C}}=\left[\frac{1}{L}\sum_{l=1}^L\mathcal{C}\left(\m{y}_l\right)^H\mathcal{C}\left(\m{y}_l\right)\right]^{-\frac{1}{2}}.
\end{aligned}
\end{equation}
Furthermore, since the representation~\eqref{circulant-convolution-matrix} remains valid in the complex-valued setting, problem~\eqref{ini-opt-complex} is equivalent to
\begin{equation}
\label{opt-ini-complex}
\begin{aligned}
\min_{\m{h}\in\mathbb{S}_{\text{C}}^{N-1}}\;\rho_{\text{C}}\left(\m{h}\right)=\sum_{n=1}^N\left(\frac{1}{L}\sum_{l=1}^L\left|\m{y}_l^\top\m{\Gamma}_n\m{R}_{\text{C}}\m{h}\right|^2+\varepsilon\right)^{\frac{1}{2}},
\end{aligned}
\end{equation}
which is the complex-valued counterpart of the optimization problem~\eqref{opt-ini-general}.

Next, we justify the optimization problem~\eqref{opt-ini-complex}. As in the real-valued case, to facilitate the analysis of its geometric properties, we consider the rotated optimization problem
\begin{equation}
\label{opt-complex-reparameterize}
\begin{aligned}
\min_{\m{h}\in\mathbb{S}_{\text{C}}^{N-1}}\;&\sum_{n=1}^N\left(\frac{1}{L}\sum_{l=1}^L\left|\m{y}_l^\top\m{\Gamma}_n\m{R}_{\text{C}}\m{U}_{\text{C}}^H\m{h}\right|^2+\varepsilon\right)^{\frac{1}{2}}.
\end{aligned}
\end{equation}
where $\m{U}_{\text{C}}$ is the unitary matrix defined by
\begin{equation}
\begin{aligned}
\m{U}_{\text{C}}=\mathcal{C}\left(\m{g}\right)\left[\mathcal{C}\left(\m{g}\right)^H\mathcal{C}\left(\m{g}\right)\right]^{-\frac{1}{2}}.
\end{aligned}
\end{equation}
Problems~\eqref{opt-ini-complex} and~\eqref{opt-complex-reparameterize} differ only by the unitary transformation $\m{U}_{\mathrm{C}}$ applied to the optimization variable and therefore possess identical geometric properties. As in the real-valued case, we make the following assumptions: 1) the nonzero components $\mathcal{P}_{\Omega}\left(\m{x}_l\right)$, for $l=1,\cdots,L$, are i.i.d. circularly symmetric Gaussian random vectors with zero mean and covariance matrix $\m{I}$; and 2) $\Omega$ is non-uniform. Under the first assumption, Lemma~\ref{lem-tail} can be extended directly to the complex-valued setting~\cite{tropp2012user}. Moreover, by following arguments analogous to those in the proof of Lemma~\ref{lem-tail-Delta}, Lemma~\ref{lem-tail-Delta} can likewise be extended to the complex-valued setting. Consequently, as the sample size $L$ tends to infinity, the following quantities converge to zero:
\begin{equation}
\begin{aligned}
\m{E}_{\text{C}}=\frac{1}{L}\sum_{l=1}^L\m{x}_l\m{x}_l^H-\m{\Sigma}_{\Omega},
\end{aligned}
\end{equation}
\begin{equation}
\begin{aligned}
\m{R}_{\text{C}}-\frac{1}{\sqrt{K}}\left[\mathcal{C}\left(\m{g}\right)^H\mathcal{C}\left(\m{g}\right)\right]^{-\frac{1}{2}},
\end{aligned}
\end{equation}
and
\begin{equation}
\begin{aligned}
\m{\Delta}_{\text{C}}=\mathcal{C}\left(\m{g}\right)\m{R}_{\text{C}}\m{U}_{\text{C}}^H-\frac{1}{\sqrt{K}}\m{I}.
\end{aligned}
\end{equation}
Moreover, it is straightforward to verify that
\begin{equation}
\label{objective-complex}
\begin{aligned}
&\sum_{n=1}^N\left(\frac{1}{L}\sum_{l=1}^L\left|\m{y}_l^\top\m{\Gamma}_n\m{R}_{\text{C}}\m{U}_{\text{C}}^H\m{h}\right|^2+\varepsilon\right)^{\frac{1}{2}}\\
=&\sum_{n=1}^N\left(\frac{1}{L}\sum_{l=1}^L\left|\m{x}_l^\top\m{\Gamma}_n\left(\frac{1}{\sqrt{K}}\m{I}+\mathcal{C}\left(\m{g}\right)\m{R}_{\text{C}}\m{U}_{\text{C}}^H-\frac{1}{\sqrt{K}}\m{I}\right)\m{h}\right|^2+\varepsilon\right)^{\frac{1}{2}}\\
=&\sum_{n=1}^N\left[\m{h}^H\left(\frac{1}{\sqrt{K}}\m{I}+\m{\Delta}_{\text{C}}\right)^H\m{\Gamma}_n^H\left(\m{\Sigma}_{\Omega}+\overline{\m{E}_{\text{C}}}\right)\m{\Gamma}_n\left(\frac{1}{\sqrt{K}}\m{I}+\m{\Delta}_{\text{C}}\right)\m{h}+\varepsilon\right]^{\frac{1}{2}}.
\end{aligned}
\end{equation}
Therefore, when $\m{E}_{\text{C}}=\m{0}$ and $\m{\Delta}_{\text{C}}=\m{0}$, the asymptotic optimization problem reduces to
\begin{equation}
\label{opt-complex-asymptotic}
\begin{aligned}
\min_{\m{h}\in\mathbb{S}_{\text{C}}^{N-1}}\;&\sum_{n=1}^N\left(\frac{1}{K}\m{h}^H\m{\Gamma}_n^\top\m{\Sigma}_{\Omega}\m{\Gamma}_n\m{h}+\varepsilon\right)^{\frac{1}{2}}=&\sum_{n=1}^N\left[\sum_{m=1}^N\left|h_m\right|^2\left(\frac{1}{K}\m{e}_m^\top\m{\Gamma}_n^\top\m{\Sigma}_{\Omega}\m{\Gamma}_n\m{e}_m+\varepsilon\right)\right]^{\frac{1}{2}}.
\end{aligned}
\end{equation}
Following arguments similar to those used in the proof of Proposition~\ref{lem-support}, it can be shown that the global minimizers of problem~\eqref{opt-complex-asymptotic} are precisely the complex signed canonical basis vectors, namely $\exp\left\{j\phi\right\}\m{e}_n$ for $\phi\in\left[0,2\pi\right)$ and $n=1,\cdots,N$, if and only if $\Omega$ is non-uniform. Consequently, problem~\eqref{opt-ini-complex}, which serves as a non-asymptotic counterpart of problem~\eqref{opt-complex-asymptotic}, is expected to recover the desired solution when the sample size is sufficiently large. Furthermore, theoretical guarantees analogous to those established in Theorem~\ref{thm-err-bound-general} can be derived through a similar line of analysis. However, a complete proof requires substantial technical derivations to characterize the geometric landscape rigorously, including the distance between the current iterate and the target solution, as well as the local second-order properties of the objective function. For the sake of brevity, these technical details are omitted. 

Finally, we turn to the algorithm for solving problem~\eqref{opt-ini-complex}. Observe that $\mathbb{S}_{\text{C}}^{N-1}$ is a Riemannian manifold equipped with the Riemannian metric
\begin{equation}
\label{metric-complex}
\begin{aligned}
g_{\m{h}}^{\text{C}}\left(\m{x},\m{y}\right)=\Re\left\{\m{x}^H\m{y}\right\},
\end{aligned}
\end{equation}
where $\m{x}$ and $\m{y}$ are tangent vectors in the tangent space at $\m{h}\in\mathbb{S}_{\text{C}}^{N-1}$, and $\Re\left\{\cdot\right\}$ denotes the real-part operator. Accordingly, the proposed algorithm in Section~\ref{Sec:Algorithm} can be extended naturally to the complex-valued setting by replacing the expressions for the Riemannian gradient and the Riemannian Hessian with their complex-valued counterparts. It follows from~\eqref{metric-complex} that the Riemannian gradient of $\rho_{\mathrm{C}}$ is given by
\begin{equation}
\begin{aligned}
\text{grad}\;\rho_{\text{C}}\left(\m{h}\right)=&\sum_{n=1}^N\left(\frac{1}{L}\sum_{l=1}^L\left|\m{y}_l^\top\m{\Gamma}_n\m{R}_{\text{C}}\m{h}\right|^2+\varepsilon\right)^{-\frac{1}{2}}\left(\m{I}-\m{h}\m{h}^H\right)\m{R}_{\text{C}}^H\m{\Gamma}_n^\top\left(\frac{1}{L}\sum_{l=1}^L\overline{\m{y}_l}\m{y}_l^\top\right)\m{\Gamma}_n\m{R}_{\text{C}}\m{h},
\end{aligned}
\end{equation}
and the Riemannian Hessian of $\rho_{\text{C}}$ is
\begin{equation}
\begin{aligned}
\text{Hess}\;\rho_{\text{C}}\left(\m{h}\right)\left[\m{\eta}\right]=&\sum_{n=1}^N\left(\frac{1}{L}\sum_{l=1}^L\left|\m{y}_l^\top\m{\Gamma}_n\m{R}_{\text{C}}\m{h}\right|^2+\varepsilon\right)^{-\frac{1}{2}}\m{R}_{\text{C}}^H\m{\Gamma}_n^\top\left(\frac{1}{L}\sum_{l=1}^L\overline{\m{y}_l}\m{y}_l^\top\right)\m{\Gamma}_n\m{R}_{\text{C}}\m{\eta}\\
&-\sum_{n=1}^N\left(\frac{1}{L}\sum_{l=1}^L\left|\m{y}_l^\top\m{\Gamma}_n\m{R}_{\text{C}}\m{h}\right|^2+\varepsilon\right)^{-\frac{1}{2}}\Re\left\{\m{h}^H\m{R}_{\text{C}}^H\m{\Gamma}_n^\top\left(\frac{1}{L}\sum_{l=1}^L\overline{\m{y}_l}\m{y}_l^\top\right)\m{\Gamma}_n\m{R}_{\text{C}}\m{\eta}\right\}\m{h}\\
&-\sum_{n=1}^N\left(\frac{1}{L}\sum_{l=1}^L\left|\m{y}_l^\top\m{\Gamma}_n\m{R}_{\text{C}}\m{h}\right|^2+\varepsilon\right)^{-\frac{3}{2}}\Re\left\{\m{h}^H\m{R}_{\text{C}}^H\m{\Gamma}_n^\top\left(\frac{1}{L}\sum_{l=1}^L\overline{\m{y}_l}\m{y}_l^\top\right)\m{\Gamma}_n\m{R}_{\text{C}}\m{\eta}\right\}\\
&\cdot\m{R}_{\text{C}}^H\m{\Gamma}_n^\top\left(\frac{1}{L}\sum_{l=1}^L\overline{\m{y}_l}\m{y}_l^\top\right)\m{\Gamma}_n\m{R}_{\text{C}}\m{h}\\
&+\sum_{n=1}^N\left(\frac{1}{L}\sum_{l=1}^L\left|\m{y}_l^\top\m{\Gamma}_n\m{R}_{\text{C}}\m{h}\right|^2+\varepsilon\right)^{-\frac{3}{2}}\Re\left\{\m{h}^H\m{R}_{\text{C}}^H\m{\Gamma}_n^\top\left(\frac{1}{L}\sum_{l=1}^L\overline{\m{y}_l}\m{y}_l^\top\right)\m{\Gamma}_n\m{R}_{\text{C}}\m{\eta}\right\}\\
&\cdot\Re\left\{\m{h}^H\m{R}_{\text{C}}^H\m{\Gamma}_n^\top\left(\frac{1}{L}\sum_{l=1}^L\overline{\m{y}_l}\m{y}_l^\top\right)\m{\Gamma}_n\m{R}_{\text{C}}\m{h}\right\}\m{h}\\
&-\sum_{n=1}^N\left(\frac{1}{L}\sum_{l=1}^L\left|\m{y}_l^\top\m{\Gamma}_n\m{R}_{\text{C}}\m{h}\right|^2+\varepsilon\right)^{-\frac{1}{2}}\left[\m{h}^H\m{R}_{\text{C}}^H\m{\Gamma}_n^\top\left(\frac{1}{L}\sum_{l=1}^L\overline{\m{y}_l}\m{y}_l^\top\right)\m{\Gamma}_n\m{R}_{\text{C}}\m{h}\right]\m{\eta},
\end{aligned}
\end{equation}
where $\m{\eta}$ is a tangent vector in the tangent space at $\m{h}\in\mathbb{S}_{\text{C}}^{N-1}$. Consequently, the algorithmic framework of Algorithm~1 remains unchanged, with only the Riemannian gradient and the Riemannian Hessian replaced by their complex-valued counterparts.

\section{Numerical Experiments}
\label{Sec:simulation}

In this section, we evaluate the performance of the proposed method. The proposed algorithm is implemented with a maximum of $2\times10^3$ iterations. The smoothing parameter is set to $\varepsilon=\min\left\{10^5N^{-5},10^{-4}\right\}$, the first-order stationarity tolerance is chosen as $\epsilon_g=10^{-2}$, and the second-order stationarity tolerance is set to $\epsilon_H=10^{-6}N^{-3}$. For computational efficiency, the initial step sizes for the gradient descent and the negative-curvature search are set to $10^{-2}$ and $10^{-4}$, respectively. These step sizes are subsequently refined using a backtracking line search strategy~\cite{hosseini2018line}.

\subsection{Random Signals}
In this subsection, we compare the proposed method with the existing approach developed in~\cite{shi2021manifold}. 
The method in~\cite{shi2021manifold} addresses the general multichannel sparse blind deconvolution problem using a general sparsity-promoting objective function, specifically the log-cosh objective $\sum_{l=1}^L\mu\log\cosh\left(\frac{1}{\mu}\mathcal{C}\left(\m{y}_l\right)\m{R}\m{h}\right)$, where $\log\cosh$ is applied entry-wise. The resulting optimization problem is solved using Riemannian gradient descent with multiple random initializations to seek a desirable solution. For the implementation of the method in~\cite{shi2021manifold}, we adopt the parameter settings recommended by the authors. Specifically, the maximum number of iterations is set to $2\times10^2$, the smoothing parameter is chosen as $\mu=\min\left\{10N^{-\frac{5}{4}},0.05\right\}$, and the initial step size is set to $10^{-1}$ and subsequently refined using a backtracking line search strategy. In addition, the number of random initializations is set to $10$.


In Experiment~1, we investigate the optimization landscapes induced by different sparsity-promoting objective functions for jointly sparse blind deconvolution. Specifically, we compare the exact $\ell_1$-based objective function, $\sum_{l=1}^L\left\|\mathcal{C}\left(\m{y}_l\right)\m{R}\m{h}\right\|_1$, the exact $\ell_{2,1}$-based objective function, $\sum_{n=1}^N\left(\frac{1}{L}\sum_{l=1}^L\left|\m{y}_l^\top\m{\Gamma}_n\m{R}\m{h}\right|^2\right)^{\frac{1}{2}}$, and the smooth objective function proposed in \eqref{opt-ini-general}, $\sum_{n=1}^N\left(\frac{1}{L}\sum_{l=1}^L\left|\m{y}_l^\top\m{\Gamma}_n\m{R}\m{h}\right|^2+\varepsilon\right)^{\frac{1}{2}}$, where $\varepsilon=10^{-4}$. We set $N=3$, $K=2$, and $\kappa=16$. The objective landscapes corresponding to $L=20$ and $L=2$ are shown in Fig.~\ref{fig.landscape}. It can be observed that, when the sample size is sufficiently large, all three objective functions exhibit favorable optimization landscapes, as their global minima coincide with the target solution. However, when the sample size is small, the global minimum of the $\ell_1$-based objective function no longer coincides with the target solution, suggesting that the corresponding optimization algorithm may fail to recover the true filter. In contrast, the global minimum of the exact $\ell_{2,1}$-based objective function remains aligned with the target solution. Moreover, the landscape induced by the proposed smooth objective function closely resembles that of the exact $\ell_{2,1}$ objective. These observations demonstrate that exploiting joint sparsity can substantially reduce the sample size required for successful recovery, while the proposed smooth objective function provides an accurate approximation to the exact $\ell_{2,1}$ objective.
\begin{figure}
\centering
\begin{minipage}[b]{.3\linewidth}
\includegraphics[width=2.0in]{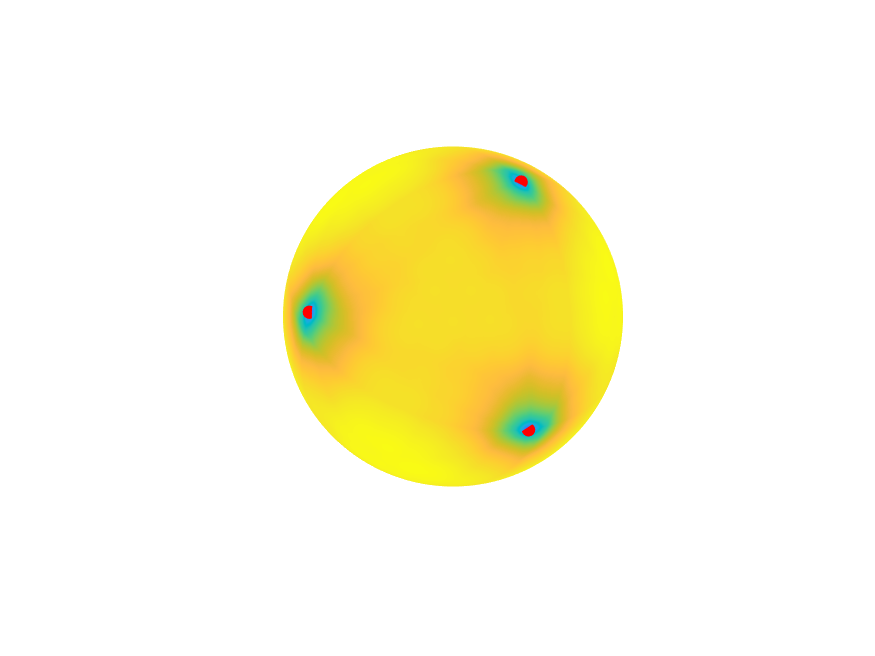}
\centering
\text{(a) $\ell_1$ objective, $L=20$}
\includegraphics[width=2.0in]{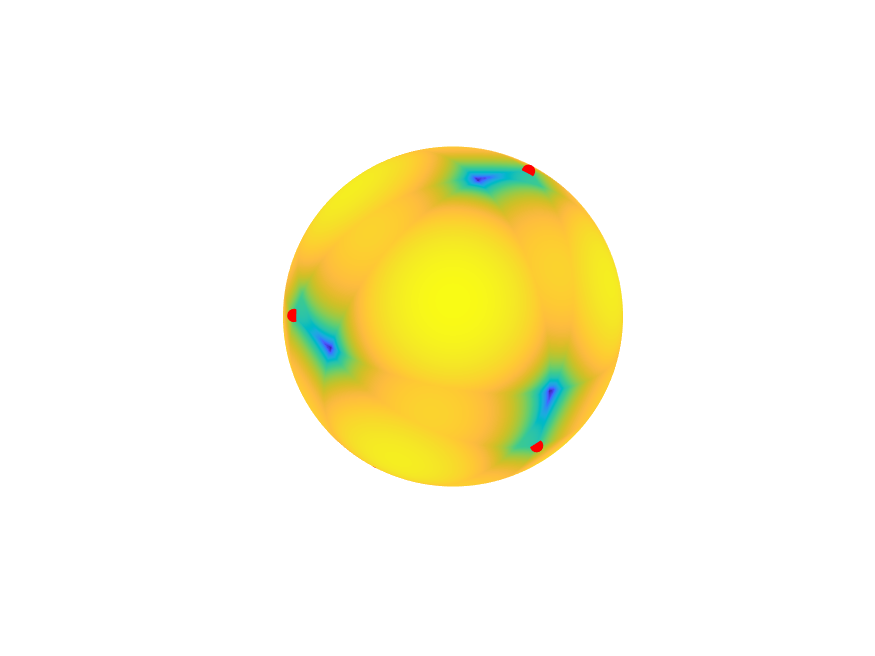}
\centering
\text{(b) $\ell_1$ objective, $L=2$}
\end{minipage}
\begin{minipage}[b]{.3\linewidth}
\includegraphics[width=2.0in]{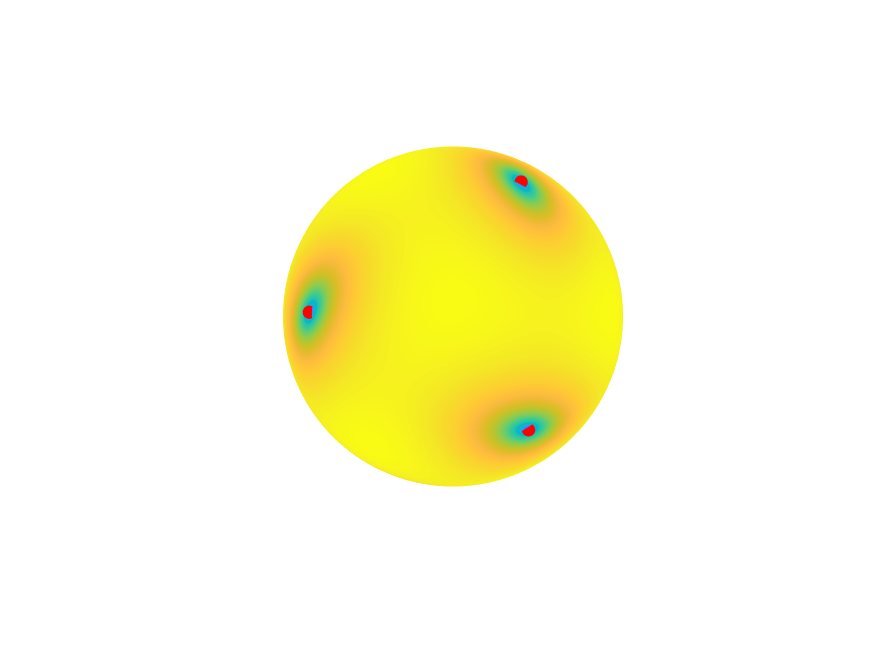}
\centering
\text{(c) $\ell_{2,1}$ objective, $L=20$}
\includegraphics[width=2.0in]{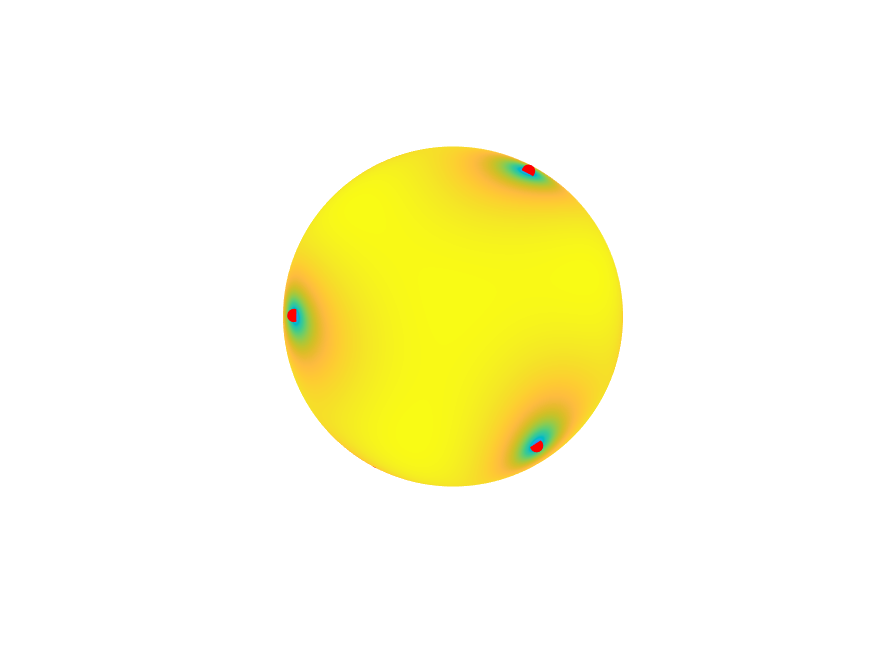}
\centering
\text{(d) $\ell_{2,1}$ objective, $L=2$}
\end{minipage}
\begin{minipage}[b]{.3\linewidth}
\includegraphics[width=2.0in]{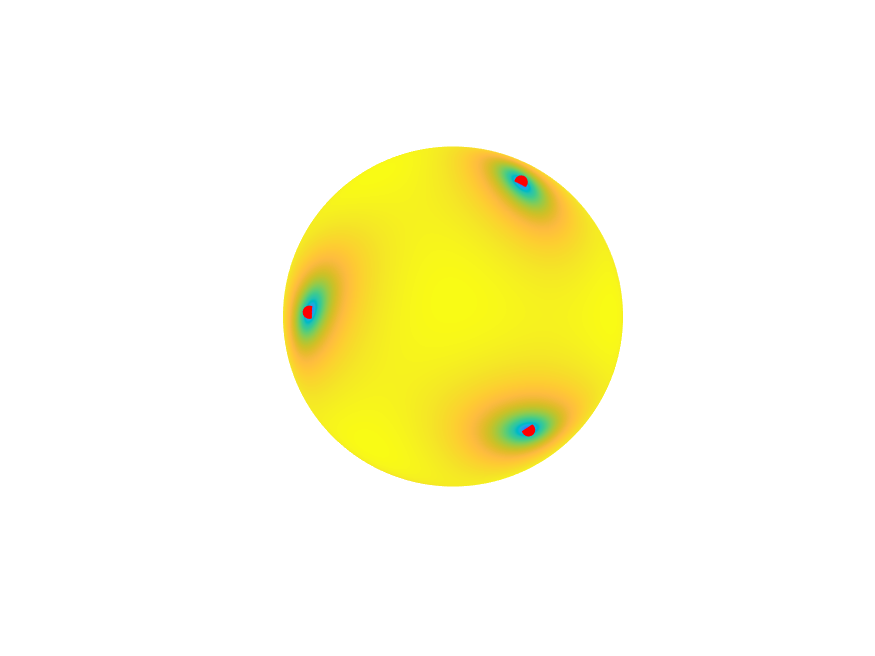}
\centering
\text{(e) Proposed objective, $L=20$}
\includegraphics[width=2.0in]{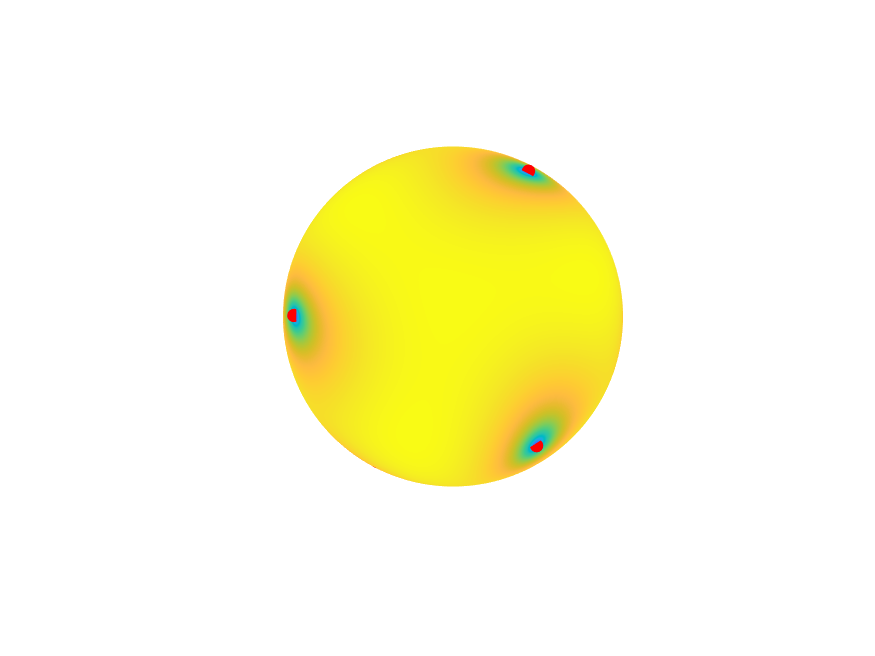}
\centering
\text{(f) Proposed objective, $L=2$}
\end{minipage}
\caption{Optimization landscapes corresponding to different sparsity-promoting objective functions. Darker colors indicate smaller objective values, whereas lighter colors indicate larger objective values. The red dots mark the target solutions.}
\label{fig.landscape}
\end{figure}

In Experiment~2, we evaluate the recovery success rate under different parameter settings. We adopt a simulation setup similar to that in~\cite{shi2021manifold}. The sparse input signals share a common support, where the support locations are generated at random and the nonzero entries are drawn independently from a standard Gaussian distribution. The filter $\m{g}$ is synthesized such that the circulant matrix $\mathcal{C}\left(\m{g}\right)$ has an approximately prescribed condition number $\widetilde{\kappa}$. Specifically, the filter is generated as follows. First, the magnitudes of $\widehat{\m{g}}$ are sampled independently from a uniform distribution over $\left[1,\widetilde{\kappa}\right]$, while their phases are sampled independently from a uniform distribution over $\left[0,2\pi\right]$. Second, Hermitian symmetry is imposed on $\widehat{\m{g}}$ to ensure that $\m{g}$ is real-valued, namely, $\widehat{{g}}_n=\overline{\widehat{{g}}_{\left(N+2-n\right)\bmod N}}$ for $n=1,\cdots,N$, where $\overline{\cdot}$ denotes complex conjugation. Recovery is declared successful if the estimated inverse representation of the filter, denoted by $\widetilde{\m{g}}_{\text{inv}}$, satisfies $\left\|\m{g}\circledast\widetilde{\m{g}}_{\text{inv}}\right\|_\infty/\left\|\m{g}\circledast\widetilde{\m{g}}_{\text{inv}}\right\|_2>0.99$. For each parameter setting, $100$ independent Monte Carlo trials are conducted to estimate the success rate. First, we fix $N=64$ and $\widetilde{\kappa}=16$, and vary $K\in\left\{1,3,\ldots,59\right\}$ and $L\in\left\{1,2,\ldots,30\right\}$. The corresponding results are shown in Fig.~\ref{fig.PT}(a) and Fig.~\ref{fig.PT}(b). Next, we fix $K=27$ and $\widetilde{\kappa}=16$, and vary $N\in\left\{32,34,\ldots,64\right\}$ and $L\in\left\{1,2,\ldots,30\right\}$. The results are presented in Fig.~\ref{fig.PT}(c) and Fig.~\ref{fig.PT}(d). Finally, we fix $N=64$ and $K=27$, and vary $\widetilde{\kappa}\in\left\{2,4,\ldots,2^{16}\right\}$ and $L\in\left\{1,2,\ldots,20\right\}$. The corresponding results are shown in Fig.~\ref{fig.PT}(e) and Fig.~\ref{fig.PT}(f). It can be observed that the proposed method consistently exhibits a substantially larger success region than the method of~\cite{shi2021manifold} and achieves successful recovery with significantly lower sample size across all parameter settings. These observations are consistent with the theoretical analysis and validate that exploiting joint sparsity reduces the sample complexity required for successful recovery. 
\begin{figure}
\centering
\begin{minipage}[b]{.3\linewidth}
\includegraphics[width=2.0in]{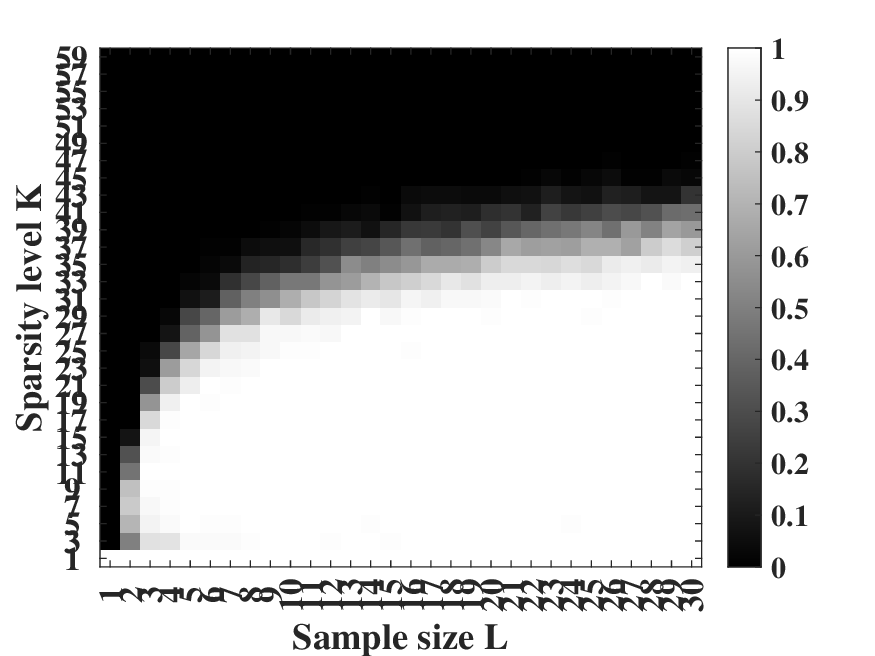}
\text{(a) \cite{shi2021manifold}, $K$ versus $L$}
\includegraphics[width=2.0in]{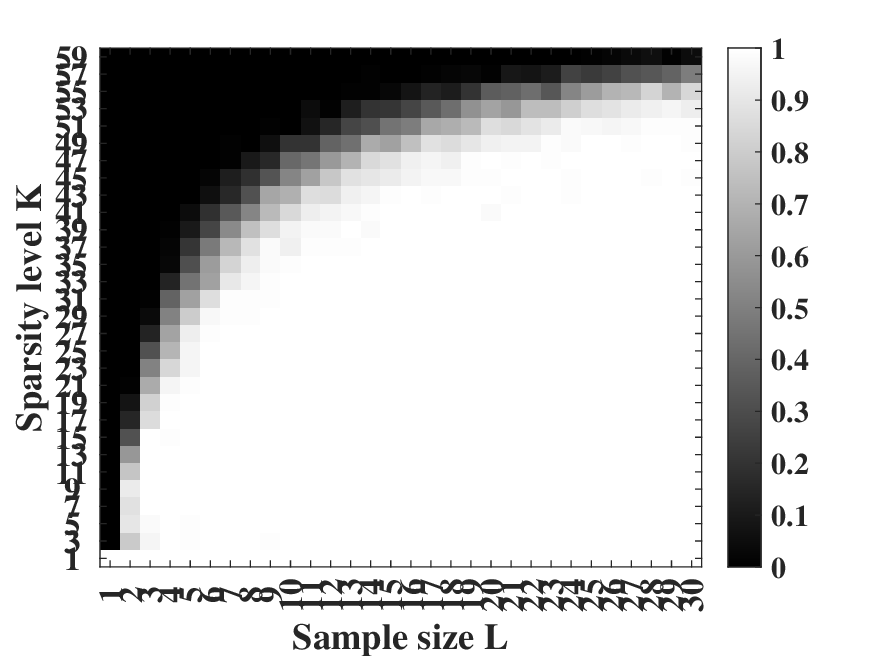}
\centering
\text{(b) Proposed, $K$ versus $L$}
\end{minipage}
\begin{minipage}[b]{.3\linewidth}
\includegraphics[width=2.0in]{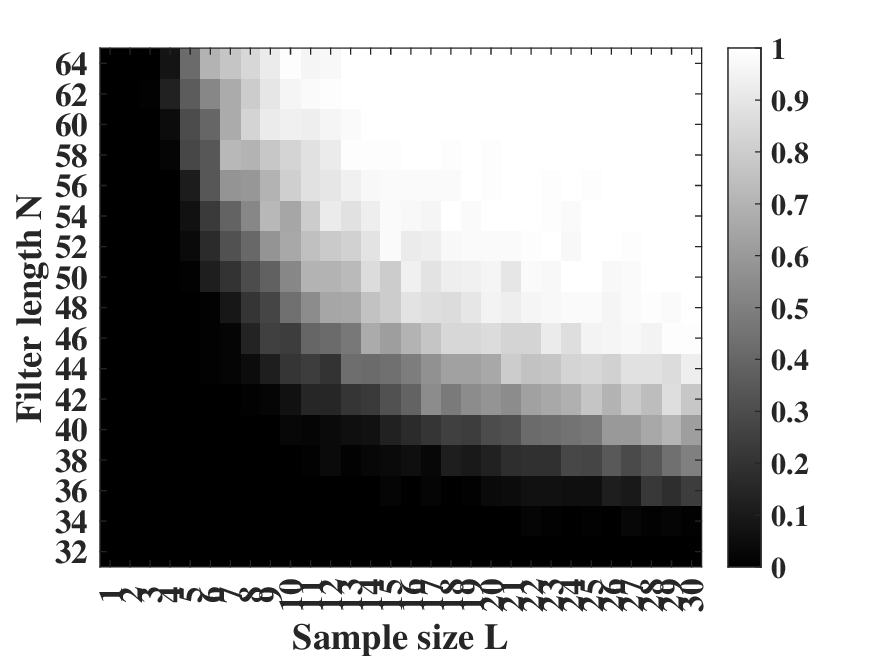}
\text{(c) \cite{shi2021manifold}, $N$ versus $L$}
\includegraphics[width=2.0in]{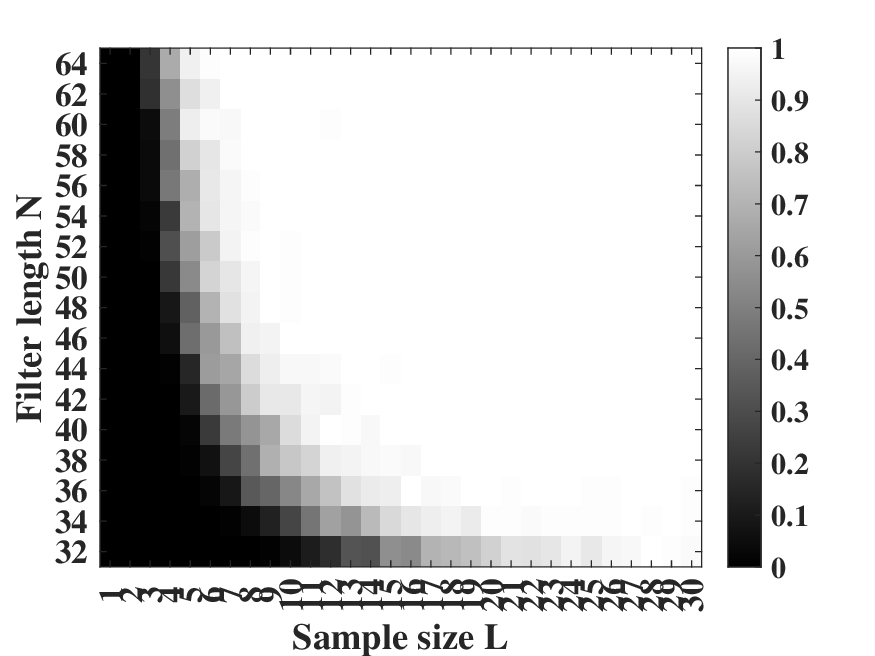}
\centering
\text{(d) Proposed, $N$ versus $L$}
\end{minipage}
\begin{minipage}[b]{.3\linewidth}
\includegraphics[width=2.0in]{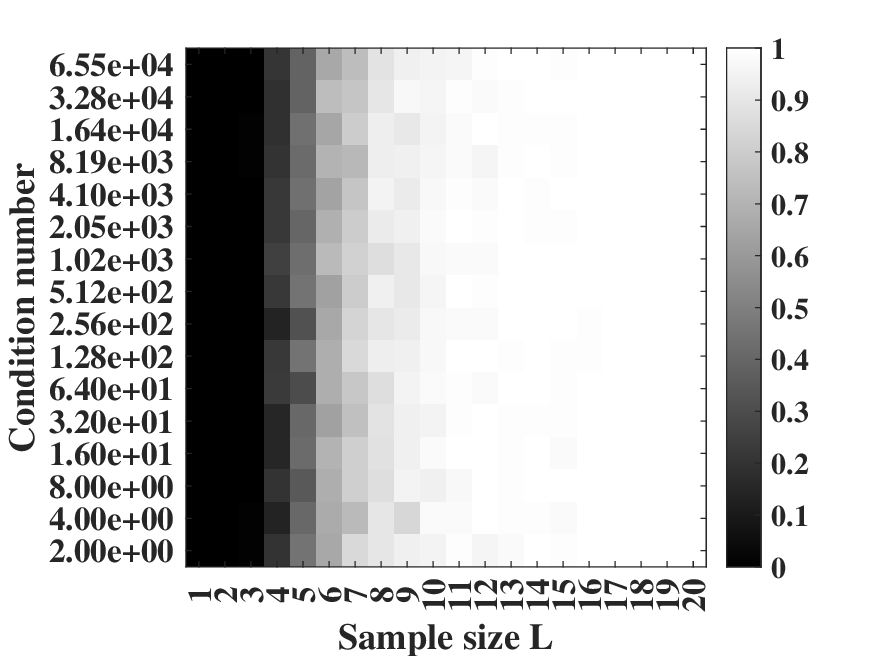}
\text{(e) \cite{shi2021manifold}, $\widetilde{\kappa}$ versus $L$}
\includegraphics[width=2.0in]{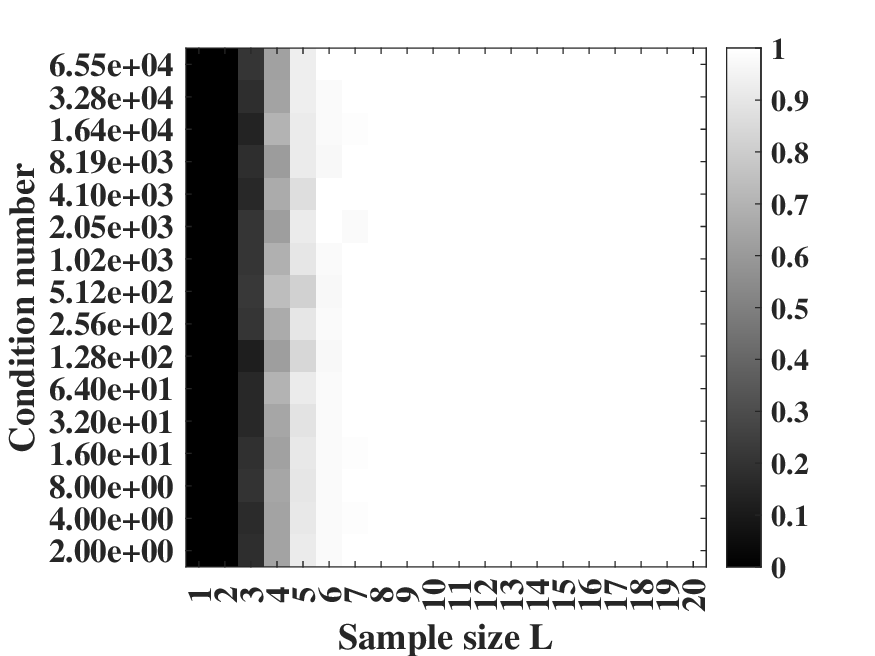}
\centering
\text{(f) Proposed, $\widetilde{\kappa}$ versus $L$}
\end{minipage}
\caption{Phase-transition behavior of jointly sparse blind deconvolution under different parameter settings. White indicates a success rate of $100\%$, whereas black indicates a success rate of $0\%$.}
\label{fig.PT}
\end{figure}

\subsection{Array Signal Processing Application}
In this subsection, we apply the proposed algorithm to a array signal processing application: multipath time-difference-of-arrival (TDOA) estimation with unknown transmitted waveforms~\cite{griffiths2022introduction}, which plays an important role in passive localization~\cite{fabrizio2013multipath}. We consider a reference-free passive radar receiver equipped with a uniform linear array consisting of $L$ antennas with half-wavelength inter-element spacing. The transmitted waveform propagates through a multipath environment with $K$ paths. Let $s$ denote the unknown transmitted waveform. The delay, complex path gain, and direction-of-arrival (DOA) associated with the $k$-th path are denoted by $\tau_k$, $\beta_k$, and $\omega_k$, respectively. WLOG, we assume that $\tau_1\leq\tau_2\leq\cdots\leq\tau_K$. The received signal at time $t$ at the $l$-th antenna is modeled as
\begin{equation}
\begin{aligned}
y_l\left(t\right)=\sum_{k=1}^K \beta_k\exp\left\{j2\pi\frac{\left(l-1\right)\cos\omega_k}{2}\right\}s\left(t-\tau_k\right)+n\left(t\right),\;t\in\left[T_{\text{p}},T_{\text{PRI}}\right],
\end{aligned}
\end{equation}
where $T_{\text{p}}$ denotes the pulse duration, i.e., $s\left(t\right)\neq0$ only for $t\in\left[0,T_{\text{p}}\right)$, $T_{\text{PRI}}$ denotes the pulse repetition interval (PRI), and $n\left(t\right)$ denotes additive noise. Sampling the received signal at a frequency $f_{\text{s}}$ yields the discrete-time observation matrix $\m{Y}\in\mathbb{C}^{N\times L}$, where $N=f_{\text{s}}\left(T_{\text{PRI}}-T_{\text{p}}\right)$. Suppose that $f_{\text{s}}\tau_k$ is an integer for each $k=1,\cdots,K$, which can be ensured when the sampling rate is sufficiently high. Then the discrete-time observations satisfy
\begin{equation}
\label{TDOA}
\begin{aligned}
Y_{n,l}=\sum_{k=1}^K \beta_k\exp\left\{j2\pi\frac{\left(l-1\right)\cos\omega_k}{2}\right\}s\left[n-f_{\text{s}}{\tau_k}\right]+N_{n,l},
\end{aligned}
\end{equation}
where $\m{N}\in\mathbb{C}^{N\times L}$ denotes the noise matrix and
\begin{equation}
\begin{aligned}
s\left[n-f_{\text{s}}{\tau_k}\right]=s\left(T_{\text{p}}+\frac{n-f_{\text{s}}{\tau_k}}{f_{\text{s}}}\right)=s\left(\frac{n}{f_{\text{s}}}+T_{\text{p}}-\tau_k\right).
\end{aligned}
\end{equation}
Furthermore, assume that $\tau_k\in\left[T_{\text{p}},T_{\text{PRI}}-T_{\text{p}}\right)$, which ensures that the received signals from a single pulse remain within a single PRI and do not overlap with the transmission interval. Define the filter $\m{g}\in\mathbb{C}^N$ and the input matrix $\m{X}\in\mathbb{C}^{N\times L}$ as
\begin{equation}
\begin{aligned}
g_n=&s\left[n\right],\\
X_{n,l}=&\beta_k\exp\left\{j2\pi\frac{\left(l-1\right)\cos\omega_k}{2}\right\}\mathbb{I}\left(n,f_{\text{s}}{\tau_k}\right),
\end{aligned}
\end{equation}
where $\mathbb{I}\left(n_1,n_2\right)=1$ if $n_1=n_2$, and $\mathbb{I}\left(n_1,n_2\right)=0$ otherwise. It follows that
\begin{equation}
\begin{aligned}
\m{Y}=\mathcal{C}\left(\m{g}\right)\m{X}+\m{N},
\end{aligned}
\end{equation}
where $\m{X}$ contains only $K$ nonzero rows. Therefore, the TDOA estimation problem can be formulated as a jointly sparse blind deconvolution problem. In practice, however, two important challenges must be taken into account: 1) the delays generally do not coincide with the sampling grid; and 2) the observations are polluted by noise. In the following experiments, we evaluate the TDOA estimation performance of the proposed algorithm under these two non-ideal factors through the estimated delay spectrum. The true power associated with the $k$-th path is given by $\left|\beta_k\right|^2$. The estimated delay spectrum is computed from the average row-wise energy of the recovered input signal matrix $\widetilde{\m{X}}$. Specifically, let $\widetilde{\m{p}}$ denote the estimated delay spectrum, whose $n$-th entry is defined as $\widetilde{p}_n=\frac{1}{L}\sum_{l=1}^L\left|\widetilde{\m{X}}_{n,l}\right|^2$. Note that blind deconvolution-based methods can recover only the relative delays, namely $\left\{\tau_k-\tau_1\right\}_{k=2}^K$. Therefore, when plotting the delay spectrum, we assume that the delay of the first path is known exactly and use it as the reference. For ease of comparison, both the true and estimated delay spectra are normalized so that their maximum values are equal to $1$. The pulse duration and PRI are set to $T_{\text{p}}=10~\mu\mathrm{s}$ and $T_{\text{PRI}}=40~\mu\mathrm{s}$, respectively, the sampling rate is $f_{\text{s}}=100$~MHz, yielding $N=3000$, and the number of antennas is $L=32$. The unknown transmitted waveform $s$ is chosen as a chirp signal with a bandwidth of $20$~MHz. We consider a multipath propagation environment with $K=3$ propagation paths, where the DOAs, delays, and complex path gains are generated randomly.

In Experiment~3, we investigate the performance of the proposed algorithm under different signal-to-noise ratio (SNR) levels, defined as $\frac{\left\|\mathcal{C}\left(\m{g}\right)\m{X}\right\|_2^2}{\left\|\m{N}\right\|_2^2}$, with values of $20~\mathrm{dB}$, $10~\mathrm{dB}$, and $0~\mathrm{dB}$. Note that it is highly nontrivial to extend the algorithm in~\cite{shi2021manifold} to the complex-valued setting, no comparison is provided. We consider two randomly generated sets of DOAs and propagation delays. The first set corresponds to a larger delay separation, whereas the second set represents a more challenging case with closely spaced delays: 1) DOAs are $\left\{142.3612,59.3150,114.7363\right\}^\circ$ and delays are $\left\{15.8764,17.2490,18.5540\right\}~\mu\mathrm{s}$; and 2) DOAs are $\left\{125.6770,40.8769,53.2631\right\}^\circ$ and delays are $\left\{17.3662,26.8032,26.9704\right\}~\mu\mathrm{s}$. Note that the delays in both cases do not lie on the sampling grid. The results are shown in Fig.~\ref{fig.TDOA}. The proposed algorithm provides stable TDOA estimates across all SNR levels, despite the combined effects of off-grid delays and additive noise.
\begin{figure}
\centering
\begin{minipage}[b]{.3\linewidth}
\includegraphics[width=2.0in]{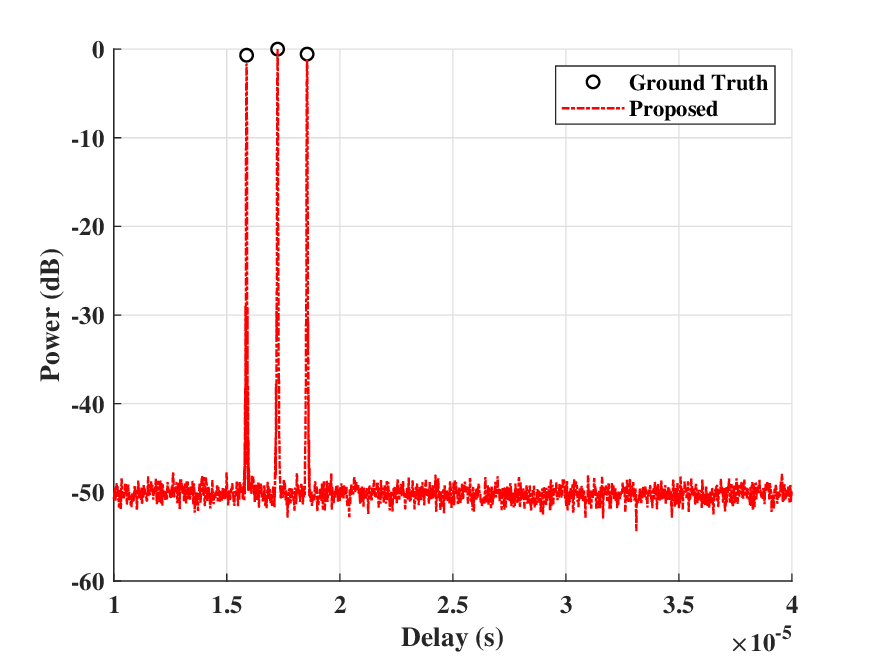}
\text{(a) Set 1, $\mathrm{SNR}=20~\mathrm{dB}$}
\includegraphics[width=2.0in]{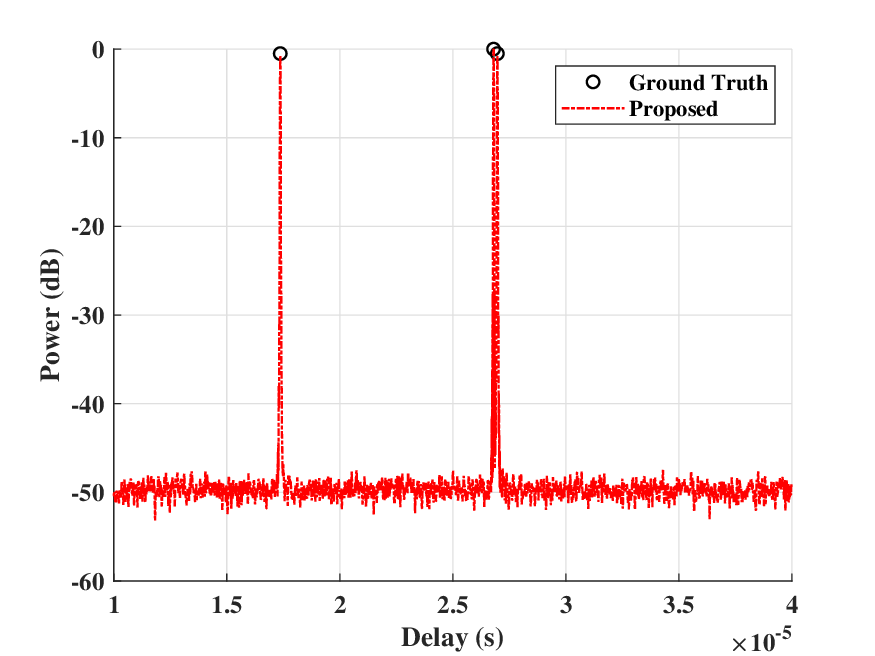}
\centering
\text{(d) Set 2, $\mathrm{SNR}=20~\mathrm{dB}$}
\end{minipage}
\begin{minipage}[b]{.3\linewidth}
\includegraphics[width=2.0in]{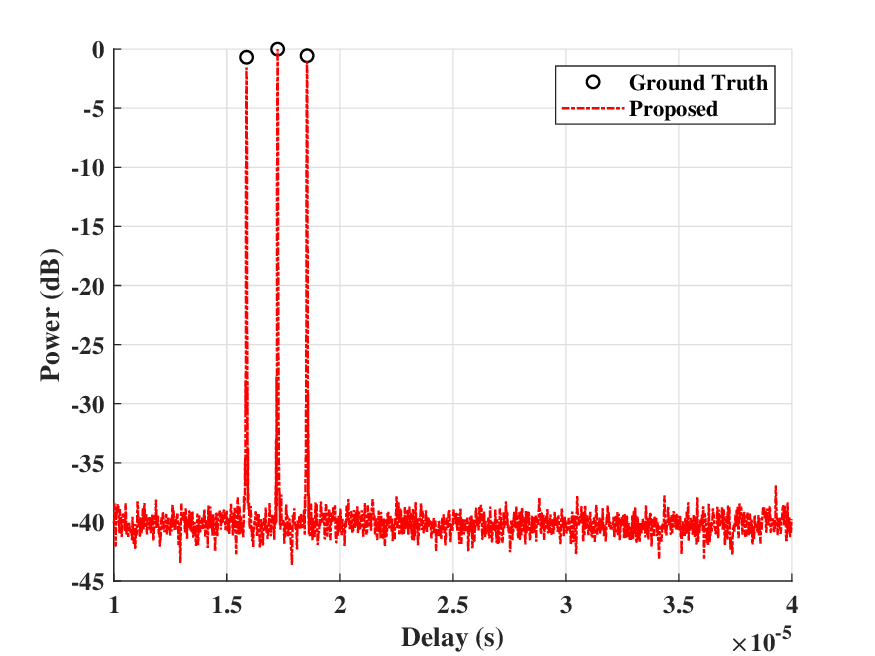}
\text{(b) Set 1, $\mathrm{SNR}=10~\mathrm{dB}$}
\includegraphics[width=2.0in]{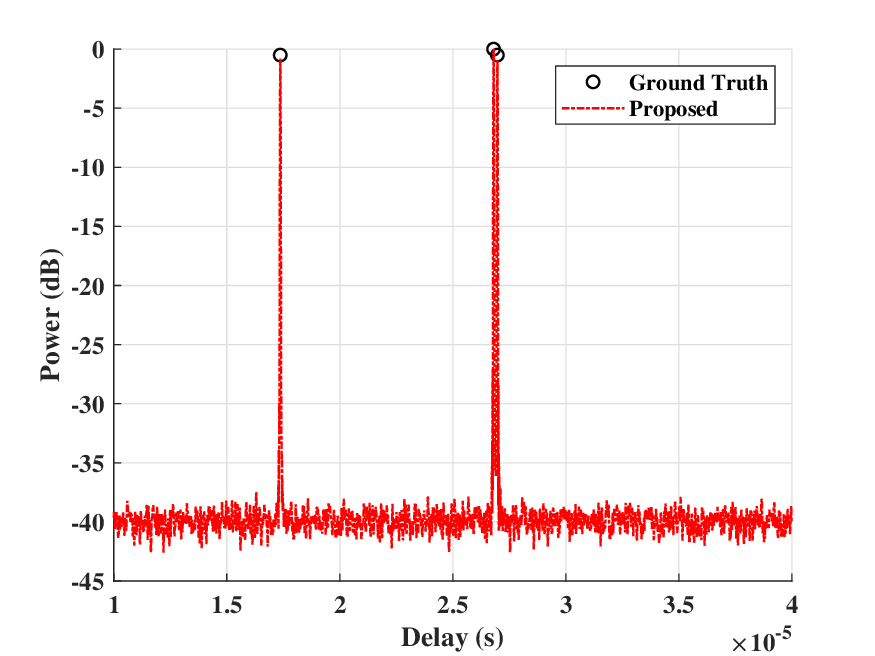}
\centering
\text{(e) Set 2, $\mathrm{SNR}=10~\mathrm{dB}$}
\end{minipage}
\begin{minipage}[b]{.3\linewidth}
\includegraphics[width=2.0in]{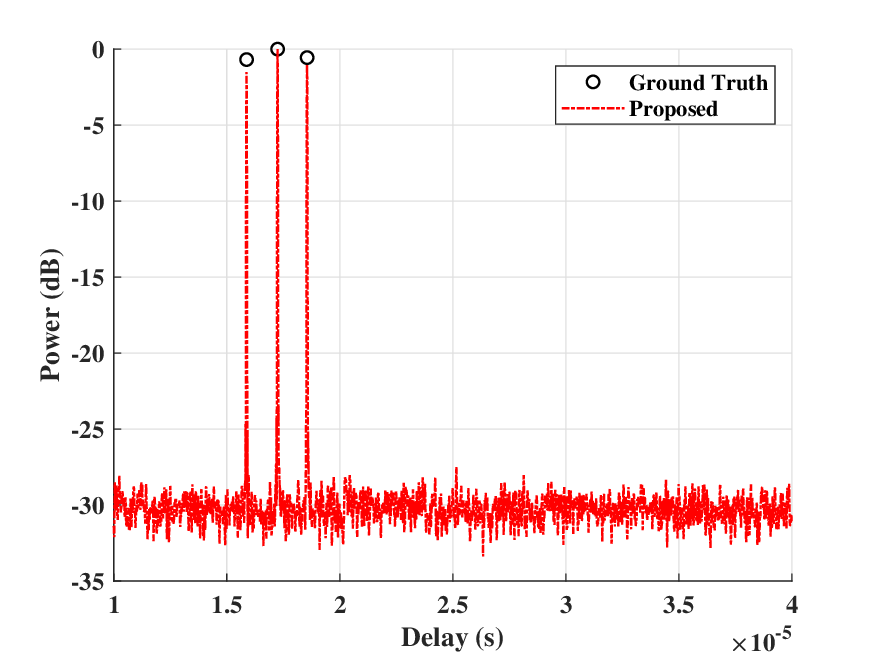}
\text{(c) Set 1, $\mathrm{SNR}=0~\mathrm{dB}$}
\includegraphics[width=2.0in]{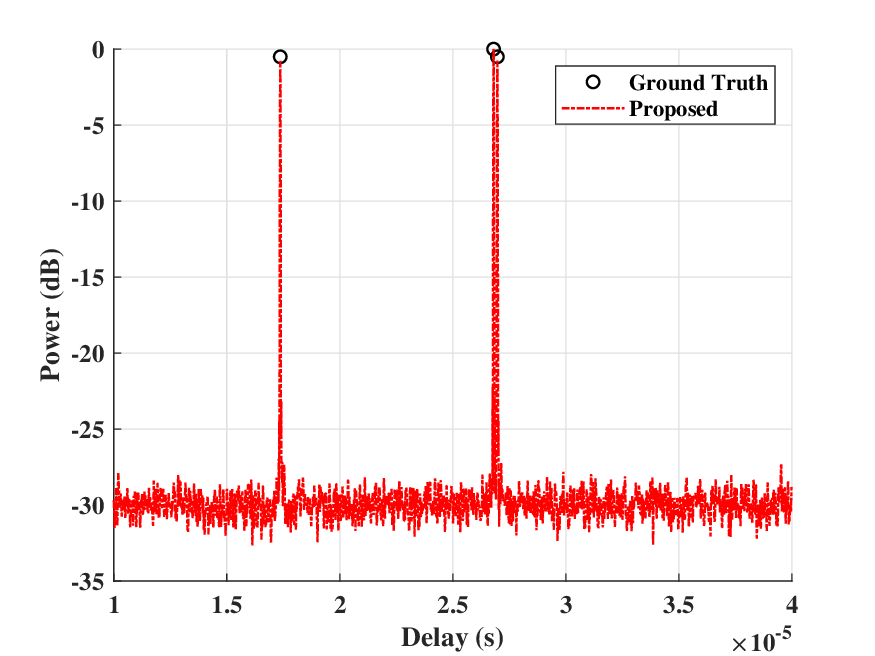}
\centering
\text{(f) Set 2, $\mathrm{SNR}=0~\mathrm{dB}$}
\end{minipage}
\caption{Estimated delay spectra under different SNR levels for two parameter sets.}
\label{fig.TDOA}
\end{figure}

In summary, the simulation results demonstrate the performance gains achieved by exploiting joint sparsity in blind deconvolution and validate the effectiveness of the proposed algorithm.

\section{Conclusion}
\label{Sec:conclusion}

This paper develops an optimization framework and establishes theoretical guarantees for jointly sparse blind deconvolution. Specifically, we formulate the jointly sparse blind deconvolution problem as a Riemannian optimization problem on the unit sphere and propose an algorithm that is guaranteed to compute an approximate second-order stationary point within a finite number of iterations. Furthermore, we establish a non-asymptotic relationship between the estimation error and the sample size, thereby characterizing the recovery performance of the proposed method under finite-sample conditions. Numerical experiments validate both the theoretical analysis and the effectiveness of the proposed algorithm, including its application to array signal processing.

While our analysis demonstrates that exploiting joint sparsity can substantially reduce the sample complexity required for successful recovery, several important directions remain for future research. In particular, it is of interest to extend the proposed framework to more realistic noisy settings and to develop corresponding theoretical guarantees. This includes the design of robust algorithms and the derivation of non-asymptotic error bounds that explicitly characterize the impact of measurement noise on recovery performance.


%
%
%

\suppproof{The proofs of several auxiliary lemmas and technical propositions are provided in the Supplementary Material. These results primarily involve technical estimates and intermediate arguments used in the analysis of the main theorems.}


\putbib[strings]

\end{bibunit}

\newpage

\begin{bibunit}[IEEEtran]

\title{Supplementary Material for\\Jointly Sparse Blind Deconvolution via Riemannian Optimization}

\author{Wenlong Wang$^1$\orcid{0009-0001-5855-8723}, Baiyang Guo$^2$\orcid{0009-0003-2399-0879}, Zai Yang$^{1,*}$\orcid{0000-0002-9502-5176}, Shixiang Chen$^3$\orcid{0000-0002-3261-0714} and Junpeng Shi$^2$\orcid{0000-0002-9910-0663}}

\affil{$^1$School of Mathematics and Statistics, Xi’an Jiaotong University, Xi’an, China}

\affil{$^2$College of Electronic Engineering, National University of Defense Technology, Hefei, China}

\affil{$^3$School of Mathematical Sciences,University of Science and Technology of China, Hefei, China}

\affil{$^*$Author to whom any correspondence should be addressed.}

\email{wang1813857265@stu.xjtu.edu.cn, guobaiyang@nudt.edu.cn, yangzai@xjtu.edu.cn, shxchen@ustc.edu.cn and shijunpeng20@nudt.edu.cn}

This supplementary material provides complete proofs of the theoretical results presented in the main manuscript. In particular, it contains the proofs of several auxiliary lemmas and technical propositions stated in the main text, whose proofs are deferred here to improve the readability of the manuscript.

\section{Proof of Lemma~\ref{lem-diff}}
\label{pf-lem-diff}
It follows from \eqref{prop-max} that
\begin{equation}
\begin{aligned}
\min_{n\in\chi_o}\m{h}^\top\m{\Gamma}_n^\top\m{\Sigma}_{\Omega}\m{\Gamma}_n\m{h}\leq Kh_o^2.
\end{aligned}
\end{equation}
It remains to show that
\begin{equation}
\label{lem-diff-2}
\begin{aligned}
\min_{n\in\chi_o}\m{h}^\top\m{\Gamma}_n^\top\m{\Sigma}_{\Omega}\m{\Gamma}_n\m{h}\leq1-\frac{1-h_o^2}{K}.
\end{aligned}
\end{equation}
We establish \eqref{lem-diff-2} by contradiction. Suppose that, for all $n\in\chi_o$,
\begin{equation}
\label{contradiction}
\begin{aligned}
\m{h}^\top\m{\Gamma}_{n}^\top\m{\Sigma}_{\Omega}\m{\Gamma}_{n}\m{h}>1-\frac{1-h_o^2}{K}.
\end{aligned}
\end{equation}
Then
\begin{equation}
\begin{aligned}
\m{h}^\top\m{\Gamma}_{n}^\top\m{\Sigma}_{\Omega}\m{\Gamma}_{n}\m{h}-h_o^2>\frac{\left(K-1\right)\left(1-h_o^2\right)}{K}.
\end{aligned}
\end{equation}
Consequently, there exists an index $\widetilde{o}$ such that
\begin{equation}
\begin{aligned}
h_{\widetilde{o}}^2>\frac{1-h_o^2}{K}.
\end{aligned}
\end{equation}
Since $\Omega$ is non-uniform, there exists $\widetilde{n}\in\chi_o$ such that $\widetilde{n}\notin\chi_{\widetilde{o}}$, namely,
\begin{equation}
\begin{aligned}
\m{h}^\top\m{\Gamma}_{\widetilde{n}}^\top\m{\Sigma}_{\Omega}\m{\Gamma}_{\widetilde{n}}\m{h}\leq1-h_{\widetilde{o}}^2<1-\frac{1-h_o^2}{K},
\end{aligned}
\end{equation}
which contradicts \eqref{contradiction}. Therefore, \eqref{lem-diff-2} holds, completing the proof.

\section{Proof of Lemma~\ref{lem-2st}}
\label{pf-lem-2st}
The Riemannian Hessian of $\psi_\varepsilon^0$ is
\begin{equation}
\begin{aligned}
\text{Hess}\;\psi_\varepsilon^0\left(\m{h}\right)=&\left(\m{I}-\m{h}\m{h}^\top\right)\Bigg\{\sum_{n=1}^N\left[\m{h}^\top\m{\Gamma}_n^\top\left(\m{\Sigma}_{\Omega}+\m{E}_{\Omega}\right)\m{\Gamma}_n\m{h}+\varepsilon\right]^{-\frac{1}{2}}\m{\Gamma}_n^\top\left(\m{\Sigma}_{\Omega}+\m{E}_{\Omega}\right)\m{\Gamma}_n\\
&-\sum_{n=1}^N\left[\m{h}^\top\m{\Gamma}_n^\top\left(\m{\Sigma}_{\Omega}+\m{E}_{\Omega}\right)\m{\Gamma}_n\m{h}+\varepsilon\right]^{-\frac{3}{2}}\m{\Gamma}_n^\top\left(\m{\Sigma}_{\Omega}+\m{E}_{\Omega}\right)\m{\Gamma}_n\m{h}\m{h}^\top\m{\Gamma}_n^\top\left(\m{\Sigma}_{\Omega}+\m{E}_{\Omega}\right)\m{\Gamma}_n\\
&-\sum_{n=1}^N\left[\m{h}^\top\m{\Gamma}_n^\top\left(\m{\Sigma}_{\Omega}+\m{E}_{\Omega}\right)\m{\Gamma}_n\m{h}+\varepsilon\right]^{-\frac{1}{2}}\left[\m{h}^\top\m{\Gamma}_n^\top\left(\m{\Sigma}_{\Omega}+\m{E}_{\Omega}\right)\m{\Gamma}_n\m{h}\right]\m{I}\Bigg\}\left(\m{I}-\m{h}\m{h}^\top\right)\\
=&\left(\m{I}-\m{h}\m{h}^\top\right)\Bigg\{\sum_{n=1}^N\left[\m{h}^\top\m{\Gamma}_n^\top\left(\m{\Sigma}_{\Omega}+\m{E}_{\Omega}\right)\m{\Gamma}_n\m{h}+\varepsilon\right]^{-\frac{3}{2}}\m{H}_n\Bigg\}\left(\m{I}-\m{h}\m{h}^\top\right),
\end{aligned}
\end{equation}
where
\begin{equation}
\begin{aligned}
\m{H}_n=&\left[\m{h}^\top\m{\Gamma}_n^\top\left(\m{\Sigma}_{\Omega}+\m{E}_{\Omega}\right)\m{\Gamma}_n\m{h}+\varepsilon\right]\left\{\m{\Gamma}_n^\top\left(\m{\Sigma}_{\Omega}+\m{E}_{\Omega}\right)\m{\Gamma}_n-\left[\m{h}^\top\m{\Gamma}_n^\top\left(\m{\Sigma}_{\Omega}+\m{E}_{\Omega}\right)\m{\Gamma}_n\m{h}\right]\m{I}\right\}\\
&-\m{\Gamma}_n^\top\left(\m{\Sigma}_{\Omega}+\m{E}_{\Omega}\right)\m{\Gamma}_n\m{h}\m{h}^\top\m{\Gamma}_n^\top\left(\m{\Sigma}_{\Omega}+\m{E}_{\Omega}\right)\m{\Gamma}_n.
\end{aligned}
\end{equation}
It is straightforward to verify that
\begin{equation}
\begin{aligned}
\m{d}^\top\m{H}_n\m{d}=&\m{d}^\top\Bigg\{\left[\m{h}^\top\m{\Gamma}_n^\top\left(\m{\Sigma}_{\Omega}+\m{E}_{\Omega}\right)\m{\Gamma}_n\m{h}+\varepsilon\right]\left\{\m{\Gamma}_n^\top\left(\m{\Sigma}_{\Omega}+\m{E}_{\Omega}\right)\m{\Gamma}_n-\left[\m{h}^\top\m{\Gamma}_n^\top\left(\m{\Sigma}_{\Omega}+\m{E}_{\Omega}\right)\m{\Gamma}_n\m{h}\right]\m{I}\right\}\\
&-\m{\Gamma}_n^\top\left(\m{\Sigma}_{\Omega}+\m{E}_{\Omega}\right)\m{\Gamma}_n\m{h}\m{h}^\top\m{\Gamma}_n^\top\left(\m{\Sigma}_{\Omega}+\m{E}_{\Omega}\right)\m{\Gamma}_n\Bigg\}\m{d}\\
=&\left[\m{h}^\top\m{\Gamma}_n^\top\left(\m{\Sigma}_{\Omega}+\m{E}_{\Omega}\right)\m{\Gamma}_n\m{h}+\varepsilon\right]\left\{\m{d}^\top\m{\Gamma}_n^\top\left(\m{\Sigma}_{\Omega}+\m{E}_{\Omega}\right)\m{\Gamma}_n\m{d}-\left[\m{h}^\top\m{\Gamma}_n^\top\left(\m{\Sigma}_{\Omega}+\m{E}_{\Omega}\right)\m{\Gamma}_n\m{h}\right]\left\|\m{d}\right\|_2^2\right\}\\
&-\m{d}^\top\m{\Gamma}_n^\top\left(\m{\Sigma}_{\Omega}+\m{E}_{\Omega}\right)\m{\Gamma}_n\m{h}\m{h}^\top\m{\Gamma}_n^\top\left(\m{\Sigma}_{\Omega}+\m{E}_{\Omega}\right)\m{\Gamma}_n\m{d},
\end{aligned}
\end{equation}
where
\begin{equation}
\begin{aligned}
\m{d}^\top\m{\Gamma}_n^\top\left(\m{\Sigma}_{\Omega}+\m{E}_{\Omega}\right)\m{\Gamma}_n\m{d}=&\m{e}_o^\top\m{\Gamma}_n^\top\left(\m{\Sigma}_{\Omega}+\m{E}_{\Omega}\right)\m{\Gamma}_n\m{e}_o-2h_o\m{h}^\top\m{\Gamma}_n^\top\left(\m{\Sigma}_{\Omega}+\m{E}_{\Omega}\right)\m{\Gamma}_n\m{e}_o\\
&+h_o^2\m{h}^\top\m{\Gamma}_n^\top\left(\m{\Sigma}_{\Omega}+\m{E}_{\Omega}\right)\m{\Gamma}_n\m{h},
\end{aligned}
\end{equation}
and
\begin{equation}
\begin{aligned}
&\m{d}^\top\m{\Gamma}_n^\top\left(\m{\Sigma}_{\Omega}+\m{E}_{\Omega}\right)\m{\Gamma}_n\m{h}\m{h}^\top\m{\Gamma}_n^\top\left(\m{\Sigma}_{\Omega}+\m{E}_{\Omega}\right)\m{\Gamma}_n\m{d}\\
=&\left[\m{e}_o^\top\m{\Gamma}_n^\top\left(\m{\Sigma}_{\Omega}+\m{E}_{\Omega}\right)\m{\Gamma}_n\m{h}\right]^2-2h_o\m{e}_o^\top\m{\Gamma}_n^\top\left(\m{\Sigma}_{\Omega}+\m{E}_{\Omega}\right)\m{\Gamma}_n\m{h}\m{h}^\top\m{\Gamma}_n^\top\left(\m{\Sigma}_{\Omega}+\m{E}_{\Omega}\right)\m{\Gamma}_n\m{h}\\
&+h_o^2\left[\m{h}^\top\m{\Gamma}_n^\top\left(\m{\Sigma}_{\Omega}+\m{E}_{\Omega}\right)\m{\Gamma}_n\m{h}\right]^2.
\end{aligned}
\end{equation}
Consequently,
\begin{equation}
\begin{aligned}
\m{d}^\top\m{H}_n\m{d}
=&-\left[\m{h}^\top\m{\Gamma}_n^\top\left(\m{\Sigma}_{\Omega}+\m{E}_{\Omega}\right)\m{\Gamma}_n\m{h}+\varepsilon\right]\left[\m{h}^\top\m{\Gamma}_n^\top\left(\m{\Sigma}_{\Omega}+\m{E}_{\Omega}\right)\m{\Gamma}_n\m{h}\right]\left\|\m{d}\right\|_2^2\\
&+\left[\m{h}^\top\m{\Gamma}_n^\top\left(\m{\Sigma}_{\Omega}+\m{E}_{\Omega}\right)\m{\Gamma}_n\m{h}+\varepsilon\right]\m{e}_o^\top\m{\Gamma}_n^\top\m{\Sigma}_{\Omega}\m{\Gamma}_n\m{e}_o\\
&+\left[\m{h}^\top\m{\Gamma}_n^\top\left(\m{\Sigma}_{\Omega}+\m{E}_{\Omega}\right)\m{\Gamma}_n\m{h}+\varepsilon\right]\m{e}_o^\top\m{\Gamma}_n^\top\m{E}_{\Omega}\m{\Gamma}_n\m{e}_o\\
&+\left[\m{h}^\top\m{\Gamma}_n^\top\left(\m{\Sigma}_{\Omega}+\m{E}_{\Omega}\right)\m{\Gamma}_n\m{h}+\varepsilon\right]\left[-2h_o\m{h}^\top\m{\Gamma}_n^\top\left(\m{\Sigma}_{\Omega}+\m{E}_{\Omega}\right)\m{\Gamma}_n\m{e}_o\right]\\
&+\left[\m{h}^\top\m{\Gamma}_n^\top\left(\m{\Sigma}_{\Omega}+\m{E}_{\Omega}\right)\m{\Gamma}_n\m{h}+\varepsilon\right]\left[h_o^2\m{h}^\top\m{\Gamma}_n^\top\left(\m{\Sigma}_{\Omega}+\m{E}_{\Omega}\right)\m{\Gamma}_n\m{h}\right]\\
&-\left[\m{e}_o^\top\m{\Gamma}_n^\top\left(\m{\Sigma}_{\Omega}+\m{E}_{\Omega}\right)\m{\Gamma}_n\m{h}\right]^2-h_o^2\left[\m{h}^\top\m{\Gamma}_n^\top\left(\m{\Sigma}_{\Omega}+\m{E}_{\Omega}\right)\m{\Gamma}_n\m{h}\right]^2\\
&+2h_o\left[\m{h}^\top\m{\Gamma}_n^\top\left(\m{\Sigma}_{\Omega}+\m{E}_{\Omega}\right)\m{\Gamma}_n\m{h}\right]\m{e}_o^\top\m{\Gamma}_n^\top\left(\m{\Sigma}_{\Omega}+\m{E}_{\Omega}\right)\m{\Gamma}_n\m{h}.
\end{aligned}
\end{equation}
Furthermore, we have
\begin{equation}
\begin{aligned}
\m{d}^\top\m{H}_n\m{d}
=&-\left[\m{h}^\top\m{\Gamma}_n^\top\left(\m{\Sigma}_{\Omega}+\m{E}_{\Omega}\right)\m{\Gamma}_n\m{h}+\varepsilon\right]\left[\m{h}^\top\m{\Gamma}_n^\top\left(\m{\Sigma}_{\Omega}+\m{E}_{\Omega}\right)\m{\Gamma}_n\m{h}\right]\left\|\m{d}\right\|_2^2\\
&+\left[\m{h}^\top\m{\Gamma}_n^\top\left(\m{\Sigma}_{\Omega}+\m{E}_{\Omega}\right)\m{\Gamma}_n\m{h}+\varepsilon\right]\m{e}_o^\top\m{\Gamma}_n^\top\m{\Sigma}_{\Omega}\m{\Gamma}_n\m{e}_o\\
&-\left[\m{e}_o^\top\m{\Gamma}_n^\top\left(\m{\Sigma}_{\Omega}+\m{E}_{\Omega}\right)\m{\Gamma}_n\m{h}\right]^2\\
&-2h_o\varepsilon\left[\m{h}^\top\m{\Gamma}_n^\top\left(\m{\Sigma}_{\Omega}+\m{E}_{\Omega}\right)\m{\Gamma}_n\m{e}_o\right]\\
&+\left[\m{h}^\top\m{\Gamma}_n^\top\left(\m{\Sigma}_{\Omega}+\m{E}_{\Omega}\right)\m{\Gamma}_n\m{h}+\varepsilon\right]\m{e}_o^\top\m{\Gamma}_n^\top\m{E}_{\Omega}\m{\Gamma}_n\m{e}_o\\
&+\varepsilon h_o^2\m{h}^\top\m{\Gamma}_n^\top\left(\m{\Sigma}_{\Omega}+\m{E}_{\Omega}\right)\m{\Gamma}_n\m{h}.
\end{aligned}
\end{equation}
Summing over $n$ yields
\begin{equation}
\label{res-main-term}
\begin{aligned}
\frac{\m{d}^\top\text{Hess}\;\psi_\varepsilon^0\left(\m{h}\right)\m{d}}{\left\|\m{d}\right\|_2^2}=&\frac{1}{\left\|\m{d}\right\|_2^2}\sum_{n=1}^N\left[\m{h}^\top\m{\Gamma}_n^\top\left(\m{\Sigma}_{\Omega}+\m{E}_{\Omega}\right)\m{\Gamma}_n\m{h}+\varepsilon\right]^{-\frac{3}{2}}\m{d}^\top\m{H}_n\m{d}=\text{main}+\text{res},
\end{aligned}
\end{equation}
where
\begin{equation}
\begin{aligned}
\text{main}=&-\sum_{n=1}^N\left[\m{h}^\top\m{\Gamma}_n^\top\left(\m{\Sigma}_{\Omega}+\m{E}_{\Omega}\right)\m{\Gamma}_n\m{h}+\varepsilon\right]^{-\frac{1}{2}}\left(\m{h}^\top\m{\Gamma}_n^\top\m{\Sigma}_{\Omega}\m{\Gamma}_n\m{h}\right)\\
&+\frac{1}{\left\|\m{d}\right\|_2^2}\sum_{n\in\chi_o}\left[\m{h}^\top\m{\Gamma}_n^\top\left(\m{\Sigma}_{\Omega}+\m{E}_{\Omega}\right)\m{\Gamma}_n\m{h}+\varepsilon\right]^{-\frac{1}{2}}\\
&-\frac{h_o^2}{\left\|\m{d}\right\|_2^2}\sum_{n\in\chi_o}\left[\m{h}^\top\m{\Gamma}_n^\top\left(\m{\Sigma}_{\Omega}+\m{E}_{\Omega}\right)\m{\Gamma}_n\m{h}+\varepsilon\right]^{-\frac{3}{2}},
\end{aligned}
\end{equation}
and
\begin{equation}
\begin{aligned}
\text{res}=&-\sum_{n=1}^N\left[\m{h}^\top\m{\Gamma}_n^\top\left(\m{\Sigma}_{\Omega}+\m{E}_{\Omega}\right)\m{\Gamma}_n\m{h}+\varepsilon\right]^{-\frac{1}{2}}\left(\m{h}^\top\m{\Gamma}_n^\top\m{E}_{\Omega}\m{\Gamma}_n\m{h}\right)\\
&-\frac{2h_o}{\left\|\m{d}\right\|_2^2}\sum_{n=1}^N\left[\m{h}^\top\m{\Gamma}_n^\top\left(\m{\Sigma}_{\Omega}+\m{E}_{\Omega}\right)\m{\Gamma}_n\m{h}+\varepsilon\right]^{-\frac{3}{2}}\left(\m{h}^\top\m{\Gamma}_n^\top\m{E}_{\Omega}\m{\Gamma}_n\m{e}_o\right)\\
&-\frac{1}{\left\|\m{d}\right\|_2^2}\sum_{n=1}^N\left[\m{h}^\top\m{\Gamma}_n^\top\left(\m{\Sigma}_{\Omega}+\m{E}_{\Omega}\right)\m{\Gamma}_n\m{h}+\varepsilon\right]^{-\frac{3}{2}}\left[\m{e}_o^\top\m{\Gamma}_n^\top\m{E}_{\Omega}\m{\Gamma}_n\m{h}\right]^2\\
&-\frac{2h_o^2\varepsilon}{\left\|\m{d}\right\|_2^2}\sum_{n\in\chi_o}\left[\m{h}^\top\m{\Gamma}_n^\top\left(\m{\Sigma}_{\Omega}+\m{E}_{\Omega}\right)\m{\Gamma}_n\m{h}+\varepsilon\right]^{-\frac{3}{2}}\\
&-\frac{2h_o\varepsilon}{\left\|\m{d}\right\|_2^2}\sum_{n=1}^N\left[\m{h}^\top\m{\Gamma}_n^\top\left(\m{\Sigma}_{\Omega}+\m{E}_{\Omega}\right)\m{\Gamma}_n\m{h}+\varepsilon\right]^{-\frac{3}{2}}\left[\m{h}^\top\m{\Gamma}_n^\top\m{E}_{\Omega}\m{\Gamma}_n\m{e}_o\right]\\
&+\frac{1}{\left\|\m{d}\right\|_2^2}\sum_{n=1}^N\left[\m{h}^\top\m{\Gamma}_n^\top\left(\m{\Sigma}_{\Omega}+\m{E}_{\Omega}\right)\m{\Gamma}_n\m{h}+\varepsilon\right]^{-\frac{1}{2}}\m{e}_o^\top\m{\Gamma}_n^\top\m{E}_{\Omega}\m{\Gamma}_n\m{e}_o\\
&+\frac{\varepsilon h_o^2}{\left\|\m{d}\right\|_2^2}\sum_{n=1}^N\left[\m{h}^\top\m{\Gamma}_n^\top\left(\m{\Sigma}_{\Omega}+\m{E}_{\Omega}\right)\m{\Gamma}_n\m{h}+\varepsilon\right]^{-\frac{3}{2}}\m{h}^\top\m{\Gamma}_n^\top\left(\m{\Sigma}_{\Omega}+\m{E}_{\Omega}\right)\m{\Gamma}_n\m{h}.
\end{aligned}
\end{equation}
By discarding nonpositive terms, the residual term can be further bounded as
\begin{equation}
\begin{aligned}
\text{res}
\leq&-\sum_{n=1}^N\left[\m{h}^\top\m{\Gamma}_n^\top\left(\m{\Sigma}_{\Omega}+\m{E}_{\Omega}\right)\m{\Gamma}_n\m{h}+\varepsilon\right]^{-\frac{1}{2}}\left(\m{h}^\top\m{\Gamma}_n^\top\m{E}_{\Omega}\m{\Gamma}_n\m{h}\right)\\
&-\frac{2h_o\left(1+\varepsilon\right)}{\left\|\m{d}\right\|_2^2}\sum_{n=1}^N\left[\m{h}^\top\m{\Gamma}_n^\top\left(\m{\Sigma}_{\Omega}+\m{E}_{\Omega}\right)\m{\Gamma}_n\m{h}+\varepsilon\right]^{-\frac{3}{2}}\left(\m{h}^\top\m{\Gamma}_n^\top\m{E}_{\Omega}\m{\Gamma}_n\m{e}_o\right)\\
&+\frac{1}{\left\|\m{d}\right\|_2^2}\sum_{n=1}^N\left[\m{h}^\top\m{\Gamma}_n^\top\left(\m{\Sigma}_{\Omega}+\m{E}_{\Omega}\right)\m{\Gamma}_n\m{h}+\varepsilon\right]^{-\frac{1}{2}}\m{e}_o^\top\m{\Gamma}_n^\top\m{E}_{\Omega}\m{\Gamma}_n\m{e}_o\\
&+\frac{\varepsilon h_o^2}{\left\|\m{d}\right\|_2^2}\sum_{n=1}^N\left[\m{h}^\top\m{\Gamma}_n^\top\left(\m{\Sigma}_{\Omega}+\m{E}_{\Omega}\right)\m{\Gamma}_n\m{h}+\varepsilon\right]^{-\frac{3}{2}}\m{h}^\top\m{\Gamma}_n^\top\left(\m{\Sigma}_{\Omega}+\m{E}_{\Omega}\right)\m{\Gamma}_n\m{h}\\
\leq&-\sum_{n=1}^N\left[\m{h}^\top\m{\Gamma}_n^\top\left(\m{\Sigma}_{\Omega}+\m{E}_{\Omega}\right)\m{\Gamma}_n\m{h}+\varepsilon\right]^{-\frac{1}{2}}\left(\m{h}^\top\m{\Gamma}_n^\top\m{E}_{\Omega}\m{\Gamma}_n\m{h}\right)\\
&+\frac{2h_o\left(1+\varepsilon\right)}{\left\|\m{d}\right\|_2^2}\left(h_o^2-\left\|\m{E}_\Omega\right\|_2+\varepsilon\right)^{-\frac{3}{2}}K\left\|\m{E}_\Omega\right\|_2\\
&+\frac{1}{\left\|\m{d}\right\|_2^2}\left(h_o^2-\left\|\m{E}_\Omega\right\|_2+\varepsilon\right)^{-\frac{1}{2}}K\left\|\m{E}_\Omega\right\|_2+\frac{h_o^2}{\left\|\m{d}\right\|_2^2}\varepsilon^{\frac{1}{2}}N\\
\leq&-\sum_{n=1}^N\left[\m{h}^\top\m{\Gamma}_n^\top\left(\m{\Sigma}_{\Omega}+\m{E}_{\Omega}\right)\m{\Gamma}_n\m{h}+\varepsilon\right]^{-\frac{1}{2}}\left(\m{h}^\top\m{\Gamma}_n^\top\m{E}_{\Omega}\m{\Gamma}_n\m{h}\right)\\
&+\frac{8\left(1+\varepsilon\right)}{\left\|\m{d}\right\|_2^2}h_o^{-2}K\left\|\m{E}_\Omega\right\|_2+\frac{2}{\left\|\m{d}\right\|_2^2}Kh_o^{-1}\left\|\m{E}_\Omega\right\|_2+\frac{h_o^2}{\left\|\m{d}\right\|_2^2}\varepsilon^{\frac{1}{2}}N.
\end{aligned}
\end{equation}
Note that
\begin{equation}
\label{temp-term-2}
\begin{aligned}
&\left|\sum_{n=1}^N\left[\m{h}^\top\m{\Gamma}_n^\top\left(\m{\Sigma}_{\Omega}+\m{E}_{\Omega}\right)\m{\Gamma}_n\m{h}+\varepsilon\right]^{-\frac{1}{2}}\left(\m{h}^\top\m{\Gamma}_n^\top\m{E}_{\Omega}\m{\Gamma}_n\m{h}\right)\right|\\
\leq&\sum_{n\in\chi_{\m{h}}}\frac{\left[\left(\m{h}^\top\m{\Gamma}_n^\top\m{E}_{\Omega}\m{\Gamma}_n\m{h}\right)/\left\|\mathcal{P}_{\Omega}\left(\m{\Gamma}_n\m{h}\right)\right\|_2^2\right]\left\|\mathcal{P}_{\Omega}\left(\m{\Gamma}_n\m{h}\right)\right\|_2}{\left[\m{h}^\top\m{\Gamma}_n^\top\left(\m{\Sigma}_{\Omega}+\m{E}_{\Omega}\right)\m{\Gamma}_n\m{h}/\left\|\mathcal{P}_{\Omega}\left(\m{\Gamma}_n\m{h}\right)\right\|_2^2+\varepsilon/\left\|\mathcal{P}_{\Omega}\left(\m{\Gamma}_n\m{h}\right)\right\|_2^2\right]^{\frac{1}{2}}}\\
\leq&\frac{\left\|\m{E}_\Omega\right\|_2}{\left(1-\left\|\m{E}_\Omega\right\|_2+\varepsilon\right)^{\frac{1}{2}}}\sum_{n\in\chi_{\m{h}}}\left\|\mathcal{P}_{\Omega}\left(\m{\Gamma}_n\m{h}\right)\right\|_2.
\end{aligned}
\end{equation}
Substituting \eqref{temp-term-2} into the preceding bound yields
\begin{equation}
\label{res-term}
\begin{aligned}
\text{res}
\leq&2\left\|\m{E}_\Omega\right\|_2\sum_{n\in\chi_{\m{h}}}\left\|\mathcal{P}_{\Omega}\left(\m{\Gamma}_n\m{h}\right)\right\|_2+\frac{8\left(1+\varepsilon\right)}{\left\|\m{d}\right\|_2^2}h_o^{-2}K\left\|\m{E}_\Omega\right\|_2+\frac{2}{\left\|\m{d}\right\|_2^2}Kh_o^{-1}\left\|\m{E}_\Omega\right\|_2+\frac{h_o^2}{\left\|\m{d}\right\|_2^2}\varepsilon^{\frac{1}{2}}N.
\end{aligned}
\end{equation}
To provide an upper bound for the main term, we establish the following lemma.
\begin{lem}
\label{lem-1st}
Let $\m{h}$ be an approximate first-order stationary point satisfying \eqref{prop-first-order stationary} and \eqref{prop-max}. Then, for $\left\|\m{E}_\Omega\right\|_2\leq\frac{1}{2}$, it holds that
\begin{equation}
\label{first-order}
\begin{aligned}
&\left|\sum_{n\in\chi_o}\frac{1}{\left[\m{h}^\top\m{\Gamma}_n^\top\left(\m{\Sigma}_{\Omega}+\m{E}_{\Omega}\right)\m{\Gamma}_n\m{h}+\varepsilon\right]^{\frac{1}{2}}}-\sum_{n=1}^N\frac{\m{h}^\top\m{\Gamma}_n^\top\m{\Sigma}_{\Omega}\m{\Gamma}_n\m{h}}{\left[\m{h}^\top\m{\Gamma}_n^\top\left(\m{\Sigma}_{\Omega}+\m{E}_{\Omega}\right)\m{\Gamma}_n\m{h}+\varepsilon\right]^{\frac{1}{2}}}\right|\\
\leq&\frac{1}{h_o}\left(\xi+2K\left\|\m{E}_\Omega\right\|_2\right)+2\left\|\m{E}_\Omega\right\|_2\sum_{n\in\chi_{\m{h}}}\left\|\mathcal{P}_{\Omega}\left(\m{\Gamma}_n\m{h}\right)\right\|_2.
\end{aligned}
\end{equation}
\end{lem}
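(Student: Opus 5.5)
The plan is to read off the $o$-th coordinate of the Riemannian gradient \eqref{grad}. Write $w_n=\left[\m{h}^\top\m{\Gamma}_n^\top(\m{\Sigma}_{\Omega}+\m{E}_{\Omega})\m{\Gamma}_n\m{h}+\varepsilon\right]^{-\frac{1}{2}}$ and $\m{A}_n=\m{\Gamma}_n^\top(\m{\Sigma}_{\Omega}+\m{E}_{\Omega})\m{\Gamma}_n$. Then
\begin{equation*}
\m{e}_o^\top\text{grad}\;\psi_\varepsilon^0(\m{h})=\sum_{n=1}^N w_n\left(\m{e}_o^\top\m{A}_n\m{h}-h_o\,\m{h}^\top\m{A}_n\m{h}\right).
\end{equation*}
By \eqref{prop-first-order stationary}, the absolute value of this quantity is at most $\xi$.

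Next, split each $\m{A}_n$ into its $\m{\Sigma}_{\Omega}$ part and its $\m{E}_{\Omega}$ part. Since $\m{\Gamma}_n^\top\m{\Sigma}_{\Omega}\m{\Gamma}_n$ is diagonal with $0/1$ entries, we have $\m{e}_o^\top\m{\Gamma}_n^\top\m{\Sigma}_{\Omega}\m{\Gamma}_n\m{h}=h_o\mathbb{1}[n\in\chi_o]$, which gives
\begin{equation*}
\m{e}_o^\top\text{grad}\;\psi_\varepsilon^0(\m{h})=h_o\left(\sum_{n\in\chi_o}w_n-\sum_{n=1}^N w_n\,\m{h}^\top\m{\Gamma}_n^\top\m{\Sigma}_{\Omega}\m{\Gamma}_n\m{h}\right)+\sum_{n=1}^N w_n\left(\m{e}_o^\top\m{\Gamma}_n^\top\m{E}_{\Omega}\m{\Gamma}_n\m{h}-h_o\,\m{h}^\top\m{\Gamma}_n^\top\m{E}_{\Omega}\m{\Gamma}_n\m{h}\right).
\end{equation*}
Dividing by $h_o$ and using the triangle inequality, the left-hand side of \eqref{first-order} is at most
\begin{equation*}
h_o^{-1}\xi+h_o^{-1}\Big|\sum_n w_n\,\m{e}_o^\top\m{\Gamma}_n^\top\m{E}_{\Omega}\m{\Gamma}_n\m{h}\Big|+\Big|\sum_n w_n\,\m{h}^\top\m{\Gamma}_n^\top\m{E}_{\Omega}\m{\Gamma}_n\m{h}\Big|.
\end{equation*}

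The third term is exactly the quantity bounded in \eqref{temp-term-2}, namely by $\|\m{E}_\Omega\|_2(1-\|\m{E}_\Omega\|_2+\varepsilon)^{-1/2}\sum_{n\in\chi_{\m{h}}}\|\mathcal{P}_\Omega(\m{\Gamma}_n\m{h})\|_2$. When $\|\m{E}_\Omega\|_2\le\frac12$, this is at most $2\|\m{E}_\Omega\|_2\sum_{n\in\chi_{\m{h}}}\|\mathcal{P}_\Omega(\m{\Gamma}_n\m{h})\|_2$. One caveat: \eqref{temp-term-2} is stated with the normalization by $\|\mathcal{P}_\Omega(\m{\Gamma}_n\m{h})\|_2$ and implicitly assumes that $\m{E}_\Omega$ effectively acts on the support $\Omega$. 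This holds because $\m{x}_l$ vanishes off $\Omega$, so $\m{E}_\Omega=\m{\Sigma}_\Omega\m{E}_\Omega\m{\Sigma}_\Omega$. I would state this explicitly, since it also makes the terms with $n\notin\chi_{\m{h}}$ vanish.

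The second term is the main obstacle, and it must produce $2K\|\m{E}_\Omega\|_2/h_o$. Using $\m{E}_\Omega=\m{\Sigma}_\Omega\m{E}_\Omega\m{\Sigma}_\Omega$, the summand vanishes unless $n\in\chi_o$, and $|\chi_o|=K$. For $n\in\chi_o$, Cauchy--Schwarz gives $|\m{e}_o^\top\m{\Gamma}_n^\top\m{E}_\Omega\m{\Gamma}_n\m{h}|\le\|\m{E}_\Omega\|_2\|\mathcal{P}_\Omega(\m{\Gamma}_n\m{h})\|_2$. Moreover, $w_n^{-2}\ge(1-\|\m{E}_\Omega\|_2)\|\mathcal{P}_\Omega(\m{\Gamma}_n\m{h})\|_2^2+\varepsilon$. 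Hence each such summand is at most $\|\m{E}_\Omega\|_2(1-\|\m{E}_\Omega\|_2)^{-1/2}\le\sqrt2\|\m{E}_\Omega\|_2\le2\|\m{E}_\Omega\|_2$. Summing over the $K$ indices in $\chi_o$ gives $2K\|\m{E}_\Omega\|_2$, and after division by $h_o$ this is the claimed term. The fact that this bound is uniform, rather than blowing up as $h_o$ becomes small, is what needs care. The point is that the factor $\|\mathcal{P}_\Omega(\m{\Gamma}_n\m{h})\|_2$ cancels between numerator and weight, and that $\|\mathcal{P}_\Omega(\m{\Gamma}_n\m{h})\|_2\ge h_o>0$ for $n\in\chi_o$, so no division by zero occurs. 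Collecting the three bounds yields \eqref{first-order}.
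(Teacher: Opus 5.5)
Your proposal is correct and follows the paper's proof essentially step by step: you read off the $o$-th coordinate of the Riemannian gradient, split off the $\m{\Sigma}_{\Omega}$ and $\m{E}_{\Omega}$ parts, bound the $\m{e}_o$-term over $\chi_o$ by $K\|\m{E}_{\Omega}\|_2(1-\|\m{E}_{\Omega}\|_2)^{-1/2}$, and bound the $\m{h}$-term by the estimate \eqref{temp-term-2}. You also make explicit two facts the paper uses without stating them: the identity $\m{E}_{\Omega}=\m{\Sigma}_{\Omega}\m{E}_{\Omega}\m{\Sigma}_{\Omega}$, which justifies restricting the first sum to $\chi_o$ with $|\chi_o|=K$, and the nondegeneracy $\|\mathcal{P}_{\Omega}(\m{\Gamma}_n\m{h})\|_2\geq h_o>0$ for $n\in\chi_o$.
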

\begin{proof}
It follows from \eqref{grad} that the $o$-th entry of $\text{grad}\;\psi_\varepsilon^0\left(\m{h}\right)$ is given by
\begin{equation}
\begin{aligned}
\mathcal{P}_{o}\left(\text{grad}\;\psi_\varepsilon^0\left(\m{h}\right)\right)=&\sum_{n\in\chi_o}\left[\m{h}^\top\m{\Gamma}_n^\top\left(\m{\Sigma}_{\Omega}+\m{E}_{\Omega}\right)\m{\Gamma}_n\m{h}+\varepsilon\right]^{-\frac{1}{2}}h_o\\
&-\sum_{n=1}^N\left[\m{h}^\top\m{\Gamma}_n^\top\left(\m{\Sigma}_{\Omega}+\m{E}_{\Omega}\right)\m{\Gamma}_n\m{h}+\varepsilon\right]^{-\frac{1}{2}}\left(\m{h}^\top\m{\Gamma}_n^\top\m{\Sigma}_{\Omega}\m{\Gamma}_n\m{h}\right)h_o\\
&+\mathcal{P}_{o}\left(\sum_{n=1}^N\left[\m{h}^\top\m{\Gamma}_n^\top\left(\m{\Sigma}_{\Omega}+\m{E}_{\Omega}\right)\m{\Gamma}_n\m{h}+\varepsilon\right]^{-\frac{1}{2}}\m{\Gamma}_n^\top\m{E}_{\Omega}\m{\Gamma}_n\m{h}\right)\\
&-\mathcal{P}_{o}\left(\sum_{n=1}^N\left[\m{h}^\top\m{\Gamma}_n^\top\left(\m{\Sigma}_{\Omega}+\m{E}_{\Omega}\right)\m{\Gamma}_n\m{h}+\varepsilon\right]^{-\frac{1}{2}}\left(\m{h}^\top\m{\Gamma}_n^\top\m{E}_{\Omega}\m{\Gamma}_n\m{h}\right)\m{h}\right).
\end{aligned}
\end{equation}
By the first-order stationarity condition \eqref{prop-first-order stationary},
\begin{equation}
\begin{aligned}
\left|\mathcal{P}_{o}\left(\text{grad}\;\psi_\varepsilon^0\left(\m{h}\right)\right)\right|\leq\xi,
\end{aligned}
\end{equation}
which implies
\begin{equation}
\label{temp-term-0}
\begin{aligned}
&h_o\left|\sum_{n\in\chi_o}\left[\m{h}^\top\m{\Gamma}_n^\top\left(\m{\Sigma}_{\Omega}+\m{E}_{\Omega}\right)\m{\Gamma}_n\m{h}+\varepsilon\right]^{-\frac{1}{2}}-\sum_{n=1}^N\left[\m{h}^\top\m{\Gamma}_n^\top\left(\m{\Sigma}_{\Omega}+\m{E}_{\Omega}\right)\m{\Gamma}_n\m{h}+\varepsilon\right]^{-\frac{1}{2}}\left(\m{h}^\top\m{\Gamma}_n^\top\m{\Sigma}_{\Omega}\m{\Gamma}_n\m{h}\right)\right|\\
\leq&\xi+\left|\mathcal{P}_{o}\left(\sum_{n=1}^N\left[\m{h}^\top\m{\Gamma}_n^\top\left(\m{\Sigma}_{\Omega}+\m{E}_{\Omega}\right)\m{\Gamma}_n\m{h}+\varepsilon\right]^{-\frac{1}{2}}\m{\Gamma}_n^\top\m{E}_{\Omega}\m{\Gamma}_n\m{h}\right)\right|\\
&+\left|\mathcal{P}_{o}\left(\sum_{n=1}^N\left[\m{h}^\top\m{\Gamma}_n^\top\left(\m{\Sigma}_{\Omega}+\m{E}_{\Omega}\right)\m{\Gamma}_n\m{h}+\varepsilon\right]^{-\frac{1}{2}}\left(\m{h}^\top\m{\Gamma}_n^\top\m{E}_{\Omega}\m{\Gamma}_n\m{h}\right)\m{h}\right)\right|\\
\leq&\xi+\left|\mathcal{P}_{o}\left(\sum_{n\in\chi_o}\left[\m{h}^\top\m{\Gamma}_n^\top\left(\m{\Sigma}_{\Omega}+\m{E}_{\Omega}\right)\m{\Gamma}_n\m{h}+\varepsilon\right]^{-\frac{1}{2}}\m{\Gamma}_n^\top\m{E}_{\Omega}\m{\Gamma}_n\m{h}\right)\right|\\
&+h_o\left|\sum_{n=1}^N\left[\m{h}^\top\m{\Gamma}_n^\top\left(\m{\Sigma}_{\Omega}+\m{E}_{\Omega}\right)\m{\Gamma}_n\m{h}+\varepsilon\right]^{-\frac{1}{2}}\left(\m{h}^\top\m{\Gamma}_n^\top\m{E}_{\Omega}\m{\Gamma}_n\m{h}\right)\right|.
\end{aligned}
\end{equation}
Moreover,
\begin{equation}
\label{temp-term-1}
\begin{aligned}
&\left|\mathcal{P}_{o}\left(\sum_{n\in\chi_o}\left[\m{h}^\top\m{\Gamma}_n^\top\left(\m{\Sigma}_{\Omega}+\m{E}_{\Omega}\right)\m{\Gamma}_n\m{h}+\varepsilon\right]^{-\frac{1}{2}}\m{\Gamma}_n^\top\m{E}_{\Omega}\m{\Gamma}_n\m{h}\right)\right|\\
\leq&\sum_{n\in\chi_{o}}\left[\frac{\left(\m{h}^\top\m{\Gamma}_n^\top\m{E}_{\Omega}^\top\m{E}_{\Omega}\m{\Gamma}_n\m{h}\right)/\left\|\mathcal{P}_{\Omega}\left(\m{\Gamma}_n\m{h}\right)\right\|_2^2}{\m{h}^\top\m{\Gamma}_n^\top\left(\m{\Sigma}_{\Omega}+\m{E}_{\Omega}\right)\m{\Gamma}_n\m{h}/\left\|\mathcal{P}_{\Omega}\left(\m{\Gamma}_n\m{h}\right)\right\|_2^2+\varepsilon/\left\|\mathcal{P}_{\Omega}\left(\m{\Gamma}_n\m{h}\right)\right\|_2^2}\right]^{\frac{1}{2}}\\
\leq&K\frac{\left\|\m{E}_\Omega\right\|_2}{\left(1-\left\|\m{E}_\Omega\right\|_2+\varepsilon\right)^{\frac{1}{2}}}.
\end{aligned}
\end{equation}
Substituting \eqref{temp-term-1} and \eqref{temp-term-2} into the inequality \eqref{temp-term-0} yields
\begin{equation}
\label{first-order}
\begin{aligned}
&h_o\left|\sum_{n\in\chi_o}\left[\m{h}^\top\m{\Gamma}_n^\top\left(\m{\Sigma}_{\Omega}+\m{E}_{\Omega}\right)\m{\Gamma}_n\m{h}+\varepsilon\right]^{-\frac{1}{2}}-\sum_{n=1}^N\left[\m{h}^\top\m{\Gamma}_n^\top\left(\m{\Sigma}_{\Omega}+\m{E}_{\Omega}\right)\m{\Gamma}_n\m{h}+\varepsilon\right]^{-\frac{1}{2}}\left(\m{h}^\top\m{\Gamma}_n^\top\m{\Sigma}_{\Omega}\m{\Gamma}_n\m{h}\right)\right|\\
\leq&\xi+K\frac{\left\|\m{E}_\Omega\right\|_2}{\left(1-\left\|\m{E}_\Omega\right\|_2+\varepsilon\right)^{\frac{1}{2}}}+h_o\frac{\left\|\m{E}_\Omega\right\|_2}{\left(1-\left\|\m{E}_\Omega\right\|_2+\varepsilon\right)^{\frac{1}{2}}}\sum_{n\in\chi_{\m{h}}}\left\|\mathcal{P}_{\Omega}\left(\m{\Gamma}_n\m{h}\right)\right\|_2\\
\leq&\xi+2K\left\|\m{E}_\Omega\right\|_2+2h_o\left\|\m{E}_\Omega\right\|_2\sum_{n\in\chi_{\m{h}}}\left\|\mathcal{P}_{\Omega}\left(\m{\Gamma}_n\m{h}\right)\right\|_2.
\end{aligned}
\end{equation}
Dividing both sides by $h_o$ completes the proof.
\end{proof}

It follows from Lemma~\ref{lem-1st} that
\begin{equation}
\begin{aligned}
\text{main}\leq&\left(\frac{1}{\left\|\m{d}\right\|_2^2}-1\right)\sum_{n\in\chi_o}\left[\m{h}^\top\m{\Gamma}_n^\top\left(\m{\Sigma}_{\Omega}+\m{E}_{\Omega}\right)\m{\Gamma}_n\m{h}+\varepsilon\right]^{-\frac{1}{2}}\\
&-\frac{h_o^2}{\left\|\m{d}\right\|_2^2}\sum_{n\in\chi_o}\left[\m{h}^\top\m{\Gamma}_n^\top\left(\m{\Sigma}_{\Omega}+\m{E}_{\Omega}\right)\m{\Gamma}_n\m{h}+\varepsilon\right]^{-\frac{3}{2}}\\
&+\frac{1}{h_o}\left(\xi+2K\left\|\m{E}_\Omega\right\|_2\right)+2\left\|\m{E}_\Omega\right\|_2\sum_{n\in\chi_{\m{h}}}\left\|\mathcal{P}_{\Omega}\left(\m{\Gamma}_n\m{h}\right)\right\|_2\\
=&\frac{h_o^2}{\left\|\m{d}\right\|_2^2}\sum_{n\in\chi_o}\left\{\left[\m{h}^\top\m{\Gamma}_n^\top\left(\m{\Sigma}_{\Omega}+\m{E}_{\Omega}\right)\m{\Gamma}_n\m{h}+\varepsilon\right]^{-\frac{1}{2}}-\left[\m{h}^\top\m{\Gamma}_n^\top\left(\m{\Sigma}_{\Omega}+\m{E}_{\Omega}\right)\m{\Gamma}_n\m{h}+\varepsilon\right]^{-\frac{3}{2}}\right\}\\
&+\frac{1}{h_o}\left(\xi+2K\left\|\m{E}_\Omega\right\|_2\right)+2\left\|\m{E}_\Omega\right\|_2\sum_{n\in\chi_{\m{h}}}\left\|\mathcal{P}_{\Omega}\left(\m{\Gamma}_n\m{h}\right)\right\|_2.
\end{aligned}
\end{equation}
Applying Lemma~\ref{lem-diff} yields
\begin{equation}
\begin{aligned}
&\sum_{n\in\chi_o}\left\{\left[\m{h}^\top\m{\Gamma}_n^\top\left(\m{\Sigma}_{\Omega}+\m{E}_{\Omega}\right)\m{\Gamma}_n\m{h}+\varepsilon\right]^{-\frac{1}{2}}-\left[\m{h}^\top\m{\Gamma}_n^\top\left(\m{\Sigma}_{\Omega}+\m{E}_{\Omega}\right)\m{\Gamma}_n\m{h}+\varepsilon\right]^{-\frac{3}{2}}\right\}\\
\leq&\left(\min\left\{Kh_o^2,1-\frac{1-h_o^2}{K}\right\}+\left\|\m{E}_\Omega\right\|_2+\varepsilon\right)^{-\frac{1}{2}}-\left(\min\left\{Kh_o^2,1-\frac{1-h_o^2}{K}\right\}+\left\|\m{E}_\Omega\right\|_2+\varepsilon\right)^{-\frac{3}{2}}\\
&+\left(K-1\right)\left[\left(1+\left\|\m{E}_\Omega\right\|_2+\varepsilon\right)^{-\frac{1}{2}}-\left(1+\left\|\m{E}_\Omega\right\|_2+\varepsilon\right)^{-\frac{3}{2}}\right]\\
\leq&\left(\min\left\{Kh_o^2,1-\frac{1-h_o^2}{K}\right\}+\left\|\m{E}_\Omega\right\|_2+\varepsilon\right)^{-\frac{3}{2}}\left(\min\left\{Kh_o^2-1,-\frac{1-h_o^2}{K}\right\}+\left\|\m{E}_\Omega\right\|_2+\varepsilon\right)\\
&+\left(K-1\right)\left(1+\left\|\m{E}_\Omega\right\|_2+\varepsilon\right)^{-\frac{3}{2}}\left(\left\|\m{E}_\Omega\right\|_2+\varepsilon\right).
\end{aligned}
\end{equation}
Consequently, we obtain that
\begin{equation}
\label{main-term}
\begin{aligned}
\text{main}\leq&\frac{h_o^2}{\left\|\m{d}\right\|_2^2}\left(\min\left\{1-\frac{1-h_o^2}{K},Kh_o^2\right\}+\left\|\m{E}_\Omega\right\|_2+\varepsilon\right)^{-\frac{3}{2}}\left(\min\left\{-\frac{1-h_o^2}{K},Kh_o^2-1\right\}+\left\|\m{E}_\Omega\right\|_2+\varepsilon\right)\\
&+\frac{h_o^2}{\left\|\m{d}\right\|_2^2}\left(K-1\right)\left(1+\left\|\m{E}_\Omega\right\|_2+\varepsilon\right)^{-\frac{3}{2}}\left(\left\|\m{E}_\Omega\right\|_2+\varepsilon\right)\\
&+\frac{1}{h_o}\left(\xi+2K\left\|\m{E}_\Omega\right\|_2\right)+2\left\|\m{E}_\Omega\right\|_2\sum_{n\in\chi_{\m{h}}}\left\|\mathcal{P}_{\Omega}\left(\m{\Gamma}_n\m{h}\right)\right\|_2\\
\leq&\frac{h_o^2}{\left\|\m{d}\right\|_2^2}\left(\min\left\{1-\frac{1-h_o^2}{K},Kh_o^2\right\}+\left\|\m{E}_\Omega\right\|_2+\varepsilon\right)^{-\frac{3}{2}}\left(\min\left\{-\frac{1-h_o^2}{K},Kh_o^2-1\right\}+\left\|\m{E}_\Omega\right\|_2+\varepsilon\right)\\
&+\frac{h_o^2}{\left\|\m{d}\right\|_2^2}K\left(\left\|\m{E}_\Omega\right\|_2+\varepsilon\right)+\frac{1}{h_o}\left(\xi+2K\left\|\m{E}_\Omega\right\|_2\right)+2\left\|\m{E}_\Omega\right\|_2\sum_{n\in\chi_{\m{h}}}\left\|\mathcal{P}_{\Omega}\left(\m{\Gamma}_n\m{h}\right)\right\|_2.
\end{aligned}
\end{equation}
Substituting \eqref{res-term} and \eqref{main-term} into \eqref{res-main-term} yields
\begin{equation}
\begin{aligned}
\frac{\m{d}^\top\text{Hess}\;\psi_\varepsilon^0\left(\m{h}\right)\m{d}}{\left\|\m{d}\right\|_2^2}\leq&\frac{h_o^2}{\left\|\m{d}\right\|_2^2}\left(\min\left\{1-\frac{1-h_o^2}{K},Kh_o^2\right\}+\left\|\m{E}_\Omega\right\|_2+\varepsilon\right)^{-\frac{3}{2}}\\
&\cdot\left(\min\left\{-\frac{1-h_o^2}{K},Kh_o^2-1\right\}+\left\|\m{E}_\Omega\right\|_2+\varepsilon\right)\\
&+\frac{1}{\left\|\m{d}\right\|_2^2}h_o^2K\left\|\m{E}_\Omega\right\|_2+4\left\|\m{E}_\Omega\right\|_2\sum_{n\in\chi_{\m{h}}}\left\|\mathcal{P}_{\Omega}\left(\m{\Gamma}_n\m{h}\right)\right\|_2+\frac{4}{\left\|\m{d}\right\|_2^2}h_o^{-1}K\left\|\m{E}_\Omega\right\|_2\\
&+\frac{8}{\left\|\m{d}\right\|_2^2}h_o^{-2}\left(1+\varepsilon\right)K\left\|\m{E}_\Omega\right\|_2+\frac{1}{\left\|\m{d}\right\|_2^2}h_o^2\varepsilon K+\frac{2}{\left\|\m{d}\right\|_2^2}h_o^2\varepsilon^{\frac{1}{2}}N+h_o^{-1}\xi,
\end{aligned}
\end{equation}
Therefore, there exist positive constants $c_1\leq\frac{1}{2}$, $c_2\leq1$, and $c_3$ such that, whenever $\left\|\m{E}_\Omega\right\|_2\leq c_1K^{-2}\max^{-1}\left\{K^{2},N\right\}$, $\varepsilon\leq c_2K^{-6}N^{-2}$, and $\xi\leq c_3 K^{-3}N^{-\frac{1}{2}}$, it holds that
\begin{equation}
\begin{aligned}
\frac{\m{d}^\top\text{Hess}\;\psi_\varepsilon^0\left(\m{h}\right)\m{d}}{\left\|\m{d}\right\|_2^2}\leq&\left(1+c_1+c_2\right)^{-\frac{3}{2}}\frac{h_o^2}{\left\|\m{d}\right\|_2^2}\left(\min\left\{1-\frac{1-h_o^2}{K},Kh_o^2\right\}\right)^{-\frac{3}{2}}\min\left\{-\frac{1-h_o^2}{K},Kh_o^2-1\right\}\\
&+\frac{h_o^2}{\left\|\m{d}\right\|_2^2}\left(\min\left\{1-\frac{1-h_o^2}{K},Kh_o^2\right\}\right)^{-\frac{3}{2}}\left(\left\|\m{E}_\Omega\right\|_2+\varepsilon\right)\\
&+\frac{c_1}{\left\|\m{d}\right\|_2^2}K^{-3}+4\left\|\m{E}_\Omega\right\|_2\sum_{n\in\chi_{\m{h}}}\left\|\mathcal{P}_{\Omega}\left(\m{\Gamma}_n\m{h}\right)\right\|_2\\
&+\frac{20}{\left\|\m{d}\right\|_2^2}h_o^{-2}K\left\|\m{E}_\Omega\right\|_2+\frac{c_2}{\left\|\m{d}\right\|_2^2}K^{-3}+\frac{2c_2^{\frac{1}{2}}}{\left\|\m{d}\right\|_2^2}K^{-3}+c_3K^{-3},
\end{aligned}
\end{equation}
which completes the proof.

\section{Proof of Proposition~\ref{thm-case1}}
\label{pf-thm-case1}
By Lemma~\ref{lem-2st} and the assumption $h_o\in\left[\sqrt{\frac{1}{N}},\sqrt{\frac{1}{K+1}}\right]$, there exist positive constants $c_1$, $c_2$, and $c_3$ such that, whenever $\left\|\m{E}_\Omega\right\|_2\leq c_1K^{-2}\max^{-1}\left\{K^{2},N\right\}$, $\varepsilon\leq c_2K^{-6}N^{-2}$, and $\xi\leq c_3K^{-3}N^{-\frac{1}{2}}$, it holds that
\begin{equation}
\begin{aligned}
\frac{\m{d}^\top\text{Hess}\;\psi_\varepsilon^0\left(\m{h}\right)\m{d}}{\left\|\m{d}\right\|_2^2}
\leq&\left(1+c_1+c_2\right)^{-\frac{3}{2}}\frac{h_o^{-1}}{\left\|\m{d}\right\|_2^2}K^{-\frac{3}{2}}\left(Kh_o^2-1\right)+\frac{1}{\left\|\m{d}\right\|_2^2}\left(c_1+c_2\right)K^{-2}\\
&+4c_1K^{-2}+\frac{20}{\left\|\m{d}\right\|_2^2}h_o^{-2}K\left\|\m{E}_\Omega\right\|_2+\frac{c_1+c_2+2c_2^{\frac{1}{2}}}{\left\|\m{d}\right\|_2^2}K^{-3}+c_3K^{-3}.
\end{aligned}
\end{equation}
Furthermore, if $\left\|\m{E}_\Omega\right\|_2\leq c_1K^{-\frac{7}{2}}N^{-\frac{1}{2}}$, then
\begin{equation}
\begin{aligned}
\frac{\m{d}^\top\text{Hess}\;\psi_\varepsilon^0\left(\m{h}\right)\m{d}}{\left\|\m{d}\right\|_2^2}
\leq&-\left(1+c_1+c_2\right)^{-\frac{3}{2}}\frac{h_o^{-1}}{\left\|\m{d}\right\|_2^2}\frac{K^{-\frac{5}{2}}}{2}+\left(c_1+c_2\right)\frac{h_o^{-1}}{\left\|\m{d}\right\|_2^2}K^{-\frac{5}{2}}\\
&+4c_1\frac{h_o^{-1}}{\left\|\m{d}\right\|_2^2}K^{-\frac{5}{2}}+20c_1\frac{h_o^{-1}}{\left\|\m{d}\right\|_2^2}K^{-\frac{5}{2}}+\frac{c_1+c_2+2c_2^{\frac{1}{2}}+c_3}{\left\|\m{d}\right\|_2^2}K^{-3}\\
\leq&-\frac{h_o^{-1}}{\left\|\m{d}\right\|_2^2}\left[\frac{\left(1+c_1+c_2\right)^{-\frac{3}{2}}}{2}-\left(26c_1+2c_2+2c_2^{\frac{1}{2}}+c_3\right)\right]K^{-\frac{5}{2}}\\
\leq&-\frac{1}{\left\|\m{d}\right\|_2^2}\left[\frac{\left(1+c_1+c_2\right)^{-\frac{3}{2}}}{2}-\left(26c_1+2c_2+2c_2^{\frac{1}{2}}+c_3\right)\right]K^{-2},
\end{aligned}
\end{equation}
which completes the proof.

\section{Proof of Proposition~\ref{thm-case2-1}}
\label{pf-thm-case2-1}
By Lemma~\ref{lem-2st} and the assumption $h_o\in\left(\sqrt{\frac{1}{K+1}},\sqrt{1-\frac{1}{4K}}\right]$, there exist positive constants $c_1$, $c_2$, and $c_3$ such that, whenever $\left\|\m{E}_\Omega\right\|_2\leq c_1K^{-2}\max^{-1}\left\{K^{2},N\right\}$, $\varepsilon\leq c_2K^{-6}N^{-2}$, and $\xi\leq c_3K^{-3}N^{-\frac{1}{2}}$, it holds that
\begin{equation}
\begin{aligned}
\frac{\m{d}^\top\text{Hess}\;\psi_\varepsilon^0\left(\m{h}\right)\m{d}}{\left\|\m{d}\right\|_2^2}
\leq&\left(1+c_1+c_2\right)^{-\frac{3}{2}}\frac{h_o^2}{\left\|\m{d}\right\|_2^2}\left(1-\frac{1-h_o^2}{K}\right)^{-\frac{3}{2}}\left(-\frac{1-h_o^2}{K}\right)+\frac{4\left(c_1+c_2\right)}{\left\|\m{d}\right\|_2^2}K^{-4}\\
&+4N\left\|\m{E}_\Omega\right\|_2+\frac{20}{\left\|\m{d}\right\|_2^2}h_o^{-2}K\left\|\m{E}_\Omega\right\|_2+\frac{c_1+c_2+2c_2^{\frac{1}{2}}+c_3}{\left\|\m{d}\right\|_2^2}K^{-3}.
\end{aligned}
\end{equation}
Furthermore, if $\left\|\m{E}_\Omega\right\|_2\leq c_1K^{-\frac{5}{2}}\max^{-1}\left\{N,K^2\right\}$, then
\begin{equation}
\begin{aligned}
\frac{\m{d}^\top\text{Hess}\;\psi_\varepsilon^0\left(\m{h}\right)\m{d}}{\left\|\m{d}\right\|_2^2}
\leq&-\frac{\left(1+c_1+c_2\right)^{-\frac{3}{2}}}{8}\frac{1}{\left\|\m{d}\right\|_2^2}K^{-2}+\frac{4\left(c_1+c_2\right)}{\left\|\m{d}\right\|_2^2}K^{-4}\\
&+4c_1K^{-\frac{5}{2}}+40c_1\frac{1}{\left\|\m{d}\right\|_2^2}K^{-\frac{5}{2}}+\frac{c_1+c_2+2c_2^{\frac{1}{2}}+c_3}{\left\|\m{d}\right\|_2^2}K^{-3}\\
\leq&-\frac{1}{\left\|\m{d}\right\|_2^2}\left[\frac{\left(1+c_1+c_2\right)^{-\frac{3}{2}}}{8}-\left(49c_1+5c_2+2c_2^{\frac{1}{2}}+c_3\right)\right]K^{-2},
\end{aligned}
\end{equation}
which completes the proof.

\section{Proof of Proposition~\ref{thm-case2-2}}
\label{pf-thm-case2-2}
By Lemma~\ref{lem-2st} and the assumption $h_o\in\left(\sqrt{1-\frac{1}{4K}},\sqrt{1-\frac{1}{4K^2}}\right]$, there exist positive constants $c_1$, $c_2$, and $c_3$ such that, whenever $\left\|\m{E}_\Omega\right\|_2\leq c_1K^{-2}\max^{-1}\left\{K^{2},N\right\}$, $\varepsilon\leq c_2K^{-6}N^{-2}$, and $\xi\leq c_3K^{-3}N^{-\frac{1}{2}}$, it holds that
\begin{equation}
\begin{aligned}
\frac{\m{d}^\top\text{Hess}\;\psi_\varepsilon^0\left(\m{h}\right)\m{d}}{\left\|\m{d}\right\|_2^2}
\leq&-\left(1+c_1+c_2\right)^{-\frac{3}{2}}\frac{h_o^2}{\left\|\m{d}\right\|_2^2}\left(1-\frac{1-h_o^2}{K}\right)^{-\frac{3}{2}}\frac{1-h_o^2}{K}+\frac{4\left(c_1+c_2\right)}{\left\|\m{d}\right\|_2^2}K^{-4}\\
&+4\left\|\m{E}_\Omega\right\|_2\sum_{n\in\chi_{\m{h}}}\left\|\mathcal{P}_{\Omega}\left(\m{\Gamma}_n\m{h}\right)\right\|_2+\frac{80c_1}{\left\|\m{d}\right\|_2^2}K^{-3}+\frac{c_1+c_2+2c_2^{\frac{1}{2}}+c_3}{\left\|\m{d}\right\|_2^2}K^{-3}.
\end{aligned}
\end{equation}
Next, observe that
\begin{equation}
\begin{aligned}
\sum_{n\in\chi_{\m{h}}}\left\|\mathcal{P}_{\Omega}\left(\m{\Gamma}_n\m{h}\right)\right\|_2=&\sum_{n\in\chi_{\m{h}}\cap\chi_o}\left\|\mathcal{P}_{\Omega}\left(\m{\Gamma}_n\m{h}\right)\right\|_2+\sum_{n\in\chi_{\m{h}}\setminus\chi_o}\left\|\mathcal{P}_{\Omega}\left(\m{\Gamma}_n\m{h}\right)\right\|_2\leq K+\frac{N-K}{2\sqrt{K}}.
\end{aligned}
\end{equation}
Therefore, if $\left\|\m{E}_\Omega\right\|_2\leq c_1K^{-\frac{5}{2}}\max^{-1}\left\{N,K^2\right\}$, it holds that
\begin{equation}
\begin{aligned}
\frac{\m{d}^\top\text{Hess}\;\psi_\varepsilon^0\left(\m{h}\right)\m{d}}{\left\|\m{d}\right\|_2^2}
\leq&-\frac{\left(1+c_1+c_2\right)^{-\frac{3}{2}}}{8}\frac{1}{\left\|\m{d}\right\|_2^2}K^{-3}+\frac{4\left(c_1+c_2\right)}{\left\|\m{d}\right\|_2^2}K^{-4}\\
&+\left(8c_1+2c_1\right)K^{-3}+\frac{80c_1}{\left\|\m{d}\right\|_2^2}K^{-3}+\frac{c_1+c_2+2c_2^{\frac{1}{2}}+c_3}{\left\|\m{d}\right\|_2^2}K^{-3}\\
\leq&-\frac{1}{\left\|\m{d}\right\|_2^2}\left[\frac{\left(1+c_1+c_2\right)^{-\frac{3}{2}}}{8}-\left(95c_1+5c_2+2c_2^{\frac{1}{2}}+c_3\right)\right]K^{-3},
\end{aligned}
\end{equation}
which completes the proof.

\section{Proof of Proposition~\ref{thm-case3}}
\label{pf-thm-case3}
From \eqref{grad} and \eqref{direction}, we obtain
\begin{equation}
\begin{aligned}
\m{d}^\top\text{grad}\;\psi_\varepsilon^0\left(\m{h}\right)=&\sum_{n=1}^N\left[\m{h}^\top\m{\Gamma}_n^\top\left(\m{\Sigma}_{\Omega}+\m{E}_{\Omega}\right)\m{\Gamma}_n\m{h}+\varepsilon\right]^{-\frac{1}{2}}\m{d}^\top\m{\Gamma}_n^\top\left(\m{\Sigma}_{\Omega}+\m{E}_{\Omega}\right)\m{\Gamma}_n\m{h}\\
=&\sum_{n=1}^N\frac{\m{e}_o^\top\m{\Gamma}_n^\top\m{\Sigma}_{\Omega}\m{\Gamma}_n\m{h}-h_o\m{h}^\top\m{\Gamma}_n^\top\m{\Sigma}_{\Omega}\m{\Gamma}_n\m{h}}{\left[\m{h}^\top\m{\Gamma}_n^\top\left(\m{\Sigma}_{\Omega}+\m{E}_{\Omega}\right)\m{\Gamma}_n\m{h}+\varepsilon\right]^{\frac{1}{2}}}+\sum_{n=1}^N\frac{\m{e}_o^\top\m{\Gamma}_n^\top\m{E}_{\Omega}\m{\Gamma}_n\m{h}-h_o\m{h}^\top\m{\Gamma}_n^\top\m{E}_{\Omega}\m{\Gamma}_n\m{h}}{\left[\m{h}^\top\m{\Gamma}_n^\top\left(\m{\Sigma}_{\Omega}+\m{E}_{\Omega}\right)\m{\Gamma}_n\m{h}+\varepsilon\right]^{\frac{1}{2}}}\\
=&\sum_{n\in\chi_o}\frac{h_o\left(1-\m{h}^\top\m{\Gamma}_n^\top\m{\Sigma}_{\Omega}\m{\Gamma}_n\m{h}\right)}{\left[\m{h}^\top\m{\Gamma}_n^\top\left(\m{\Sigma}_{\Omega}+\m{E}_{\Omega}\right)\m{\Gamma}_n\m{h}+\varepsilon\right]^{\frac{1}{2}}}-\sum_{n\in\chi_o^\text{C}}\frac{h_o\m{h}^\top\m{\Gamma}_n^\top\m{\Sigma}_{\Omega}\m{\Gamma}_n\m{h}}{\left[\m{h}^\top\m{\Gamma}_n^\top\left(\m{\Sigma}_{\Omega}+\m{E}_{\Omega}\right)\m{\Gamma}_n\m{h}+\varepsilon\right]^{\frac{1}{2}}}\\
&+\sum_{n=1}^N\frac{\m{d}^\top\m{\Gamma}_n^\top\m{E}_{\Omega}\m{\Gamma}_n\m{h}}{\left[\m{h}^\top\m{\Gamma}_n^\top\left(\m{\Sigma}_{\Omega}+\m{E}_{\Omega}\right)\m{\Gamma}_n\m{h}+\varepsilon\right]^{\frac{1}{2}}},
\end{aligned}
\end{equation}
where $\chi_o^\text{C}$ denotes the complement of $\chi_o$ in $\left\{1,2,\cdots,N\right\}$. Next, observe that
\begin{equation}
\begin{aligned}
&\sum_{n\in\chi_o}\frac{h_o\left(1-\m{h}^\top\m{\Gamma}_n^\top\m{\Sigma}_{\Omega}\m{\Gamma}_n\m{h}\right)}{\left[\m{h}^\top\m{\Gamma}_n^\top\left(\m{\Sigma}_{\Omega}+\m{E}_{\Omega}\right)\m{\Gamma}_n\m{h}+\varepsilon\right]^{\frac{1}{2}}}-\sum_{n\in\chi_o^\text{C}}\frac{h_o\m{h}^\top\m{\Gamma}_n^\top\m{\Sigma}_{\Omega}\m{\Gamma}_n\m{h}}{\left[\m{h}^\top\m{\Gamma}_n^\top\left(\m{\Sigma}_{\Omega}+\m{E}_{\Omega}\right)\m{\Gamma}_n\m{h}+\varepsilon\right]^{\frac{1}{2}}}\\
\leq&\frac{h_o}{\left[h_o^2-\left\|\m{E}_\Omega\right\|_2+\varepsilon\right]^{\frac{1}{2}}}\sum_{n\in\chi_o}\left(1-\m{h}^\top\m{\Gamma}_n^\top\m{\Sigma}_{\Omega}\m{\Gamma}_n\m{h}\right)-\frac{h_o}{\left[1-h_o^2+\left\|\m{E}_\Omega\right\|_2+\varepsilon\right]^{\frac{1}{2}}}\sum_{n\in\chi_o^\text{C}}\m{h}^\top\m{\Gamma}_n^\top\m{\Sigma}_{\Omega}\m{\Gamma}_n\m{h}\\
\leq&\frac{Kh_o\left(1-h_o^2\right)}{\left(h_o^2-\left\|\m{E}_\Omega\right\|_2+\varepsilon\right)^{\frac{1}{2}}}-\frac{h_o\left(1-h_o^2\right)}{\left(1-h_o^2+\left\|\m{E}_\Omega\right\|_2+\varepsilon\right)^{\frac{1}{2}}}\\
=&h_o\left(1-h_o^2\right)\left[\frac{K}{\left(h_o^2-\left\|\m{E}_\Omega\right\|_2+\varepsilon\right)^{\frac{1}{2}}}-\frac{1}{\left(1-h_o^2+\left\|\m{E}_\Omega\right\|_2+\varepsilon\right)^{\frac{1}{2}}}\right],
\end{aligned}
\end{equation}
where the second inequality follows from the nonuniformity of $\Omega$. Specifically, for every $m\neq o$, there exists $\m{\Gamma}_n$ such that $\m{e}_m^\top\m{\Gamma}_n^\top\m{\Sigma}_{\Omega}\m{\Gamma}_n\m{e}_m=1$ and $\m{e}_o^\top\m{\Gamma}_n^\top\m{\Sigma}_{\Omega}\m{\Gamma}_n\m{e}_o=0$. Consequently,
\begin{equation}
\label{out-concentrate}
\begin{aligned}
\sum_{n\in\chi_o^\text{C}}\m{h}^\top\m{\Gamma}_n^\top\m{\Sigma}_{\Omega}\m{\Gamma}_n\m{h}\geq1-h_o^2.
\end{aligned}
\end{equation}
Moreover,
\begin{equation}
\begin{aligned}
&\left|\sum_{n=1}^N\frac{\m{d}^\top\m{\Gamma}_n^\top\m{E}_{\Omega}\m{\Gamma}_n\m{h}}{\left[\m{h}^\top\m{\Gamma}_n^\top\left(\m{\Sigma}_{\Omega}+\m{E}_{\Omega}\right)\m{\Gamma}_n\m{h}+\varepsilon\right]^{\frac{1}{2}}}\right|\\
\leq&\left\|\m{d}\right\|_2\sum_{n=1}^N\frac{\left\|\m{E}_{\Omega}\m{\Gamma}_n\m{h}\right\|_2}{\left[\m{h}^\top\m{\Gamma}_n^\top\left(\m{\Sigma}_{\Omega}+\m{E}_{\Omega}\right)\m{\Gamma}_n\m{h}+\varepsilon\right]^{\frac{1}{2}}}\\
\leq&\left\|\m{d}\right\|_2\sum_{n\in\chi_{\m{h}}}\left[\frac{\left(\m{h}^\top\m{\Gamma}_n^\top\m{E}_{\Omega}^\top\m{E}_{\Omega}\m{\Gamma}_n\m{h}\right)/\left\|\mathcal{P}_{\Omega}\left(\m{\Gamma}_n\m{h}\right)\right\|_2^2}{\m{h}^\top\m{\Gamma}_n^\top\left(\m{\Sigma}_{\Omega}+\m{E}_{\Omega}\right)\m{\Gamma}_n\m{h}/\left\|\mathcal{P}_{\Omega}\left(\m{\Gamma}_n\m{h}\right)\right\|_2^2+\varepsilon/\left\|\mathcal{P}_{\Omega}\left(\m{\Gamma}_n\m{h}\right)\right\|_2^2}\right]^{\frac{1}{2}}\\
\leq&N\left\|\m{d}\right\|_2\frac{\left\|\m{E}_\Omega\right\|_2}{\left(1-\left\|\m{E}_\Omega\right\|_2+\varepsilon\right)^{\frac{1}{2}}}.
\end{aligned}
\end{equation}
Since $\m{h}$ satisfies \eqref{prop-first-order stationary},
\begin{equation}
\begin{aligned}
\frac{\left|\m{d}^\top\text{grad}\;\psi_\varepsilon^0\left(\m{h}\right)\right|}{\left\|\m{d}\right\|_2}\leq\left\|\text{grad}\;\psi_\varepsilon^0\left(\m{h}\right)\right\|_2\leq\xi.
\end{aligned}
\end{equation}
Therefore, if $\frac{K}{\left(h_o^2-\left\|\m{E}_\Omega\right\|_2+\varepsilon\right)^{\frac{1}{2}}}-\frac{1}{\left(1-h_o^2+\left\|\m{E}_\Omega\right\|_2+\varepsilon\right)^{\frac{1}{2}}}<0$, then
\begin{equation}
\begin{aligned}
&h_o\left(1-h_o^2\right)^{\frac{1}{2}}\left|\frac{1}{\left(1-h_o^2+\left\|\m{E}_\Omega\right\|_2+\varepsilon\right)^{\frac{1}{2}}}-\frac{K}{\left(h_o^2-\left\|\m{E}_\Omega\right\|_2+\varepsilon\right)^{\frac{1}{2}}}\right|\leq\xi+N\frac{\left\|\m{E}_\Omega\right\|_2}{\left(1-\left\|\m{E}_\Omega\right\|_2+\varepsilon\right)^{\frac{1}{2}}}.
\end{aligned}
\end{equation}
Now choose $c_1=\frac{1}{32}$, $c_2=\frac{1}{32}$. Under the conditions $\left\|\m{E}_\Omega\right\|_2\leq c_1K^{-2}$ and $\varepsilon\leq c_2K^{-2}$, we obtain
\begin{equation}
\begin{aligned}
&h_o\left(1-h_o^2\right)^{\frac{1}{2}}\left|\frac{4K}{\sqrt{5}}-\frac{\sqrt{2}K}{\sqrt{1-\frac{1}{4K^2}}}\right|\leq\xi+2N\left\|\m{E}_\Omega\right\|_2.
\end{aligned}
\end{equation}
Consequently,
\begin{equation}
\begin{aligned}
\left(1-h_o^2\right)^{\frac{1}{2}}\leq&2^{-\frac{1}{2}}\left(\sqrt{\frac{8}{5}}-\sqrt{\frac{4}{3}}\right)^{-1}\frac{1}{Kh_o}\left(\xi+2N\left\|\m{E}_\Omega\right\|_2\right)\leq\left(\sqrt{\frac{8}{5}}-\sqrt{\frac{4}{3}}\right)^{-1}K^{-1}\left(\xi+2N\left\|\m{E}_\Omega\right\|_2\right).
\end{aligned}
\end{equation}
Choosing $c_3=\sqrt{2}\left(\sqrt{\frac{8}{5}}-\sqrt{\frac{4}{3}}\right)^{-1}$, $c_4=2\sqrt{2}\left(\sqrt{\frac{8}{5}}-\sqrt{\frac{4}{3}}\right)^{-1}$ yields
\begin{equation}
\begin{aligned}
\left(1-h_o^2\right)^{\frac{1}{2}}\leq&\frac{1}{\sqrt{2}}\left(c_3K^{-1}\xi+c_4K^{-1}N\left\|\m{E}_\Omega\right\|_2\right).
\end{aligned}
\end{equation}
Finally, since
\begin{equation}
\label{error}
\begin{aligned}
\left\|\m{h}-\m{e}_o\right\|_2\leq\sqrt{2}\left(1-h_o^2\right)^{\frac{1}{2}},
\end{aligned}
\end{equation}
the proof is complete.

\section{Proof of Lemma~\ref{lem-tail-Delta}}
\label{pf-lem-tail-Delta}
Observe that
\begin{equation}
\label{Delta-R}
\begin{aligned}
\left\|\m{\Delta}\right\|_2=&\left\|\mathcal{C}\left(\m{g}\right)\m{R}\m{U}^\top-\frac{1}{\sqrt{K}}\m{I}\right\|_2\\
=&\left\|\mathcal{C}\left(\m{g}\right)\m{R}-\frac{1}{\sqrt{K}}\m{U}\right\|_2\\
=&\left\|\mathcal{C}\left(\m{g}\right)\m{R}-\frac{1}{\sqrt{K}}\mathcal{C}\left(\m{g}\right)\left[\mathcal{C}\left(\m{g}\right)^\top\mathcal{C}\left(\m{g}\right)\right]^{-\frac{1}{2}}\right\|_2\\
\leq&\left\|\mathcal{C}\left(\m{g}\right)\right\|_2\left\|\m{R}-\frac{1}{\sqrt{K}}\left[\mathcal{C}\left(\m{g}\right)^\top\mathcal{C}\left(\m{g}\right)\right]^{-\frac{1}{2}}\right\|_2.
\end{aligned}
\end{equation}
It follows from \cite[Theorem 6.2]{higham2008functions} that
\begin{equation}
\begin{aligned}
\left\|\m{R}-\frac{1}{\sqrt{K}}\left[\mathcal{C}\left(\m{g}\right)^\top\mathcal{C}\left(\m{g}\right)\right]^{-\frac{1}{2}}\right\|_2=&\left\|\left[\frac{1}{L}\sum_{l=1}^L\mathcal{C}\left(\m{y}_l\right)^\top\mathcal{C}\left(\m{y}_l\right)\right]^{-\frac{1}{2}}
-\frac{1}{\sqrt{K}}\left[\mathcal{C}\left(\m{g}\right)^\top\mathcal{C}\left(\m{g}\right)\right]^{-\frac{1}{2}}\right\|_2\\
\leq&\frac{\left\|\left[\frac{1}{L}\sum_{l=1}^L\mathcal{C}\left(\m{y}_l\right)^\top\mathcal{C}\left(\m{y}_l\right)\right]^{-1}
-\frac{1}{K}\left[\mathcal{C}\left(\m{g}\right)^\top\mathcal{C}\left(\m{g}\right)\right]^{-1}\right\|_2}{\lambda_{\text{min}}\left(\frac{1}{\sqrt{K}}\left[\mathcal{C}\left(\m{g}\right)^\top\mathcal{C}\left(\m{g}\right)\right]^{-\frac{1}{2}}\right)}.
\end{aligned}
\end{equation}
Hence,
\begin{equation}
\label{equ-delta}
\begin{aligned}
\left\|\m{\Delta}\right\|_2\leq&\left\|\mathcal{C}\left(\m{g}\right)\right\|_2\frac{\left\|\left[\mathcal{C}\left(\m{g}\right)^\top\m{Q}\mathcal{C}\left(\m{g}\right)\right]^{-1}
-\frac{1}{K}\left[\mathcal{C}\left(\m{g}\right)^\top\mathcal{C}\left(\m{g}\right)\right]^{-1}\right\|_2}{\lambda_{\text{min}}\left(\frac{1}{\sqrt{K}}\left[\mathcal{C}\left(\m{g}\right)^\top\mathcal{C}\left(\m{g}\right)\right]^{-\frac{1}{2}}\right)}\\
\leq&\left\|\mathcal{C}\left(\m{g}\right)\right\|_2\left\|\left[\mathcal{C}\left(\m{g}\right)^\top\mathcal{C}\left(\m{g}\right)\right]^{-1}\right\|_2\frac{\left\|\mathcal{C}\left(\m{g}\right)^\top\mathcal{C}\left(\m{g}\right)\left[\mathcal{C}\left(\m{g}\right)^\top\m{Q}\mathcal{C}\left(\m{g}\right)\right]^{-1}
-\frac{1}{K}\m{I}\right\|_2}{\lambda_{\text{min}}\left(\frac{1}{\sqrt{K}}\left[\mathcal{C}\left(\m{g}\right)^\top\mathcal{C}\left(\m{g}\right)\right]^{-\frac{1}{2}}\right)}\\
=&\sqrt{K}\left\|\mathcal{C}\left(\m{g}\right)\right\|_2^2\left\|\left[\mathcal{C}\left(\m{g}\right)^\top\mathcal{C}\left(\m{g}\right)\right]^{-1}\right\|_2\left\|\mathcal{C}\left(\m{g}\right)^\top\mathcal{C}\left(\m{g}\right)\left[\mathcal{C}\left(\m{g}\right)^\top\m{Q}\mathcal{C}\left(\m{g}\right)\right]^{-1}
-\frac{1}{K}\m{I}\right\|_2\\
=&\sqrt{K}{\kappa^2}\left\|\mathcal{C}\left(\m{g}\right)^\top\mathcal{C}\left(\m{g}\right)\left[\mathcal{C}\left(\m{g}\right)^\top\m{Q}\mathcal{C}\left(\m{g}\right)\right]^{-1}
-\frac{1}{K}\m{I}\right\|_2,
\end{aligned}
\end{equation}
where
\begin{equation}
\begin{aligned}
\m{Q}=\frac{1}{L}\sum_{l=1}^L\mathcal{C}\left(\m{x}_l\right)^\top\mathcal{C}\left(\m{x}_l\right).
\end{aligned}
\end{equation}
Furthermore,
\begin{equation}
\label{leq-cond-t}
\begin{aligned}
&\left\|\mathcal{C}\left(\m{g}\right)^\top\mathcal{C}\left(\m{g}\right)\left[\mathcal{C}\left(\m{g}\right)^\top\m{Q}\mathcal{C}\left(\m{g}\right)\right]^{-1}
-\frac{1}{K}\m{I}\right\|_2\\
=&\left\|\left(K\m{I}+\mathcal{C}\left(\m{g}\right)^\top\left(\m{Q}-K\m{I}\right)\mathcal{C}\left(\m{g}\right)\left[\mathcal{C}\left(\m{g}\right)^\top\mathcal{C}\left(\m{g}\right)\right]^{-1}\right)^{-1}-\frac{1}{K}\m{I}\right\|_2\\
=&\frac{1}{K}\left\|\left(\m{I}+\mathcal{C}\left(\m{g}\right)^\top\left(\frac{1}{K}\m{Q}-\m{I}\right)\mathcal{C}\left(\m{g}\right)\left[\mathcal{C}\left(\m{g}\right)^\top\mathcal{C}\left(\m{g}\right)\right]^{-1}\right)^{-1}-\m{I}\right\|_2\\
\leq&\frac{1}{K}\left\|\left(\m{I}+\mathcal{C}\left(\m{g}\right)^\top\left(\frac{1}{K}\m{Q}-\m{I}\right)\mathcal{C}\left(\m{g}\right)\left[\mathcal{C}\left(\m{g}\right)^\top\mathcal{C}\left(\m{g}\right)\right]^{-1}\right)^{-1}\right\|_2\\
&\cdot\left\|\mathcal{C}\left(\m{g}\right)^\top\left(\frac{1}{K}\m{Q}-\m{I}\right)\mathcal{C}\left(\m{g}\right)\left[\mathcal{C}\left(\m{g}\right)^\top\mathcal{C}\left(\m{g}\right)\right]^{-1}\right\|_2.
\end{aligned}
\end{equation}
We next derive a bound on $\left\|\frac{1}{K}\m{Q}-\m{I}\right\|_2$ using the following moment-controlled Bernstein inequality.
\begin{lem}[{\cite[Lemma 7]{shi2021manifold}}]
\label{moment-controlled}
Let $\left\{\m{Z}_l\right\}_{l=1}^L$ be a set of independent random matrices with dimension $N\times N$. Assume there exist $\sigma$ and $R$ such that for all $m\geq2$, $\mathbb{E}\left[\left\|\m{Z}_l\right\|_2^m\right]\leq\frac{m!}{2}\sigma^2R^{m-2}$. Denote $\m{S}=\frac{1}{L}\sum_{l=1}^L\m{Z}_l$, then we have for any $t>0$, the following bound holds with probability at least $1-2N\exp\left\{-\frac{Lt^2}{2\sigma^2+2Rt}\right\}$:
\begin{equation}
\begin{aligned}
\left\|\m{S}-\mathbb{E}\left[\m{S}\right]\right\|_2\leq t.
\end{aligned}
\end{equation}
\end{lem}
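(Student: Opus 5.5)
The plan is to reduce the statement to the subexponential (moment-controlled) matrix Bernstein inequality for Hermitian matrices in~\cite{tropp2012user}. Two ingredients make this possible: the Hermitian dilation, which handles the fact that the $\m{Z}_l$ need not be symmetric, and a rescaling, which converts the $1/L$-average into a sum. Concretely, I set $\m{Y}_l=\m{Z}_l-\mathbb{E}\left[\m{Z}_l\right]$, so that $L\left(\m{S}-\mathbb{E}\left[\m{S}\right]\right)=\sum_{l=1}^L\m{Y}_l$ with independent, zero-mean summands. I then define the $2N\times 2N$ Hermitian dilation
\begin{equation*}
\mathscr{H}\left(\m{Y}_l\right)=\begin{bmatrix}\m{0}&\m{Y}_l\\\m{Y}_l^\top&\m{0}\end{bmatrix}.
\end{equation*}
Its spectrum is $\pm$ the singular values of $\m{Y}_l$. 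Hence $\lambda_{\max}\left(\sum_l\mathscr{H}\left(\m{Y}_l\right)\right)=\left\|\sum_l\m{Y}_l\right\|_2$, and a one-sided eigenvalue tail bound for the dilation already controls the operator norm. This is exactly where the dimensional factor $2N$ in the probability comes from.

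The key step is to verify the matrix moment condition required by the Hermitian Bernstein inequality:
\begin{equation*}
\mathbb{E}\left[\mathscr{H}\left(\m{Y}_l\right)^m\right]\preceq\frac{m!}{2}R^{m-2}\m{A}_l^2 \quad\text{for all } m\geq2.
\end{equation*}
For this I use the operator inequality $\mathscr{H}\left(\m{Y}_l\right)^m\preceq\left\|\mathscr{H}\left(\m{Y}_l\right)\right\|_2^m\m{I}=\left\|\m{Y}_l\right\|_2^m\m{I}$. For even $m$ this is immediate. For odd $m$, I bound $\m{X}^m\preceq\left\|\m{X}\right\|_2^{m-2}\m{X}^2\preceq\left\|\m{X}\right\|_2^m\m{I}$, where the first inequality follows by diagonalizing $\m{X}$. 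Taking expectations then gives
\begin{equation*}
\mathbb{E}\left[\mathscr{H}\left(\m{Y}_l\right)^m\right]\preceq\mathbb{E}\left[\left\|\m{Y}_l\right\|_2^m\right]\m{I}\preceq\frac{m!}{2}\sigma^2R^{m-2}\m{I},
\end{equation*}
so I may take $\m{A}_l^2=\sigma^2\m{I}$. The total variance proxy is then $\left\|\sum_l\m{A}_l^2\right\|_2=L\sigma^2$. Applying the subexponential Bernstein bound
\begin{equation*}
\mathbb{P}\left[\lambda_{\max}\left(\textstyle\sum_l\m{X}_l\right)\geq s\right]\leq d\exp\left\{-\frac{s^2/2}{L\sigma^2+Rs}\right\}
\end{equation*}
with $d=2N$ and $s=Lt$ gives $2N\exp\left\{-\frac{L^2t^2}{2L\sigma^2+2RLt}\right\}=2N\exp\left\{-\frac{Lt^2}{2\sigma^2+2Rt}\right\}$, which is the claimed probability.

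The main obstacle is the centering. The hypothesis is stated for $\m{Z}_l$, but the Bernstein inequality needs the moment bound for the centered $\m{Y}_l$. I would first try to read the assumption as applying to the centered matrices, which is how it is used in~\cite{shi2021manifold} and how it will be verified later for $\frac{1}{K}\mathcal{C}\left(\m{x}_l\right)^\top\mathcal{C}\left(\m{x}_l\right)-\m{I}$. Failing that, I would use $\left\|\m{Y}_l\right\|_2\leq\left\|\m{Z}_l\right\|_2+\mathbb{E}\left\|\m{Z}_l\right\|_2$ together with Jensen's inequality to obtain $\mathbb{E}\left\|\m{Y}_l\right\|_2^m\leq2^m\mathbb{E}\left\|\m{Z}_l\right\|_2^m\leq\frac{m!}{2}\left(4\sigma^2\right)\left(2R\right)^{m-2}$. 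This yields the same statement with $\sigma,R$ replaced by $2\sigma,2R$. That change only affects absolute constants, which is harmless for the subsequent use in Lemma~\ref{lem-tail-Delta}, where the constants $c_1,c_2$ are unspecified.
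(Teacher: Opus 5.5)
The paper gives no proof of this lemma; it is imported from \cite[Lemma~7]{shi2021manifold}, so your argument has to stand on its own. Its architecture is right. The dilation accounts for the factor $2N$ and reduces $\left\|\cdot\right\|_2$ to a one-sided $\lambda_{\max}$ bound. The bound $\m{X}^m\preceq\left\|\m{X}\right\|_2^m\m{I}$ holds for every $m$, the variance proxy is $L\sigma^2$, and $s=Lt$ reproduces the exponent.

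The gap is the centering step: neither resolution you offer proves the lemma as stated.
\begin{itemize}
\item Reading the hypothesis as a bound on $\left\|\m{Z}_l-\mathbb{E}[\m{Z}_l]\right\|_2$ changes the lemma. It is also not how the lemma is used here. In the proof of Lemma~\ref{lem-tail-Delta}, the moment bound is verified for the uncentered $\m{Z}_l=\frac{1}{K}\mathcal{C}(\m{x}_l)^\top\mathcal{C}(\m{x}_l)$, whose mean is $\m{I}$, not for $\m{Z}_l-\m{I}$.
\item Your fallback is valid, but it only yields probability $1-2N\exp\{-\frac{Lt^2}{8\sigma^2+4Rt}\}$. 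That this weaker bound would suffice downstream, where constants are unspecified, does not prove the stated inequality with $\sigma$ and $R$ unchanged.
\end{itemize}

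The missing idea is that centered moments are not needed. Keep the mean as a deterministic shift inside the trace exponential, where it cancels the linear term of the matrix moment generating function. This is the matrix analogue of Bernstein's inequality under non-centered moment conditions.

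Put $\m{X}_l=\mathscr{H}(\m{Z}_l)$ and $\m{M}_l=\mathbb{E}[\m{X}_l]$. Use Lieb's concavity theorem in the form $\mathbb{E}\,\mathrm{tr}\exp(\m{H}+\m{X})\leq\mathrm{tr}\exp(\m{H}+\log\mathbb{E}e^{\m{X}})$ for fixed symmetric $\m{H}$. Iterating over the independent summands, starting from $\m{H}=-\theta\sum_{l}\m{M}_l$, gives for $0<\theta<1/R$
\begin{equation*}
\mathbb{E}\,\mathrm{tr}\exp\Big(\theta\sum_{l=1}^L\left(\m{X}_l-\m{M}_l\right)\Big)\leq\mathrm{tr}\exp\Big(\sum_{l=1}^L\big(\log\mathbb{E}e^{\theta\m{X}_l}-\theta\m{M}_l\big)\Big).
\end{equation*}
Now $\log\m{A}\preceq\m{A}-\m{I}$ for $\m{A}\succ\m{0}$, and your bound $\mathbb{E}[\m{X}_l^m]\preceq\mathbb{E}[\|\m{Z}_l\|_2^m]\m{I}$ applies verbatim to the uncentered $\m{X}_l$. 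Hence
\begin{equation*}
\log\mathbb{E}e^{\theta\m{X}_l}\preceq\theta\m{M}_l+\sum_{m\geq2}\frac{\theta^m}{m!}\mathbb{E}[\m{X}_l^m]\preceq\theta\m{M}_l+\frac{\theta^2\sigma^2}{2(1-R\theta)}\m{I}.
\end{equation*}
The $\theta\m{M}_l$ terms cancel. By monotonicity of $\mathrm{tr}\exp$, the right-hand side is at most $2N\exp\{\frac{L\theta^2\sigma^2}{2(1-R\theta)}\}$. Apply the Laplace-transform bound $\mathbb{P}[\lambda_{\max}(\m{W})\geq s]\leq e^{-\theta s}\,\mathbb{E}\,\mathrm{tr}\,e^{\theta\m{W}}$ with $\theta=s/(L\sigma^2+Rs)$ and $s=Lt$. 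This gives exactly $2N\exp\{-\frac{Lt^2}{2\sigma^2+2Rt}\}$, with $\sigma$ and $R$ unchanged.
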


Following Lemma~\ref{moment-controlled}, define $\m{Z}_l=\frac{1}{K}\mathcal{C}\left(\m{x}_l\right)^\top\mathcal{C}\left(\m{x}_l\right)$ for $l=1,\cdots,L$. It follows immediately that $\m{S}=\frac{1}{K}\m{Q}$. Furthermore, $\mathbb{E}\left[\m{S}\right]=\m{I}$, since the $\left(n_1,n_2\right)$-th entry of $\mathbb{E}\!\left[\mathcal{C}\left(\m{x}_l\right)^\top\mathcal{C}\left(\m{x}_l\right)\right]$ satisfies
\begin{equation}
\mathbb{E}\left[\mathcal{C}\left(\m{x}_l\right)^\top\mathcal{C}\left(\m{x}_l\right)\right]=
\begin{cases}
\mathbb{E}\left[\left\|\m{x}_l\right\|_2^2\right]=K, & n_1=n_2,\\
0, & n_1\neq n_2,
\end{cases}
\end{equation}
since $\left\{\mathcal{P}_{\Omega}\left(\m{x}_l\right)\right\}_{l=1}^L$ are i.i.d. Gaussian random vectors with zero mean and covariance matrix $\m{I}$. Therefore, if there exist constants $\sigma$ and $R$ such that $\mathbb{E}\left[\left\|\m{Z}_l\right\|_2^m\right]\leq\frac{m!}{2}\sigma^2R^{m-2}$ for $m\geq2$, then a bound on $\left\|\m{S}-\mathbb{E}\left[\m{S}\right]\right\|_2=\left\|\frac{1}{K}\m{Q}-\m{I}\right\|_2$ follows directly from Lemma~\ref{moment-controlled}. It follows from Lemma~\ref{lemma-circulant convolution} that the circulant matrix $\mathcal{C}\left(\m{x}_l\right)$ admits the diagonalization
\begin{equation}
\begin{aligned}
\mathcal{C}\left(\m{x}_l\right)={\m{F}^{\left[N\right]}}\diag\left(\widehat{\m{x}_l}\right){\m{F}^{\left[N\right]}}^H.
\end{aligned}
\end{equation}
Note that the $n$-th entry of $\widehat{\m{x}_l}$, given by $\mathcal{P}_n\left(\widehat{\m{x}}_l\right)$, is a complex Gaussian random variable with zero mean and variance 
\begin{equation}
\begin{aligned}
\mathbb{E}\left[\left|\mathcal{P}_n\left(\widehat{\m{x}_l}\right)\right|^2\right]=&\mathbb{E}\left[N{\m{F}^{\left[N\right]}}_{:,n}^H\m{x}_l\m{x}_l^\top{\m{F}^{\left[N\right]}}_{:,n}\right]=N{\m{F}^{\left[N\right]}}_{:,n}^H\m{\Sigma}_\Omega{\m{F}^{\left[N\right]}}_{:,n}=K.
\end{aligned}
\end{equation}
Hence, by \cite[Proposition 2.5.8]{vershynin2018high}, $\mathcal{P}_n\left(\widehat{\m{x}_l}\right)$ is a sub-Gaussian random variable. Consequently, there exists a positive constant $c_1$ such that the sub-Gaussian norm of $\mathcal{P}_n\left(\widehat{\m{x}_l}\right)$ satisfies
\begin{equation}
\begin{aligned}
\left\|\mathcal{P}_n\left(\widehat{\m{x}_l}\right)\right\|_{\text{sub-Gaussian}}\leq c_1\sqrt{K}.
\end{aligned}
\end{equation}
Consequently,
\begin{equation}
\label{union bound}
\begin{aligned}
\mathbb{P}\left[\left\|\mathcal{C}\left(\m{x}_l\right)\right\|_2\geq t\right]
=&\mathbb{P}\left[\left\|{\m{F}^{\left[N\right]}}^H\diag\left(\widehat{\m{x}_l}\right){\m{F}^{\left[N\right]}}\right\|_2\geq t\right]\\
=&\mathbb{P}\left[\max_{n=1,\cdots,N}\left|\mathcal{P}_n\left(\widehat{\m{x}_l}\right)\right|\geq t\right]\\
\leq&\sum_{n=1}^N\mathbb{P}\left[\left|\mathcal{P}_n\left(\widehat{\m{x}_l}\right)\right|\geq t\right].
\end{aligned}
\end{equation}
Applying the union bound in \eqref{union bound} together with the tail bound for sub-Gaussian random variables in \cite[Eq. (2.14)]{vershynin2018high}, we conclude that there exists a positive constant $c_2$ such that
\begin{equation}
\begin{aligned}
\mathbb{P}\left(\left\|\mathcal{C}\left(\m{x}_l\right)\right\|_2\geq t\right)
\leq2N\exp\left\{-\frac{c_2t^2}{c_1K}\right\}.
\end{aligned}
\end{equation}
Furthermore,
\begin{equation}
\begin{aligned}
\mathbb{E}\left[\left\|\mathcal{C}\left(\m{x}_l\right)\right\|_2^{2m}\right]=&\int_0^\infty\mathbb{P}\left(\left\|\mathcal{C}\left(\m{x}_l\right)\right\|_2^{2m}\geq u\right)du=\int_0^\infty\mathbb{P}\left(\left\|\mathcal{C}\left(\m{x}_l\right)\right\|_2\geq t\right)2mt^{2m-1}dt.
\end{aligned}
\end{equation}
Note that if $t\geq\sqrt{\frac{2c_1K}{c_2}\log 2N}$, then
\begin{equation}
\begin{aligned}
2N\exp\left\{-\frac{c_2t^2}{c_1K}\right\}\leq\exp\left\{-\frac{c_2t^2}{2c_1K}\right\}.
\end{aligned}
\end{equation}
Therefore,
\begin{equation}
\begin{aligned}
\mathbb{E}\left[\left\|\mathcal{C}\left(\m{x}_l\right)\right\|_2^{2m}\right]=&\int_0^\infty\mathbb{P}\left(\left\|\mathcal{C}\left(\m{x}_l\right)\right\|_2^{2m}\geq u\right)du\\
=&\int_0^\infty\mathbb{P}\left(\left\|\mathcal{C}\left(\m{x}_l\right)\right\|_2\geq t\right)2mt^{2m-1}dt\\
\leq&\int_0^{\sqrt{\frac{2c_1K}{c_2}\log 2N}}2mt^{2m-1}dt+\int_0^\infty\exp\left\{-\frac{c_2t^2}{2c_1K}\right\}2mt^{2m-1}dt\\
\leq&\int_0^{\sqrt{\frac{2c_1K}{c_2}\log 2N}}2mt^{2m-1}dt+\int_0^\infty\exp\left\{-\frac{c_2t^2}{2c_1K}\right\}2mt^{2m-1}dt\\
=&\left(\sqrt{\frac{2c_1K}{c_2}\log 2N}\right)^{2m}+m\int_0^\infty\exp\left\{-v\right\}\left(\frac{2c_1Kv}{c_2}\right)^{\frac{2m-1}{2}}\left(\frac{2c_1K}{c_2}\right)^{\frac{1}{2}}v^{-\frac{1}{2}}dv\\
=&\left(\frac{2c_1K}{c_2}\log2N\right)^m+m\left(\frac{2c_1K}{c_2}\right)^{m}\int_0^\infty\exp\left\{-v\right\}v^{m-1}dv\\
=&\left(\frac{2c_1K}{c_2}\log2N\right)^m+\left(\frac{2c_1K}{c_2}\right)^{m}m!,
\end{aligned}
\end{equation}
where the last equality follows from the definition of the Gamma function \cite{sebah2002introduction}. Moreover,
\begin{equation}
\begin{aligned}
\mathbb{E}\left[\left\|\mathcal{C}\left(\m{x}_l\right)\right\|_2^{2m}\right]\leq&\left(\frac{2c_1K}{c_2}\log2N\right)^m+\left(\frac{2c_1K}{c_2}\right)^{m}m!\leq2m!\left(\frac{2c_1K}{c_2}\log2N\right)^m.
\end{aligned}
\end{equation}
It follows immediately that
\begin{equation}
\begin{aligned}
\mathbb{E}\left[\left\|\frac{1}{K}\mathcal{C}\left(\m{x}_l\right)^\top\mathcal{C}\left(\m{x}_l\right)\right\|_2^m\right]=&\frac{1}{K^m}\mathbb{E}\left[\left\|\mathcal{C}\left(\m{x}_l\right)\right\|_2^{2m}\right]\leq2m!\left(\frac{2c_1}{c_2}\log2N\right)^m.
\end{aligned}
\end{equation}
Therefore, we may choose $\sigma^2=4\left(\frac{2c_1}{c_2}\log2N\right)^2$ and $R=\frac{2c_1}{c_2}\log2N$ such that $\mathbb{E}\left[\left\|\m{Z}_l\right\|_2^m\right]\leq\frac{m!}{2}\sigma^2R^{m-2}$. Applying Lemma~\ref{moment-controlled} then yields that, for any $t>0$, the following bound holds with probability at least $1-2N\exp\left\{-\frac{Lt^2}{2\sigma^2+2Rt}\right\}$:
\begin{equation}
\begin{aligned}
\left\|\frac{1}{K}\m{Q}-\m{I}\right\|_2\leq t.
\end{aligned}
\end{equation}
Furthermore, for any $t\in\left(0,\frac{1}{2}\kappa^{-2}\right)$, it follows that
\begin{equation}
\begin{aligned}
\left\|\mathcal{C}\left(\m{g}\right)^\top\left(\frac{1}{K}\m{Q}-\m{I}\right)\mathcal{C}\left(\m{g}\right)\left[\mathcal{C}\left(\m{g}\right)^\top\mathcal{C}\left(\m{g}\right)\right]^{-1}\right\|_2\leq\kappa^2t\leq\frac{1}{2}.
\end{aligned}
\end{equation}
Consequently, by \eqref{leq-cond-t},
\begin{equation}
\begin{aligned}
&\left\|\mathcal{C}\left(\m{g}\right)^\top\mathcal{C}\left(\m{g}\right)\left[\mathcal{C}\left(\m{g}\right)^\top\m{Q}\mathcal{C}\left(\m{g}\right)\right]^{-1}
-\frac{1}{K}\m{I}\right\|_2\\
\leq&\frac{1}{K}\frac{\left\|\mathcal{C}\left(\m{g}\right)^\top\left(\frac{1}{K}\m{Q}-\m{I}\right)\mathcal{C}\left(\m{g}\right)\left[\mathcal{C}\left(\m{g}\right)^\top\mathcal{C}\left(\m{g}\right)\right]^{-1}\right\|_2}{1-\left\|\mathcal{C}\left(\m{g}\right)^\top\left(\frac{1}{K}\m{Q}-\m{I}\right)\mathcal{C}\left(\m{g}\right)\left[\mathcal{C}\left(\m{g}\right)^\top\mathcal{C}\left(\m{g}\right)\right]^{-1}\right\|_2}\\
\leq&\frac{2}{K}\left\|\mathcal{C}\left(\m{g}\right)^\top\left(\frac{1}{K}\m{Q}-\m{I}\right)\mathcal{C}\left(\m{g}\right)\left[\mathcal{C}\left(\m{g}\right)^\top\mathcal{C}\left(\m{g}\right)\right]^{-1}\right\|_2\\
\leq&\frac{2\kappa^2}{K}\left\|\frac{1}{K}\m{Q}-\m{I}\right\|_2.
\end{aligned}
\end{equation}
Therefore, by \eqref{equ-delta},
\begin{equation}
\begin{aligned}
\left\|\m{\Delta}\right\|_2\leq \frac{2\kappa^4t}{\sqrt{K}}.
\end{aligned}
\end{equation}
Substituting this bound into \eqref{Delta-R} further gives
\begin{equation}
\begin{aligned}
\left\|\m{R}-\frac{1}{\sqrt{K}}\left[\mathcal{C}\left(\m{g}\right)^\top\mathcal{C}\left(\m{g}\right)\right]^{-\frac{1}{2}}\right\|_2\leq&\frac{1}{\sigma_{\text{min}}\left(\mathcal{C}\left(\m{g}\right)\right)}\left\|\m{\Delta}\right\|_2\\
\leq&\frac{2\kappa^4t}{\sqrt{K}\sigma_{\text{min}}\left(\mathcal{C}\left(\m{g}\right)\right)}.
\end{aligned}
\end{equation}
This completes the proof.



\section{Proof of Lemma~\ref{lem-2st-general}}
\label{pf-lem-2st-general}
The Riemannian Hessian of $\psi_\varepsilon$ is given by
\begin{equation}
\begin{aligned}
\text{Hess}\;\psi_\varepsilon\left(\m{h}\right)=&\left(\m{I}-\m{h}\m{h}^\top\right)\Bigg\{\sum_{n=1}^N\left[\m{h}^\top\left(\frac{1}{\sqrt{K}}\m{I}+\m{\Delta}\right)^\top\m{\Gamma}_n^\top\left(\m{\Sigma}_{\Omega}+\m{E}_{\Omega}\right)\m{\Gamma}_n\left(\frac{1}{\sqrt{K}}\m{I}+\m{\Delta}\right)\m{h}+\varepsilon\right]^{-\frac{1}{2}}\\
&\cdot\left(\frac{1}{\sqrt{K}}\m{I}+\m{\Delta}\right)^\top\m{\Gamma}_n^\top\left(\m{\Sigma}_{\Omega}+\m{E}_{\Omega}\right)\m{\Gamma}_n\left(\frac{1}{\sqrt{K}}\m{I}+\m{\Delta}\right)\\
&-\sum_{n=1}^N\left[\m{h}^\top\left(\frac{1}{\sqrt{K}}\m{I}+\m{\Delta}\right)^\top\m{\Gamma}_n^\top\left(\m{\Sigma}_{\Omega}+\m{E}_{\Omega}\right)\m{\Gamma}_n\left(\frac{1}{\sqrt{K}}\m{I}+\m{\Delta}\right)\m{h}+\varepsilon\right]^{-\frac{3}{2}}\\
&\cdot\left(\frac{1}{\sqrt{K}}\m{I}+\m{\Delta}\right)^\top\m{\Gamma}_n^\top\left(\m{\Sigma}_{\Omega}+\m{E}_{\Omega}\right)\m{\Gamma}_n\left(\frac{1}{\sqrt{K}}\m{I}+\m{\Delta}\right)\m{h}\m{h}^\top\\
&\cdot\left(\frac{1}{\sqrt{K}}\m{I}+\m{\Delta}\right)^\top\m{\Gamma}_n^\top\left(\m{\Sigma}_{\Omega}+\m{E}_{\Omega}\right)\m{\Gamma}_n\left(\frac{1}{\sqrt{K}}\m{I}+\m{\Delta}\right)\\
&-\sum_{n=1}^N\left[\m{h}^\top\left(\frac{1}{\sqrt{K}}\m{I}+\m{\Delta}\right)^\top\m{\Gamma}_n^\top\left(\m{\Sigma}_{\Omega}+\m{E}_{\Omega}\right)\m{\Gamma}_n\left(\frac{1}{\sqrt{K}}\m{I}+\m{\Delta}\right)\m{h}+\varepsilon\right]^{-\frac{1}{2}}\\
&\cdot\left[\m{h}^\top\left(\frac{1}{\sqrt{K}}\m{I}+\m{\Delta}\right)^\top\m{\Gamma}_n^\top\left(\m{\Sigma}_{\Omega}+\m{E}_{\Omega}\right)\m{\Gamma}_n\left(\frac{1}{\sqrt{K}}\m{I}+\m{\Delta}\right)\m{h}\right]\m{I}\Bigg\}\left(\m{I}-\m{h}\m{h}^\top\right)\\
=&\sum_{n=1}^N\frac{\left(\m{I}-\m{h}\m{h}^\top\right)\widetilde{\m{H}}_n\left(\m{I}-\m{h}\m{h}^\top\right)}{\left[\m{h}^\top\left(\frac{1}{\sqrt{K}}\m{I}+\m{\Delta}\right)^\top\m{\Gamma}_n^\top\left(\m{\Sigma}_{\Omega}+\m{E}_{\Omega}\right)\m{\Gamma}_n\left(\frac{1}{\sqrt{K}}\m{I}+\m{\Delta}\right)\m{h}+\varepsilon\right]^{\frac{3}{2}}},
\end{aligned}
\end{equation}
where
\begin{equation}
\begin{aligned}
\widetilde{\m{H}}_n=&\left[\m{h}^\top\left(\frac{1}{\sqrt{K}}\m{I}+\m{\Delta}\right)^\top\m{\Gamma}_n^\top\left(\m{\Sigma}_{\Omega}+\m{E}_{\Omega}\right)\m{\Gamma}_n\left(\frac{1}{\sqrt{K}}\m{I}+\m{\Delta}\right)\m{h}+\varepsilon\right]\\
&\cdot\left(\frac{1}{\sqrt{K}}\m{I}+\m{\Delta}\right)^\top\m{\Gamma}_n^\top\left(\m{\Sigma}_{\Omega}+\m{E}_{\Omega}\right)\m{\Gamma}_n\left(\frac{1}{\sqrt{K}}\m{I}+\m{\Delta}\right)\\
&-\left(\frac{1}{\sqrt{K}}\m{I}+\m{\Delta}\right)^\top\m{\Gamma}_n^\top\left(\m{\Sigma}_{\Omega}+\m{E}_{\Omega}\right)\m{\Gamma}_n\left(\frac{1}{\sqrt{K}}\m{I}+\m{\Delta}\right)\m{h}\m{h}^\top\\
&\cdot\left(\frac{1}{\sqrt{K}}\m{I}+\m{\Delta}\right)^\top\m{\Gamma}_n^\top\left(\m{\Sigma}_{\Omega}+\m{E}_{\Omega}\right)\m{\Gamma}_n\left(\frac{1}{\sqrt{K}}\m{I}+\m{\Delta}\right)\\
&-\left[\m{h}^\top\left(\frac{1}{\sqrt{K}}\m{I}+\m{\Delta}\right)^\top\m{\Gamma}_n^\top\left(\m{\Sigma}_{\Omega}+\m{E}_{\Omega}\right)\m{\Gamma}_n\left(\frac{1}{\sqrt{K}}\m{I}+\m{\Delta}\right)\m{h}+\varepsilon\right]\\
&\cdot\left[\m{h}^\top\left(\frac{1}{\sqrt{K}}\m{I}+\m{\Delta}\right)^\top\m{\Gamma}_n^\top\left(\m{\Sigma}_{\Omega}+\m{E}_{\Omega}\right)\m{\Gamma}_n\left(\frac{1}{\sqrt{K}}\m{I}+\m{\Delta}\right)\m{h}\right]\m{I}.
\end{aligned}
\end{equation}
We obtain
\begin{equation}
\begin{aligned}
\m{d}^\top\widetilde{\m{H}}_n\m{d}
=&\left[\m{h}^\top\left(\frac{1}{\sqrt{K}}\m{I}+\m{\Delta}\right)^\top\m{\Gamma}_n^\top\left(\m{\Sigma}_{\Omega}+\m{E}_{\Omega}\right)\m{\Gamma}_n\left(\frac{1}{\sqrt{K}}\m{I}+\m{\Delta}\right)\m{h}+\varepsilon\right]\\
&\cdot\m{d}^\top\left(\frac{1}{\sqrt{K}}\m{I}+\m{\Delta}\right)^\top\m{\Gamma}_n^\top\left(\m{\Sigma}_{\Omega}+\m{E}_{\Omega}\right)\m{\Gamma}_n\left(\frac{1}{\sqrt{K}}\m{I}+\m{\Delta}\right)\m{d}\\
&-\m{d}^\top\left(\frac{1}{\sqrt{K}}\m{I}+\m{\Delta}\right)^\top\m{\Gamma}_n^\top\left(\m{\Sigma}_{\Omega}+\m{E}_{\Omega}\right)\m{\Gamma}_n\left(\frac{1}{\sqrt{K}}\m{I}+\m{\Delta}\right)\m{h}\\
&\cdot\m{h}^\top\left(\frac{1}{\sqrt{K}}\m{I}+\m{\Delta}\right)^\top\m{\Gamma}_n^\top\left(\m{\Sigma}_{\Omega}+\m{E}_{\Omega}\right)\m{\Gamma}_n\left(\frac{1}{\sqrt{K}}\m{I}+\m{\Delta}\right)\m{d}\\
&-\left[\m{h}^\top\left(\frac{1}{\sqrt{K}}\m{I}+\m{\Delta}\right)^\top\m{\Gamma}_n^\top\left(\m{\Sigma}_{\Omega}+\m{E}_{\Omega}\right)\m{\Gamma}_n\left(\frac{1}{\sqrt{K}}\m{I}+\m{\Delta}\right)\m{h}+\varepsilon\right]\\
&\cdot\left[\m{h}^\top\left(\frac{1}{\sqrt{K}}\m{I}+\m{\Delta}\right)^\top\m{\Gamma}_n^\top\left(\m{\Sigma}_{\Omega}+\m{E}_{\Omega}\right)\m{\Gamma}_n\left(\frac{1}{\sqrt{K}}\m{I}+\m{\Delta}\right)\m{h}\right]\left\|\m{d}\right\|_2^2.
\end{aligned}
\end{equation}
Furthermore,
\begin{equation}
\begin{aligned}
\m{d}^\top\widetilde{\m{H}}_n\m{d}
=&\left[\m{h}^\top\left(\frac{1}{\sqrt{K}}\m{I}+\m{\Delta}\right)^\top\m{\Gamma}_n^\top\left(\m{\Sigma}_{\Omega}+\m{E}_{\Omega}\right)\m{\Gamma}_n\left(\frac{1}{\sqrt{K}}\m{I}+\m{\Delta}\right)\m{h}+\varepsilon\right]\\
&\cdot\m{e}_o^\top\left(\frac{1}{\sqrt{K}}\m{I}+\m{\Delta}\right)^\top\m{\Gamma}_n^\top\left(\m{\Sigma}_{\Omega}+\m{E}_{\Omega}\right)\m{\Gamma}_n\left(\frac{1}{\sqrt{K}}\m{I}+\m{\Delta}\right)\m{e}_o\\
&+\varepsilon h_o^2\m{h}^\top\left(\frac{1}{\sqrt{K}}\m{I}+\m{\Delta}\right)^\top\m{\Gamma}_n^\top\left(\m{\Sigma}_{\Omega}+\m{E}_{\Omega}\right)\m{\Gamma}_n\left(\frac{1}{\sqrt{K}}\m{I}+\m{\Delta}\right)\m{h}\\
&-2\varepsilon h_o\m{h}^\top\left(\frac{1}{\sqrt{K}}\m{I}+\m{\Delta}\right)^\top\m{\Gamma}_n^\top\left(\m{\Sigma}_{\Omega}+\m{E}_{\Omega}\right)\m{\Gamma}_n\left(\frac{1}{\sqrt{K}}\m{I}+\m{\Delta}\right)\m{e}_o\\
&-\left[\m{e}_o^\top\left(\frac{1}{\sqrt{K}}\m{I}+\m{\Delta}\right)^\top\m{\Gamma}_n^\top\left(\m{\Sigma}_{\Omega}+\m{E}_{\Omega}\right)\m{\Gamma}_n\left(\frac{1}{\sqrt{K}}\m{I}+\m{\Delta}\right)\m{h}\right]^2\\
&-\left[\m{h}^\top\left(\frac{1}{\sqrt{K}}\m{I}+\m{\Delta}\right)^\top\m{\Gamma}_n^\top\left(\m{\Sigma}_{\Omega}+\m{E}_{\Omega}\right)\m{\Gamma}_n\left(\frac{1}{\sqrt{K}}\m{I}+\m{\Delta}\right)\m{h}+\varepsilon\right]\\
&\cdot\left[\m{h}^\top\left(\frac{1}{\sqrt{K}}\m{I}+\m{\Delta}\right)^\top\m{\Gamma}_n^\top\left(\m{\Sigma}_{\Omega}+\m{E}_{\Omega}\right)\m{\Gamma}_n\left(\frac{1}{\sqrt{K}}\m{I}+\m{\Delta}\right)\m{h}\right]\left\|\m{d}\right\|_2^2.
\end{aligned}
\end{equation}
Consequently, by defining
\begin{equation}
\begin{aligned}
\chi_{\m{\Delta},\m{h}}=&\left\{n:\;\m{h}^\top\left(\frac{1}{\sqrt{K}}\m{I}+\m{\Delta}\right)^\top\m{\Gamma}_n^\top\m{\Sigma}_{\Omega}\m{\Gamma}_n\left(\frac{1}{\sqrt{K}}\m{I}+\m{\Delta}\right)\m{h}>0\right\},
\end{aligned}
\end{equation}
and following derivations similar to those in the proof of Lemma~\ref{lem-2st}, we obtain
\begin{equation}
\begin{aligned}
\frac{\m{d}^\top\text{Hess}\;\psi_\varepsilon\left(\m{h}\right)\m{d}}{\left\|\m{d}\right\|_2^2}
=&\frac{1}{\left\|\m{d}\right\|_2^2}\sum_{n=1}^N\frac{\m{d}^\top\widetilde{\m{H}}_n\m{d}}{\left[\m{h}^\top\left(\frac{1}{\sqrt{K}}\m{I}+\m{\Delta}\right)^\top\m{\Gamma}_n^\top\left(\m{\Sigma}_{\Omega}+\m{E}_{\Omega}\right)\m{\Gamma}_n\left(\frac{1}{\sqrt{K}}\m{I}+\m{\Delta}\right)\m{h}+\varepsilon\right]^{\frac{3}{2}}}\\
=&\widetilde{\text{main}}+\widetilde{\text{res}},
\end{aligned}
\end{equation}
where
\begin{equation}
\begin{aligned}
\widetilde{\text{main}}=&\frac{1}{\left\|\m{d}\right\|_2^2}\sum_{n=1}^N\frac{\m{e}_o^\top\left(\frac{1}{\sqrt{K}}\m{I}+\m{\Delta}\right)^\top\m{\Gamma}_n^\top\m{\Sigma}_{\Omega}\m{\Gamma}_n\left(\frac{1}{\sqrt{K}}\m{I}+\m{\Delta}\right)\m{e}_o}{\left[\m{h}^\top\left(\frac{1}{\sqrt{K}}\m{I}+\m{\Delta}\right)^\top\m{\Gamma}_n^\top\left(\m{\Sigma}_{\Omega}+\m{E}_{\Omega}\right)\m{\Gamma}_n\left(\frac{1}{\sqrt{K}}\m{I}+\m{\Delta}\right)\m{h}+\varepsilon\right]^{\frac{1}{2}}}\\
&-\sum_{n\in\chi_{\m{\Delta},\m{h}}}\frac{\m{h}^\top\left(\frac{1}{\sqrt{K}}\m{I}+\m{\Delta}\right)^\top\m{\Gamma}_n^\top\m{\Sigma}_{\Omega}\m{\Gamma}_n\left(\frac{1}{\sqrt{K}}\m{I}+\m{\Delta}\right)\m{h}}{\left[\m{h}^\top\left(\frac{1}{\sqrt{K}}\m{I}+\m{\Delta}\right)^\top\m{\Gamma}_n^\top\left(\m{\Sigma}_{\Omega}+\m{E}_{\Omega}\right)\m{\Gamma}_n\left(\frac{1}{\sqrt{K}}\m{I}+\m{\Delta}\right)\m{h}+\varepsilon\right]^{\frac{1}{2}}}\\
&-\frac{1}{\left\|\m{d}\right\|_2^2}\sum_{n\in\chi_{\m{\Delta},\m{h}}}\frac{\left[\m{e}_o^\top\left(\frac{1}{\sqrt{K}}\m{I}+\m{\Delta}\right)^\top\m{\Gamma}_n^\top\m{\Sigma}_{\Omega}\m{\Gamma}_n\left(\frac{1}{\sqrt{K}}\m{I}+\m{\Delta}\right)\m{h}\right]^2}{\left[\m{h}^\top\left(\frac{1}{\sqrt{K}}\m{I}+\m{\Delta}\right)^\top\m{\Gamma}_n^\top\left(\m{\Sigma}_{\Omega}+\m{E}_{\Omega}\right)\m{\Gamma}_n\left(\frac{1}{\sqrt{K}}\m{I}+\m{\Delta}\right)\m{h}+\varepsilon\right]^{\frac{3}{2}}},
\end{aligned}
\end{equation}
and
\begin{equation}
\begin{aligned}
\widetilde{\text{res}}\leq&\frac{1}{\left\|\m{d}\right\|_2^2}\sum_{n\in\chi_{\m{\Delta},\m{h}}}\frac{\varepsilon h_o^2\m{h}^\top\left(\frac{1}{\sqrt{K}}\m{I}+\m{\Delta}\right)^\top\m{\Gamma}_n^\top\left(\m{\Sigma}_{\Omega}+\m{E}_{\Omega}\right)\m{\Gamma}_n\left(\frac{1}{\sqrt{K}}\m{I}+\m{\Delta}\right)\m{h}}{\left[\m{h}^\top\left(\frac{1}{\sqrt{K}}\m{I}+\m{\Delta}\right)^\top\m{\Gamma}_n^\top\left(\m{\Sigma}_{\Omega}+\m{E}_{\Omega}\right)\m{\Gamma}_n\left(\frac{1}{\sqrt{K}}\m{I}+\m{\Delta}\right)\m{h}+\varepsilon\right]^{\frac{3}{2}}}\\
&-\frac{1}{\left\|\m{d}\right\|_2^2}\sum_{n\in\chi_{\m{\Delta},\m{h}}}\frac{2\varepsilon h_o\m{h}^\top\left(\frac{1}{\sqrt{K}}\m{I}+\m{\Delta}\right)^\top\m{\Gamma}_n^\top\left(\m{\Sigma}_{\Omega}+\m{E}_{\Omega}\right)\m{\Gamma}_n\left(\frac{1}{\sqrt{K}}\m{I}+\m{\Delta}\right)\m{e}_o}{\left[\m{h}^\top\left(\frac{1}{\sqrt{K}}\m{I}+\m{\Delta}\right)^\top\m{\Gamma}_n^\top\left(\m{\Sigma}_{\Omega}+\m{E}_{\Omega}\right)\m{\Gamma}_n\left(\frac{1}{\sqrt{K}}\m{I}+\m{\Delta}\right)\m{h}+\varepsilon\right]^{\frac{3}{2}}}\\
&+\frac{1}{\left\|\m{d}\right\|_2^2}\sum_{n=1}^N\frac{\m{e}_o^\top\left(\frac{1}{\sqrt{K}}\m{I}+\m{\Delta}\right)^\top\m{\Gamma}_n^\top\m{E}_{\Omega}\m{\Gamma}_n\left(\frac{1}{\sqrt{K}}\m{I}+\m{\Delta}\right)\m{e}_o}{\left[\m{h}^\top\left(\frac{1}{\sqrt{K}}\m{I}+\m{\Delta}\right)^\top\m{\Gamma}_n^\top\left(\m{\Sigma}_{\Omega}+\m{E}_{\Omega}\right)\m{\Gamma}_n\left(\frac{1}{\sqrt{K}}\m{I}+\m{\Delta}\right)\m{h}+\varepsilon\right]^{\frac{1}{2}}}\\
&-\sum_{n\in\chi_{\m{\Delta},\m{h}}}\frac{\m{h}^\top\left(\frac{1}{\sqrt{K}}\m{I}+\m{\Delta}\right)^\top\m{\Gamma}_n^\top\m{E}_{\Omega}\m{\Gamma}_n\left(\frac{1}{\sqrt{K}}\m{I}+\m{\Delta}\right)\m{h}}{\left[\m{h}^\top\left(\frac{1}{\sqrt{K}}\m{I}+\m{\Delta}\right)^\top\m{\Gamma}_n^\top\left(\m{\Sigma}_{\Omega}+\m{E}_{\Omega}\right)\m{\Gamma}_n\left(\frac{1}{\sqrt{K}}\m{I}+\m{\Delta}\right)\m{h}+\varepsilon\right]^{\frac{1}{2}}}\\
&-\frac{1}{\left\|\m{d}\right\|_2^2}\sum_{n\in\chi_{\m{\Delta},\m{h}}}\frac{\left[\m{e}_o^\top\left(\frac{1}{\sqrt{K}}\m{I}+\m{\Delta}\right)^\top\m{\Gamma}_n^\top\m{E}_{\Omega}\m{\Gamma}_n\left(\frac{1}{\sqrt{K}}\m{I}+\m{\Delta}\right)\m{h}\right]^2}{\left[\m{h}^\top\left(\frac{1}{\sqrt{K}}\m{I}+\m{\Delta}\right)^\top\m{\Gamma}_n^\top\left(\m{\Sigma}_{\Omega}+\m{E}_{\Omega}\right)\m{\Gamma}_n\left(\frac{1}{\sqrt{K}}\m{I}+\m{\Delta}\right)\m{h}+\varepsilon\right]^{\frac{3}{2}}}\\
&-\frac{2}{\left\|\m{d}\right\|_2^2}\sum_{n\in\chi_{\m{\Delta},\m{h}}}\frac{\m{e}_o^\top\left(\frac{1}{\sqrt{K}}\m{I}+\m{\Delta}\right)^\top\m{\Gamma}_n^\top\m{\Sigma}_{\Omega}\m{\Gamma}_n\left(\frac{1}{\sqrt{K}}\m{I}+\m{\Delta}\right)\m{h}}{\left[\m{h}^\top\left(\frac{1}{\sqrt{K}}\m{I}+\m{\Delta}\right)^\top\m{\Gamma}_n^\top\left(\m{\Sigma}_{\Omega}+\m{E}_{\Omega}\right)\m{\Gamma}_n\left(\frac{1}{\sqrt{K}}\m{I}+\m{\Delta}\right)\m{h}+\varepsilon\right]^{\frac{3}{2}}}\\
&\cdot\m{e}_o^\top\left(\frac{1}{\sqrt{K}}\m{I}+\m{\Delta}\right)^\top\m{\Gamma}_n^\top\m{E}_{\Omega}\m{\Gamma}_n\left(\frac{1}{\sqrt{K}}\m{I}+\m{\Delta}\right)\m{h}.
\end{aligned}
\end{equation}

First, we derive an upper bound for the term $\widetilde{\text{res}}$. By discarding all nonpositive terms, $\widetilde{\text{res}}$ can be further bounded as
\begin{equation}
\begin{aligned}
\widetilde{\text{res}}\leq&-\sum_{n\in\chi_{\m{\Delta},\m{h}}}\frac{\m{h}^\top\left(\frac{1}{\sqrt{K}}\m{I}+\m{\Delta}\right)^\top\m{\Gamma}_n^\top\m{E}_{\Omega}\m{\Gamma}_n\left(\frac{1}{\sqrt{K}}\m{I}+\m{\Delta}\right)\m{h}}{\left[\m{h}^\top\left(\frac{1}{\sqrt{K}}\m{I}+\m{\Delta}\right)^\top\m{\Gamma}_n^\top\left(\m{\Sigma}_{\Omega}+\m{E}_{\Omega}\right)\m{\Gamma}_n\left(\frac{1}{\sqrt{K}}\m{I}+\m{\Delta}\right)\m{h}+\varepsilon\right]^{\frac{1}{2}}}\\
&+\frac{1}{\left\|\m{d}\right\|_2^2}\sum_{n\in\chi_{\m{\Delta},\m{h}}}\frac{\varepsilon h_o^2\m{h}^\top\left(\frac{1}{\sqrt{K}}\m{I}+\m{\Delta}\right)^\top\m{\Gamma}_n^\top\left(\m{\Sigma}_{\Omega}+\m{E}_{\Omega}\right)\m{\Gamma}_n\left(\frac{1}{\sqrt{K}}\m{I}+\m{\Delta}\right)\m{h}}{\left[\m{h}^\top\left(\frac{1}{\sqrt{K}}\m{I}+\m{\Delta}\right)^\top\m{\Gamma}_n^\top\left(\m{\Sigma}_{\Omega}+\m{E}_{\Omega}\right)\m{\Gamma}_n\left(\frac{1}{\sqrt{K}}\m{I}+\m{\Delta}\right)\m{h}+\varepsilon\right]^{\frac{3}{2}}}\\
&+\frac{1}{\left\|\m{d}\right\|_2^2}\sum_{n=1}^N\frac{\m{e}_o^\top\left(\frac{1}{\sqrt{K}}\m{I}+\m{\Delta}\right)^\top\m{\Gamma}_n^\top\m{E}_{\Omega}\m{\Gamma}_n\left(\frac{1}{\sqrt{K}}\m{I}+\m{\Delta}\right)\m{e}_o}{\left[\m{h}^\top\left(\frac{1}{\sqrt{K}}\m{I}+\m{\Delta}\right)^\top\m{\Gamma}_n^\top\left(\m{\Sigma}_{\Omega}+\m{E}_{\Omega}\right)\m{\Gamma}_n\left(\frac{1}{\sqrt{K}}\m{I}+\m{\Delta}\right)\m{h}+\varepsilon\right]^{\frac{1}{2}}}\\
&-\frac{2\varepsilon h_o}{\left\|\m{d}\right\|_2^2}\sum_{n\in\chi_{\m{\Delta},\m{h}}}\frac{\m{h}^\top\left(\frac{1}{\sqrt{K}}\m{I}+\m{\Delta}\right)^\top\m{\Gamma}_n^\top\m{E}_{\Omega}\m{\Gamma}_n\left(\frac{1}{\sqrt{K}}\m{I}+\m{\Delta}\right)\m{e}_o}{\left[\m{h}^\top\left(\frac{1}{\sqrt{K}}\m{I}+\m{\Delta}\right)^\top\m{\Gamma}_n^\top\left(\m{\Sigma}_{\Omega}+\m{E}_{\Omega}\right)\m{\Gamma}_n\left(\frac{1}{\sqrt{K}}\m{I}+\m{\Delta}\right)\m{h}+\varepsilon\right]^{\frac{3}{2}}}\\
&-\frac{2}{\left\|\m{d}\right\|_2^2}\sum_{n\in\chi_{\m{\Delta},\m{h}}}\frac{\m{e}_o^\top\left(\frac{1}{\sqrt{K}}\m{I}+\m{\Delta}\right)^\top\m{\Gamma}_n^\top\m{\Sigma}_{\Omega}\m{\Gamma}_n\left(\frac{1}{\sqrt{K}}\m{I}+\m{\Delta}\right)\m{h}}{\left[\m{h}^\top\left(\frac{1}{\sqrt{K}}\m{I}+\m{\Delta}\right)^\top\m{\Gamma}_n^\top\left(\m{\Sigma}_{\Omega}+\m{E}_{\Omega}\right)\m{\Gamma}_n\left(\frac{1}{\sqrt{K}}\m{I}+\m{\Delta}\right)\m{h}+\varepsilon\right]^{\frac{3}{2}}}\\
&\cdot\m{e}_o^\top\left(\frac{1}{\sqrt{K}}\m{I}+\m{\Delta}\right)^\top\m{\Gamma}_n^\top\m{E}_{\Omega}\m{\Gamma}_n\left(\frac{1}{\sqrt{K}}\m{I}+\m{\Delta}\right)\m{h}\\
&-\frac{2\varepsilon h_o}{\left\|\m{d}\right\|_2^2}\sum_{n\in\chi_{\m{\Delta},\m{h}}}\frac{\frac{1}{\sqrt{K}}\m{h}^\top\m{\Gamma}_n^\top\m{\Sigma}_{\Omega}\m{\Gamma}_n\m{\Delta}\m{e}_o+\frac{1}{\sqrt{K}}\m{h}^\top\m{\Delta}^\top\m{\Gamma}_n^\top\m{\Sigma}_{\Omega}\m{\Gamma}_n\m{e}_o+\m{h}^\top\m{\Delta}^\top\m{\Gamma}_n^\top\m{\Sigma}_{\Omega}\m{\Gamma}_n\m{\Delta}\m{e}_o}{\left[\m{h}^\top\left(\frac{1}{\sqrt{K}}\m{I}+\m{\Delta}\right)^\top\m{\Gamma}_n^\top\left(\m{\Sigma}_{\Omega}+\m{E}_{\Omega}\right)\m{\Gamma}_n\left(\frac{1}{\sqrt{K}}\m{I}+\m{\Delta}\right)\m{h}+\varepsilon\right]^{\frac{3}{2}}}.
\end{aligned}
\end{equation}
Next, observe that
\begin{equation}
\begin{aligned}
&\left|\sum_{n\in\chi_{\m{\Delta},\m{h}}}\frac{\m{h}^\top\left(\frac{1}{\sqrt{K}}\m{I}+\m{\Delta}\right)^\top\m{\Gamma}_n^\top\m{E}_{\Omega}\m{\Gamma}_n\left(\frac{1}{\sqrt{K}}\m{I}+\m{\Delta}\right)\m{h}}{\left[\m{h}^\top\left(\frac{1}{\sqrt{K}}\m{I}+\m{\Delta}\right)^\top\m{\Gamma}_n^\top\left(\m{\Sigma}_{\Omega}+\m{E}_{\Omega}\right)\m{\Gamma}_n\left(\frac{1}{\sqrt{K}}\m{I}+\m{\Delta}\right)\m{h}+\varepsilon\right]^{\frac{1}{2}}}\right|\\
\leq&\frac{\left\|\m{E}_\Omega\right\|_2}{\left(1-\left\|\m{E}_\Omega\right\|_2\right)^{\frac{1}{2}}}\sum_{n\in\chi_{\m{\Delta},\m{h}}}\left\|\mathcal{P}_\Omega\left(\m{\Gamma}_n\left(\frac{1}{\sqrt{K}}\m{I}+\m{\Delta}\right)\m{h}\right)\right\|_2,
\end{aligned}
\end{equation}
and
\begin{equation}
\begin{aligned}
&\sum_{n\in\chi_{\m{\Delta},\m{h}}}\frac{\varepsilon h_o^2\m{h}^\top\left(\frac{1}{\sqrt{K}}\m{I}+\m{\Delta}\right)^\top\m{\Gamma}_n^\top\left(\m{\Sigma}_{\Omega}+\m{E}_{\Omega}\right)\m{\Gamma}_n\left(\frac{1}{\sqrt{K}}\m{I}+\m{\Delta}\right)\m{h}}{\left[\m{h}^\top\left(\frac{1}{\sqrt{K}}\m{I}+\m{\Delta}\right)^\top\m{\Gamma}_n^\top\left(\m{\Sigma}_{\Omega}+\m{E}_{\Omega}\right)\m{\Gamma}_n\left(\frac{1}{\sqrt{K}}\m{I}+\m{\Delta}\right)\m{h}+\varepsilon\right]^{\frac{3}{2}}}\leq h_o^2\varepsilon^{\frac{1}{2}}N.
\end{aligned}
\end{equation}
Moreover, we obtain
\begin{equation}
\begin{aligned}
&\sum_{n=1}^N\frac{\left|\m{e}_o^\top\left(\frac{1}{\sqrt{K}}\m{I}+\m{\Delta}\right)^\top\m{\Gamma}_n^\top\m{E}_{\Omega}\m{\Gamma}_n\left(\frac{1}{\sqrt{K}}\m{I}+\m{\Delta}\right)\m{e}_o\right|}{\left[\m{h}^\top\left(\frac{1}{\sqrt{K}}\m{I}+\m{\Delta}\right)^\top\m{\Gamma}_n^\top\left(\m{\Sigma}_{\Omega}+\m{E}_{\Omega}\right)\m{\Gamma}_n\left(\frac{1}{\sqrt{K}}\m{I}+\m{\Delta}\right)\m{h}+\varepsilon\right]^{\frac{1}{2}}}\\
\leq&\sum_{n=1}^K\frac{\frac{1}{K}\left|\m{e}_o^\top\m{\Gamma}_n^\top\m{E}_{\Omega}\m{\Gamma}_n\m{e}_o\right|}{\left[\m{h}^\top\left(\frac{1}{\sqrt{K}}\m{I}+\m{\Delta}\right)^\top\m{\Gamma}_n^\top\left(\m{\Sigma}_{\Omega}+\m{E}_{\Omega}\right)\m{\Gamma}_n\left(\frac{1}{\sqrt{K}}\m{I}+\m{\Delta}\right)\m{h}+\varepsilon\right]^{\frac{1}{2}}}\\
&+\sum_{n=1}^N\frac{\frac{1}{\sqrt{K}}\left|\m{e}_o^\top\m{\Delta}^\top\m{\Gamma}_n^\top\m{E}_{\Omega}\m{\Gamma}_n\m{e}_o\right|+\frac{1}{\sqrt{K}}\left|\m{e}_o^\top\m{\Gamma}_n^\top\m{E}_{\Omega}\m{\Gamma}_n\m{\Delta}\m{e}_o\right|+\left|\m{e}_o^\top\m{\Delta}^\top\m{\Gamma}_n^\top\m{E}_{\Omega}\m{\Gamma}_n\m{\Delta}\m{e}_o\right|}{\left[\m{h}^\top\left(\frac{1}{\sqrt{K}}\m{I}+\m{\Delta}\right)^\top\m{\Gamma}_n^\top\left(\m{\Sigma}_{\Omega}+\m{E}_{\Omega}\right)\m{\Gamma}_n\left(\frac{1}{\sqrt{K}}\m{I}+\m{\Delta}\right)\m{h}+\varepsilon\right]^{\frac{1}{2}}}\\
\leq&\frac{\left\|\m{E}_\Omega\right\|_2}{\left(\frac{h_o^2}{K}-\frac{2\left\|\m{\Delta}\right\|_2}{\sqrt{K}}-\left\|\m{\Delta}\right\|_2^2-\left(\frac{1}{\sqrt{K}}+\left\|\m{\Delta}\right\|_2\right)^2\left\|\m{E}_\Omega\right\|_2\right)^{\frac{1}{2}}}+\varepsilon^{-\frac{1}{2}}N\left[\frac{2\left\|\m{E}_\Omega\right\|_2\left\|\m{\Delta}\right\|_2}{\sqrt{K}}+\left\|\m{E}_\Omega\right\|_2\left\|\m{\Delta}\right\|_2^2\right].
\end{aligned}
\end{equation}
Similarly, we have
\begin{equation}
\begin{aligned}
&\sum_{n\in\chi_{\m{\Delta},\m{h}}}\frac{\left|\m{e}_o^\top\left(\frac{1}{\sqrt{K}}\m{I}+\m{\Delta}\right)^\top\m{\Gamma}_n^\top\m{E}_{\Omega}\m{\Gamma}_n\left(\frac{1}{\sqrt{K}}\m{I}+\m{\Delta}\right)\m{h}\right|}{\left[\m{h}^\top\left(\frac{1}{\sqrt{K}}\m{I}+\m{\Delta}\right)^\top\m{\Gamma}_n^\top\left(\m{\Sigma}_{\Omega}+\m{E}_{\Omega}\right)\m{\Gamma}_n\left(\frac{1}{\sqrt{K}}\m{I}+\m{\Delta}\right)\m{h}+\varepsilon\right]^{\frac{3}{2}}}\\
\leq&\frac{\left\|\m{E}_\Omega\right\|_2}{\left(\frac{h_o^2}{K}-\frac{2\left\|\m{\Delta}\right\|_2}{\sqrt{K}}-\left\|\m{\Delta}\right\|_2^2-\left(\frac{1}{\sqrt{K}}+\left\|\m{\Delta}\right\|_2\right)^2\left\|\m{E}_\Omega\right\|_2\right)^{\frac{3}{2}}}+\varepsilon^{-\frac{3}{2}}N\left[\frac{2\left\|\m{E}_\Omega\right\|_2\left\|\m{\Delta}\right\|_2}{\sqrt{K}}+\left\|\m{E}_\Omega\right\|_2\left\|\m{\Delta}\right\|_2^2\right].
\end{aligned}
\end{equation}
Furthermore, we obtain
\begin{equation}
\begin{aligned}
&\sum_{n\in\chi_{\m{\Delta},\m{h}}}\frac{\left|\m{e}_o^\top\left(\frac{1}{\sqrt{K}}\m{I}+\m{\Delta}\right)^\top\m{\Gamma}_n^\top\m{\Sigma}_{\Omega}\m{\Gamma}_n\left(\frac{1}{\sqrt{K}}\m{I}+\m{\Delta}\right)\m{h}\right|}{\left[\m{h}^\top\left(\frac{1}{\sqrt{K}}\m{I}+\m{\Delta}\right)^\top\m{\Gamma}_n^\top\left(\m{\Sigma}_{\Omega}+\m{E}_{\Omega}\right)\m{\Gamma}_n\left(\frac{1}{\sqrt{K}}\m{I}+\m{\Delta}\right)\m{h}+\varepsilon\right]^{\frac{3}{2}}}\\
&\cdot\left|\m{e}_o^\top\left(\frac{1}{\sqrt{K}}\m{I}+\m{\Delta}\right)^\top\m{\Gamma}_n^\top\m{E}_{\Omega}\m{\Gamma}_n\left(\frac{1}{\sqrt{K}}\m{I}+\m{\Delta}\right)\m{h}\right|\\
\leq&\sum_{n\in\chi_{\m{\Delta},\m{h}}\cap\chi_o}\frac{\frac{h_o}{K}\left|\m{e}_o^\top\left(\frac{1}{\sqrt{K}}\m{I}+\m{\Delta}\right)^\top\m{\Gamma}_n^\top\m{E}_{\Omega}\m{\Gamma}_n\left(\frac{1}{\sqrt{K}}\m{I}+\m{\Delta}\right)\m{h}\right|}{\left[\m{h}^\top\left(\frac{1}{\sqrt{K}}\m{I}+\m{\Delta}\right)^\top\m{\Gamma}_n^\top\left(\m{\Sigma}_{\Omega}+\m{E}_{\Omega}\right)\m{\Gamma}_n\left(\frac{1}{\sqrt{K}}\m{I}+\m{\Delta}\right)\m{h}+\varepsilon\right]^{\frac{3}{2}}}\\
&+\sum_{n\in\chi_{\m{\Delta},\m{h}}\cap\chi_o}\frac{\frac{\left\|\m{E}_\Omega\right\|_2}{K}\left(\frac{2\left\|\m{\Delta}\right\|_2}{\sqrt{K}}+\left\|\m{\Delta}\right\|_2^2\right)}{\left[\m{h}^\top\left(\frac{1}{\sqrt{K}}\m{I}+\m{\Delta}\right)^\top\m{\Gamma}_n^\top\left(\m{\Sigma}_{\Omega}+\m{E}_{\Omega}\right)\m{\Gamma}_n\left(\frac{1}{\sqrt{K}}\m{I}+\m{\Delta}\right)\m{h}+\varepsilon\right]^{\frac{3}{2}}}\\
&+\sum_{n\in\chi_{\m{\Delta},\m{h}}\cap\chi_o}\frac{\frac{1}{\sqrt{K}}\left|\m{e}_o^\top\m{\Gamma}_n^\top\m{\Sigma}_\Omega\m{\Gamma}_n\m{\Delta}\m{h}\right|\left(\frac{2\left\|\m{\Delta}\right\|_2}{\sqrt{K}}+\left\|\m{\Delta}\right\|_2^2\right)\left\|\m{E}_\Omega\right\|_2}{\left[\m{h}^\top\left(\frac{1}{\sqrt{K}}\m{I}+\m{\Delta}\right)^\top\m{\Gamma}_n^\top\left(\m{\Sigma}_{\Omega}+\m{E}_{\Omega}\right)\m{\Gamma}_n\left(\frac{1}{\sqrt{K}}\m{I}+\m{\Delta}\right)\m{h}+\varepsilon\right]^{\frac{3}{2}}}\\
&+\sum_{n\in\chi_{\m{\Delta},\m{h}}\cap\chi_o}\frac{\frac{1}{\sqrt{K}}\left|\m{e}_o^\top\m{\Gamma}_n^\top\m{E}_\Omega\m{\Gamma}_n\m{\Delta}\m{h}\right|\left(\frac{\left\|\m{\Delta}\right\|_2}{\sqrt{K}}+\left\|\m{\Delta}\right\|_2^2\right)}{\left[\m{h}^\top\left(\frac{1}{\sqrt{K}}\m{I}+\m{\Delta}\right)^\top\m{\Gamma}_n^\top\left(\m{\Sigma}_{\Omega}+\m{E}_{\Omega}\right)\m{\Gamma}_n\left(\frac{1}{\sqrt{K}}\m{I}+\m{\Delta}\right)\m{h}+\varepsilon\right]^{\frac{3}{2}}}\\
&+\sum_{n\in\chi_{\m{\Delta},\m{h}}}\frac{\left|\m{e}_o^\top\m{\Delta}^\top\m{\Gamma}_n^\top\m{\Sigma}_\Omega\m{\Gamma}_n\left(\frac{1}{\sqrt{K}}\m{I}+\m{\Delta}\right)\m{h}\right|\left|\m{e}_o^\top\m{\Delta}^\top\m{\Gamma}_n^\top\m{E}_\Omega\m{\Gamma}_n\left(\frac{1}{\sqrt{K}}\m{I}+\m{\Delta}\right)\m{h}\right|}{\left[\m{h}^\top\left(\frac{1}{\sqrt{K}}\m{I}+\m{\Delta}\right)^\top\m{\Gamma}_n^\top\left(\m{\Sigma}_{\Omega}+\m{E}_{\Omega}\right)\m{\Gamma}_n\left(\frac{1}{\sqrt{K}}\m{I}+\m{\Delta}\right)\m{h}+\varepsilon\right]^{\frac{3}{2}}}\\
\leq&\frac{h_o}{K}\frac{K\left(\frac{1}{\sqrt{K}}+\left\|\m{\Delta}\right\|_2\right)^2\left\|\m{E}_\Omega\right\|_2}{\left(\frac{h_o^2}{K}-\frac{2\left\|\m{\Delta}\right\|_2}{\sqrt{K}}-\left\|\m{\Delta}\right\|_2^2-\left(\frac{1}{\sqrt{K}}+\left\|\m{\Delta}\right\|_2\right)^2\left\|\m{E}_\Omega\right\|_2\right)^{\frac{3}{2}}}\\
&+\left(\frac{2\left\|\m{\Delta}\right\|_2}{\sqrt{K}}+\left\|\m{\Delta}\right\|_2^2\right)\frac{\left\|\m{E}_\Omega\right\|_2}{\left(\frac{h_o^2}{K}-\frac{2\left\|\m{\Delta}\right\|_2}{\sqrt{K}}-\left\|\m{\Delta}\right\|_2^2-\left(\frac{1}{\sqrt{K}}+\left\|\m{\Delta}\right\|_2\right)^2\left\|\m{E}_\Omega\right\|_2\right)^{\frac{3}{2}}}\\
&+\left(\frac{2\left\|\m{\Delta}\right\|_2}{\sqrt{K}}+\left\|\m{\Delta}\right\|_2^2\right)\frac{\sqrt{K}\left\|\m{\Delta}\right\|_2\left\|\m{E}_\Omega\right\|_2}{\left(\frac{h_o^2}{K}-\frac{2\left\|\m{\Delta}\right\|_2}{\sqrt{K}}-\left\|\m{\Delta}\right\|_2^2-\left(\frac{1}{\sqrt{K}}+\left\|\m{\Delta}\right\|_2\right)^2\left\|\m{E}_\Omega\right\|_2\right)^{\frac{3}{2}}}\\
&+\left(\frac{\left\|\m{\Delta}\right\|_2}{\sqrt{K}}+\left\|\m{\Delta}\right\|_2^2\right)\frac{\sqrt{K}\left\|\m{\Delta}\right\|_2\left\|\m{E}_\Omega\right\|_2}{\left(\frac{h_o^2}{K}-\frac{2\left\|\m{\Delta}\right\|_2}{\sqrt{K}}-\left\|\m{\Delta}\right\|_2^2-\left(\frac{1}{\sqrt{K}}+\left\|\m{\Delta}\right\|_2\right)^2\left\|\m{E}_\Omega\right\|_2\right)^{\frac{3}{2}}}\\
&+\varepsilon^{-\frac{1}{2}}N\frac{\left\|\m{\Delta}\right\|_2^2\left\|\m{E}_\Omega\right\|_2}{1-\left\|\m{E}_\Omega\right\|_2},
\end{aligned}
\end{equation}
and
\begin{equation}
\begin{aligned}
&\varepsilon\sum_{n\in\chi_{\m{\Delta},\m{h}}}\frac{\frac{1}{\sqrt{K}}\m{h}^\top\m{\Gamma}_n^\top\m{\Sigma}_{\Omega}\m{\Gamma}_n\m{\Delta}\m{e}_o+\frac{1}{\sqrt{K}}\m{h}^\top\m{\Delta}^\top\m{\Gamma}_n^\top\m{\Sigma}_{\Omega}\m{\Gamma}_n\m{e}_o+\m{h}^\top\m{\Delta}^\top\m{\Gamma}_n^\top\m{\Sigma}_{\Omega}\m{\Gamma}_n\m{\Delta}\m{e}_o}{\left[\m{h}^\top\left(\frac{1}{\sqrt{K}}\m{I}+\m{\Delta}\right)^\top\m{\Gamma}_n^\top\left(\m{\Sigma}_{\Omega}+\m{E}_{\Omega}\right)\m{\Gamma}_n\left(\frac{1}{\sqrt{K}}\m{I}+\m{\Delta}\right)\m{h}+\varepsilon\right]^{\frac{3}{2}}}\\
\leq&\sum_{n\in\chi_{\m{\Delta},\m{h}}}\frac{\m{h}^\top\left(\frac{1}{\sqrt{K}}\m{I}+\m{\Delta}\right)^\top\m{\Gamma}_n^\top\m{\Sigma}_{\Omega}\m{\Gamma}_n\m{\Delta}\m{e}_o+\frac{1}{\sqrt{K}}\m{h}^\top\m{\Delta}^\top\m{\Gamma}_n^\top\m{\Sigma}_{\Omega}\m{\Gamma}_n\m{e}_o}{\left[\m{h}^\top\left(\frac{1}{\sqrt{K}}\m{I}+\m{\Delta}\right)^\top\m{\Gamma}_n^\top\left(\m{\Sigma}_{\Omega}+\m{E}_{\Omega}\right)\m{\Gamma}_n\left(\frac{1}{\sqrt{K}}\m{I}+\m{\Delta}\right)\m{h}+\varepsilon\right]^{\frac{1}{2}}}\\
\leq&\frac{1}{\left(1-\left\|\m{E}_\Omega\right\|_2\right)^{\frac{1}{2}}}N\left\|\m{\Delta}\right\|_2+\frac{\sqrt{K}\left\|\m{\Delta}\right\|_2}{\left(\frac{h_o^2}{K}-\frac{2\left\|\m{\Delta}\right\|_2}{\sqrt{K}}-\left\|\m{\Delta}\right\|_2^2-\left(\frac{1}{\sqrt{K}}+\left\|\m{\Delta}\right\|_2\right)^2\left\|\m{E}_\Omega\right\|_2\right)^{\frac{1}{2}}}.
\end{aligned}
\end{equation}
Therefore, for $\left\|\m{E}_\Omega\right\|_2\leq\frac{1}{32}N^{-1}$ and $\left\|\m{\Delta}\right\|_2\leq\frac{1}{32}K^{-\frac{1}{2}}N^{-1}$, we obtain
\begin{equation}
\begin{aligned}
\widetilde{\text{res}}\leq&2\left\|\m{E}_\Omega\right\|_2\sum_{n\in\chi_{\m{\Delta},\m{h}}}\left\|\mathcal{P}_\Omega\left(\m{\Gamma}_n\left(\frac{1}{\sqrt{K}}\m{I}+\m{\Delta}\right)\m{h}\right)\right\|_2+\frac{1}{\left\|\m{d}\right\|_2^2}h_o^2\varepsilon^{\frac{1}{2}}N\\
&+\frac{1}{\left\|\m{d}\right\|_2^2}2h_o^{-1}K^{\frac{1}{2}}\left\|\m{E}_\Omega\right\|_2+\frac{1}{\left\|\m{d}\right\|_2^2}4\varepsilon^{-\frac{1}{2}}K^{-\frac{1}{2}}N\left\|\m{E}_\Omega\right\|_2\left\|\m{\Delta}\right\|_2\\
&+\frac{1}{\left\|\m{d}\right\|_2^2}4h_o^{-2}\varepsilon K^{\frac{3}{2}}\left\|\m{E}_\Omega\right\|_2+\frac{1}{\left\|\m{d}\right\|_2^2}8h_o\varepsilon^{-\frac{1}{2}}K^{-\frac{1}{2}}N\left\|\m{E}_\Omega\right\|_2\left\|\m{\Delta}\right\|_2\\
&+\frac{1}{\left\|\m{d}\right\|_2^2}4h_o^{-2}K^{\frac{1}{2}}\left\|\m{E}_\Omega\right\|_2+\frac{1}{\left\|\m{d}\right\|_2^2}18h_o^{-3}K\left\|\m{E}_\Omega\right\|_2\left\|\m{\Delta}\right\|_2+\frac{1}{\left\|\m{d}\right\|_2^2}2\varepsilon^{-\frac{1}{2}}N\left\|\m{\Delta}\right\|_2^2\left\|\m{E}_\Omega\right\|_2\\
&+\frac{1}{\left\|\m{d}\right\|_2^2}2h_o\left(2N+2h_o^{-1}K\right)\left\|\m{\Delta}\right\|_2.
\end{aligned}
\end{equation}

We next derive an upper bound for the term $\widetilde{\text{main}}$. To this end, we first establish the following result.
\begin{lem}
\label{lem-1st-general}
Let $\m{h}$ be an approximate first-order stationary point satisfying \eqref{prop-max} and \eqref{prop-first-order stationary-general}. Then, for $\left\|\m{E}_\Omega\right\|_2\leq\frac{1}{2}$, the following inequality holds:
\begin{equation}
\begin{aligned}
&\left|\sum_{n\in\chi_{\m{\Delta},\m{h}}}\frac{\left(\m{e}_o-h_o\m{h}\right)^\top\left(\frac{1}{\sqrt{K}}\m{I}+\m{\Delta}\right)^\top\m{\Gamma}_n^\top\m{\Sigma}_{\Omega}\m{\Gamma}_n\left(\frac{1}{\sqrt{K}}\m{I}+\m{\Delta}\right)\m{h}}{\left[\m{h}^\top\left(\frac{1}{\sqrt{K}}\m{I}+\m{\Delta}\right)^\top\m{\Gamma}_n^\top\left(\m{\Sigma}_{\Omega}+\m{E}_{\Omega}\right)\m{\Gamma}_n\left(\frac{1}{\sqrt{K}}\m{I}+\m{\Delta}\right)\m{h}+\varepsilon\right]^{\frac{1}{2}}}\right|\\
\leq&\xi+2K^{\frac{1}{2}}\left\|\m{E}_\Omega\right\|_2+2N\left\|\m{\Delta}\right\|_2\left\|\m{E}_\Omega\right\|_2+2h_o\left\|\m{E}_\Omega\right\|_2\sum_{n\in\chi_{\m{\Delta},\m{h}}}\left\|\mathcal{P}_{\Omega}\left(\m{\Gamma}_n\left(\frac{1}{\sqrt{K}}\m{I}+\m{\Delta}\right)\m{h}\right)\right\|_2.
\end{aligned}
\end{equation}
\end{lem}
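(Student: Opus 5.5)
The plan is to mirror the proof of Lemma~\ref{lem-1st}. The difference is that we project the Riemannian gradient \eqref{grad-general} onto the tangent direction $\m{d}=\m{e}_o-h_o\m{h}$ rather than onto the $o$-th coordinate. That is why no factor $h_o^{-1}$ appears in the bound.

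Write $\m{A}=\frac{1}{\sqrt{K}}\m{I}+\m{\Delta}$, $\m{M}_n=\m{A}^\top\m{\Gamma}_n^\top(\m{\Sigma}_\Omega+\m{E}_\Omega)\m{\Gamma}_n\m{A}$ and $w_n=[\m{h}^\top\m{M}_n\m{h}+\varepsilon]^{-1/2}$.

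\textbf{Step 1: reduce to an $\m{E}_\Omega$ remainder.} Since $\m{d}\in\mathrm{T}_{\m{h}}\mathbb{S}^{N-1}$, we have $\m{d}^\top(\m{I}-\m{h}\m{h}^\top)=\m{d}^\top$. Hence $\m{d}^\top\text{grad}\,\psi_\varepsilon(\m{h})=\sum_n w_n\,\m{d}^\top\m{M}_n\m{h}$. By Cauchy--Schwarz, \eqref{prop-first-order stationary-general} and $\|\m{d}\|_2\le 1$, this quantity has absolute value at most $\xi$.

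Split $\m{M}_n$ into its $\m{\Sigma}_\Omega$ part and its $\m{E}_\Omega$ part.
\begin{itemize}
\item For $n\notin\chi_{\m{\Delta},\m{h}}$ we have $\m{\Sigma}_\Omega\m{\Gamma}_n\m{A}\m{h}=\m{0}$, so the $\m{\Sigma}_\Omega$ part summed over all $n$ is exactly the quantity inside the absolute value on the left-hand side.
\item The left-hand side is therefore at most $\xi+\bigl|\sum_n w_n\,\m{d}^\top\m{A}^\top\m{\Gamma}_n^\top\m{E}_\Omega\m{\Gamma}_n\m{A}\m{h}\bigr|$.
\end{itemize}

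\textbf{Step 2: the structural facts used to bound the remainder.}
\begin{itemize}
\item The $\m{x}_l$ vanish off $\Omega$, so $\m{E}_\Omega$ is supported on $\Omega\times\Omega$. Consequently $\m{E}_\Omega\m{\Gamma}_n\m{A}\m{h}=\m{E}_\Omega\mathcal{P}_\Omega(\m{\Gamma}_n\m{A}\m{h})$ after zero-padding, and in particular it vanishes when $n\notin\chi_{\m{\Delta},\m{h}}$.
\item The weights satisfy $\m{h}^\top\m{M}_n\m{h}\ge(1-\|\m{E}_\Omega\|_2)\|\mathcal{P}_\Omega(\m{\Gamma}_n\m{A}\m{h})\|_2^2$. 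With $\|\m{E}_\Omega\|_2\le\frac12$ this gives $w_n\le\sqrt{2}\,\|\mathcal{P}_\Omega(\m{\Gamma}_n\m{A}\m{h})\|_2^{-1}$ for $n\in\chi_{\m{\Delta},\m{h}}$.
\end{itemize}
This normalization trick is the same one used in \eqref{temp-term-2} and \eqref{temp-term-1}.

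\textbf{Step 3: the $h_o\m{h}$ piece.} The piece $h_o\sum_n w_n\,\m{h}^\top\m{A}^\top\m{\Gamma}_n^\top\m{E}_\Omega\m{\Gamma}_n\m{A}\m{h}$ is bounded exactly as in \eqref{temp-term-2}. It contributes at most $2h_o\|\m{E}_\Omega\|_2\sum_{n\in\chi_{\m{\Delta},\m{h}}}\|\mathcal{P}_\Omega(\m{\Gamma}_n\m{A}\m{h})\|_2$.

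\textbf{Step 4: the $\m{e}_o$ piece.} By Cauchy--Schwarz and Step 2, each term is bounded by
\begin{equation*}
w_n\|\m{E}_\Omega\|_2\,\|\mathcal{P}_\Omega(\m{\Gamma}_n\m{A}\m{e}_o)\|_2\,\|\mathcal{P}_\Omega(\m{\Gamma}_n\m{A}\m{h})\|_2\le\sqrt{2}\,\|\m{E}_\Omega\|_2\,\|\mathcal{P}_\Omega(\m{\Gamma}_n\m{A}\m{e}_o)\|_2 .
\end{equation*}
Now expand $\mathcal{P}_\Omega(\m{\Gamma}_n\m{A}\m{e}_o)=\frac{1}{\sqrt{K}}\mathcal{P}_\Omega(\m{\Gamma}_n\m{e}_o)+\mathcal{P}_\Omega(\m{\Gamma}_n\m{\Delta}\m{e}_o)$.
\begin{itemize}
\item The first vector is nonzero only for the $K$ indices $n\in\chi_o$, and each has norm $K^{-1/2}$. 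Summing over $n$ gives $\sqrt{K}$.
\item The second vector has norm at most $\|\m{\Delta}\|_2$ for every $n$. Summing over all $N$ indices gives $N\|\m{\Delta}\|_2$.
\end{itemize}
Since $\sqrt2\le2$, this piece contributes at most $2K^{1/2}\|\m{E}_\Omega\|_2+2N\|\m{\Delta}\|_2\|\m{E}_\Omega\|_2$. Adding Steps 1, 3 and 4 gives the claimed bound.

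\textbf{Main obstacle.} The delicate point is Step 2, namely the lower bound on the weights in terms of $\|\mathcal{P}_\Omega(\m{\Gamma}_n\m{A}\m{h})\|_2$. It is what allows the $\m{E}_\Omega$ terms to be controlled without any dependence on $\varepsilon^{-1/2}$. It also requires checking carefully that the support of $\m{E}_\Omega$ lies in $\Omega\times\Omega$, so that every $\m{E}_\Omega$ term vanishes outside $\chi_{\m{\Delta},\m{h}}$.
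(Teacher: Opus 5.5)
Your proof is correct and follows the paper's argument step for step. You bound the tangent component of the gradient by $\xi$, isolate the $\m{E}_\Omega$ remainder (which vanishes outside $\chi_{\m{\Delta},\m{h}}$), and control its $\m{e}_o$ and $h_o\m{h}$ pieces via $w_n\le\sqrt{2}\,\|\mathcal{P}_\Omega(\m{\Gamma}_n\m{A}\m{h})\|_2^{-1}$, splitting $\m{A}\m{e}_o$ into its $K^{-1/2}\m{e}_o$ and $\m{\Delta}\m{e}_o$ parts. One correction to your framing: since $\m{e}_o^\top(\m{I}-\m{h}\m{h}^\top)=\m{d}^\top$, the $o$-th coordinate of the Riemannian gradient that the paper uses already equals $\m{d}^\top\text{grad}\;\psi_\varepsilon(\m{h})$, so this is not a different projection from Lemma~\ref{lem-1st}; the $h_o^{-1}$ is absent only because $h_o$ is not factored out here, as the matrices $\m{A}^\top\m{\Gamma}_n^\top\m{\Sigma}_\Omega\m{\Gamma}_n\m{A}$ are no longer diagonal.
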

\begin{proof}
From \eqref{grad-general}, the $o$-th entry of $\text{grad}\;\psi_\varepsilon\left(\m{h}\right)$ can be expressed as
\begin{equation}
\begin{aligned}
\mathcal{P}_{o}\left(\text{grad}\;\psi_\varepsilon\left(\m{h}\right)\right)=&\sum_{n\in\chi_{\m{\Delta},\m{h}}}\frac{\m{d}^\top\left(\frac{1}{\sqrt{K}}\m{I}+\m{\Delta}\right)^\top\m{\Gamma}_n^\top\m{\Sigma}_{\Omega}\m{\Gamma}_n\left(\frac{1}{\sqrt{K}}\m{I}+\m{\Delta}\right)\m{h}}{\left[\m{h}^\top\left(\frac{1}{\sqrt{K}}\m{I}+\m{\Delta}\right)^\top\m{\Gamma}_n^\top\left(\m{\Sigma}_{\Omega}+\m{E}_{\Omega}\right)\m{\Gamma}_n\left(\frac{1}{\sqrt{K}}\m{I}+\m{\Delta}\right)\m{h}+\varepsilon\right]^{\frac{1}{2}}}\\
&+\sum_{n\in\chi_{\m{\Delta},\m{h}}}\frac{\m{e}_o^\top\left(\frac{1}{\sqrt{K}}\m{I}+\m{\Delta}\right)^\top\m{\Gamma}_n^\top\m{E}_{\Omega}\m{\Gamma}_n\left(\frac{1}{\sqrt{K}}\m{I}+\m{\Delta}\right)\m{h}}{\left[\m{h}^\top\left(\frac{1}{\sqrt{K}}\m{I}+\m{\Delta}\right)^\top\m{\Gamma}_n^\top\left(\m{\Sigma}_{\Omega}+\m{E}_{\Omega}\right)\m{\Gamma}_n\left(\frac{1}{\sqrt{K}}\m{I}+\m{\Delta}\right)\m{h}+\varepsilon\right]^{\frac{1}{2}}}\\
&-h_o\sum_{n\in\chi_{\m{\Delta},\m{h}}}\frac{\m{h}^\top\left(\frac{1}{\sqrt{K}}\m{I}+\m{\Delta}\right)^\top\m{\Gamma}_n^\top\m{E}_{\Omega}\m{\Gamma}_n\left(\frac{1}{\sqrt{K}}\m{I}+\m{\Delta}\right)\m{h}}{\left[\m{h}^\top\left(\frac{1}{\sqrt{K}}\m{I}+\m{\Delta}\right)^\top\m{\Gamma}_n^\top\left(\m{\Sigma}_{\Omega}+\m{E}_{\Omega}\right)\m{\Gamma}_n\left(\frac{1}{\sqrt{K}}\m{I}+\m{\Delta}\right)\m{h}+\varepsilon\right]^{\frac{1}{2}}}.
\end{aligned}
\end{equation}
By the first-order stationarity condition \eqref{prop-first-order stationary-general},
\begin{equation}
\begin{aligned}
\left|\mathcal{P}_{o}\left(\text{grad}\;\psi_\varepsilon\left(\m{h}\right)\right)\right|\leq\xi,
\end{aligned}
\end{equation}
which implies
\begin{equation}
\label{lem-1st-general-1}
\begin{aligned}
&\left|\sum_{n\in\chi_{\m{\Delta},\m{h}}}\frac{\m{d}^\top\left(\frac{1}{\sqrt{K}}\m{I}+\m{\Delta}\right)^\top\m{\Gamma}_n^\top\m{\Sigma}_{\Omega}\m{\Gamma}_n\left(\frac{1}{\sqrt{K}}\m{I}+\m{\Delta}\right)\m{h}}{\left[\m{h}^\top\left(\frac{1}{\sqrt{K}}\m{I}+\m{\Delta}\right)^\top\m{\Gamma}_n^\top\left(\m{\Sigma}_{\Omega}+\m{E}_{\Omega}\right)\m{\Gamma}_n\left(\frac{1}{\sqrt{K}}\m{I}+\m{\Delta}\right)\m{h}+\varepsilon\right]^{\frac{1}{2}}}\right|\\
\leq&\xi+\left|\sum_{n\in\chi_{\m{\Delta},\m{h}}}\frac{\m{e}_o^\top\left(\frac{1}{\sqrt{K}}\m{I}+\m{\Delta}\right)^\top\m{\Gamma}_n^\top\m{E}_{\Omega}\m{\Gamma}_n\left(\frac{1}{\sqrt{K}}\m{I}+\m{\Delta}\right)\m{h}}{\left[\m{h}^\top\left(\frac{1}{\sqrt{K}}\m{I}+\m{\Delta}\right)^\top\m{\Gamma}_n^\top\left(\m{\Sigma}_{\Omega}+\m{E}_{\Omega}\right)\m{\Gamma}_n\left(\frac{1}{\sqrt{K}}\m{I}+\m{\Delta}\right)\m{h}+\varepsilon\right]^{\frac{1}{2}}}\right|\\
&+\left|h_o\sum_{n\in\chi_{\m{\Delta},\m{h}}}\frac{\m{h}^\top\left(\frac{1}{\sqrt{K}}\m{I}+\m{\Delta}\right)^\top\m{\Gamma}_n^\top\m{E}_{\Omega}\m{\Gamma}_n\left(\frac{1}{\sqrt{K}}\m{I}+\m{\Delta}\right)\m{h}}{\left[\m{h}^\top\left(\frac{1}{\sqrt{K}}\m{I}+\m{\Delta}\right)^\top\m{\Gamma}_n^\top\left(\m{\Sigma}_{\Omega}+\m{E}_{\Omega}\right)\m{\Gamma}_n\left(\frac{1}{\sqrt{K}}\m{I}+\m{\Delta}\right)\m{h}+\varepsilon\right]^{\frac{1}{2}}}\right|.
\end{aligned}
\end{equation}
Moreover, we have
\begin{equation}
\label{}
\begin{aligned}
&\left|\sum_{n\in\chi_{\m{\Delta},\m{h}}}\frac{\m{e}_o^\top\left(\frac{1}{\sqrt{K}}\m{I}+\m{\Delta}\right)^\top\m{\Gamma}_n^\top\m{E}_{\Omega}\m{\Gamma}_n\left(\frac{1}{\sqrt{K}}\m{I}+\m{\Delta}\right)\m{h}}{\left[\m{h}^\top\left(\frac{1}{\sqrt{K}}\m{I}+\m{\Delta}\right)^\top\m{\Gamma}_n^\top\left(\m{\Sigma}_{\Omega}+\m{E}_{\Omega}\right)\m{\Gamma}_n\left(\frac{1}{\sqrt{K}}\m{I}+\m{\Delta}\right)\m{h}+\varepsilon\right]^{\frac{1}{2}}}\right|\\
\leq&\frac{1}{\sqrt{K}}\left|\sum_{n\in\chi_{\m{\Delta},\m{h}}}\frac{\m{e}_o^\top\m{\Gamma}_n^\top\m{E}_{\Omega}\m{\Gamma}_n\left(\frac{1}{\sqrt{K}}\m{I}+\m{\Delta}\right)\m{h}}{\left[\m{h}^\top\left(\frac{1}{\sqrt{K}}\m{I}+\m{\Delta}\right)^\top\m{\Gamma}_n^\top\left(\m{\Sigma}_{\Omega}+\m{E}_{\Omega}\right)\m{\Gamma}_n\left(\frac{1}{\sqrt{K}}\m{I}+\m{\Delta}\right)\m{h}+\varepsilon\right]^{\frac{1}{2}}}\right|\\
&+\left|\sum_{n\in\chi_{\m{\Delta},\m{h}}}\frac{\m{e}_o^\top\m{\Delta}^\top\m{\Gamma}_n^\top\m{E}_{\Omega}\m{\Gamma}_n\left(\frac{1}{\sqrt{K}}\m{I}+\m{\Delta}\right)\m{h}}{\left[\m{h}^\top\left(\frac{1}{\sqrt{K}}\m{I}+\m{\Delta}\right)^\top\m{\Gamma}_n^\top\left(\m{\Sigma}_{\Omega}+\m{E}_{\Omega}\right)\m{\Gamma}_n\left(\frac{1}{\sqrt{K}}\m{I}+\m{\Delta}\right)\m{h}+\varepsilon\right]^{\frac{1}{2}}}\right|.
\end{aligned}
\end{equation}
The two terms on the right-hand side can be bounded as
\begin{equation}
\begin{aligned}
&\left|\sum_{n\in\chi_{\m{\Delta},\m{h}}}\frac{\m{e}_o^\top\m{\Gamma}_n^\top\m{E}_{\Omega}\m{\Gamma}_n\left(\frac{1}{\sqrt{K}}\m{I}+\m{\Delta}\right)\m{h}}{\left[\m{h}^\top\left(\frac{1}{\sqrt{K}}\m{I}+\m{\Delta}\right)^\top\m{\Gamma}_n^\top\left(\m{\Sigma}_{\Omega}+\m{E}_{\Omega}\right)\m{\Gamma}_n\left(\frac{1}{\sqrt{K}}\m{I}+\m{\Delta}\right)\m{h}+\varepsilon\right]^{\frac{1}{2}}}\right|\\
\leq&\sum_{n\in\chi_{o}\cap\chi_{\m{\Delta},\m{h}}}\left[\frac{\left(\m{h}^\top\left(\frac{1}{\sqrt{K}}\m{I}+\m{\Delta}\right)^\top\m{\Gamma}_n^\top\m{E}_{\Omega}^\top\m{E}_{\Omega}\m{\Gamma}_n\left(\frac{1}{\sqrt{K}}\m{I}+\m{\Delta}\right)\m{h}\right)/\left\|\mathcal{P}_{\Omega}\left(\m{\Gamma}_n\left(\frac{1}{\sqrt{K}}\m{I}+\m{\Delta}\right)\m{h}\right)\right\|_2^2}{\m{h}^\top\left(\frac{1}{\sqrt{K}}\m{I}+\m{\Delta}\right)^\top\m{\Gamma}_n^\top\left(\m{\Sigma}_{\Omega}+\m{E}_{\Omega}\right)\m{\Gamma}_n\left(\frac{1}{\sqrt{K}}\m{I}+\m{\Delta}\right)\m{h}/\left\|\mathcal{P}_{\Omega}\left(\m{\Gamma}_n\left(\frac{1}{\sqrt{K}}\m{I}+\m{\Delta}\right)\m{h}\right)\right\|_2^2}\right]^{\frac{1}{2}}\\
\leq&K\frac{\left\|\m{E}_\Omega\right\|_2}{\left(1-\left\|\m{E}_\Omega\right\|_2\right)^{\frac{1}{2}}},
\end{aligned}
\end{equation}
and
\begin{equation}
\begin{aligned}
&\left|\sum_{n\in\chi_{\m{\Delta},\m{h}}}\frac{\m{e}_o^\top\m{\Delta}^\top\m{\Gamma}_n^\top\m{E}_{\Omega}\m{\Gamma}_n\left(\frac{1}{\sqrt{K}}\m{I}+\m{\Delta}\right)\m{h}}{\left[\m{h}^\top\left(\frac{1}{\sqrt{K}}\m{I}+\m{\Delta}\right)^\top\m{\Gamma}_n^\top\left(\m{\Sigma}_{\Omega}+\m{E}_{\Omega}\right)\m{\Gamma}_n\left(\frac{1}{\sqrt{K}}\m{I}+\m{\Delta}\right)\m{h}+\varepsilon\right]^{\frac{1}{2}}}\right|\\
\leq&\left\|\m{\Delta}\right\|_2\sum_{n\in\chi_{\m{\Delta},\m{h}}}\left[\frac{\left(\m{h}^\top\left(\frac{1}{\sqrt{K}}\m{I}+\m{\Delta}\right)^\top\m{\Gamma}_n^\top\m{E}_{\Omega}^\top\m{E}_{\Omega}\m{\Gamma}_n\left(\frac{1}{\sqrt{K}}\m{I}+\m{\Delta}\right)\m{h}\right)/\left\|\mathcal{P}_{\Omega}\left(\m{\Gamma}_n\left(\frac{1}{\sqrt{K}}\m{I}+\m{\Delta}\right)\m{h}\right)\right\|_2^2}{\m{h}^\top\left(\frac{1}{\sqrt{K}}\m{I}+\m{\Delta}\right)^\top\m{\Gamma}_n^\top\left(\m{\Sigma}_{\Omega}+\m{E}_{\Omega}\right)\m{\Gamma}_n\left(\frac{1}{\sqrt{K}}\m{I}+\m{\Delta}\right)\m{h}/\left\|\mathcal{P}_{\Omega}\left(\m{\Gamma}_n\left(\frac{1}{\sqrt{K}}\m{I}+\m{\Delta}\right)\m{h}\right)\right\|_2^2}\right]^{\frac{1}{2}}\\
\leq&N\left\|\m{\Delta}\right\|_2\frac{\left\|\m{E}_\Omega\right\|_2}{\left(1-\left\|\m{E}_\Omega\right\|_2\right)^{\frac{1}{2}}}.
\end{aligned}
\end{equation}
Consequently,
\begin{equation}
\label{lem-1st-general-2}
\begin{aligned}
&\left|\sum_{n\in\chi_{\m{\Delta},\m{h}}}\frac{\m{e}_o^\top\left(\frac{1}{\sqrt{K}}\m{I}+\m{\Delta}\right)^\top\m{\Gamma}_n^\top\m{E}_{\Omega}\m{\Gamma}_n\left(\frac{1}{\sqrt{K}}\m{I}+\m{\Delta}\right)\m{h}}{\left[\m{h}^\top\left(\frac{1}{\sqrt{K}}\m{I}+\m{\Delta}\right)^\top\m{\Gamma}_n^\top\left(\m{\Sigma}_{\Omega}+\m{E}_{\Omega}\right)\m{\Gamma}_n\left(\frac{1}{\sqrt{K}}\m{I}+\m{\Delta}\right)\m{h}+\varepsilon\right]^{\frac{1}{2}}}\right|\\
\leq&K^{\frac{1}{2}}\frac{\left\|\m{E}_\Omega\right\|_2}{\left(1-\left\|\m{E}_\Omega\right\|_2\right)^{\frac{1}{2}}}+N\left\|\m{\Delta}\right\|_2\frac{\left\|\m{E}_\Omega\right\|_2}{\left(1-\left\|\m{E}_\Omega\right\|_2\right)^{\frac{1}{2}}}
\end{aligned}
\end{equation}
Following an argument similar to that used in deriving \eqref{temp-term-2}, we obtain
\begin{equation}
\label{lem-1st-general-3}
\begin{aligned}
&\left|\sum_{n\in\chi_{\m{\Delta},\m{h}}}\frac{\m{h}^\top\left(\frac{1}{\sqrt{K}}\m{I}+\m{\Delta}\right)^\top\m{\Gamma}_n^\top\m{E}_{\Omega}\m{\Gamma}_n\left(\frac{1}{\sqrt{K}}\m{I}+\m{\Delta}\right)\m{h}}{\left[\m{h}^\top\left(\frac{1}{\sqrt{K}}\m{I}+\m{\Delta}\right)^\top\m{\Gamma}_n^\top\left(\m{\Sigma}_{\Omega}+\m{E}_{\Omega}\right)\m{\Gamma}_n\left(\frac{1}{\sqrt{K}}\m{I}+\m{\Delta}\right)\m{h}+\varepsilon\right]^{\frac{1}{2}}}\right|\\
\leq&\frac{\left\|\m{E}_\Omega\right\|_2}{\left(1-\left\|\m{E}_\Omega\right\|_2\right)^{\frac{1}{2}}}\sum_{n\in\chi_{\m{\Delta},\m{h}}}\left\|\mathcal{P}_{\Omega}\left(\m{\Gamma}_n\left(\frac{1}{\sqrt{K}}\m{I}+\m{\Delta}\right)\m{h}\right)\right\|_2.
\end{aligned}
\end{equation}
Substituting \eqref{lem-1st-general-2} and \eqref{lem-1st-general-3} into \eqref{lem-1st-general-1} yields
\begin{equation}
\begin{aligned}
&\left|\sum_{n\in\chi_{\m{\Delta},\m{h}}}\frac{\left(\m{e}_o-h_o\m{h}\right)^\top\left(\frac{1}{\sqrt{K}}\m{I}+\m{\Delta}\right)^\top\m{\Gamma}_n^\top\m{\Sigma}_{\Omega}\m{\Gamma}_n\left(\frac{1}{\sqrt{K}}\m{I}+\m{\Delta}\right)\m{h}}{\left[\m{h}^\top\left(\frac{1}{\sqrt{K}}\m{I}+\m{\Delta}\right)^\top\m{\Gamma}_n^\top\left(\m{\Sigma}_{\Omega}+\m{E}_{\Omega}\right)\m{\Gamma}_n\left(\frac{1}{\sqrt{K}}\m{I}+\m{\Delta}\right)\m{h}+\varepsilon\right]^{\frac{1}{2}}}\right|\\
\leq&\xi+\frac{K^{\frac{1}{2}}\left\|\m{E}_\Omega\right\|_2}{\left(1-\left\|\m{E}_\Omega\right\|_2\right)^{\frac{1}{2}}}+\frac{N\left\|\m{\Delta}\right\|_2\left\|\m{E}_\Omega\right\|_2}{\left(1-\left\|\m{E}_\Omega\right\|_2\right)^{\frac{1}{2}}}+\frac{h_o\left\|\m{E}_\Omega\right\|_2}{\left(1-\left\|\m{E}_\Omega\right\|_2\right)^{\frac{1}{2}}}\sum_{n\in\chi_{\m{\Delta},\m{h}}}\left\|\mathcal{P}_{\Omega}\left(\m{\Gamma}_n\left(\frac{1}{\sqrt{K}}\m{I}+\m{\Delta}\right)\m{h}\right)\right\|_2.
\end{aligned}
\end{equation}
Finally, recalling that $\left\|\m{E}_\Omega\right\|_2\leq\frac{1}{2}$ completes the proof.
\end{proof}

It follows from Lemma~\ref{lem-1st-general} that
\begin{equation}
\begin{aligned}
\widetilde{\text{main}}\leq&\frac{1}{\left\|\m{d}\right\|_2^2}\sum_{n=1}^N\frac{\m{e}_o^\top\left(\frac{1}{\sqrt{K}}\m{I}+\m{\Delta}\right)^\top\m{\Gamma}_n^\top\m{\Sigma}_{\Omega}\m{\Gamma}_n\left(\frac{1}{\sqrt{K}}\m{I}+\m{\Delta}\right)\m{e}_o}{\left[\m{h}^\top\left(\frac{1}{\sqrt{K}}\m{I}+\m{\Delta}\right)^\top\m{\Gamma}_n^\top\left(\m{\Sigma}_{\Omega}+\m{E}_{\Omega}\right)\m{\Gamma}_n\left(\frac{1}{\sqrt{K}}\m{I}+\m{\Delta}\right)\m{h}+\varepsilon\right]^{\frac{1}{2}}}\\
&-h_o^{-1}\sum_{n\in\chi_{\m{\Delta},\m{h}}}\frac{\m{e}_o^\top\left(\frac{1}{\sqrt{K}}\m{I}+\m{\Delta}\right)^\top\m{\Gamma}_n^\top\m{\Sigma}_{\Omega}\m{\Gamma}_n\left(\frac{1}{\sqrt{K}}\m{I}+\m{\Delta}\right)\m{h}}{\left[\m{h}^\top\left(\frac{1}{\sqrt{K}}\m{I}+\m{\Delta}\right)^\top\m{\Gamma}_n^\top\left(\m{\Sigma}_{\Omega}+\m{E}_{\Omega}\right)\m{\Gamma}_n\left(\frac{1}{\sqrt{K}}\m{I}+\m{\Delta}\right)\m{h}+\varepsilon\right]^{\frac{1}{2}}}\\
&-\frac{1}{\left\|\m{d}\right\|_2^2}\sum_{n\in\chi_{\m{\Delta},\m{h}}}\frac{\left|\m{e}_o^\top\left(\frac{1}{\sqrt{K}}\m{I}+\m{\Delta}\right)^\top\m{\Gamma}_n^\top\m{\Sigma}_{\Omega}\m{\Gamma}_n\left(\frac{1}{\sqrt{K}}\m{I}+\m{\Delta}\right)\m{h}\right|^2}{\left[\m{h}^\top\left(\frac{1}{\sqrt{K}}\m{I}+\m{\Delta}\right)^\top\m{\Gamma}_n^\top\left(\m{\Sigma}_{\Omega}+\m{E}_{\Omega}\right)\m{\Gamma}_n\left(\frac{1}{\sqrt{K}}\m{I}+\m{\Delta}\right)\m{h}+\varepsilon\right]^{\frac{3}{2}}}\\
&+h_o^{-1}\left(\xi+2K^{\frac{1}{2}}\left\|\m{E}_\Omega\right\|_2+2N\left\|\m{\Delta}\right\|_2\left\|\m{E}_\Omega\right\|_2\right)\\
&+2\left\|\m{E}_\Omega\right\|_2\sum_{n\in\chi_{\m{\Delta},\m{h}}}\left\|\mathcal{P}_{\Omega}\left(\m{\Gamma}_n\left(\frac{1}{\sqrt{K}}\m{I}+\m{\Delta}\right)\m{h}\right)\right\|_2.
\end{aligned}
\end{equation}
For $\left\|\m{E}_\Omega\right\|_2\leq\frac{1}{32}N^{-1}$ and $\left\|\m{\Delta}\right\|_2\leq\frac{1}{32}K^{-\frac{1}{2}}N^{-1}$, it holds that
\begin{equation}
\begin{aligned}
&\sum_{n=1}^N\frac{\m{e}_o^\top\left(\frac{1}{\sqrt{K}}\m{I}+\m{\Delta}\right)^\top\m{\Gamma}_n^\top\m{\Sigma}_{\Omega}\m{\Gamma}_n\left(\frac{1}{\sqrt{K}}\m{I}+\m{\Delta}\right)\m{e}_o}{\left[\m{h}^\top\left(\frac{1}{\sqrt{K}}\m{I}+\m{\Delta}\right)^\top\m{\Gamma}_n^\top\left(\m{\Sigma}_{\Omega}+\m{E}_{\Omega}\right)\m{\Gamma}_n\left(\frac{1}{\sqrt{K}}\m{I}+\m{\Delta}\right)\m{h}+\varepsilon\right]^{\frac{1}{2}}}\\
\leq&\frac{1}{K}\sum_{n\in\chi_o}\frac{1}{\left[\m{h}^\top\left(\frac{1}{\sqrt{K}}\m{I}+\m{\Delta}\right)^\top\m{\Gamma}_n^\top\left(\m{\Sigma}_{\Omega}+\m{E}_{\Omega}\right)\m{\Gamma}_n\left(\frac{1}{\sqrt{K}}\m{I}+\m{\Delta}\right)\m{h}+\varepsilon\right]^{\frac{1}{2}}}\\
&+\sum_{n=1}^N\frac{\left|\frac{2}{\sqrt{K}}\m{e}_o^\top\m{\Gamma}_n^\top\m{\Sigma}_{\Omega}\m{\Gamma}_n\m{\Delta}\m{e}_o\right|+\m{e}_o^\top\m{\Delta}^\top\m{\Gamma}_n^\top\m{\Sigma}_{\Omega}\m{\Gamma}_n\m{\Delta}\m{e}_o}{\left[\m{h}^\top\left(\frac{1}{\sqrt{K}}\m{I}+\m{\Delta}\right)^\top\m{\Gamma}_n^\top\left(\m{\Sigma}_{\Omega}+\m{E}_{\Omega}\right)\m{\Gamma}_n\left(\frac{1}{\sqrt{K}}\m{I}+\m{\Delta}\right)\m{h}+\varepsilon\right]^{\frac{1}{2}}}\\
\leq&\frac{1}{K}\sum_{n\in\chi_o}\frac{1}{\left[\m{h}^\top\left(\frac{1}{\sqrt{K}}\m{I}+\m{\Delta}\right)^\top\m{\Gamma}_n^\top\left(\m{\Sigma}_{\Omega}+\m{E}_{\Omega}\right)\m{\Gamma}_n\left(\frac{1}{\sqrt{K}}\m{I}+\m{\Delta}\right)\m{h}+\varepsilon\right]^{\frac{1}{2}}}\\
&+4h_o^{-1}K\left\|\m{\Delta}\right\|_2+\varepsilon^{-\frac{1}{2}}N\left\|\m{\Delta}\right\|_2^2.
\end{aligned}
\end{equation}
Similarly,
\begin{equation}
\begin{aligned}
&h_o^{-1}\sum_{n\in\chi_{\m{\Delta},\m{h}}}\frac{\m{e}_o^\top\left(\frac{1}{\sqrt{K}}\m{I}+\m{\Delta}\right)^\top\m{\Gamma}_n^\top\m{\Sigma}_{\Omega}\m{\Gamma}_n\left(\frac{1}{\sqrt{K}}\m{I}+\m{\Delta}\right)\m{h}}{\left[\m{h}^\top\left(\frac{1}{\sqrt{K}}\m{I}+\m{\Delta}\right)^\top\m{\Gamma}_n^\top\left(\m{\Sigma}_{\Omega}+\m{E}_{\Omega}\right)\m{\Gamma}_n\left(\frac{1}{\sqrt{K}}\m{I}+\m{\Delta}\right)\m{h}+\varepsilon\right]^{\frac{1}{2}}}\\
\geq&\frac{1}{K}\sum_{n\in\chi_o}\frac{1}{\left[\m{h}^\top\left(\frac{1}{\sqrt{K}}\m{I}+\m{\Delta}\right)^\top\m{\Gamma}_n^\top\left(\m{\Sigma}_{\Omega}+\m{E}_{\Omega}\right)\m{\Gamma}_n\left(\frac{1}{\sqrt{K}}\m{I}+\m{\Delta}\right)\m{h}+\varepsilon\right]^{\frac{1}{2}}}\\
&-h_o^{-1}\left(2N+2h_o^{-1}K\right)\left\|\m{\Delta}\right\|_2,
\end{aligned}
\end{equation}
and
\begin{equation}
\begin{aligned}
&\sum_{n\in\chi_{\m{\Delta},\m{h}}}\frac{\left|\m{e}_o^\top\left(\frac{1}{\sqrt{K}}\m{I}+\m{\Delta}\right)^\top\m{\Gamma}_n^\top\m{\Sigma}_{\Omega}\m{\Gamma}_n\left(\frac{1}{\sqrt{K}}\m{I}+\m{\Delta}\right)\m{h}\right|^2}{\left[\m{h}^\top\left(\frac{1}{\sqrt{K}}\m{I}+\m{\Delta}\right)^\top\m{\Gamma}_n^\top\left(\m{\Sigma}_{\Omega}+\m{E}_{\Omega}\right)\m{\Gamma}_n\left(\frac{1}{\sqrt{K}}\m{I}+\m{\Delta}\right)\m{h}+\varepsilon\right]^{\frac{3}{2}}}\\
\geq&\frac{1}{K}\sum_{n\in\chi_o}\frac{h_o^2K^{-1}}{\left[\m{h}^\top\left(\frac{1}{\sqrt{K}}\m{I}+\m{\Delta}\right)^\top\m{\Gamma}_n^\top\left(\m{\Sigma}_{\Omega}+\m{E}_{\Omega}\right)\m{\Gamma}_n\left(\frac{1}{\sqrt{K}}\m{I}+\m{\Delta}\right)\m{h}+\varepsilon\right]^{\frac{3}{2}}}\\
&-\sum_{n\in\chi_o}\frac{2h_oK^{-1}\left|\frac{1}{\sqrt{K}}\m{e}_o^\top\m{\Gamma}_n^\top\m{\Sigma}_{\Omega}\m{\Gamma}_n\m{\Delta}\m{h}\right|}{\left[\m{h}^\top\left(\frac{1}{\sqrt{K}}\m{I}+\m{\Delta}\right)^\top\m{\Gamma}_n^\top\left(\m{\Sigma}_{\Omega}+\m{E}_{\Omega}\right)\m{\Gamma}_n\left(\frac{1}{\sqrt{K}}\m{I}+\m{\Delta}\right)\m{h}+\varepsilon\right]^{\frac{3}{2}}}\\
&-\sum_{n\in\chi_o}\frac{2h_oK^{-1}\left|\m{e}_o^\top\m{\Delta}^\top\m{\Gamma}_n^\top\m{\Sigma}_{\Omega}\m{\Gamma}_n\left(\frac{1}{\sqrt{K}}\m{I}+\m{\Delta}\right)\m{h}\right|}{\left[\m{h}^\top\left(\frac{1}{\sqrt{K}}\m{I}+\m{\Delta}\right)^\top\m{\Gamma}_n^\top\left(\m{\Sigma}_{\Omega}+\m{E}_{\Omega}\right)\m{\Gamma}_n\left(\frac{1}{\sqrt{K}}\m{I}+\m{\Delta}\right)\m{h}+\varepsilon\right]^{\frac{3}{2}}}.
\end{aligned}
\end{equation}
Therefore,
\begin{equation}
\begin{aligned}
\widetilde{\text{main}}\leq&\frac{1}{K\left\|\m{d}\right\|_2^2}\sum_{n\in\chi_o}\frac{1}{\left[\m{h}^\top\left(\frac{1}{\sqrt{K}}\m{I}+\m{\Delta}\right)^\top\m{\Gamma}_n^\top\left(\m{\Sigma}_{\Omega}+\m{E}_{\Omega}\right)\m{\Gamma}_n\left(\frac{1}{\sqrt{K}}\m{I}+\m{\Delta}\right)\m{h}+\varepsilon\right]^{\frac{1}{2}}}\\
&-\frac{h_o^{2}}{K\left\|\m{d}\right\|_2^2}\sum_{n\in\chi_o}\frac{K^{-1}}{\left[\m{h}^\top\left(\frac{1}{\sqrt{K}}\m{I}+\m{\Delta}\right)^\top\m{\Gamma}_n^\top\left(\m{\Sigma}_{\Omega}+\m{E}_{\Omega}\right)\m{\Gamma}_n\left(\frac{1}{\sqrt{K}}\m{I}+\m{\Delta}\right)\m{h}+\varepsilon\right]^{\frac{3}{2}}}\\
&-\frac{1}{K}\sum_{n\in\chi_o}\frac{1}{\left[\m{h}^\top\left(\frac{1}{\sqrt{K}}\m{I}+\m{\Delta}\right)^\top\m{\Gamma}_n^\top\left(\m{\Sigma}_{\Omega}+\m{E}_{\Omega}\right)\m{\Gamma}_n\left(\frac{1}{\sqrt{K}}\m{I}+\m{\Delta}\right)\m{h}+\varepsilon\right]^{\frac{1}{2}}}\\
&+\frac{1}{\left\|\m{d}\right\|_2^2}\left(4h_o^{-1}K\left\|\m{\Delta}\right\|_2+\varepsilon^{-\frac{1}{2}}N\left\|\m{\Delta}\right\|_2^2+12h_o^{-2}K\left\|\m{\Delta}\right\|_2\right)+h_o^{-1}\left(2N+2h_o^{-1}K\right)\left\|\m{\Delta}\right\|_2\\
&+h_o^{-1}\left(\xi+2K^{\frac{1}{2}}\left\|\m{E}_\Omega\right\|_2+2N\left\|\m{\Delta}\right\|_2\left\|\m{E}_\Omega\right\|_2\right)\\
&+2\left\|\m{E}_\Omega\right\|_2\sum_{n\in\chi_{\m{\Delta},\m{h}}}\left\|\mathcal{P}_{\Omega}\left(\m{\Gamma}_n\left(\frac{1}{\sqrt{K}}\m{I}+\m{\Delta}\right)\m{h}\right)\right\|_2\\
\leq&\frac{h_o^{2}}{K\left\|\m{d}\right\|_2^2}\sum_{n\in\chi_o}\frac{1}{\left[\m{h}^\top\left(\frac{1}{\sqrt{K}}\m{I}+\m{\Delta}\right)^\top\m{\Gamma}_n^\top\left(\m{\Sigma}_{\Omega}+\m{E}_{\Omega}\right)\m{\Gamma}_n\left(\frac{1}{\sqrt{K}}\m{I}+\m{\Delta}\right)\m{h}+\varepsilon\right]^{\frac{1}{2}}}\\
&-\frac{h_o^{2}}{K\left\|\m{d}\right\|_2^2}\sum_{n\in\chi_o}\frac{K^{-1}}{\left[\m{h}^\top\left(\frac{1}{\sqrt{K}}\m{I}+\m{\Delta}\right)^\top\m{\Gamma}_n^\top\left(\m{\Sigma}_{\Omega}+\m{E}_{\Omega}\right)\m{\Gamma}_n\left(\frac{1}{\sqrt{K}}\m{I}+\m{\Delta}\right)\m{h}+\varepsilon\right]^{\frac{3}{2}}}\\
&+\frac{1}{\left\|\m{d}\right\|_2^2}\left(4h_o^{-1}K\left\|\m{\Delta}\right\|_2+\varepsilon^{-\frac{1}{2}}N\left\|\m{\Delta}\right\|_2^2+12h_o^{-2}K\left\|\m{\Delta}\right\|_2\right)+h_o^{-1}\left(2N+2h_o^{-1}K\right)\left\|\m{\Delta}\right\|_2\\
&+h_o^{-1}\left(\xi+2K^{\frac{1}{2}}\left\|\m{E}_\Omega\right\|_2+2N\left\|\m{\Delta}\right\|_2\left\|\m{E}_\Omega\right\|_2\right)\\
&+2\left\|\m{E}_\Omega\right\|_2\sum_{n\in\chi_{\m{\Delta},\m{h}}}\left\|\mathcal{P}_{\Omega}\left(\m{\Gamma}_n\left(\frac{1}{\sqrt{K}}\m{I}+\m{\Delta}\right)\m{h}\right)\right\|_2.
\end{aligned}
\end{equation}
It follows from Lemma~\ref{lem-diff} that
\begin{equation}
\begin{aligned}
&\sum_{n\in\chi_o}\frac{1}{\left[\m{h}^\top\left(\frac{1}{\sqrt{K}}\m{I}+\m{\Delta}\right)^\top\m{\Gamma}_n^\top\left(\m{\Sigma}_{\Omega}+\m{E}_{\Omega}\right)\m{\Gamma}_n\left(\frac{1}{\sqrt{K}}\m{I}+\m{\Delta}\right)\m{h}+\varepsilon\right]^{\frac{1}{2}}}\\
&-\sum_{n\in\chi_o}\frac{K^{-1}}{\left[\m{h}^\top\left(\frac{1}{\sqrt{K}}\m{I}+\m{\Delta}\right)^\top\m{\Gamma}_n^\top\left(\m{\Sigma}_{\Omega}+\m{E}_{\Omega}\right)\m{\Gamma}_n\left(\frac{1}{\sqrt{K}}\m{I}+\m{\Delta}\right)\m{h}+\varepsilon\right]^{\frac{3}{2}}}\\
\leq&\left[\frac{1}{K}\min\left\{Kh_o^2,1-\frac{1-h_o^2}{K}\right\}+\frac{2\left\|\m{\Delta}\right\|_2}{\sqrt{K}}+\left\|\m{\Delta}\right\|_2^2+\left(\frac{1}{\sqrt{K}}+\left\|\m{\Delta}\right\|_2\right)^2\left\|\m{E}_\Omega\right\|_2+\varepsilon\right]^{-\frac{1}{2}}\\
&-K^{-1}\left[\frac{1}{K}\min\left\{Kh_o^2,1-\frac{1-h_o^2}{K}\right\}+\frac{2\left\|\m{\Delta}\right\|_2}{\sqrt{K}}+\left\|\m{\Delta}\right\|_2^2+\left(\frac{1}{\sqrt{K}}+\left\|\m{\Delta}\right\|_2\right)^2\left\|\m{E}_\Omega\right\|_2+\varepsilon\right]^{-\frac{3}{2}}\\
&+\left(K-1\right)\left[\frac{1}{K}+\frac{2\left\|\m{\Delta}\right\|_2}{\sqrt{K}}+\left\|\m{\Delta}\right\|_2^2+\left(\frac{1}{\sqrt{K}}+\left\|\m{\Delta}\right\|_2\right)^2\left\|\m{E}_\Omega\right\|_2+\varepsilon\right]^{-\frac{1}{2}}\\
&-\left(K-1\right)K^{-1}\left[\frac{1}{K}+\frac{2\left\|\m{\Delta}\right\|_2}{\sqrt{K}}+\left\|\m{\Delta}\right\|_2^2+\left(\frac{1}{\sqrt{K}}+\left\|\m{\Delta}\right\|_2\right)^2\left\|\m{E}_\Omega\right\|_2+\varepsilon\right]^{-\frac{3}{2}}\\
\leq&\left(\frac{1}{K}\min\left\{Kh_o^2,1-\frac{1-h_o^2}{K}\right\}+\frac{3}{\sqrt{K}}\left\|\m{\Delta}\right\|_2+\frac{2}{K}\left\|\m{E}_\Omega\right\|_2+\varepsilon\right)^{-\frac{3}{2}}\\
&\cdot\left(\frac{1}{K}\min\left\{Kh_o^2,1-\frac{1-h_o^2}{K}\right\}+\frac{3}{\sqrt{K}}\left\|\m{\Delta}\right\|_2+\frac{2}{K}\left\|\m{E}_\Omega\right\|_2-\frac{1}{K}+\varepsilon\right)\\
&+K^{\frac{5}{2}}\left(\frac{3}{\sqrt{K}}\left\|\m{\Delta}\right\|_2+\frac{2}{K}\left\|\m{E}_\Omega\right\|_2+\varepsilon\right).
\end{aligned}
\end{equation}
Thus, 
\begin{equation}
\begin{aligned}
\widetilde{\text{main}}\leq&\frac{h_o^{2}}{K\left\|\m{d}\right\|_2^2}\left(\frac{1}{K}\min\left\{Kh_o^2,1-\frac{1-h_o^2}{K}\right\}+\frac{3}{\sqrt{K}}\left\|\m{\Delta}\right\|_2+\frac{2}{K}\left\|\m{E}_\Omega\right\|_2+\varepsilon\right)^{-\frac{3}{2}}\\
&\cdot\left(\frac{1}{K}\min\left\{Kh_o^2,1-\frac{1-h_o^2}{K}\right\}+\frac{3}{\sqrt{K}}\left\|\m{\Delta}\right\|_2+\frac{2}{K}\left\|\m{E}_\Omega\right\|_2-\frac{1}{K}+\varepsilon\right)\\
&+\frac{h_o^{2}}{K\left\|\m{d}\right\|_2^2}K^{\frac{5}{2}}\left(\frac{3}{\sqrt{K}}\left\|\m{\Delta}\right\|_2+\frac{2}{K}\left\|\m{E}_\Omega\right\|_2+\varepsilon\right)\\
&+\frac{1}{\left\|\m{d}\right\|_2^2}\left(4h_o^{-1}K\left\|\m{\Delta}\right\|_2+\varepsilon^{-\frac{1}{2}}N\left\|\m{\Delta}\right\|_2^2+12h_o^{-2}K\left\|\m{\Delta}\right\|_2\right)+h_o^{-1}\left(2N+2h_o^{-1}K\right)\left\|\m{\Delta}\right\|_2\\
&+h_o^{-1}\left(\xi+2K^{\frac{1}{2}}\left\|\m{E}_\Omega\right\|_2+2N\left\|\m{\Delta}\right\|_2\left\|\m{E}_\Omega\right\|_2\right)\\
&+2\left\|\m{E}_\Omega\right\|_2\sum_{n\in\chi_{\m{\Delta},\m{h}}}\left\|\mathcal{P}_{\Omega}\left(\m{\Gamma}_n\left(\frac{1}{\sqrt{K}}\m{I}+\m{\Delta}\right)\m{h}\right)\right\|_2.
\end{aligned}
\end{equation}
Consequently,
\begin{equation}
\begin{aligned}
\frac{\m{d}^\top\text{Hess}\;\psi_\varepsilon\left(\m{h}\right)\m{d}}{\left\|\m{d}\right\|_2^2}
\leq&\frac{h_o^{2}}{K\left\|\m{d}\right\|_2^2}\left(\frac{1}{K}\min\left\{Kh_o^2,1-\frac{1-h_o^2}{K}\right\}+\frac{3}{\sqrt{K}}\left\|\m{\Delta}\right\|_2+\frac{2}{K}\left\|\m{E}_\Omega\right\|_2+\varepsilon\right)^{-\frac{3}{2}}\\
&\cdot\left(\frac{1}{K}\min\left\{Kh_o^2,1-\frac{1-h_o^2}{K}\right\}+\frac{3}{\sqrt{K}}\left\|\m{\Delta}\right\|_2+\frac{2}{K}\left\|\m{E}_\Omega\right\|_2-\frac{1}{K}+\varepsilon\right)\\
&+\frac{h_o^{2}}{K\left\|\m{d}\right\|_2^2}K^{\frac{5}{2}}\left(\frac{3}{\sqrt{K}}\left\|\m{\Delta}\right\|_2+\frac{2}{K}\left\|\m{E}_\Omega\right\|_2+\varepsilon\right)\\
&+\frac{1}{\left\|\m{d}\right\|_2^2}\left(4h_o^{-1}K\left\|\m{\Delta}\right\|_2+\varepsilon^{-\frac{1}{2}}N\left\|\m{\Delta}\right\|_2^2+12h_o^{-2}K\left\|\m{\Delta}\right\|_2\right)\\
&+h_o^{-1}\left(2N+2h_o^{-1}K\right)\left\|\m{\Delta}\right\|_2\\
&+h_o^{-1}\left(\xi+2K^{\frac{1}{2}}\left\|\m{E}_\Omega\right\|_2+2N\left\|\m{\Delta}\right\|_2\left\|\m{E}_\Omega\right\|_2\right)\\
&+2\left\|\m{E}_\Omega\right\|_2\sum_{n\in\chi_{\m{\Delta},\m{h}}}\left\|\mathcal{P}_{\Omega}\left(\m{\Gamma}_n\left(\frac{1}{\sqrt{K}}\m{I}+\m{\Delta}\right)\m{h}\right)\right\|_2\\
&+2\left\|\m{E}_\Omega\right\|_2\sum_{n\in\chi_{\m{\Delta},\m{h}}}\left\|\mathcal{P}_\Omega\left(\m{\Gamma}_n\left(\frac{1}{\sqrt{K}}\m{I}+\m{\Delta}\right)\m{h}\right)\right\|_2+\frac{1}{\left\|\m{d}\right\|_2^2}h_o^2\varepsilon^{\frac{1}{2}}N\\
&+\frac{1}{\left\|\m{d}\right\|_2^2}2h_o^{-1}K^{\frac{1}{2}}\left\|\m{E}_\Omega\right\|_2+\frac{1}{\left\|\m{d}\right\|_2^2}4\varepsilon^{-\frac{1}{2}}K^{-\frac{1}{2}}N\left\|\m{E}_\Omega\right\|_2\left\|\m{\Delta}\right\|_2\\
&+\frac{1}{\left\|\m{d}\right\|_2^2}4h_o^{-2}\varepsilon K^{\frac{3}{2}}\left\|\m{E}_\Omega\right\|_2+\frac{1}{\left\|\m{d}\right\|_2^2}8h_o\varepsilon^{-\frac{1}{2}}K^{-\frac{1}{2}}N\left\|\m{E}_\Omega\right\|_2\left\|\m{\Delta}\right\|_2\\
&+\frac{1}{\left\|\m{d}\right\|_2^2}4h_o^{-2}K^{\frac{1}{2}}\left\|\m{E}_\Omega\right\|_2+\frac{1}{\left\|\m{d}\right\|_2^2}18h_o^{-3}K\left\|\m{E}_\Omega\right\|_2\left\|\m{\Delta}\right\|_2\\
&+\frac{1}{\left\|\m{d}\right\|_2^2}2\varepsilon^{-\frac{1}{2}}N\left\|\m{\Delta}\right\|_2^2\left\|\m{E}_\Omega\right\|_2+\frac{1}{\left\|\m{d}\right\|_2^2}2h_o\left(2N+2h_o^{-1}K\right)\left\|\m{\Delta}\right\|_2.
\end{aligned}
\end{equation}
Therefore, there exist positive constants $c_1\leq\frac{1}{32}$ and $c_2\leq\frac{1}{32}$ such that, for $\left\|\m{E}_\Omega\right\|_2\leq c_1\min\left\{K^{-\frac{7}{2}}N^{-1},\varepsilon^{\frac{1}{2}}\right\}$ and $\left\|\m{\Delta}\right\|_2\leq c_2\min\left\{K^{-4}N^{-1},K^{-3}N^{-\frac{3}{2}},\varepsilon^{\frac{1}{2}}K^{\frac{1}{2}}N^{-\frac{1}{2}}\right\}$, it holds that
\begin{equation}
\begin{aligned}
\frac{\m{d}^\top\text{Hess}\;\psi_\varepsilon\left(\m{h}\right)\m{d}}{\left\|\m{d}\right\|_2^2}
\leq&\frac{h_o^{2}}{K\left\|\m{d}\right\|_2^2}\left(\frac{1}{K}\min\left\{Kh_o^2,1-\frac{1-h_o^2}{K}\right\}+3c_2K^{-\frac{9}{2}}N^{-1}+2c_1K^{-\frac{9}{2}}N^{-1}+\varepsilon\right)^{-\frac{3}{2}}\\
&\cdot\left(\frac{1}{K}\min\left\{Kh_o^2,1-\frac{1-h_o^2}{K}\right\}-\frac{1}{K}+\frac{3}{\sqrt{K}}\left\|\m{\Delta}\right\|_2+\frac{2}{K}\left\|\m{E}_\Omega\right\|_2+\varepsilon\right)\\
&+\frac{h_o^{2}}{K\left\|\m{d}\right\|_2^2}\left(3c_2K^{-2}N^{-1}+2c_1K^{-2}N^{-1}+\varepsilon\right)\\
&+\frac{1}{K\left\|\m{d}\right\|_2^2}\left(4c_2h_o^{-1}K^{-2}N^{-1}+c_2^2K^{-\frac{5}{2}}N^{-\frac{1}{2}}+12c_2h_o^{-2}K^{-2}N^{-1}\right)\\
&+\frac{1}{K\left\|\m{d}\right\|_2^2}\left(6c_2h_o^{-1}K^{-2}N^{-\frac{1}{2}}+6c_2h_o^{-2}K^{-2}N^{-1}\right)\\
&+\frac{1}{K\left\|\m{d}\right\|_2^2}h_o^{-1}\left(2c_1K^{-2}N^{-1}+2c_1c_2K^{-2}N^{-1}\right)\\
&+\frac{1}{K\left\|\m{d}\right\|_2^2}\left(4c_1K^{-3}+4c_1c_2K^{-3}\right)+\frac{1}{K\left\|\m{d}\right\|_2^2}\varepsilon^{\frac{1}{2}}KN\\
&+\frac{1}{K\left\|\m{d}\right\|_2^2}2c_1h_o^{-1}K^{-2}N^{-1}+\frac{1}{K\left\|\m{d}\right\|_2^2}4c_1c_2K^{-\frac{5}{2}}N^{-\frac{1}{2}}\\
&+\frac{1}{K\left\|\m{d}\right\|_2^2}4c_1h_o^{-2}\varepsilon K^{-1}N^{-1}+\frac{1}{K\left\|\m{d}\right\|_2^2}8c_1c_2h_oK^{-\frac{5}{2}}N^{-\frac{1}{2}}\\
&+\frac{1}{K\left\|\m{d}\right\|_2^2}4c_1h_o^{-2}K^{-2}N^{-1}+\frac{1}{K\left\|\m{d}\right\|_2^2}18c_1c_2h_o^{-3}K^{-\frac{11}{2}}N^{-2}\\
&+\frac{1}{K\left\|\m{d}\right\|_2^2}2c_1c_2^2K^{-7}N^{-1}+h_o^{-1}\xi.
\end{aligned}
\end{equation}
Neglecting higher-order terms, we obtain that there exist positive constants $c_3\leq1$, and $c_4$ such that, for $\varepsilon\leq c_3K^{-7}N^{-2}$, and $\xi\leq c_4K^{-\frac{7}{2}}N^{-\frac{1}{2}}$, it holds that
\begin{equation}
\begin{aligned}
\frac{\m{d}^\top\text{Hess}\;\psi_\varepsilon\left(\m{h}\right)\m{d}}{\left\|\m{d}\right\|_2^2}
\leq&\frac{h_o^{2}}{K\left\|\m{d}\right\|_2^2}\left[\frac{1}{K}\min\left\{Kh_o^2,1-\frac{1-h_o^2}{K}\right\}+\left(2c_1+3c_2+c_3\right)K^{-\frac{9}{2}}N^{-1}\right]^{-\frac{3}{2}}\\
&\cdot\left[\frac{1}{K}\min\left\{Kh_o^2,1-\frac{1-h_o^2}{K}\right\}-\frac{1}{K}+\frac{3}{\sqrt{K}}\left\|\m{\Delta}\right\|_2+\frac{2}{K}\left\|\m{E}_\Omega\right\|_2+\varepsilon\right]\\
&+\frac{1}{K\left\|\m{d}\right\|_2^2}\left(6c_1+3c_2+2c_3+4c_1c_3+c_2^2+2c_1c_2^2+16c_1c_2\right)K^{-\frac{5}{2}}\\
&+\frac{1}{K\left\|\m{d}\right\|_2^2}\left(8c_1+27c_2+20c_1c_2\right)h_o^{-1}K^{-2}N^{-\frac{1}{2}}+\frac{1}{K\left\|\m{d}\right\|_2^2}c_4K^{-\frac{5}{2}}.
\end{aligned}
\end{equation}
Note that
\begin{equation}
\begin{aligned}
\frac{1}{K}\min\left\{Kh_o^2,1-\frac{1-h_o^2}{K}\right\}\geq\frac{1}{N}.
\end{aligned}
\end{equation}
Hence, 
\begin{equation}
\begin{aligned}
\frac{\m{d}^\top\text{Hess}\;\psi_\varepsilon\left(\m{h}\right)\m{d}}{\left\|\m{d}\right\|_2^2}
\leq&\left(1+2c_1+3c_2+c_3\right)^{-\frac{3}{2}}\frac{h_o^{2}}{\left\|\m{d}\right\|_2^2}K^{-\frac{1}{2}}\left(\min\left\{Kh_o^2,1-\frac{1-h_o^2}{K}\right\}\right)^{-\frac{3}{2}}\\
&\cdot\left(\min\left\{Kh_o^2,1-\frac{1-h_o^2}{K}\right\}-1\right)\\
&+\frac{h_o^{2}}{K\left\|\m{d}\right\|_2^2}\left(\frac{1}{K}\min\left\{Kh_o^2,1-\frac{1-h_o^2}{K}\right\}\right)^{-\frac{3}{2}}\left(\frac{3}{\sqrt{K}}\left\|\m{\Delta}\right\|_2+\frac{2}{K}\left\|\m{E}_\Omega\right\|_2+\varepsilon\right)\\
&+\frac{1}{K\left\|\m{d}\right\|_2^2}\left(6c_1+3c_2+2c_3+4c_1c_3+c_2^2+2c_1c_2^2+16c_1c_2+c_4\right)K^{-\frac{5}{2}}\\
&+\frac{1}{K\left\|\m{d}\right\|_2^2}\left(8c_1+27c_2+20c_1c_2\right)h_o^{-1}K^{-2}N^{-\frac{1}{2}},
\end{aligned}
\end{equation}
which completes the proof.


\section{Proof of Proposition~\ref{thm-case3-general}}
\label{pf-thm-case3-general}
From \eqref{direction} and \eqref{grad-general}, we obtain
\begin{equation}
\begin{aligned}
\m{d}^\top\text{grad}\;\psi_\varepsilon\left(\m{h}\right)=&\sum_{n=1}^N\left[\m{h}^\top\left(\frac{1}{\sqrt{K}}\m{I}+\m{\Delta}\right)^\top\m{\Gamma}_n^\top\left(\m{\Sigma}_{\Omega}+\m{E}_{\Omega}\right)\m{\Gamma}_n\left(\frac{1}{\sqrt{K}}\m{I}+\m{\Delta}\right)\m{h}+\varepsilon\right]^{-\frac{1}{2}}\\
&\cdot\m{d}^\top\left(\frac{1}{\sqrt{K}}\m{I}+\m{\Delta}\right)^\top\m{\Gamma}_n^\top\left(\m{\Sigma}_{\Omega}+\m{E}_{\Omega}\right)\m{\Gamma}_n\left(\frac{1}{\sqrt{K}}\m{I}+\m{\Delta}\right)\m{h}\\
=&\sum_{n=1}^N\frac{\m{d}^\top\left(\frac{1}{\sqrt{K}}\m{I}+\m{\Delta}\right)^\top\m{\Gamma}_n^\top\m{\Sigma}_{\Omega}\m{\Gamma}_n\left(\frac{1}{\sqrt{K}}\m{I}+\m{\Delta}\right)\m{h}}{\left[\m{h}^\top\left(\frac{1}{\sqrt{K}}\m{I}+\m{\Delta}\right)^\top\m{\Gamma}_n^\top\left(\m{\Sigma}_{\Omega}+\m{E}_{\Omega}\right)\m{\Gamma}_n\left(\frac{1}{\sqrt{K}}\m{I}+\m{\Delta}\right)\m{h}+\varepsilon\right]^{\frac{1}{2}}}\\
&+\sum_{n=1}^N\frac{\m{d}^\top\left(\frac{1}{\sqrt{K}}\m{I}+\m{\Delta}\right)^\top\m{\Gamma}_n^\top\m{E}_{\Omega}\m{\Gamma}_n\left(\frac{1}{\sqrt{K}}\m{I}+\m{\Delta}\right)\m{h}}{\left[\m{h}^\top\left(\frac{1}{\sqrt{K}}\m{I}+\m{\Delta}\right)^\top\m{\Gamma}_n^\top\left(\m{\Sigma}_{\Omega}+\m{E}_{\Omega}\right)\m{\Gamma}_n\left(\frac{1}{\sqrt{K}}\m{I}+\m{\Delta}\right)\m{h}+\varepsilon\right]^{\frac{1}{2}}}.
\end{aligned}
\end{equation}
Moreover,
\begin{equation}
\begin{aligned}
&\m{d}^\top\left(\frac{1}{\sqrt{K}}\m{I}+\m{\Delta}\right)^\top\m{\Gamma}_n^\top\m{\Sigma}_{\Omega}\m{\Gamma}_n\left(\frac{1}{\sqrt{K}}\m{I}+\m{\Delta}\right)\m{h}\\
=&\frac{1}{K}\m{d}^\top\m{\Gamma}_n^\top\m{\Sigma}_{\Omega}\m{\Gamma}_n\m{h}+\m{d}^\top\m{\Delta}^\top\m{\Gamma}_n^\top\m{\Sigma}_{\Omega}\m{\Gamma}_n\left(\frac{1}{\sqrt{K}}\m{I}+\m{\Delta}\right)\m{h}+\frac{1}{\sqrt{K}}\m{d}^\top\m{\Gamma}_n^\top\m{\Sigma}_{\Omega}\m{\Gamma}_n\m{\Delta}\m{h}.
\end{aligned}
\end{equation}
It follows from \eqref{out-concentrate} that
\begin{equation}
\begin{aligned}
&\sum_{n=1}^N\frac{\frac{1}{K}\m{d}^\top\m{\Gamma}_n^\top\m{\Sigma}_{\Omega}\m{\Gamma}_n\m{h}}{\left[\m{h}^\top\left(\frac{1}{\sqrt{K}}\m{I}+\m{\Delta}\right)^\top\m{\Gamma}_n^\top\left(\m{\Sigma}_{\Omega}+\m{E}_{\Omega}\right)\m{\Gamma}_n\left(\frac{1}{\sqrt{K}}\m{I}+\m{\Delta}\right)\m{h}+\varepsilon\right]^{\frac{1}{2}}}\\
=&\frac{1}{K}\sum_{n\in\chi_o}\frac{h_o\left(1-\m{h}^\top\m{\Gamma}_n^\top\m{\Sigma}_{\Omega}\m{\Gamma}_n\m{h}\right)}{\left[\m{h}^\top\left(\frac{1}{\sqrt{K}}\m{I}+\m{\Delta}\right)^\top\m{\Gamma}_n^\top\left(\m{\Sigma}_{\Omega}+\m{E}_{\Omega}\right)\m{\Gamma}_n\left(\frac{1}{\sqrt{K}}\m{I}+\m{\Delta}\right)\m{h}+\varepsilon\right]^{\frac{1}{2}}}\\
&-\frac{1}{K}\sum_{n\in\chi_o^\text{C}}\frac{h_o\m{h}^\top\m{\Gamma}_n^\top\m{\Sigma}_{\Omega}\m{\Gamma}_n\m{h}}{\left[\m{h}^\top\left(\frac{1}{\sqrt{K}}\m{I}+\m{\Delta}\right)^\top\m{\Gamma}_n^\top\left(\m{\Sigma}_{\Omega}+\m{E}_{\Omega}\right)\m{\Gamma}_n\left(\frac{1}{\sqrt{K}}\m{I}+\m{\Delta}\right)\m{h}+\varepsilon\right]^{\frac{1}{2}}}\\
\leq&\frac{1}{K}\frac{h_o}{\left[\frac{h_o^2}{K}-\frac{2\left\|\m{\Delta}\right\|_2}{\sqrt{K}}-\left\|\m{\Delta}\right\|_2^2-\left(\frac{1}{\sqrt{K}}+\left\|\m{\Delta}\right\|_2\right)^2\left\|\m{E}_\Omega\right\|_2+\varepsilon\right]^{\frac{1}{2}}}\sum_{n\in\chi_o}\left(1-\m{h}^\top\m{\Gamma}_n^\top\m{\Sigma}_{\Omega}\m{\Gamma}_n\m{h}\right)\\
&-\frac{1}{K}\frac{h_o}{\left[\frac{1-h_o^2}{K}+\frac{2\left\|\m{\Delta}\right\|_2}{\sqrt{K}}+\left\|\m{\Delta}\right\|_2^2+\left(\frac{1}{\sqrt{K}}+\left\|\m{\Delta}\right\|_2\right)^2\left\|\m{E}_\Omega\right\|_2+\varepsilon\right]^{\frac{1}{2}}}\sum_{n\in\chi_o^\text{C}}\m{h}^\top\m{\Gamma}_n^\top\m{\Sigma}_{\Omega}\m{\Gamma}_n\m{h}\\
\leq&\frac{1}{K}\frac{h_o}{\left[\frac{h_o^2}{K}-\frac{2\left\|\m{\Delta}\right\|_2}{\sqrt{K}}-\left\|\m{\Delta}\right\|_2^2-\left(\frac{1}{\sqrt{K}}+\left\|\m{\Delta}\right\|_2\right)^2\left\|\m{E}_\Omega\right\|_2+\varepsilon\right]^{\frac{1}{2}}}K\left(1-h_o^2\right)\\
&-\frac{1}{K}\frac{h_o}{\left[\frac{1-h_o^2}{K}+\frac{2\left\|\m{\Delta}\right\|_2}{\sqrt{K}}+\left\|\m{\Delta}\right\|_2^2+\left(\frac{1}{\sqrt{K}}+\left\|\m{\Delta}\right\|_2\right)^2\left\|\m{E}_\Omega\right\|_2+\varepsilon\right]^{\frac{1}{2}}}\left(1-h_o^2\right)\\
=&\frac{1}{K}h_o\left(1-h_o^2\right)\Bigg\{K\left[\frac{h_o^2}{K}-\frac{2\left\|\m{\Delta}\right\|_2}{\sqrt{K}}-\left\|\m{\Delta}\right\|_2^2-\left(\frac{1}{\sqrt{K}}+\left\|\m{\Delta}\right\|_2\right)^2\left\|\m{E}_\Omega\right\|_2+\varepsilon\right]^{-\frac{1}{2}}\\
&-\left[\frac{1-h_o^2}{K}+\frac{2\left\|\m{\Delta}\right\|_2}{\sqrt{K}}+\left\|\m{\Delta}\right\|_2^2+\left(\frac{1}{\sqrt{K}}+\left\|\m{\Delta}\right\|_2\right)^2\left\|\m{E}_\Omega\right\|_2+\varepsilon\right]^{-\frac{1}{2}}\Bigg\}.
\end{aligned}
\end{equation}
Furthermore,
\begin{equation}
\begin{aligned}
&\sum_{n=1}^N\frac{\m{d}^\top\m{\Delta}^\top\m{\Gamma}_n^\top\m{\Sigma}_{\Omega}\m{\Gamma}_n\left(\frac{1}{\sqrt{K}}\m{I}+\m{\Delta}\right)\m{h}+\frac{1}{\sqrt{K}}\m{d}^\top\m{\Gamma}_n^\top\m{\Sigma}_{\Omega}\m{\Gamma}_n\m{\Delta}\m{h}}{\left[\m{h}^\top\left(\frac{1}{\sqrt{K}}\m{I}+\m{\Delta}\right)^\top\m{\Gamma}_n^\top\left(\m{\Sigma}_{\Omega}+\m{E}_{\Omega}\right)\m{\Gamma}_n\left(\frac{1}{\sqrt{K}}\m{I}+\m{\Delta}\right)\m{h}+\varepsilon\right]^{\frac{1}{2}}}\\
\leq&N\left\|\m{d}\right\|_2\left\|\m{\Delta}\right\|_2\left(1-\left\|\m{E}_\Omega\right\|_2\right)^{-\frac{1}{2}}+\varepsilon^{-\frac{1}{2}}K^{-\frac{1}{2}}N\left\|\m{d}\right\|_2\left\|\m{\Delta}\right\|_2,
\end{aligned}
\end{equation}
and
\begin{equation}
\begin{aligned}
&\sum_{n=1}^N\frac{\m{d}^\top\left(\frac{1}{\sqrt{K}}\m{I}+\m{\Delta}\right)^\top\m{\Gamma}_n^\top\m{E}_{\Omega}\m{\Gamma}_n\left(\frac{1}{\sqrt{K}}\m{I}+\m{\Delta}\right)\m{h}}{\left[\m{h}^\top\left(\frac{1}{\sqrt{K}}\m{I}+\m{\Delta}\right)^\top\m{\Gamma}_n^\top\left(\m{\Sigma}_{\Omega}+\m{E}_{\Omega}\right)\m{\Gamma}_n\left(\frac{1}{\sqrt{K}}\m{I}+\m{\Delta}\right)\m{h}+\varepsilon\right]^{\frac{1}{2}}}\leq N\left\|\m{d}\right\|_2\frac{\left\|\m{E}_\Omega\right\|_2}{\left(1-\left\|\m{E}_\Omega\right\|_2\right)^{\frac{1}{2}}}.
\end{aligned}
\end{equation}
Since $\m{h}$ satisfies \eqref{prop-first-order stationary-general},
\begin{equation}
\begin{aligned}
\frac{\left|\m{d}^\top\text{grad}\;\psi_\varepsilon\left(\m{h}\right)\right|}{\left\|\m{d}\right\|_2}\leq\left\|\text{grad}\;\psi_\varepsilon\left(\m{h}\right)\right\|_2\leq\xi.
\end{aligned}
\end{equation}
Consequently, if
\begin{equation}
\begin{aligned}
&\Bigg\{K\left[\frac{h_o^2}{K}-\frac{2\left\|\m{\Delta}\right\|_2}{\sqrt{K}}-\left\|\m{\Delta}\right\|_2^2-\left(\frac{1}{\sqrt{K}}+\left\|\m{\Delta}\right\|_2\right)^2\left\|\m{E}_\Omega\right\|_2+\varepsilon\right]^{-\frac{1}{2}}\\
&-\left[\frac{1-h_o^2}{K}+\frac{2\left\|\m{\Delta}\right\|_2}{\sqrt{K}}+\left\|\m{\Delta}\right\|_2^2+\left(\frac{1}{\sqrt{K}}+\left\|\m{\Delta}\right\|_2\right)^2\left\|\m{E}_\Omega\right\|_2+\varepsilon\right]^{-\frac{1}{2}}\Bigg\}<0,
\end{aligned}
\end{equation}
then
\begin{equation}
\begin{aligned}
&\frac{1}{K}h_o\left(1-h_o^2\right)^{\frac{1}{2}}\Bigg\{\left[\frac{1-h_o^2}{K}+\frac{2\left\|\m{\Delta}\right\|_2}{\sqrt{K}}+\left\|\m{\Delta}\right\|_2^2+\left(\frac{1}{\sqrt{K}}+\left\|\m{\Delta}\right\|_2\right)^2\left\|\m{E}_\Omega\right\|_2+\varepsilon\right]^{-\frac{1}{2}}\\
&-K\left[\frac{h_o^2}{K}-\frac{2\left\|\m{\Delta}\right\|_2}{\sqrt{K}}-\left\|\m{\Delta}\right\|_2^2-\left(\frac{1}{\sqrt{K}}+\left\|\m{\Delta}\right\|_2\right)^2\left\|\m{E}_\Omega\right\|_2+\varepsilon\right]^{-\frac{1}{2}}\Bigg\}\\
\leq&\xi+N\left\|\m{\Delta}\right\|_2\left(1-\left\|\m{E}_\Omega\right\|_2\right)^{-\frac{1}{2}}+\varepsilon^{-\frac{1}{2}}K^{-\frac{1}{2}}N\left\|\m{\Delta}\right\|_2+N\frac{\left\|\m{E}_\Omega\right\|_2}{\left(1-\left\|\m{E}_\Omega\right\|_2\right)^{\frac{1}{2}}}.
\end{aligned}
\end{equation}
Therefore, there exist positive constants $c_1\leq\frac{1}{2}$, $c_2\leq1$, and $c_3\leq1$ such that, whenever $\left\|\m{E}_\Omega\right\|_2\leq c_1K^{-2}$, $\left\|\m{\Delta}\right\|_2\leq c_2K^{-\frac{5}{2}}$, and $\varepsilon\leq c_3K^{-3}$, the following inequality holds for all $h_o\in\left(\sqrt{1-\frac{1}{4K^2}},1\right]$:
\begin{equation}
\begin{aligned}
&\Bigg\{\left[\frac{1-h_o^2}{K}+\frac{2\left\|\m{\Delta}\right\|_2}{\sqrt{K}}+\left\|\m{\Delta}\right\|_2^2+\left(\frac{1}{\sqrt{K}}+\left\|\m{\Delta}\right\|_2\right)^2\left\|\m{E}_\Omega\right\|_2+\varepsilon\right]^{-\frac{1}{2}}\\
&-K\left[\frac{h_o^2}{K}-\frac{2\left\|\m{\Delta}\right\|_2}{\sqrt{K}}-\left\|\m{\Delta}\right\|_2^2-\left(\frac{1}{\sqrt{K}}+\left\|\m{\Delta}\right\|_2\right)^2\left\|\m{E}_\Omega\right\|_2+\varepsilon\right]^{-\frac{1}{2}}\Bigg\}\\
\geq&\left[2\left(1+4c_1+3c_2+c_3\right)^{-\frac{1}{2}}-\left(1-4c_1-3c_2\right)^{-\frac{1}{2}}\right]K^{\frac{3}{2}}.
\end{aligned}
\end{equation}
It follows that
\begin{equation}
\begin{aligned}
\left(1-h_o^2\right)^{\frac{1}{2}}\leq&\sqrt{2}\left[2\left(1+4c_1+3c_2+c_3\right)^{-\frac{1}{2}}-\left(1-4c_1-3c_2\right)^{-\frac{1}{2}}\right]^{-1}K^{-\frac{3}{2}}\\
&\cdot \left[\xi+2N\left(\left\|\m{E}_\Omega\right\|_2+\left\|\m{\Delta}\right\|_2\right)+\varepsilon^{-\frac{1}{2}}K^{-\frac{1}{2}}N\left\|\m{\Delta}\right\|_2\right].
\end{aligned}
\end{equation}
Finally, applying \eqref{error} yields
\begin{equation}
\begin{aligned}
\left\|\m{h}-\m{e}_o\right\|_2\leq&2\left[2\left(1+4c_1+3c_2+c_3\right)^{-\frac{1}{2}}-\left(1-4c_1-3c_2\right)^{-\frac{1}{2}}\right]^{-1}K^{-\frac{3}{2}}\\
&\cdot \left[\xi+2N\left(\left\|\m{E}_\Omega\right\|_2+\left\|\m{\Delta}\right\|_2\right)+\varepsilon^{-\frac{1}{2}}K^{-\frac{1}{2}}N\left\|\m{\Delta}\right\|_2\right],
\end{aligned}
\end{equation}
which completes the proof.

\section{Proof of Proposition~\ref{prop-alg}}
\label{pf-prop-alg}
The proof relies on the fact that the objective function in~\eqref{opt-ini-general} is three-times continuously differentiable and that
\begin{equation}
\label{retraction}
\begin{aligned}
\mathcal{R}_{\m{h}}\left(\m{v}\right)=\frac{\m{h}+\m{v}}{\left\|\m{h}+\m{v}\right\|_2}
\end{aligned}
\end{equation}
defines a second-order retraction on $\mathbb{S}^{N-1}$; see~\cite[Example~5.41]{boumal2023introduction}. Specifically, since $\mathbb{S}^{N-1}$ is compact, the Riemannian gradient $\text{grad}\;\rho\left(\m{h}\right)$ and $\text{Hess}\;\rho\left(\m{h}\right)$ are bounded and Lipschitz continuous on $\mathbb{S}^{N-1}$. Let $L_g$ denote the corresponding Lipschitz constant of the Riemannian gradient. We separately analyze the gradient descent steps and the negative-curvature search steps.

For $\left\|\text{grad}\;\rho\left(\m{h}_{t-1}\right)\right\|_2>\epsilon_g$, the algorithm performs a gradient descent step. By~\cite[Exercise~10.56]{boumal2023introduction},
\begin{equation}
\begin{aligned}
\rho\left(\m{h}_{t}\right)=&\rho\left(\mathcal{R}_{\m{h}_{t-1}}\left(-\alpha_g\text{grad}\;\rho\left(\m{h}_{t-1}\right)\right)\right)\\
\leq&\rho\left(\m{h}_{t-1}\right)+g_{\m{h}_{t-1}}\left(-\alpha_g\text{grad}\;\rho\left(\m{h}_{t-1}\right),\text{grad}\;\rho\left(\m{h}_{t-1}\right)\right)+\frac{L_g}{2}\left\|\alpha_g\text{grad}\;\rho\left(\m{h}_{t-1}\right)\right\|_2^2.
\end{aligned}
\end{equation}
Choosing $\alpha_g=L_g^{-1}$ yields
\begin{equation}
\begin{aligned}
\rho\left(\m{h}_{t}\right)-\rho\left(\m{h}_{t-1}\right)\leq&-\frac{1}{2L_g}\left\|\text{grad}\;\rho\left(\m{h}_{t-1}\right)\right\|_2^2\leq-\frac{\epsilon_g^2}{2L_g}.
\end{aligned}
\end{equation}

Next, suppose that $\left\|\text{grad}\;\rho\left(\m{h}_{t-1}\right)\right\|_2\leq\epsilon_g$ and $\lambda_{\min}\left(\text{Hess}\;\rho\left(\m{h}_{t-1}\right)\right)<-\epsilon_H$. In this case, the algorithm performs a negative-curvature search step. Define $\gamma\left(\alpha\right)=\mathcal{R}_{\m{h}_{t-1}}\left(-\alpha\eta_t\m{v}_t\right)$. Since $\rho\left(\gamma\left(\alpha\right)\right)$ is three-times continuously differentiable in a neighborhood of $\alpha=0$, Taylor theorem with Lagrange remainder implies that, for some $\nu$,
\begin{equation}
\begin{aligned}
\rho\left(\gamma\left(\alpha\right)\right)=&\rho\left(\m{h}_{t-1}\right)+\alpha g_{\m{h}_{t-1}}\left(\text{grad}\;\rho\left(\m{h}_{t-1}\right),-\eta_t\m{v}_t\right)+\frac{\alpha^2}{2}g_{\m{h}_{t-1}}\left(-\eta_t\m{v}_t,-\eta_t\text{Hess}\;\rho\left(\m{h}_{t-1}\right)\m{v}_t\right)\\
&+\frac{\alpha^3}{6}\left(\rho\circ\gamma\right)'''\left(\nu\right),
\end{aligned}
\end{equation}
where $\rho\circ\gamma$ denotes the composition of $\rho$ and $\gamma$. Since $\mathbb{S}^{N-1}$ is compact and $\gamma\left(\alpha\right)\in\mathbb{S}^{N-1}$ for all $\alpha$, there exists a constant $L_\gamma>0$ such that
\begin{equation}
\left|\left(\rho\circ\gamma\right)'''\left(\nu\right)\right|\leq L_\gamma
\end{equation}
Therefore,
\begin{equation}
\left|\frac{\alpha^3}{6}\left(\rho\circ\gamma\right)'''\left(\nu\right)\right|\leq\frac{\alpha^3L_\gamma}{6}.
\end{equation}
Substituting $\alpha=\alpha_H$ gives
\begin{equation}
\begin{aligned}
\rho\left(\m{h}_{t}\right)=&\rho\left(\mathcal{R}_{\m{h}_{t-1}}\left(-\alpha_H\eta_t\m{v}_t\right)\right)\\
\leq&\rho\left(\m{h}_{t-1}\right)+\alpha_Hg_{\m{h}_{t-1}}\left(\text{grad}\;\rho\left(\m{h}_{t-1}\right),-\eta_t\m{v}_t\right)+\frac{\alpha_H^2}{2}g_{\m{h}_{t-1}}\left(-\eta_t\m{v}_t,-\eta_t\text{Hess}\;\rho\left(\m{h}_{t-1}\right)\m{v}_t\right)\\
&+\frac{\alpha_H^3L_\gamma}{6}.
\end{aligned}
\end{equation}
Choosing $\alpha_H=\frac{3\epsilon_H}{2L_\gamma}$ yields
\begin{equation}
\begin{aligned}
\rho\left(\m{h}_{t}\right)-\rho\left(\m{h}_{t-1}\right)\leq-\frac{\alpha_H^2\epsilon_H}{2}+\frac{\alpha_H^3L_\gamma}{6}=-\frac{9\epsilon_H^3}{16L_\gamma^2}.
\end{aligned}
\end{equation}

Combining the two cases, every iteration decreases the objective function by at least
\begin{equation}
\begin{aligned}
\rho_{\Delta}=\min\left\{\frac{\epsilon_g^2}{2L_g},\frac{9\epsilon_H^3}{16L_\gamma^2}\right\}.
\end{aligned}
\end{equation}
Since $\rho\left(\m{h}\right)\geq0$, the algorithm can perform at most
\begin{equation}
\begin{aligned}
\frac{\rho\left(\m{h}_0\right)}{\rho_{\Delta}}=\frac{\rho\left(\m{h}_0\right)}{\min\left\{\frac{\epsilon_g^2}{2L_g},\frac{9\epsilon_H^3}{16L_\gamma^2}\right\}}
\end{aligned}
\end{equation}
iterations. Therefore, the algorithm terminates after a finite number of iterations, with iteration complexity
\begin{equation}
\begin{aligned}
T=O\left(\max\left\{\epsilon_g^{-2},\epsilon_H^{-3}\right\}\right).
\end{aligned}
\end{equation}
Upon termination, neither the gradient descent condition nor the negative-curvature condition is satisfied, implying that the returned iterate is an $\left(\epsilon_g,\epsilon_H\right)$-approximate second-order stationary point.

\putbib[strings]

\end{bibunit}

\end{document}